\documentclass[12pt]{amsart}
\usepackage{txfonts}
\usepackage{amscd,amssymb,mathabx,mathtools}
\usepackage[arrow,matrix,graph,frame,poly,arc,tips]{xy}
\usepackage{graphicx,calrsfs}
\usepackage{tikz}
\usetikzlibrary{decorations.markings}
\usetikzlibrary{shapes}
\usetikzlibrary{backgrounds}
\usetikzlibrary{calc}
\usepackage{mdwlist}
\usepackage{xfrac}


\usepackage{xcolor}
\usepackage{multirow}
\usepackage{enumerate}
\usepackage{verbatim}
\usepackage{comment}
\usepackage{todonotes}

\topmargin=0.1in
\textwidth5.5in
\textheight7.8in
\oddsidemargin=0.4in
\evensidemargin=0.4in


\DeclarePairedDelimiter{\abs}{\lvert}{\rvert}
\newcommand\on[1]{\operatorname{#1}}

    \newcommand\ba{\begin{align*}}
    \newcommand\ea{\end{align*}}
    \newcommand\be{\begin{enumerate}}
    \newcommand\ee{\end{enumerate}}
    \newcommand\bp{\begin{proof}}
    \newcommand\ep{\end{proof}}
    \newcommand\bpp{\begin{prop}}
    \newcommand\epp{\end{prop}}
    \newcommand\bpb{\begin{prob}}
    \newcommand\epb{\end{prob}}
    \newcommand\bd{\begin{defn}}
    \newcommand\ed{\end{defn}}
    \newcommand\bh{\begin{hint}}
    \newcommand\eh{\end{hint}}


    \newcommand\bN{\mathbb{N}}
    \newcommand\N{\mathbb{N}}
    \newcommand\bR{\mathbb{R}}
    
    \newcommand\Q{\mathbb{Q}}
    
    \newcommand\Z{\mathbb{Z}}

    \newcommand\CC{\mathcal{C}}

    \newcommand\GG{\mathcal{G}}
    \newcommand\DD{\mathcal{D}}
    \newcommand\HH{\mathcal{H}}
    
    \newcommand\TT{\mathcal{T}}
       \newcommand\MM{\mathcal{M}}
    \newcommand\PP{\mathcal{P}}
\newcommand\UU{\mathcal{U}}
\newcommand\VV{\mathcal{V}}
    
        \newcommand\NN{\mathcal{N}}

    \newcommand{\R}[0]{\mathbb{R}}

    \newcommand\supp{\operatorname{supp}}

    \DeclareMathOperator\Homeo{Homeo}

    \newcommand\sse{\subseteq}
    \newcommand\co{\colon}

    \DeclareMathOperator\Diff{Diff}

    \DeclareMathOperator\ro{RO}
    
    \DeclareMathOperator\PL{PL}

    \DeclareMathOperator\cl{cl}

    \newcommand\suppe{\operatorname{{supp}^{\mathrm{e}}}}



    

    \def\thetitle{{Locally approximating groups of homeomorphisms of manifolds}}
    \def\theauthors{{Thomas Koberda, J.~de la Nuez Gonz\'alez}}
    \usepackage{hyperref}
    \hypersetup{
    	colorlinks=false,
    	plainpages,
    	urlcolor=black,
    	linkcolor=black
    	pdftitle=   \thetitle,
    	pdfauthor=  {\theauthors}
    }

    \newtheorem{thm}{Theorem}[section]
    \newtheorem{lem}[thm]{Lemma}
    \newtheorem{lemma}[thm]{Lemma}
    \newtheorem{cor}[thm]{Corollary}
    \newtheorem{prop}[thm]{Proposition}

    \newtheorem*{claim*}{Claim}

    \theoremstyle{remark}

    \theoremstyle{definition}
    \newtheorem{defn}[thm]{Definition}
    \newtheorem{prob}{Problem}[section]

    \begin{document}
    \title\thetitle
    \date{\today}
    \keywords{homeomorphism group, manifold, first order theory, infinitely generated group}
    \subjclass[2020]{Primary: 20A15, 57S05, ; Secondary: 03C07, 57S25, 57M60}
    

    \author[T. Koberda]{Thomas Koberda}
    \address{Department of Mathematics, University of Virginia, Charlottesville, VA 22904-4137, USA}
    \email{thomas.koberda@gmail.com}
    \urladdr{https://sites.google.com/view/koberdat}
    
    \author[J. de la Nuez Gonz\'alez]{J. de la Nuez Gonz\'alez}
    \address{School of Mathematics, Korea Institute for Advanced Study (KIAS), Seoul, 02455, Korea}
    \email{jnuezgonzalez@gmail.com}

    \begin{abstract}
    Let $M$ be a compact, connected
    manifold of positive dimension and let $\GG\leq\Homeo(M)$ be
    \emph{locally approximating} in the sense that
    for all open $U\subseteq M$ with compact closure
    in a single Euclidean chart of $M$,
    the subgroup $\GG[U]$ consisting of elements of $\GG$ supported in
    $U$ is dense in the full group of
    homeomorphisms supported in $U$. We prove that $\GG$ interprets first order arithmetic, as well as a first order predicate that encodes
     membership in finitely generated
    subgroups of $\GG$. As a consequence, we show that if $\GG$ is not finitely generated, then no group elementarily equivalent to $\GG$ can be finitely
    generated. We show that many finitely generated locally approximating groups of homeomorphisms $\GG$ of a manifold are prime models of their theories, and give
    conditions that guarantee any finitely presented group $G$ that is elementarily equivalent to $\GG$ is isomorphic to $\GG$. We thus
    recover some results of Lasserre about the model theory of Thompson's groups $F$ and $T$. Finally, we obtain several action rigidity result for locally approximating
    groups of homeomorphisms. If $\GG$ acts in a locally approximating way on a compact, connected
    manifold $M$ then the dimension of $M$ is uniquely determined by the
    elementary equivalence class of $\GG$. Moreover,
    if $\dim M\leq 3$ then $M$ is uniquely determined up to homeomorphism, and in
    for general closed smooth manifolds,
    the homotopy type of $M$ is uniquely determined. In this way, we obtain a generalization of a well-known result of Rubin.
    \end{abstract}
    
    \maketitle

\setcounter{tocdepth}{1}
\tableofcontents
    

\section{Introduction}

Let $M$ be a compact, connected, topological manifold of positive dimension, possibly
with boundary.
Write $\Homeo(M)$ for the group of homeomorphisms of $M$, and write
$\Homeo_0(M)$ and $\Homeo_c(M\setminus\partial M)$ for the identity component of
$\Homeo(M)$ (as a topological group) and the group of compactly supported homeomorphisms
of $M\setminus\partial M$, respectively.

Many natural algebraic properties of groups
are very difficult to investigate for general groups of homeomorphisms.
This is in part because there are generally
few restrictions on groups which can act on a manifold; indeed, there is no known
restriction on countable, torsion-free groups acting by
homeomorphisms of the disk, fixing the boundary.

In this paper, we investigate the structure of subgroups of the group
$\Homeo(M)$ via logic, relating
the topological properties of $M$ to the model theoretic
and group theoretic properties of $\Homeo(M)$.
 We thus continue a research program initiated
in~\cite{dlNKK22,dlNK23}, though logically the content of this paper is independent
of our previous work. Here, we will investigate the expressive power of the language of group theory for certain groups of homeomorphisms that are sufficiently
good approximations of the full group of homeomorphisms of $M$.

For a Hausdorff topological space
$X$, an open subset $U\subseteq X$, and a group $\GG\leq\Homeo(X)$, we write $\GG[U]$ for the \emph{rigid stabilizer} of $U$, which is
to say all elements of $\GG$ which act by the identity on $X\setminus U$.

Let $M$ be a compact, connected manifold. Recall that the \emph{compact-open topology}
on $\Homeo(M)$ has basic open sets \[\UU_{K,U}=\{f\mid f(K)\subseteq U\},\] where
$K,U\subseteq M$ are a compact and an open set, respectively.
We call a subgroup $\GG\leq\Homeo(M)$ 
\emph{locally approximating} if the following conditions are satisfied:
\begin{enumerate}
\item For every open $U\subseteq M$ with compact closure
in a single Euclidean chart of $M$ such
that $U$ does not accumulate on the boundary of $M$, the group
$\GG[U]$ is dense inside of
$\Homeo(M)[U]$ in the compact-open topology.
\item If $\partial M\neq\emptyset$ then either:
\begin{enumerate}
    \item (compactly supported case) The group $\GG$ consists of compactly
supported homeomorphisms, i.e.~$\GG\leq \Homeo_c(M\setminus\partial M)$, or
\item (general case)
For every open $U$ with compact closure in a single Euclidean chart of $M$,
we have $\GG[U]$ is dense inside of
$\Homeo(M)[U]$ in the compact-open topology.
\end{enumerate}
\end{enumerate}

Observe that if $B\subseteq M$ is a \emph{collared ball} (i.e.~one which admits a
tubular neighborhood) then $B$ has compact closure
in a single Euclidean chart of $M$. Conversely, any set with compact closure in
a Euclidean chart of $M$ is contained in such a ball $B$. Thus, locally
approximating groups are easily seen to
be those for which $\GG[U]$ is dense inside of
$\Homeo(M)[U]$ for all open sets $U$ whose closures are collared balls in $M$.

There are many natural examples
of locally approximating groups that are not the full group of homeomorphisms --
diffeomorphism groups, groups of $PL$ homeomorphisms, and Thompson's groups $F$ and $T$ are among them. Since manifolds have countable bases for their topologies, one can
find a profusion of countable locally approximating groups; even in dimension one,
there are continuum many isomorphism types; cf.~\cite{KL2017,KKL2019ASENS}. Moreover,
any countable subgroup of $\Homeo_0(M)$ can be embedded in a finitely generated
subgroups of $\Homeo_0(M)$, and so there are continuum many isomorphism types
of finitely generated locally approximating groups in all
dimensions; see~\cite{leroux-mann}.

\subsection{Main results}
In the sequel, definability and interpretation is
carried out in the underlying structure
$\GG$, and in the language of groups, unless otherwise noted. 

In~\cite{dlNKK22}, we proved that full
homeomorphism groups of compact manifolds admit uniform interpretations of second order arithmetic. In general,
one should not expect locally approximating groups of homeomorphisms to interpret full second order arithmetic; for instance, Thompson's groups $F$ and $T$
are bi-interpretable (with parameters) with the ring $\Z$, and so their theories
are not rich enough to interpret full second order arithmetic; see~\cite{lasserre}.
For general locally approximating groups of homeomorphisms, we have the following:

\begin{thm}\label{thm:arith}
Let $\GG\leq \Homeo(M)$ be a locally approximating group of homeomorphisms. Then $\GG$ admits a parameter-free interpretation of first order arithmetic.
Moreover, the interpretation is uniform in $\GG$ and $M$.
\end{thm}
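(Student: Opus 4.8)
The plan is to interpret first order arithmetic by finding, inside $\GG$, a definable family of elements that behave like the natural numbers, with definable successor, addition, and multiplication. The natural candidates are (conjugates of) a single homeomorphism supported on a sequence of disjoint balls, where the $n$-th "copy" records the integer $n$. Concretely, I would fix (definably, using parameters that can later be eliminated by quantifying over suitable configurations) a shrinking sequence of pairwise disjoint collared balls $B_1, B_2, \dots$ accumulating at a point, together with a homeomorphism $t$ that acts on each $B_k$ as a fixed model homeomorphism $f$ of the ball, supported in $B_k$. Then "the element coding $n$" should be the homeomorphism agreeing with $f$ on $B_1, \dots, B_n$ and with the identity elsewhere; the successor relation $x \mapsto x'$ adds one more copy. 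Because $\GG$ is locally approximating, inside each $B_k$ the group $\GG[B_k]$ is dense in $\Homeo(M)[B_k]$, so all the building blocks (the $f$'s, transpositions swapping pairs of balls, "shift" maps relating the $B_k$) can be found inside $\GG$ itself — this density is exactly what lets us build the arithmetic apparatus without leaving $\GG$.

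The key steps, in order, are: (1) Give a first order formula, using a few parameters, that picks out the "standard configuration" — a pair $(t, s)$ where $t$ is the running-product element and $s$ is the shift — and show that the integers are coded by the subgroup generated appropriately, with $n \leftrightarrow s^n t s^{-n} \cdot \dots$, or more robustly, code $n$ by the support (a union of $n$ balls) via a definable "number of connected components of the support" predicate. (2) Define the successor relation and the constant $0$ (the identity, or the element supported on $B_1$ only). (3) Define addition: given elements coding $m$ and $n$, use the shift $s$ to translate the support of one coding element past the other and compose, so the resulting support has $m+n$ components — the delicate point being to make "the supports are disjoint after shifting" a first order condition. (4) Define multiplication, which is the genuinely hard step: the standard trick is to code multiplication via iterated addition using a second, "two-dimensional" array of balls $B_{i,j}$ and a definable pairing, or to use the recursion $\mathrm{mult}(m, n+1) = \mathrm{mult}(m,n) + m$ made first order by quantifying over a witnessing element that simultaneously records the whole computation (a "staircase" configuration). (5) Verify that all the geometric predicates used — "$x$ and $y$ have disjoint supports", "the support of $x$ has exactly one more component than that of $y$", "$s$ conjugates the first component of $\supp x$ onto the second" — are first order definable in the pure group language; here one leans on the commuting/overlapping relations among rigid stabilizers, exactly the kind of definability that underlies Rubin-type reconstruction, and which the locally approximating hypothesis guarantees behaves uniformly. (6) Finally, eliminate the parameters: quantify existentially over all tuples satisfying the "standard configuration" formula and check that any two such configurations give isomorphic coded copies of $\mathbb{N}$ (or interpret $\mathbb{N}$ as the quotient by the obvious definable equivalence), yielding a parameter-free interpretation; uniformity in $\GG$ and $M$ is automatic because the formulas never mention $M$ and only use the density axioms.

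The main obstacle will be step (4)–(5): getting multiplication, and more broadly getting the requisite combinatorial/geometric relations on supports to be \emph{first order in the group language alone}. The danger is that the most natural description of the ball configuration is topological rather than group-theoretic. The way around this is the standard Rubin-style dictionary: rigid stabilizers of disjoint open sets commute, containment of supports is captured by "every element supported in $U$ commutes with every element commuting with everything supported in $V$", and the local approximation hypothesis ensures that the relevant open sets (collared balls) are precisely the ones whose rigid stabilizers are rich enough for this dictionary to be faithful. Once "disjointness of supports" and "the number of components of a support" are pinned down first order — the latter being the subtle one, handled by maximal antichains of pairwise-commuting nontrivial elements below a given element — addition is routine via composition-after-shifting and multiplication follows by the usual two-dimensional-array encoding. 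I would expect the bulk of the technical work, and hence the length of the actual proof, to be in verifying these definability claims carefully rather than in the arithmetic itself.
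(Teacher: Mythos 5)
Your high-level coding idea (represent $n$ by a regular open set with $n$ pieces and compare codes by matching pieces via group elements, then quotient out parameters by a definable equivalence) is the same basic mechanism the paper uses. However, the specific configuration you propose runs into the two obstacles that the paper explicitly identifies and designs around at the start of Section~\ref{sec:arith}. First, your construction requires a ``running-product element'' $t$ supported on the infinite union $\bigcup_k B_k$ and a shift $s$ with $s(B_k)=B_{k+1}$ for all $k$; but a locally approximating group need not contain \emph{any} element whose extended support has infinitely many components (the paper notes piecewise linear groups as the standard example), and local approximation (density in compact-open, which only constrains finitely many compact--open pairs at a time) gives no way to manufacture a single $s$ with prescribed behavior on infinitely many balls accumulating at a point. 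Outside dimension one there is also no reason for the required north--south dynamics to be present. Second, even granting the configuration, you never say how to first-order cut the candidate codes down to those of \emph{finite} support: the obvious formula ``$h$ agrees with $f$ or the identity on each $B_k$'' is satisfied by the element supported on all of the $B_k$, and ``the support is an initial segment'' is not expressible without a finiteness argument. This finiteness argument is the real content of the paper's proof (Lemma~\ref{lem:fin-comp}), and it is obtained precisely by avoiding your setup: instead of an infinite ball chain with a global shift, the paper uses \emph{finite} quasi-invariant triples $(U,V,f)$ in which $f$ behaves like a shift only on a bounded scratchpad, imposes a long list of combinatorial conditions (including the ``glue'' condition with an auxiliary commuting element $g$) that rule out right-infinite, left-infinite, bi-infinite, and periodic orbit pieces, and then uses ``compatible extensions'' of the scratchpad to realize arbitrarily large $n$ before taking the parameter-free quotient. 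Without a substitute for that finiteness lemma and a way to avoid the nonexistent infinite-support elements, your argument does not go through.
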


In particular, a locally approximating group of homeomorphisms of $M$ cannot have a decidable theory. Formally, an \emph{interpretation} of a structure $\mathcal A$ in a
structure $\mathcal B$ (possibly in different languages) is a definable subset
$\mathcal K$
of a finite Cartesian power of $\mathcal B$ together with a definable equivalence relation
$\sim$
so that $\mathcal A$ is in bijection with $K/\sim$; moreover, there must be definable
(in $\mathcal B$) predicates which interpret the functions, relations, and constants in $\mathcal A$,
so that the result is a structure that is isomorphic to $\mathcal A$.

Building on our interpretation of first order arithmetic, we will be able to prove that locally approximating groups of homeomorphisms of manifolds can express
membership of elements in their finitely generated subgroups:

\begin{thm}\label{thm:member}
Let  $\GG\leq\Homeo(M)$ a locally approximating group of homeomorphisms and let $d\in\N$.
For all $n\in\N$, there is a predicate $\on{member}_n(\delta,\gamma_1,\ldots,\gamma_n)$ such that for all
tuples $(h,g_1,\ldots,g_n)$ of elements of $\GG$,
we have \[\GG\models\on{member}_n(h,g_1,\ldots,g_n)\] if and only if $h\in\langle g_1,\ldots,g_n\rangle$. The predicate
$\on{member}_n$ is uniform in manifolds of dimension at most $d$.
\end{thm}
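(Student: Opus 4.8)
The plan is to leverage Theorem~\ref{thm:arith} in order to reduce the statement to one about evaluating group words. Fix the parameter-free interpretation of $(\N;+,\times,0,1)$ in $\GG$ provided by that theorem; encoding finite strings \`a la G\"odel, one obtains, $\emptyset$-definably and uniformly, for each fixed $n$ a definable set $\mathrm{Words}_n$ of codes of words in the letters $x_1^{\pm1},\dots,x_n^{\pm1}$, together with a definable word length and a definable ``$k$-th letter'' function. The candidate predicate is then
\[
\on{member}_n(\delta,\gamma_1,\dots,\gamma_n)\ \longleftrightarrow\ \exists w\ \big(\mathrm{Words}_n(w)\wedge\mathrm{ev}_n(w;\gamma_1,\dots,\gamma_n,\delta)\big),
\]
where $\mathrm{ev}_n(w;\vec\gamma,\delta)$ is a formula, yet to be built, asserting that the word coded by $w$ evaluates in $\GG$ to $\delta$. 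Since $h\in\langle g_1,\dots,g_n\rangle$ exactly when some word in the $g_i^{\pm1}$ equals $h$, this has the correct meaning, and it is uniform as soon as $\mathrm{ev}_n$ is.

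All of the difficulty is concentrated in $\mathrm{ev}_n$: evaluating a word of a priori unbounded length inside a single first-order formula forces one to quantify over the sequence of partial products $p_0=\Id,\ p_1,\dots,p_{\ell(w)}=\delta$, with $p_j=p_{j-1}\gamma_i^{\epsilon}$ for $(i,\epsilon)$ the $j$-th letter of $w$. Thus the technical heart of the argument is a \emph{first-order coding of finite sequences of elements of $\GG$}: a formula $\mathrm{Seq}(\Sigma;m)$ and a definable extraction map $\mathrm{entry}(\Sigma,k)$ so that every finite sequence over $\GG$ arises as some $\mathrm{Seq}$-code and its terms are recovered by $\mathrm{entry}$. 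I would first treat sequences of \emph{ball-supported} elements using local approximation: inside a single Euclidean chart, fix (from an auxiliary parameter, eliminated below) a definable family $(B_k)_{k\in\N}$ of pairwise disjoint collared balls, indexed by the interpreted $\N$ and accumulating at a point, together with homeomorphisms $\theta_k$ carrying a fixed reference ball onto $B_k$, all realised inside $\GG$. A finite sequence of elements supported in the reference ball is then coded by $\Sigma=\prod_{k\le m}\theta_k p_k\theta_k^{-1}\in\GG$, with $\mathrm{entry}(\Sigma,k)$ read off from the restriction of $\Sigma$ to $B_k$; both ``$\Sigma$ is supported in $B_0\sqcup\dots\sqcup B_m$'' and the extraction are first order in $\Sigma$ and in the configuration.

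The main obstacle is precisely that the partial products $p_j$ are genuinely global homeomorphisms of $M$, and one cannot compress a global homeomorphism into a ball; this is also the point at which the dependence on the ambient dimension enters. The plan to overcome it is to fix, once and for all for a given dimension bound $d$, a finite atlas of $M$ together with a bounded recording scheme that represents each element of $\GG$, relative to the atlas, by a bounded tuple (of size controlled by $d$ alone) of ball-supported pieces — so that a length-$m$ sequence over $\GG$ becomes a bounded tuple of length-$m$ sequences of ball-supported elements, each coded as above. Establishing that such a decomposition can be carried out \emph{inside $\GG$} rather than merely inside $\Homeo(M)$ — via a fragmentation/portability argument for homeomorphism groups combined with local approximation — is where I expect the main new work to lie. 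Granting $\mathrm{Seq}$ and $\mathrm{entry}$, I would define $\mathrm{ev}_n(w;\vec\gamma,\delta)$ to say: there is $\Sigma$ coding an $(\ell(w)+1)$-term sequence whose $0$-th term is $\Id$, whose final term is $\delta$, and such that for every $k<\ell(w)$ and every letter $(i,\epsilon)$, if the $k$-th letter of $w$ is $(i,\epsilon)$ then $\mathrm{entry}(\Sigma,k+1)=\mathrm{entry}(\Sigma,k)\gamma_i^{\epsilon}$ — a finite conjunction over the $2n$ letters, hence first order. Finally, the auxiliary parameter naming the configuration and atlas is not named but existentially quantified inside $\mathrm{ev}_n$, subject to a definable admissibility condition; correctness needs only that some admissible choice exists relative to which every finite sequence over $\GG$ is codeable, and that the coding is faithful for every admissible choice, both of which local approximation supplies. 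The resulting $\on{member}_n$ depends only on $n$ and $d$, and is uniform over compact connected $M$ with $\dim M\le d$, since the interpretation of arithmetic of Theorem~\ref{thm:arith}, the configuration of balls, and the fragmentation bound all are; the step demanding genuine effort, as indicated, is the first-order coding of sequences of arbitrary, non-ball-supported, elements of $\GG$.
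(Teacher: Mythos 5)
Your overall skeleton is the same as the paper's — use the interpretation of arithmetic from Theorem~\ref{thm:arith} to enumerate codes $w$ of words in $n$ letters, and then construct a predicate $\mathrm{ev}_n(w;\vec\gamma,\delta)$ asserting that the coded word evaluates to $\delta$ (this is Lemma~\ref{lem:word-define} in the paper). All the content is indeed concentrated in $\mathrm{ev}_n$, as you say. But the route you take to $\mathrm{ev}_n$ is different from the paper's, and it contains a gap which I believe is not merely ``work to be done'' but a genuine obstruction to your approach.

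You propose to code the sequence of partial products $p_0=\Id, p_1,\dots,p_{\ell(w)}=\delta$ as a single element $\Sigma=\prod_k\theta_k p_k\theta_k^{-1}$ of $\GG$, after first reducing to ball-supported pieces via a fragmentation lemma executed ``inside $\GG$.'' This is essentially the sequence-coding construction used to interpret WSOL in $\Homeo(M)$. There are two closure properties of $\GG$ your construction requires, and local approximation supplies neither:
\begin{enumerate}
\item \emph{Fragmentation inside $\GG$.} Even when $g\in\GG$ factors in $\Homeo(M)$ as a bounded product of ball-supported homeomorphisms, those factors are not elements of $\GG$. Local approximation gives $\GG[U]$ \emph{dense} in $\Homeo(M)[U]$, not equal to it, and there is no mechanism to approximate the factors and still have the product recover $g$ exactly.
\item \emph{Restriction/extension inside $\GG$.} To implement $\mathrm{entry}(\Sigma,k)$ you need the restriction of $\Sigma$ to $B_k$, extended by the identity off $B_k$, to again be an element of $\GG$. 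There is no reason for locally approximating $\GG$ to be closed under such truncation (Thompson's $F$, $T$, and $\PL$ groups already fail this in general for arbitrary balls $B_k$).
\end{enumerate}
These are not incidental difficulties. The paper explicitly remarks that one should \emph{not} expect a general locally approximating $\GG$ to interpret WSOL, precisely because coding sequences of group elements as group elements is too strong; the whole point of the argument is to work around this.

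The paper's resolution is to encode not the partial products $p_j\in\GG$ themselves, but their action on small \emph{test neighborhoods} $Y\in\DD$. Fix a cover $\{U_1,\dots,U_d\}$ of (a cocompact part of) $M$ by elements of $\DD$ compactly contained in charts, and a ``scratchpad'' $W_0, f(W_0),\dots,f^{m-1}(W_0)$ built from the quasi-invariant-triple machinery of Section~\ref{sec:arith}. Compression maps $g_{i,\epsilon}\in\GG$ carry $U_i$ into $W_0$, and the $f$-translates store the successive images $\hat h_j(Y)$. The ``admissible/compatible/adapted'' pair $(V_s,V_t)\in\DD^2$ then records an entire finite orbit $Y,\hat h_1(Y),\dots,\hat h(Y)$ inside two elements of $\DD$. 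The only group elements one ever needs to produce are supported in balls compactly contained in charts, so their existence is exactly what local approximation guarantees. Crucially, the predicate $\mathrm{ev}_n$ that results is \emph{universally} quantified over adapted pairs (``for all test configurations, $h$ agrees with the word''), whereas your proposed predicate is \emph{existentially} quantified over codes $\Sigma$ (``some code of partial products ends at $\delta$''). This sign change is what lets the argument bypass sequence coding entirely. I would recommend abandoning the $\Sigma$-coding plan and adopting the test-neighborhood encoding.
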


Let $w=w(x_1,\ldots,x_n)$ be an element in a free group on
$n$ generators. It is standard that $w$ can be encoded as a single natural number.
Thus, for a fixed $n$--tuple $(g_1,\ldots,g_n)$ of elements of $\GG$, the map
\[(w,g_1,\ldots,g_n)\mapsto w(g_1,\ldots,g_n)\]
can be viewed as a function from a subset of $\N$ to $\GG$.
We can now state a result that is stronger than Theorem~\ref{thm:member}:
\begin{thm}\label{thm:define-intro}
    Let $(g_1,\ldots,g_n)$ be a tuple of elements in $\GG$, and let $w$ be an
    element of the free group on $n$ generators. Then the map
    \[(w,g_1,\ldots,g_n)\mapsto w(g_1,\ldots,g_n)\] is
    parameter-free definable in $\GG$.
\end{thm}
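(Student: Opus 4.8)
The plan is to define the graph of the evaluation map uniformly, separating the combinatorial datum of the word $w$ from the datum of its value $w(g_1,\ldots,g_n)$: the word is handled inside the copy of first-order arithmetic furnished by Theorem~\ref{thm:arith}, while its value is pinned down by exhibiting a \emph{computation certificate} inside $\GG$. First I would fix, once and for all, a parameter-free interpretation of $(\N,+,\cdot)$ in $\GG$ as in Theorem~\ref{thm:arith}. Using the standard coding of finite sequences available in first-order arithmetic, this yields definable predicates expressing, of a number $\epsilon$, that ``$\epsilon$ codes a reduced word $w_\epsilon$ of the free group $F_n$'', together with the auxiliary predicates ``$w_\epsilon$ has length $\ell$'' and ``the $k$-th letter of $w_\epsilon$ is $x_i^{\sigma}$'' with $\sigma=\pm1$. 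What remains is to produce a parameter-free formula $\Phi_n(\epsilon,\gamma_1,\ldots,\gamma_n,\delta)$ such that $\GG\models\Phi_n(\epsilon,g_1,\ldots,g_n,\delta)$ holds exactly when $\epsilon$ codes an element $w$ of $F_n$ and $\delta=w(g_1,\ldots,g_n)$.

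The central device is the one already built in the proofs of Theorems~\ref{thm:arith} and~\ref{thm:member}: a definable way of packing a finite, arithmetically indexed list of group elements into a single element of $\GG$. Concretely, I would invoke from those arguments a definable ``scaffold'' --- an accumulating family of pairwise disjoint cells together with an element of $\GG$ effecting the shift between consecutive cells --- and the associated definable ``extraction'' operation, which given $\eta\in\GG$ and $k\in\N$ returns the part of $\eta$ supported in the $k$-th cell, read against a fixed reference ball. As in the proof of Theorem~\ref{thm:member}, the scaffold is not canonical, but the class of valid scaffolds is definable and any two yield the same extracted data, so at the end one existentially quantifies it away. Using this one can speak in $\GG$ of a finite sequence $(\delta_0,\delta_1,\ldots,\delta_\ell)$ coded by a single $\eta$, and of ``replicas'' $\hat\gamma_1,\ldots,\hat\gamma_n$ of the generating tuple placed simultaneously in all cells, so that the recursion defining the partial products of $w_\epsilon$ can be verified cell by cell.

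Granting this, $\Phi_n(\epsilon,\gamma_1,\ldots,\gamma_n,\delta)$ asserts that $\epsilon$ codes an element $w_\epsilon\in F_n$, of length $\ell$ say, and that there exist a valid scaffold and an element $\eta\in\GG$ supported exactly on cells $0,\ldots,\ell$ such that: (i) the cell-$0$ part of $\eta$ is trivial; (ii) for every $k<\ell$, if the $(k+1)$-st letter of $w_\epsilon$ is $x_i^{\sigma}$, then the cell-$(k+1)$ part of $\eta$ equals the cell-$k$ part of $\eta$ composed with $\hat\gamma_i^{\sigma}$; and (iii) the cell-$\ell$ part of $\eta$ agrees with the designated replica of $\delta$. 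Clause (ii) is a single first-order sentence, since the quantifier over $k$ ranges over the interpreted $\N$ and each instance is an equation in $\GG$ expressed through the extraction operation. An induction on $\ell$ then shows that any $\eta$ witnessing (i)--(iii) must have its cell-$k$ part encode $w_\epsilon^{(k)}(\gamma_1,\ldots,\gamma_n)$, where $w_\epsilon^{(k)}$ denotes the length-$k$ prefix of $w_\epsilon$; in particular the cell-$\ell$ part is $w_\epsilon(\gamma_1,\ldots,\gamma_n)$, so $\Phi_n$ holds precisely when $\delta=w_\epsilon(\gamma_1,\ldots,\gamma_n)$. (Robustness under a non-reduced coding of $w$, if one prefers it, follows because the recursion is insensitive to free cancellation.) The construction is moreover uniform in $n$ and in the dimension of $M$, inherited from the uniformity in Theorems~\ref{thm:arith} and~\ref{thm:member}.

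The step I expect to be the main obstacle is the interface between an \emph{arbitrary} tuple $(g_1,\ldots,g_n)$ of elements of $\GG$, whose supports may be all of $M$, and the scaffold, whose cells are small balls: one cannot literally place a support-filling homeomorphism, or the partial products $w_\epsilon^{(k)}(g_1,\ldots,g_n)$, inside a cell. Dealing with this is precisely the technical heart of the proof of Theorem~\ref{thm:member}, which I would reuse: the extraction operation need not reproduce the partial products globally, only record enough of their behaviour relative to a fixed reference ball to propagate and check the recursion and to recognize the terminal value $\delta$, and one must verify that the evaluation map and its defining recursion are compatible with passing to that local data. A secondary and more routine point is the bookkeeping needed to make the scaffold and the replicas $\hat\gamma_i$ parameter-free while remaining uniform in $n$ and in $\dim M$.
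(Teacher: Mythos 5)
Your overall strategy is in the right spirit and is essentially aligned with the paper's: encode the computation history in a ``scratchpad'' (the paper's term) or ``scaffold'' (yours) of pairwise disjoint cells shifted by a fixed $f\in\GG$, and, since the partial products $w^{(k)}(g_1,\ldots,g_n)$ cannot literally be moved into small cells, record only their \emph{local} action on a reference regular open set --- what the paper calls a test neighborhood. You flag this as ``the main obstacle,'' and that identification is correct; the paper addresses it by tracking the orbit of a test neighborhood $Y$ through the cells via the ``admissible pair'' machinery.

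However, the quantifier structure of your formula $\Phi_n$ has a genuine gap. As written, $\Phi_n$ is existential: ``\emph{there exist} a valid scaffold and $\eta\in\GG$ such that (i)--(iii) hold.'' You then claim an induction on $\ell$ forces the cell-$\ell$ part of $\eta$ to be $w_\epsilon(\gamma_1,\ldots,\gamma_n)$. But per the very obstacle you raise, the cell-$k$ data is only a local avatar of the $k$-th partial product --- the image of a single reference ball under it --- and a global homeomorphism of $M$ is not determined by its action on one small regular open set. So clause (iii), ``the cell-$\ell$ part agrees with the designated replica of $\delta$,'' verifies only that $\delta$ and $w(\underline g)$ agree on one test ball, not globally. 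The converse direction then fails: if $\delta\neq w(\underline g)$ but they happen to agree on some ball $Y$, a scaffold built around $Y$ witnesses $\Phi_n(\epsilon,\underline g,\delta)$. The paper's formula avoids this by quantifying \emph{universally}: $h=w(\underline g)$ is characterized by ``for \emph{all} $(\underline g,\pi)$--adapted admissible pairs $(V_s,V_t)$, we have $h(Y(V_s^0))=Y(V_t^{m-1})$.'' The forward direction is then trivial, and the converse is handled by explicitly constructing, from a point where $h$ and $w(\underline g)$ disagree (such a point exists in a dense set by a chart-covering argument), an adapted admissible pair witnessing the disagreement. Your proposal needs this change of quantifier over the reference data, together with the argument that a witness of disagreement always embeds into some valid scaffold.

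A secondary, structural remark: you propose to reuse ``the technical heart of the proof of Theorem~\ref{thm:member},'' but in the paper the order is reversed --- Lemma~\ref{lem:word-define} (which is Theorem~\ref{thm:define-intro}) is proved first, and the membership predicate is obtained from it by prepending an existential quantifier over codes $\pi\in\PP_n$. So the tool you lean on is in fact the statement you are meant to be establishing. This does not undermine the idea, but it does mean the ``reuse'' is not available and the admissible-pair bookkeeping (your ``scaffold'' and ``extraction'') must be built from scratch, with the universal quantification over test neighborhoods baked in.
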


Theorem~\ref{thm:member} shows that all finitely generated subgroups of a locally
approximating group $\GG$ of homeomorphisms are definable with parameters
in $\GG$. Moreover,
the result allows us to express whether or not a particular
finitely generated subgroup of a locally
approximating group is itself locally approximating:
\begin{cor}\label{cor:loc-approx-subgroup}
Let $\GG\leq\Homeo(M)$, let $n\in\N$, and let $\underline g$ be an $n$--tuple of
elements of $\GG$. Then there is a first order predicate (depending only on $n$ and
the dimension of $M$) $\on{loc-approx}_n(\underline\gamma)$ such that
$\GG\models\on{loc-approx}_n(\underline g)$ if and only if $\langle \underline g\rangle$
is a locally approximating group of homeomorphisms of $M$.
\end{cor}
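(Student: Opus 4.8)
The plan is to combine Theorem~\ref{thm:member}, which makes the subgroup $H:=\langle\underline g\rangle$ uniformly definable in $\GG$ with parameters $\underline g$ via the predicate $\on{member}_n(\,\cdot\,,\underline\gamma)$, with the topological reconstruction that a locally approximating group automatically carries. Since $\GG$ is locally approximating it is locally moving and locally dense in the sense of Rubin, so the action of $\GG$ on $M$ is recovered from the abstract group; threading this through the interpretation of first order arithmetic provided by Theorem~\ref{thm:arith}, I would first produce a parameter-free interpretation in $\GG$ of a countable structure $\mathcal R$ that carries: a countable basis $\{B_k\}_{k\in\N}$ of collared balls of $M$ (together with a cofinal supply of chart balls abutting $\partial M$), the inclusion and disjointness relations among these balls, the relation ``$\supp(f)\subseteq B_k$'' for $f\in\GG$ (available from the usual support-comparison formulas in locally moving groups), a predicate enumerating a countable basis $\{\UU_j\}_{j\in\N}$ of the compact-open topology together with the relations ``$f\in\UU_j$'', and a sentence $\sigma_\partial$ that is true exactly when $\partial M\neq\emptyset$. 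All of these ingredients can be arranged to be uniform in $n$ and in $\dim M\le d$.

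Granting $\mathcal R$, I would take for $\on{loc-approx}_n(\underline\gamma)$ the conjunction of the \emph{interior clause} $\Phi_1(\underline\gamma)$ given by
\[
\forall k\,\forall j\,\forall f\,\Big[\big(\supp(f)\subseteq B_k\ \wedge\ f\in\UU_j\big)\longrightarrow\exists h\,\big(\on{member}_n(h,\underline\gamma)\wedge\supp(h)\subseteq B_k\wedge h\in\UU_j\big)\Big]
\]
(with $k,j$ ranging over the interpreted copy of $\N$ and $f,h$ over $\GG$) together with the \emph{boundary disjunction} $\sigma_\partial\longrightarrow\big(\Phi_{\mathrm{cs}}(\underline\gamma)\vee\Phi_\partial(\underline\gamma)\big)$, where $\Phi_{\mathrm{cs}}(\underline\gamma)$ asserts through $\mathcal R$ that every $h$ with $\on{member}_n(h,\underline\gamma)$ has support in a compact subset of $M\setminus\partial M$, and $\Phi_\partial$ is the verbatim analogue of $\Phi_1$ with $\{B_k\}$ replaced by the family of chart balls meeting $\partial M$. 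By construction this is a first order formula in the language of groups whose only parameters are $n$ and $\dim M$.

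For correctness I would argue as follows. The sentence $\Phi_1(\underline g)$ holds in $\GG$ exactly when $H[B_k]$ is dense in $\GG[B_k]$ (in the compact-open topology) for every $k$. Because $H[B_k]\subseteq\GG[B_k]\subseteq\Homeo(M)[B_k]$ and $\GG[B_k]$ is dense in $\Homeo(M)[B_k]$ by the hypothesis on $\GG$, transitivity of density gives that $H[B_k]$ is dense in $\Homeo(M)[B_k]$ for all $k$. A standard fragmentation argument for homeomorphism groups --- every homeomorphism supported in an open $U$ lying in a chart is approximated, inside a prescribed basic compact-open neighbourhood, by a finite product of homeomorphisms each supported in a basis ball $B_k\subseteq U$, and composition is continuous --- then upgrades this to density of $H[U]$ in $\Homeo(M)[U]$ for every admissible $U$, i.e.\ to condition (1) of the definition for $H$; the same argument applied to boundary chart balls handles $\Phi_\partial$, while if $\GG$ is of compactly supported type then so is $H$, so $\Phi_{\mathrm{cs}}(\underline g)$ holds and $H$ meets the compactly supported alternative in the definition. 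The reverse implications are immediate from the same observations.

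The step I expect to be the main obstacle is the construction of the interpreted structure $\mathcal R$: only first order arithmetic is available, so $M$ and $\Homeo(M)[B]$ cannot be interpreted wholesale, but $\Phi_1$ refers only to a countable basis of collared balls and a countable basis of the compact-open topology on their rigid stabilizers, and producing such a countable skeleton definably --- together with the support-containment relation and the recognition of whether $\partial M$ is empty --- is exactly the sort of output the reconstruction machinery behind Theorems~\ref{thm:arith}--\ref{thm:define-intro} is designed to give. A secondary technical point is the fragmentation lemma used to descend from basis balls to an arbitrary admissible $U$; it is classical for $\Homeo$ of a ball (equivalently $\Homeo_c$ of Euclidean space), but one must check it is compatible with the prescribed approximation, which is where continuity of composition is used.
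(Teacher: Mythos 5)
Your proposed formula $\Phi_1$ hinges on first interpreting a structure $\mathcal R$ that contains a countable basis $\{B_k\}_{k\in\N}$ of collared balls, a countable basis $\{\UU_j\}_{j\in\N}$ of the compact-open topology, and definable relations $\supp(f)\subseteq B_k$ and $f\in\UU_j$, all indexed by the interpreted copy of $\N$. This is the genuine gap: nothing in Theorem~\ref{thm:arith} or Theorem~\ref{thm:define-intro} produces a definable \emph{enumeration} of a topological basis by natural numbers. The only ``sets'' visible to the language of groups are rigid stabilizers $\GG[\suppe g]$, and a definable $\N$-indexed basis would amount to a definable map $k\mapsto g_k$ from the interpreted arithmetic sort into $\GG$ whose extended supports form a basis; no such map exists in general (if $\GG$ is uncountable it cannot, and even for countable $\GG$ there is no parameter-free or uniform choice). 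You flag this as ``the main obstacle'' but then assert it is ``exactly the sort of output the reconstruction machinery... is designed to give,'' which is where the argument breaks.

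The paper's proof sidesteps this entirely by never indexing anything with natural numbers. Instead it quantifies directly over elements $U, V_1,\ldots,V_{n+1}\in\DD$ and $h\in\GG$ in the language of groups (via Rubin's Expressibility Theorem), and encodes membership of $h$ in a basic compact-open set $\UU_{K,U}$ without ever naming the compact set $K$: since $\dim K\leq\dim M=n$, any open cover of $K$ refines to one by $n+1$ regular open sets $V_1,\ldots,V_{n+1}\in\DD$ with components compactly contained in the components of $W=h^{-1}(U)$, and the condition $h(K)\subseteq U$ becomes the finitely many conditions $h(V_i)\subseteq U$. The local-approximation predicate is then: for all such $h$, $U$, $V_1,\ldots,V_{n+1}$, there exists $f$ with $\on{member}_n(f,\underline g)$ and $f(V_i)\subseteq U$ for each $i$. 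This requires no auxiliary interpreted structure, no enumeration, and no fragmentation lemma; the transitivity-of-density observation in your writeup (and the reverse direction) is then immediate. If you want to repair your route, you would need to either construct the interpretation of $\mathcal R$ from scratch (which I do not see how to do) or replace the $\N$-indexed quantification with direct quantification over $\DD$ --- at which point you have essentially rediscovered the paper's argument.
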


In~\cite{dlNK23}, we proved the existence of a membership predicate as in Theorem~\ref{thm:member} for full homeomorphism groups
of manifolds. This was a consequence of an interpretation of a strengthening of \emph{weak second order logic} or \emph{WSOL}. In WSOL, one can interpret arithmetic and arbitrary
hereditarily finite sets in the underlying structure,
whereas we were able to interpret arbitrary countable
sequences of elements of $\Homeo(M)$;
see~\cite{KMS-2021,dlNK23} for more detail. 
In general, we do not expect a locally approximating subgroup $\GG\leq\Homeo(M)$ to interpret WSOL, since individual homeomorphisms
of a manifold can encode a huge amount of information; even diffeomorphisms of one-manifolds can be extremely complicated: see~\cite{KK2020crit}.
For certain locally approximating subgroups such as Thompson's groups $F$ and $T$, group elements
themselves and the group operation can be encoded in first order arithmetic, and so these groups do actually interpret WSOL.

Theorem~\ref{thm:member} also
has implications for groups arising naturally from model theoretic constructions.
Recall that the classical L\"owenheim--Skolem Theorem asserts that if $\mathcal L$ is a countable language and $\mathcal A$ is an infinite
$\mathcal L$--structure, then there is a countable $\mathcal L$--structure $\mathcal B$
that is \emph{elementarily equivalent} to $\mathcal A$, i.e.~they have the same theories. Moreover,
if $X\subseteq \mathcal A$ is (at most) countable,
then we may take $X\subseteq\mathcal B\subseteq \mathcal A$.
In particular, whenever $G$ is an arbitrary group, there is always an (at most)
countable group $H$ that is elementarily equivalent to $G$. Of course, for a given group $G$ there may
be many groups $H$ which are elementarily equivalent to $G$ but are not subgroups
of $G$ in any natural way, and
which may have cardinality much larger than that of $G$.

\begin{thm}\label{thm:inf-gen}
Let $\GG\leq\Homeo(M)$ be locally approximating, and let $G\equiv \GG$. If $\GG$ is not finitely generated, then $G$ is also not finitely
generated.
\end{thm}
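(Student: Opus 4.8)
The plan is to deduce Theorem~\ref{thm:inf-gen} from Theorem~\ref{thm:member} by a short transfer argument, invoking the elementary equivalence $G\equiv\GG$ in two separate ways. Recall from Theorem~\ref{thm:member} that for each $m\in\N$ there is a parameter-free predicate $\on{member}_m$ in the language of groups that computes membership in $m$-generated subgroups of $\GG$; the ambient dimension $d=\dim M$ is fixed throughout, so for our $\GG$ this is a single first order formula. For each $m$ set
\[
\sigma_m \;:=\; \exists\gamma_1\cdots\exists\gamma_m\;\forall\delta\;\on{member}_m(\delta,\gamma_1,\ldots,\gamma_m),
\]
a sentence in the language of groups. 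By Theorem~\ref{thm:member}, $\GG\models\sigma_m$ holds precisely when there is an $m$-tuple $\underline g$ with $\langle\underline g\rangle=\GG$, i.e.\ exactly when $\GG$ is generated by $m$ elements. Since $\GG$ is not finitely generated, $\GG\not\models\sigma_m$ for every $m$, and hence $G\not\models\sigma_m$ for every $m$.

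For the second ingredient I would record the ``soundness'' half of $\on{member}_m$ as a family of true $\forall\exists$-sentences that survive the passage to $G$. For a word $w=w(x_1,\ldots,x_m)$ in the free group on $m$ generators, the expression $w(\gamma_1,\ldots,\gamma_m)$ is a term in the group language, so
\[
\tau_w \;:=\; \forall\gamma_1\cdots\forall\gamma_m\;\on{member}_m\bigl(w(\gamma_1,\ldots,\gamma_m),\gamma_1,\ldots,\gamma_m\bigr)
\]
is again a sentence. Because $w(\underline g)\in\langle\underline g\rangle$ for every $\underline g\in\GG$, Theorem~\ref{thm:member} gives $\GG\models\tau_w$, and therefore $G\models\tau_w$ for every word $w$. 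Now suppose, toward a contradiction, that $G$ is finitely generated, say $G=\langle a_1,\ldots,a_m\rangle$. Every $h\in G$ can be written as $h=w(a_1,\ldots,a_m)$ for some word $w$ on $m$ letters, so $\tau_w$ (which holds in $G$) yields $G\models\on{member}_m(h,a_1,\ldots,a_m)$. As $h$ was arbitrary, $G\models\forall\delta\;\on{member}_m(\delta,a_1,\ldots,a_m)$, hence $G\models\sigma_m$, contradicting the previous paragraph. Thus $G$ is not finitely generated.

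The step deserving care -- though it is a subtlety rather than a genuine obstacle -- is that $\on{member}_m$ is only guaranteed to compute subgroup membership correctly in \emph{locally approximating} groups of homeomorphisms, not in an arbitrary model of $\Th(\GG)$: in $G$ one may well have $G\models\on{member}_m(h,\underline g)$ for some $h\notin\langle\underline g\rangle$, for instance via ``nonstandard words'' arising from a nonstandard model of the first order arithmetic interpreted in $\GG$ through Theorem~\ref{thm:arith}. This is precisely why one cannot argue naively that ``$G$ finitely generated $\Rightarrow G\models\sigma_m$''. The argument is arranged to use only the \emph{soundness} direction of $\on{member}_m$ -- packaged as the $\forall$-consequences $\tau_w$ of $\Th(\GG)$, which automatically persist in $G$ -- while the non-generation of $\GG$ by $m$ elements enters only as the first order fact $\GG\not\models\sigma_m$; any failure of completeness of $\on{member}_m$ in $G$ is then harmless, since the conclusion $G\not\models\sigma_m$ rests purely on elementary equivalence and the reverse deduction plugs in a tuple that already generates $G$, leaving no element outside the subgroup to worry about.
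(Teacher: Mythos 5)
Your proof is correct and takes essentially the same approach as the paper: the sentences $\tau_w$ you introduce recover precisely the content of the paper's Lemma~\ref{lem:larger-subset} (there packaged as the $\phi_w$ sentences), and the rest of the argument — transferring $\GG\not\models\sigma_m$ to $G$ and then showing a finite generating tuple in $G$ would force $G\models\sigma_m$ — is the contrapositive of the paper's argument. You have also correctly identified and handled the key subtlety, namely that only the soundness half of $\on{member}_m$ survives passage to $G$, which is exactly what the paper relies on.
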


We reemphasize here that we do not require $G$ to be an elementary subgroup of $\GG$. We recall furthermore
that finite generation is not an elementary property of groups;
indeed, for $n\geq 2$, the standard inclusion of the free group $F_n$ on
$n$ generators into $F_{n+1}$ is an elementary embedding, which implies that a free group on countably many generators is elementarily
equivalent to $F_2$~\cite{Sela10,MR2238945,MR2293770}.

The following consequence of Theorem~\ref{thm:inf-gen} is immediate.

\begin{cor}\label{cor:ee-ig}
Suppose that $G$ is elementarily equivalent to one of:
\begin{enumerate}
    \item A subgroup \[\Homeo_0(M)\leq\GG\leq\Homeo(M),\] or
    a subgroup \[\Homeo_{0,c}(M\setminus\partial M)\leq\GG\leq\Homeo_c(M
    \setminus\partial M),\] when $M$ has nonempty boundary.
    \item 
    A subgroup \[\Diff^r_{0}(M)\leq\GG\leq\Diff^r(M)\] for $1\leq r\leq\infty$,
     provided $M$ is smooth, or
    a subgroup \[\Diff^r_{0,c}(M\setminus\partial M)\leq\GG\leq\Diff^r_c(M
    \setminus\partial M)\] for $1\leq r\leq\infty$
    when $M$ has nonempty boundary.
\end{enumerate}
Then $G$ is not finitely generated.
\end{cor}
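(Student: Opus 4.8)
The plan is to deduce the corollary from Theorem~\ref{thm:inf-gen}, which applies and yields the conclusion for every $G\equiv\GG$ as soon as one checks, for each group $\GG$ in the list, that $\GG$ is locally approximating and is not finitely generated.

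The failure of finite generation comes for free from cardinality. Each $\GG$ in the list contains the rigid stabilizer $\Homeo(M)[B]$ of a collared ball $B\subseteq M$ in the topological cases, and in the smooth cases it contains every $C^r$ diffeomorphism supported in $B$ that is isotopic to the identity through such diffeomorphisms; in either case this subgroup already has cardinality $2^{\aleph_0}$, since it contains the pairwise distinct time-$t$ maps, for $t$ ranging over a small interval, of the flow of a fixed nonzero compactly supported vector field inside $B$. A finitely generated group is countable, so no $\GG$ in the list is finitely generated, and hence neither is any $G\equiv\GG$.

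For local approximation, by the remark following the definition it suffices to show that $\GG[U]$ is dense in $\Homeo(M)[U]$ in the compact-open topology whenever $\overline U$ is a collared ball of $M$ (a collared half-ball when $U$ meets $\partial M$). The key tool is the Alexander trick: identifying such a $U$ with the open unit (half-)ball of the ambient chart, any $f\in\Homeo(M)[U]$ is joined to the identity inside $\Homeo(M)[U]$ by the path $f_t(x)=t\,f(x/t)$ for $\abs{x}\le t$ and $f_t(x)=x$ for $\abs{x}\ge t$; hence $\Homeo(M)[U]\subseteq\Homeo_0(M)$, and $\Homeo(M)[U]\subseteq\Homeo_{0,c}(M\setminus\partial M)$ when $\overline U$ is interior. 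I would then reduce to the smallest group in each family --- $\Homeo_0(M)$, $\Homeo_{0,c}(M\setminus\partial M)$, $\Diff^r_0(M)$, $\Diff^r_{0,c}(M\setminus\partial M)$ --- since $\GG[U]\supseteq\HH[U]$ whenever $\HH\le\GG$, so density is inherited by overgroups; for these smallest groups the side conditions in the definition hold automatically, the two compactly supported groups satisfying the compactly supported alternative of clause~(2) and $\Homeo_0(M)$ satisfying the general alternative because $\Homeo_0(M)[U]=\Homeo(M)[U]$ for all relevant $U$ by the Alexander trick. For the two topological groups the asserted density is now immediate, their rigid stabilizers of $U$ being equal to $\Homeo(M)[U]$.

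The smooth case is where I expect the only real obstacle. Here I would show that $\Diff^r_0(M)[U]$ is $C^0$-dense in $\Homeo(M)[U]$ for $U$ as above. Given $f\in\Homeo(M)[U]$ and $\varepsilon>0$, first apply the Alexander rescaling $f_\lambda(x)=\lambda\,f(x/\lambda)$ with $\lambda<1$ close to $1$: then $f_\lambda$ is supported in $\lambda\overline U\Subset U$ with $\|f_\lambda-f\|_{C^0}<\varepsilon/3$. Next, by classical smoothing theory --- a homeomorphism of $\R^n$, or of a half-space near its boundary, is a $C^0$-limit of $C^r$ diffeomorphisms whose supports can be confined to any prescribed neighbourhood of the original support --- choose a $C^r$ diffeomorphism $F$ supported in $U$ with $\|F-f_\lambda\|_{C^0}<\varepsilon/3$. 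The diffeomorphism $F$ need not lie in the identity component of the group of compactly supported $C^r$ diffeomorphisms; to repair this I would use that this component group is a group under composition, and that for any compactly supported $\psi$ the Alexander rescaling $s\mapsto\bigl(x\mapsto s\,\psi(x/s)\bigr)$, $s\in(0,1]$, is a path staying in the component of $\psi$, so every isotopy class has a representative supported in an arbitrarily small ball inside $U$ and arbitrarily $C^0$-close to the identity. Choosing such a $\psi$ in the class inverse to that of $F$ with $\operatorname{Lip}(F)\cdot\|\psi-\operatorname{id}\|_{C^0}<\varepsilon/3$, the composite $g=F\psi$ lies in the identity component, is supported in $U$, hence $g\in\Diff^r_0(M)[U]$, and $\|g-f\|_{C^0}<\varepsilon$. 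The compactly supported smooth family is handled verbatim with $\overline U$ interior. With this, every $\GG$ in the list is locally approximating, and the corollary follows from Theorem~\ref{thm:inf-gen}. The essential inputs are the $C^0$-density of $C^r$ diffeomorphisms among homeomorphisms and the $\pi_0$-correction above; everything else is formal.
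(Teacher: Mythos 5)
Your route---reduce to Theorem~\ref{thm:inf-gen} by checking that each $\GG$ on the list is locally approximating and not finitely generated---is exactly what the paper intends; the paper itself offers only the one-line remark that the corollary is ``immediate,'' taking local approximation for granted from the introduction, so you are supplying the verification it elides. The cardinality argument and the Alexander-trick identification $\Homeo(M)[U]=\Homeo_0(M)[U]$ (resp.\ $=\Homeo_{0,c}(M\setminus\partial M)[U]$) for $\overline U$ a collared (half-)ball are both clean, and the reduction to the minimal groups in each family is correct. The one soft spot is the appeal to ``classical smoothing theory'' in the $C^r$ case: the fact that a compactly supported homeomorphism of $\R^n$ (or of a half-space, rel the flat boundary face) is a $C^0$-limit of compactly supported $C^r$ diffeomorphisms is a genuinely deep theorem, not elementary folklore---it is Moise--Munkres for $n\le 3$, Connell together with Kirby--Siebenmann for $n\ge 5$, and in the critical dimension $n=4$ it rests on Quinn's handle-smoothing results---so this should be cited precisely rather than waved at. The subsequent $\pi_0$-correction by precomposing with a $C^0$-small, small-support representative of the inverse class in $\pi_0\Diff_c(\R^n)$ is correct and is indeed needed once that group is nontrivial. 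Modulo making that citation explicit, the argument is sound.
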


As we have written above already, the $c$ subscript denotes compactly supported
homeomorphisms or diffeomorphisms.
Note that one
can obtain an analogue to Corollary~\ref{cor:ee-ig} for the full group of piecewise linear homeomorphisms of a piecewise linear manifold.

Recall that Thompson's group $T$ and $F$ are finitely generated locally approximating subgroups of $\Homeo(S^1)$ and $\Homeo(I)$, respectively:

\begin{cor}\label{cor:Thompson-ee}
The group $T$ is not elementarily equivalent to $\Homeo_0(S^1)$, and the group $F$ is not elementarily equivalent to $\Homeo_0(I)$. The same
conclusion holds if $\Homeo_0$ is replaced by the group of orientation preserving diffeomorphisms of any regularity $1\leq r\leq\infty$.
\end{cor}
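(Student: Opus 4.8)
The plan is to deduce the statement directly from Theorem~\ref{thm:inf-gen}. First I would observe that $\Homeo_0(S^1)$ and $\Homeo_0(I)$ are locally approximating subgroups of $\Homeo(S^1)$ and $\Homeo(I)$, respectively. For the circle this is immediate: any homeomorphism of $S^1$ supported in an arc $U$ whose closure is a proper sub-arc is automatically isotopic to the identity through homeomorphisms supported in $U$, so that $\Homeo_0(S^1)[U]=\Homeo(S^1)[U]$ and density holds trivially. For the interval, $\Homeo_0(I)$ is exactly the group of orientation-preserving homeomorphisms, and for every open $U\subseteq I$ whose closure lies in a single chart, a homeomorphism supported in $U$ fixes $\partial I$ and is again isotopic to the identity through homeomorphisms supported in $U$; hence $\Homeo_0(I)$ satisfies condition~(2b) and is locally approximating. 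The same reasoning shows that the groups $\Diff^r_+(S^1)$ and $\Diff^r_+(I)$ of orientation-preserving diffeomorphisms are locally approximating for every $1\leq r\leq\infty$, since diffeomorphisms supported in a chart lie in and are approximated within the identity component; these groups are among the basic examples of locally approximating groups noted in the introduction.

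Next I would record the elementary observation that none of $\Homeo_0(S^1)$, $\Homeo_0(I)$, $\Diff^r_+(S^1)$, or $\Diff^r_+(I)$ is finitely generated, simply because each of these groups is uncountable while a finitely generated group is countable.

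Now suppose for contradiction that $T\equiv\Homeo_0(S^1)$. Applying Theorem~\ref{thm:inf-gen} with $\GG=\Homeo_0(S^1)$, which is locally approximating and not finitely generated, and with $G=T$, we conclude that $T$ is not finitely generated. This contradicts the fact that $T$ is finitely generated, so $T\not\equiv\Homeo_0(S^1)$. The identical argument with $\GG=\Homeo_0(I)$ and $G=F$ gives $F\not\equiv\Homeo_0(I)$, using that $F$ is finitely generated. Replacing $\Homeo_0(S^1)$ by $\Diff^r_+(S^1)$ and $\Homeo_0(I)$ by $\Diff^r_+(I)$ and invoking Theorem~\ref{thm:inf-gen} for these locally approximating groups yields the diffeomorphism statements in exactly the same manner.

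I do not anticipate any genuine obstacle here. The only points that require a small verification are that the ambient target groups are locally approximating in the precise sense of the definition (in particular that the boundary condition~(2) holds for $I$) and that they fail to be finitely generated; both are routine. All of the substance is contained in Theorem~\ref{thm:inf-gen}.
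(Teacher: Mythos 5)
Your proof is correct and follows the same route as the paper: Corollary~\ref{cor:Thompson-ee} is a special case of Corollary~\ref{cor:ee-ig}, which the paper presents as an immediate consequence of Theorem~\ref{thm:inf-gen}, using exactly the two observations you make (the target groups are locally approximating and uncountable, hence not finitely generated, while $T$ and $F$ are finitely generated). Your extra verification that $\Homeo_0(S^1)$, $\Homeo_0(I)$, and $\Diff^r_+$ satisfy the locally approximating definition, including the boundary case for $I$, is a reasonable unpacking of details the paper leaves implicit.
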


Let $\mathcal T$ be a theory in a language $\mathcal L$, and let $\mathcal M$ be a model of $\mathcal T$. We see that $\mathcal M$ is a \emph{prime}
model of $\mathcal T$ is $\mathcal M$ elementarily embeds in every model of $\mathcal T$.
For groups (and in general for structures over a countable language), $G$ is
a prime model of its theory if and only if $G$ is countable and the type of every
tuple in $G$ is isolated.

It is a theorem of Lasserre that Thompson's groups $F$ and $T$ are prime models of their respective theories.

\begin{thm}\label{thm:prime-intro}
Let $\GG\leq\Homeo(M)$ be an interpretably presented locally approximating group of homeomorphisms.
Then $\GG$ is a prime model of its theory.
\end{thm}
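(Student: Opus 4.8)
The plan is to establish primality by verifying the standard criterion: $\GG$ is countable and every type realized in $\GG$ is isolated. Countability is immediate, since an interpretably presented group is in particular countably generated (indeed the term "interpretably presented" should supply a surjection from an arithmetically-definable set of words onto $\GG$, hence $|\GG|\leq\aleph_0$). So the entire content lies in the isolation of types. First I would recall the precise meaning of \emph{interpretably presented}: there is a parameter-free interpretation of $\N$ in $\GG$ (supplied by Theorem~\ref{thm:arith}), a finite tuple $\bar g=(g_1,\dots,g_n)$ of elements of $\GG$, and a formula which, relative to this interpreted copy of arithmetic, identifies exactly the set of words $w$ in the free group $F_n$ such that $w(\bar g)=1$; equivalently the kernel of $F_n\twoheadrightarrow\GG$ is an arithmetically-definable (in the interpreted $\N$) subset. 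The key external inputs are Theorem~\ref{thm:arith} (parameter-free interpretation of first order arithmetic), Theorem~\ref{thm:define-intro} (the evaluation map $(w,\bar g)\mapsto w(\bar g)$ is parameter-free definable), and Theorem~\ref{thm:member} (membership in finitely generated subgroups is uniformly definable).

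**The main argument.**

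The heart of the matter is to produce, for an arbitrary tuple $\bar a=(a_1,\dots,a_k)$ of elements of $\GG$, a single formula $\varphi(\bar x)$ isolating $\tp(\bar a)$. I would proceed as follows. Since $\GG=\langle\bar g\rangle$, each $a_i$ equals $w_i(\bar g)$ for some word $w_i\in F_n$; fix such words, and let $m_i\in\N$ be the code of $w_i$. Now observe that the tuple $\bar g$ itself is not yet pinned down by a first-order formula, so the trick is to quantify over all tuples of the right "shape." Consider the formula $\varphi(\bar x)$ that asserts: there exists a tuple $\bar y=(y_1,\dots,y_n)$ such that (a) $\on{loc-approx}_n(\bar y)$ holds (using Corollary~\ref{cor:loc-approx-subgroup}), (b) for every word $w\in F_n$, coded by a natural number in the interpreted $\N$, we have $w(\bar y)=1$ if and only if $w$ lies in the defining kernel — here I use the evaluation predicate of Theorem~\ref{thm:define-intro} on the left and the arithmetically-definable description of the kernel on the right — and (c) $x_i=w_i(\bar y)$ for the fixed codes $m_1,\dots,m_k$. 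Condition (b) forces the map $g_j\mapsto y_j$ to extend to an isomorphism $\GG\to\langle\bar y\rangle$; condition (a) together with (b) and the fact that $\langle\bar y\rangle\leq\GG$ makes this isomorphism internal to $\GG$. Then (c) transports the tuple $\bar a$ to the corresponding tuple in $\langle\bar y\rangle$. Since any model of $\Th(\GG)$ satisfies "$\exists\bar y$ (a)$\wedge$(b)," a back-and-forth / homogeneity argument shows $\varphi$ is consistent with $\Th(\GG)$ and that any realization of $\varphi$ in $\GG$ is an automorphic image of $\bar a$; hence $\varphi$ isolates $\tp(\bar a)$, which is what we need.

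**The delicate points.**

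There are two places I expect friction. The first is making precise that condition (b), being an \emph{infinite} conjunction over all words, can be expressed by a \emph{single} first-order formula: this is exactly where the interpreted arithmetic of Theorem~\ref{thm:arith} earns its keep, since one quantifies over codes $c\in\N$, decodes $c$ into a word via a definable pairing, evaluates using Theorem~\ref{thm:define-intro}, and compares with the arithmetically-defined kernel — all inside $\GG$. One must check the uniformity claims in Theorems~\ref{thm:member} and~\ref{thm:define-intro} are strong enough that the whole scheme collapses to one formula with the $m_i$ as the only arithmetic parameters, and those are themselves $\emptyset$-definable as specific elements of the interpreted $\N$. The second, subtler point is verifying that a realization $\bar y$ of (a)$\wedge$(b) inside $\GG$ actually generates $\GG$ and not merely a proper isomorphic copy: condition (b) gives an isomorphism $\GG\cong\langle\bar y\rangle$, and one needs that this forces $\langle\bar y\rangle=\GG$. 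This is where "interpretably presented" must be exploited more fully — the defining data should encode not just relations but enough about the interpreted $\N$ that the internal isomorphism is surjective — or, alternatively, one weakens the target to "$\bar a$ and its image have the same type" which already suffices for isolation and sidesteps surjectivity entirely. I would adopt the latter route, as it keeps the argument clean: for isolation we only need that every realization of $\varphi$ in $\GG$ has the same type as $\bar a$, and the isomorphism $\GG\to\langle\bar y\rangle\leq\GG$ composed with the inclusion does not a priori help unless it is elementary — so in fact the cleanest path is to argue that $\varphi(\bar a)$ holds (taking $\bar y=\bar g$) and that $\varphi$ is \emph{complete} over $\Th(\GG)$ by a direct syntactic comparison, namely that any two realizations of $\varphi$ generate isomorphic groups via a definable isomorphism matching the distinguished tuples, whence they satisfy the same formulas. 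Pinning down this last step rigorously is the main obstacle, and I anticipate it requires the full strength of the hypothesis together with a careful unwinding of the interpretation of arithmetic.
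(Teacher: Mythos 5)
Your overall strategy (countability plus isolation of types) is legitimate and is the textbook dual of what the paper does --- the paper instead builds the elementary embedding directly and verifies it via the Tarski--Vaught test, which cleanly avoids having to establish type-completeness by a separate argument. The two routes are morally equivalent and rest on the same ingredients. However, there is a concrete gap in the formula you construct.

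Your conditions (a) and (b) do not pin the tuple $\bar y$ down to the automorphism orbit of $\bar g$. Condition (b) forces $\langle \bar y\rangle$ to be an isomorphic image of $\GG$ via $g_i \mapsto y_i$, and condition (a) forces $\langle\bar y\rangle$ to be locally approximating, but neither forces $\langle\bar y\rangle = \GG$. (Local approximation and generation are independent conditions; a locally approximating proper subgroup abstractly isomorphic to $\GG$ is not ruled out.) If $\bar y$ generates a proper isomorphic copy, then $g_i\mapsto y_i$ is only a self-embedding of $\GG$, not an automorphism, and a self-embedding has no obligation to preserve types. So ``any two realizations generate isomorphic groups via a definable isomorphism, whence they satisfy the same formulas'' is precisely the non-sequitur: isomorphism of subgroups does not give equality of types of tuples inside the ambient $\GG$. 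Your proposed workaround of weakening the target to ``$\bar a$ and its image have the same type'' does not sidestep this; it \emph{is} this, and it fails for the same reason.

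The missing clause is the generation condition, which is what the paper includes in its $\chi$: namely $(\forall\delta)(\exists\alpha)\,P(\alpha,\bar y,\delta)$, asserting (via the parameter-free evaluation predicate of Theorem~\ref{thm:define-intro}) that every element of the group is $w(\bar y)$ for some word $w$ coded in the interpreted arithmetic. Crucially, inside $\GG$ itself the interpreted $\N$ is the standard model, so this clause really does force $\langle\bar y\rangle=\GG$ (it says the standard words in $\bar y$ exhaust the group). Combined with your condition (b), which forces $\bar y$ and $\bar g$ to satisfy exactly the same relations, the map $g_i\mapsto y_i$ is then an \emph{automorphism} of $\GG$, which is what you need to transport types. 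With that clause added, your condition (a) is superfluous and can be dropped, and the isolation argument closes. The paper's version of this observation is the remark, just before Theorem~\ref{thm:prime}, that any two tuples satisfying $\chi$ lie in the same automorphism orbit; you need exactly that fact, and your formula as stated does not supply it.
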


Here, a group $\GG$ is \emph{interpretably presented} if, roughly, it admits a presentation
which is parameter-free interpretable in the first order theory of $\GG$;
a group which is finitely
presented or recursively presented is interpretably presented.
See Section~\ref{sec:qfa}
for a precise definition.
Theorem~\ref{thm:prime-intro} in particular
recovers Lasserre's result on the primality of $F$ and $T$.

We say that a group $H$ is \emph{uniformly simple}
if for any pair $g_1$ and $g_2$ of nontrivial elements of $H$, we have
$g_1$ can be expressed as a product of a uniformly bounded number of conjugates of $g_2$. We say that $[H,H]$ has \emph{bounded commutator
width} if every element of $[H,H]$ can be written as a product of a bounded number of commutators. We have the following:
\begin{cor}\label{cor:qfa}
Let $\GG\leq\Homeo(M)$ be an interpretably presented locally approximating group of homeomorphisms, and suppose that $H$ is a finitely presented group that is
elementarily equivalent to $\GG$. Then $\GG$ is a retract of $H$.

Moreover, suppose that either:
\begin{enumerate}
\item
$\GG$ is uniformly simple, or
\item
$\GG$ is nonabelian, has trivial center, and has a uniformly simple commutator subgroup of bounded commutator width.
\end{enumerate}
Then $H$ is isomorphic to $\GG$.
\end{cor}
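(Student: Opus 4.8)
The plan is to deduce the corollary from Theorem~\ref{thm:prime-intro} (primality of $\GG$), a little model-theoretic transfer, and a short normal-subgroup argument; the interpretation of arithmetic and the membership predicate (Theorems~\ref{thm:member}--\ref{thm:define-intro}) are used only inside the proof of Theorem~\ref{thm:prime-intro} and will be treated as a black box here. First, I would dispose of a vacuous case: if $\GG$ is not finitely generated then, by Theorem~\ref{thm:inf-gen}, no group elementarily equivalent to $\GG$ is finitely generated, hence none is finitely presented, and there is nothing to prove; so assume $\GG$ is finitely generated. Since $\GG$ is interpretably presented, Theorem~\ref{thm:prime-intro} gives that $\GG$ is a prime model of its theory; as $H\models\Th(\GG)$, the group $\GG$ embeds elementarily into $H$, and I would replace $\GG$ by this elementary copy, writing $\GG_0\preceq H$ and fixing a finite generating tuple $\bar g=(g_1,\dots,g_n)$ of $\GG_0$.

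\textbf{The retraction.} Fix a finite presentation $H=\langle y_1,\dots,y_m\mid w_1,\dots,w_\ell\rangle$; since the $y_j$ generate $H\supseteq\GG_0$, write each $g_i$ as a word $g_i=u_i(y_1,\dots,y_m)$. I would then consider the first-order formula with parameters $\bar g$,
\[
\Phi(z_1,\dots,z_m;\bar g)\ :=\ \bigwedge_{j=1}^{\ell}\big(w_j(\bar z)=1\big)\ \wedge\ \bigwedge_{i=1}^{n}\big(u_i(\bar z)=g_i\big),
\]
which $H$ satisfies with $\bar z=\bar y$. As $\GG_0\preceq H$ and $\bar g$ is a tuple from $\GG_0$, there is a witness $\bar b=(b_1,\dots,b_m)\in\GG_0^{\,m}$ with $\GG_0\models\Phi(\bar b;\bar g)$. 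Because $w_j(\bar b)=1$ for every $j$, the assignment $y_j\mapsto b_j$ induces a homomorphism $\rho\colon H\to\GG_0$, and because $u_i(\bar b)=g_i$ this $\rho$ fixes each $g_i$, hence is the identity on $\GG_0$. Thus $\rho$ exhibits $\GG_0\cong\GG$ as a retract of $H$, which is the first assertion. I expect the only delicate point to be this packaging step: encoding both ``$\bar z$ kills the relators of $H$'' and ``$\bar z$ sends a generating set of $\GG_0$ to itself'' by a single parameter-definable condition, which is possible precisely because $H$ is finitely presented.

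\textbf{First-order transfer.} Set $N=\ker\rho\trianglelefteq H$, so $H=\GG_0\ltimes N$ and $N\cap\GG_0=1$; the goal is to show $N=1$ under the extra hypotheses. Each relevant property of $\GG$ is first-order, hence holds in $H$: uniform simplicity with a fixed constant $C$ is the sentence $\forall g_1\forall g_2\,[\,g_1\neq1\wedge g_2\neq1\rightarrow\exists t_1\cdots t_C\ g_1=\prod_i t_ig_2t_i^{-1}\,]$ (padding by trivial conjugators if one wants ``at most $C$''); trivial center is $\forall x\,[\,x\neq1\rightarrow\exists y\ xy\neq yx\,]$; and ``$[\GG,\GG]$ has commutator width at most $k$'' is expressed by asserting that the set of products of at most $2k$ commutators coincides with the set of products of at most $k$ commutators, which then is a definable subgroup equal to $[\GG,\GG]$, relative to which uniform simplicity of $[\GG,\GG]$ is again a sentence. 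I would also use the elementary observation that any group satisfying uniform simplicity (with any bound) is simple: a nontrivial normal subgroup contains a nontrivial element, all of whose conjugates --- and therefore every group element --- lie in it.

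\textbf{Conclusion of the Moreover.} In case (1), $H$ is uniformly simple, hence simple; since $N\trianglelefteq H$ and $\rho(H)=\GG_0\neq1$, we cannot have $N=H$, so $N=1$ and $\rho$ is an isomorphism $H\cong\GG$. In case (2), $\GG$ is nonabelian, so $[H,H]$ is nontrivial, and it is simple by the previous paragraph; the subgroup $N\cap[H,H]$ is normal in $[H,H]$, so it is either all of $[H,H]$ or trivial. The former would force $[H,H]\le N$, making $H/N\cong\GG_0\cong\GG$ abelian, contrary to hypothesis, so $N\cap[H,H]=1$. Then, since $N$ is normal, $[N,H]\le N\cap[H,H]=1$, whence $N\le Z(H)=1$, and again $\rho$ is an isomorphism $H\cong\GG$. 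The one nonobvious ingredient throughout is Theorem~\ref{thm:prime-intro}; everything else is the transfer of the finite presentation of $H$ into $\GG_0$ by elementarity and a routine analysis of the kernel of the retraction.
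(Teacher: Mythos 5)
Your proposal is correct and takes essentially the same route as the paper: the retraction is obtained by transporting the finite presentation of $H$ into the elementary copy of $\GG$ furnished by Theorem~\ref{thm:prime-intro}, and the isomorphism follows from the first-order transfer of uniform simplicity, trivial center, and bounded commutator width. A minor difference worth noting is that your formula $\Phi$ explicitly carries the condition $u_i(\bar z)=g_i$ forcing the induced map $H\to\GG_0$ to be the identity on $\GG_0$, which gives the splitting cleanly; the paper instead augments a generating set of $H$ by generators of the elementary copy and relies on $\chi$, so that in principle the surjection composed with $\psi$ is only an automorphism of $\GG$ (which of course still suffices for $\GG$ to be a retract). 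For the ``Moreover'' the paper only gestures at the kernel analysis; your explicit argument (in case (2): $N\cap[H,H]=1$ since $[H,H]$ is simple and $\GG$ is nonabelian, so $[N,H]=1$, so $N\leq Z(H)=1$) fills in the omitted details correctly. One small slip: ``padding by trivial conjugators'' does not literally express ``at most $C$ conjugates of a nontrivial $g_2$'' since a conjugate of $g_2\neq 1$ is never trivial; one should instead take a finite disjunction over $c\leq C$, but this is cosmetic and does not affect first-order expressibility.
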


In the case of Thompson's groups $F$ and $T$, Corollary~\ref{cor:qfa} partially recovers a result of Lasserre, which asserts that $F$ and $T$
are \emph{quasi-finitely axiomatizable} or~\emph{QFA}, i.e.~that any finitely generated group elementary equivalent to one of them is actually
isomorphic. See~\cite{Nies-describing}.

General homeomorphism groups of manifolds cannot be QFA because they
may fail to be finitely generated. However, they are QFA in a relative sense,
in that two elementarily equivalent homeomorphism groups of manifolds have homeomorphic underlying manifolds; see~\cite{dlNKK22}.

Finally, we turn to action rigidity of locally approximating groups. Ideally,
we would like to be able to show that if $\GG\leq\Homeo(M)$ is a locally approximating
group and $G\equiv\GG$ then if $G$ can be realized as a locally approximating group of
homeomorphisms of a manifold $N$ then $M\cong N$. This would be a generalization of
results of~\cite{Whittaker,ChenMann,Rubin1989,Rubin1996}, which seek to characterize
when a full homeomorphism group of a manifold can act by homeomorphisms on another
manifold (in a sufficiently transitive way), as well as of the first order rigidity
of full homeomorphism groups in~\cite{dlNKK22}.

Full action rigidity for locally approximating groups appears out of reach at the
time of this writing, since it is adjacent to very difficult questions
in manifold topology.
However, we obtain some action rigidity results, which partially recover the
main result of~\cite{dlNKK22} in the case of locally approximating groups of homeomorphisms, and which apply in all dimensions:

\begin{thm}\label{thm:action-rigid}
    Let $\GG\leq\Homeo(M)$ be a locally approximating group of homeomorphisms.
    \begin{enumerate}
        \item The dimension of $M$ is uniquely determined by the elementary
        equivalence class of $\GG$. That is, if $G\equiv\GG$ and 
        if $G\leq\Homeo(N)$ is a realization of $G$ as a locally approximating group
        of homeomorphisms, then $\dim M=\dim N$.
        \item If $M$ is arbitrary of dimension at most two and $\GG\leq\Homeo(M)$ is locally approximating,
        then $M$ is uniquely determined by the elementary equivalence class of $\GG$.
        That is, if $G\equiv\GG$ and if $G\leq\Homeo(N)$
        is a realization of $G$ as a locally approximating group
        of homeomorphisms then $M\cong N$.
        \item If $M$ is closed and orientable of dimension $3$ and
        $\GG\leq\Homeo(M)$ is locally approximating,
        then $M$ is uniquely determined by the elementary equivalence class of $\GG$.
        That is, if $G\equiv\GG$ and if $G\leq\Homeo(N)$
        is a realization of $G$ as a locally approximating group
        of homeomorphisms then $M\cong N$.
        \item If $M$ is closed and smooth of arbitrary dimension
        and $\GG\leq\Homeo(M)$ is locally approximating, then the homotopy type of $M$
        is uniquely determined by the elementary equivalence class of $\GG$.
        That is, if $G\equiv\GG$ and if $G\leq\Homeo(N)$
        is a realization of $G$ as a locally approximating group
        of homeomorphisms then $M$ is homotopy equivalent to $N$.
    \end{enumerate}
\end{thm}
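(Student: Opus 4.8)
The plan is to leverage the two main interpretability results, Theorem~\ref{thm:arith} and Theorem~\ref{thm:member}, to extract topological invariants of $M$ from the first order theory of $\GG$ alone. The key observation is that, by Theorem~\ref{thm:member}, every finitely generated subgroup $\langle \underline g\rangle$ of $\GG$ is uniformly definable with parameters, and by Corollary~\ref{cor:loc-approx-subgroup} the property of a finitely generated tuple generating a locally approximating subgroup is first order expressible. Hence one can quantify, in the theory of $\GG$, over finitely generated locally approximating subgroups $\GG_0\leq\GG$ and their rigid stabilizers of ``definable balls''. The first step is therefore to produce a definable family of subgroups playing the role of rigid stabilizers $\GG[B]$ of collared balls $B\subseteq M$: using the interpreted arithmetic we encode a coherent system of such subgroups and their containment/disjointness/covering relations, recovering (a definable version of) the germ-structure that appears in Rubin-type reconstruction theorems. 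For part (1), the dimension, I would then isolate a first order sentence whose truth depends only on $\dim M$: the natural candidate is a statement about the maximal length of chains, or about local commutation patterns, of these rigid-stabilizer-type subgroups, exploiting that $\Homeo_c(\bR^n)$ and $\Homeo_c(\bR^m)$ have genuinely different algebraic local structure for $n\neq m$ (for instance via the failure of certain ``fragmentation into $k$ pieces'' identities, or via the homology of the poset of balls, which detects $n$). Since the interpretation is uniform in $M$ by Theorem~\ref{thm:arith}, such a sentence transfers across elementary equivalence, giving $\dim M=\dim N$.

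For parts (2)--(4), the strategy is to reconstruct more of $M$ from the definable poset of rigid stabilizers of balls. Here I would adapt Rubin's reconstruction theorem: the Boolean-algebra-like structure of supports, together with the interpreted arithmetic allowing us to talk about countable approximations, should recover the topology of $M$ up to homeomorphism as an abstract object definable in the theory. The point is that $\GG$ being locally approximating means $\GG[B]$ is dense in $\Homeo(M)[B]$, so the ``regular open algebra'' of $M$ is faithfully seen by $\GG$, exactly as in Rubin's hypotheses; from this one recovers $M\setminus\partial M$ as a topological space, and then $M$ itself, purely from the interpreted data. Thus $G\equiv\GG$ acting locally approximatingly on $N$ forces $N$ to carry the same interpreted regular open algebra, hence $M\cong N$ \emph{in low dimensions} where a manifold is determined by weaker data. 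For dimension $\leq 2$ this is classical (surfaces are classified by genus, orientability, boundary, punctures, all of which are visible in the topology of $M$ and hence interpretable). For closed orientable $3$-manifolds, Moise's theorem and geometrization make ``homeomorphic'' equivalent to ``homotopy equivalent plus orientation data'' for most pieces, and in any case the full homeomorphism type of a closed $3$-manifold is recoverable from $M$ as a space, which we have reconstructed. For part (4) in arbitrary dimension, reconstructing $M$ as a topological space already determines its homotopy type, so this is the weakest and most robust conclusion; the smoothness hypothesis is presumably used only to ensure $\GG$ (e.g.\ a diffeomorphism group) is genuinely locally approximating and that the reconstructed space is a manifold.

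The main obstacle is the reconstruction step itself: showing that the \emph{first order theory} of $\GG$ --- not $\GG$ as a concrete permutation group, where Rubin's theorem applies directly --- suffices to pin down $M$ as a space. The difficulty is that Rubin's theorem uses the actual action on $M$, whereas here we only have an abstract group elementarily equivalent to $\GG$; we must therefore replace Rubin's set-theoretic reconstruction by a \emph{definable} one, carried out inside the interpreted arithmetic, so that it is preserved under $\equiv$. Concretely, the hard part is verifying that the poset of definable rigid-stabilizer subgroups, with the first order relations we can express among them, contains enough information to run Rubin's argument: one must check that ``point'' (an ultrafilter of regular open sets), ``convergence'', and ``dimension'' are all recoverable by first order means from $\GG$, uniformly. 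I expect this to be where the bulk of the work lies, and where the dimension restrictions in (2) and (3) come from --- in high dimensions one cannot upgrade ``same space'' to ``homeomorphic manifold'' without resolving open problems in geometric topology, which is why (4) settles only for the homotopy type. I would structure the proof by first proving (1) via an explicit dimension-detecting sentence, then proving (4) via the general reconstruction-as-a-space, and finally specializing to (2) and (3) using the low-dimensional classification theorems.
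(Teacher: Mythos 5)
Your central idea for parts (2)--(4) --- running a ``definable'' version of Rubin's ultrafilter reconstruction to recover $M$ as a topological space and then applying classification results --- is exactly what the paper says cannot be done, and this is a genuine gap. Rubin's reconstruction recovers points of $M$ as certain ultrafilters on $\ro(M)$; but a locally approximating $\GG$ may be a countable group acting on an uncountable $M$, so there is no hope of interpreting points (or the space $M$) in the first-order theory of $\GG$. The paper remarks on precisely this: Proposition~\ref{prop:rubin} requires an actual isomorphism between the groups and ``relies in an essential way on second order constructions,'' and Section~\ref{sec:rigid} explicitly notes ``we cannot directly interpret points on $M$.'' The paper's strategy is to sidestep reconstruction entirely and instead express \emph{specific finite or arithmetically codable topological invariants} by first-order sentences. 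For (4), the invariant is a finite triangulation $\TT$ of $M$: one interprets, via arithmetic, ascending chains of finite tuples of elements of $\DD$ whose intersection pattern is combinatorially subordinate to $\TT$, and Leray's Nerve Theorem then forces any $N$ carrying such a configuration to be homotopy equivalent to $|\TT|\simeq M$. The smoothness hypothesis in (4) is there to guarantee triangulability (a genuine issue for high-dimensional topological manifolds), not to ensure local approximation as you suggest. For (2) the paper works with concrete expressible invariants of surfaces (number of boundary components via covers by $d(n)$ elements of $\DD$, ``annular pairs'' detecting essential simple closed curves, a M\"obius criterion for one-sidedness), and for (3) it encodes a Heegaard diagram for $M$ in a finite portion of the curve graph, using Aramayona--Leininger finite rigidity so that the type of the diagram is isolated by a quantifier-free formula.

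For part (1), your proposal is also vaguer and different from the paper's: you gesture at chain lengths of rigid stabilizers, fragmentation identities, or homology of the poset of balls. The paper instead directly expresses Lebesgue covering dimension: it defines when a collection of elements of $\DD$ ``very densely covers'' a compactly contained $W\in\DD$, and writes a sentence $\dim_{\leq n}$ asserting that every such cover by $2^{n+1}$ sets admits a refinement by $n+1$ sets with a good extension. Correctness of this sentence is proved via Lebesgue's Covering Theorem, \v{C}ech monotonicity, and Ostrand's theorem (Lemma~\ref{lem:dim-facts}), together with density of $\DD$ in $\ro(M)$. Your suggested invariants might conceivably be made to work, but as stated they are not developed to the point of being checkable, and the paper's covering-dimension route is both concrete and matches the Lebesgue dimension machinery exactly.
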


In the proof of Theorem~\ref{thm:action-rigid}, the reader will note that one
does not need the full theory of $\GG$, or even an infinite fragment to determine
the dimension, homeomorphism type, or homotopy type of $M$; for each $M$,
there is a single sentence which, when true in a locally approximating group
$G\leq\Homeo(N)$, forces $N$ to share corresponding properties with $M$.

In the higher dimensional case of Theorem~\ref{thm:action-rigid}, one can
sometimes promote homotopy equivalence to homeomorphism. That general aspherical homotopy
equivalent closed manifolds are homeomorphic is the famous
Borel Conjecture~\cite{Farrell-Borel},
which many authors suspect to be false. The Borel Conjecture (and hence the upgrade
from homotopy equivalence to homeomorphism in Theorem~\ref{thm:action-rigid})
holds provided that any of the following conditions hold:
\begin{enumerate}
    \item $\dim M\neq 3,4$ and $M$ admits a Riemannian metric of nonpositive
    curvature;
    \item $\dim M\neq 3,4$ and $\pi_1(M)$ can be realized as a discrete subgroup of
    a general linear group;
    \item Both $M$ and $N$ are complete affine flat manifolds.
\end{enumerate}

We do not know how to prove that the homeomorphism type of $M$ is recovered
by the elementary equivalence class of $\GG$, in general.

In Theorem~\ref{thm:action-rigid}, we emphasize that
the core content is that $G$ may be taken to be any
group elementarily equivalent to $\GG$ and hence need not admit any map comparing it to
$\GG$ directly. The fact that $\GG$ itself as a group determines
$M$ up to homeomorphism is an immediate consequence of the celebrated result
commonly known as Rubin's Theorem~\cite{Rubin1989,Rubin1996,KK2021book}:

\begin{prop}\label{prop:rubin}
Let $\GG\leq\Homeo(M)$ be a locally approximating group of homeomorphisms, and suppose
that $\GG\leq\Homeo(N)$ is another realization of $\GG$ as a locally approximating group of
homeomorphisms. If $M$ is closed then $M\cong N$. If $M$ has boundary then
$M\setminus\partial M\cong N\setminus\partial N$.
\end{prop}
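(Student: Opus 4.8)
The plan is to derive this as a direct application of Rubin's Theorem. Recall that Rubin's Theorem (in one of its standard forms; see~\cite{Rubin1989,Rubin1996,KK2021book}) states that if a group $\GG$ acts faithfully on a topological space in two ways, each of which is \emph{locally dense} (also called \emph{locally moving}) in the sense that for every point $x$ and every open neighborhood $U$ of $x$, the closure of the orbit $\{g\cdot x : g\in\GG[U]\}$ has nonempty interior, and if the underlying spaces are sufficiently nice (e.g. perfectly normal, or manifolds), then there is a $\GG$-equivariant homeomorphism between the two spaces. So the first step is to verify that a locally approximating group $\GG\leq\Homeo(M)$ acts locally densely on $M$ in the required sense.

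For this, I would argue as follows. Fix a point $x\in M$ and an open neighborhood $U$ of $x$. Shrinking $U$, we may assume $U$ is an open set whose closure is a collared ball contained in a single Euclidean chart (and, in the boundary case, either avoiding $\partial M$ appropriately or handled by condition (2)(b)); by the discussion following the definition of locally approximating groups, $\GG[U]$ is dense in $\Homeo(M)[U]$ in the compact-open topology. Now $\Homeo(M)[U]$ certainly acts with dense orbits on $U$ — indeed one can move $x$ to any point of a small ball around it by a homeomorphism supported in $U$ — and density of $\GG[U]$ in the compact-open topology implies that the $\GG[U]$-orbit of $x$ is dense in that ball, hence has closure with nonempty interior. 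This establishes local density. One must be slightly careful near $\partial M$: in the compactly supported case one only gets local density on $M\setminus\partial M$, while in the general boundary case condition (2)(b) gives it on all of $M$; this is exactly why the statement of the proposition splits into the closed case (conclusion $M\cong N$) and the boundary case (conclusion $M\setminus\partial M\cong N\setminus\partial N$).

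With local density in hand for both realizations $\GG\leq\Homeo(M)$ and $\GG\leq\Homeo(N)$, Rubin's Theorem produces a homeomorphism $\varphi$ between the relevant spaces ($M$ and $N$ in the closed case, $M\setminus\partial M$ and $N\setminus\partial N$ in the boundary case, since that is where both actions are locally dense and where the reconstruction machinery applies) which conjugates one $\GG$-action to the other. The only real subtlety — and the step I expect to require the most care — is the bookkeeping around the boundary: one has to confirm that the hypotheses of the version of Rubin's Theorem being invoked are met by the open manifold $M\setminus\partial M$ (it is a manifold, hence perfectly normal and locally compact Hausdorff, so this is fine), and that in the closed case $\partial M=\emptyset=\partial N$ automatically, so no discrepancy arises. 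Everything else is a routine invocation of the cited reconstruction theorem.
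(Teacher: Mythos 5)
Your proof is correct and matches the paper's approach: the paper gives no explicit argument for this proposition, treating it as an immediate consequence of Rubin's Theorem after observing (in Section~\ref{sec:bool}) that locally approximating groups are locally moving and locally dense on $M\setminus\partial M$, which is precisely the verification you carry out.

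One small inaccuracy in your middle paragraph: condition~(2)(b) does \emph{not} give local density on all of $M$. At a boundary point $x\in\partial M$, every element of $\Homeo(M)[U]$ sends $x$ to another boundary point (homeomorphisms preserve $\partial M$), so $\GG[U]\cdot x\subseteq\partial M$ has empty interior in $M$; local density at $\partial M$ fails in the general case just as in the compactly supported case. This is the actual reason the conclusion weakens to $M\setminus\partial M\cong N\setminus\partial N$ whenever $\partial M\neq\varnothing$, regardless of which clause of the definition $\GG$ satisfies. Your final paragraph applies Rubin to $M\setminus\partial M$ anyway, so the conclusion you reach is right; only the intermediate explanation is off.
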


The key point in Proposition~\ref{prop:rubin} is the fact that there is an isomorphism
between two locally approximating groups of homeomorphisms of $M$ and $N$, and without
an explicit comparison map the proof fails. Moreover, Proposition~\ref{prop:rubin}
relies in an essential way on second order constructions. Theorem~\ref{thm:action-rigid}
can thus be seen as a strengthening of Rubin's result, that 
in particular says that under the
hypotheses of the theorem, a given elementary equivalence class of groups can act in a locally approximating
fashion on at most one manifold
up to homeomorphism (or at least homotopy equivalence).

\subsection{Organization of this paper}

In Section~\ref{sec:bool}, we collect basic properties of regular open sets and their
relationship to locally approximating groups. In Section~\ref{sec:arith}, we interpret
arithmetic and relate this interpretation
to the action of locally approximating groups on
regular open sets. In Section~\ref{sec:member}, we show that the map from
words in the free group on $n$ generators to $\GG$ given by evaluating generators
of the free group on $n$--tuples of elements of $\GG$ is definable. In particular,
we obtain the membership predicate for finitely generated subgroups of $\GG$,
and obtain various other consequences. Section~\ref{sec:rigid} establishes action
rigidity.

\section{Dense subsets of the Boolean algebra of regular open sets}\label{sec:bool}

In this section we gather some basic facts about groups acting on manifolds, and more generally on Hausdorff topological spaces, and the Boolean
algebra of regular open sets. References for this section are~\cite{Rubin1989,Rubin1996,KK2021book,dlNKK22}.

\subsection{Regular open sets}
Let $X$ be a Hausdorff topological space. We say that an open set $U\subseteq X$ is \emph{regular} if $U$ is equal to the interior of its closure.
The collection of regular open sets of $X$ is written $\ro(X)$. An important feature of $\ro(X)$ is that it has the natural structure of a Boolean algebra.
The minimal and maximal elements are the empty set and $X$, respectively. The meet of two regular open sets $U$ and $V$ is just the intersection
$U\cap V$, and the join $U\oplus V$ is the interior of the closure of the union of $U$ and $V$. The complement $U^{\perp}$ of $U$ is the interior
of the complement of $U$. The \emph{Boolean symmetric difference} between two sets $U$ and $V$ is
\[U\triangle V=(U^{\perp}\cap V)\oplus (V^{\perp}\cap U).\] The partial order $U\leq V$ is given by $U\subseteq V$.

\subsection{Group actions}
Let $X$ be a Hausdorff topological space as before and let $\GG\leq \Homeo(X)$ be a subgroup. For $U\subseteq X$ open, we write
$\GG[U]$ for the \emph{rigid stabilizer} of $U$, which is to say the subgroup of $\GG$ consisting of elements which are the identity on $X\setminus U$.
The group $\GG$ is \emph{locally moving} if $\GG[U]$ is nontrivial for every nonempty open $U\subseteq X$. A precise definition of locally approximation for
groups acting on topological manifolds was given in the introduction above. The
definition may seem strange at first (specifically involving open sets contained
in a single coordinate chart of $M$), though this is done to accommodate a wider
variety of groups. Indeed, if we required $\GG[U]$ to be dense in $\Homeo(M)[U]$ for
all $U\subseteq M$, then whenever $\Homeo(M)$ is disconnected as a topological group, we
would have that no subgroup of $\Homeo_0(M)$ is locally approximating; this would
exclude natural examples of groups which should be locally approximating, such as Thompson's groups
$F$ and $T$.

Our definition of locally approximation of groups acting on topological manifolds
is related to notions found in several other
sources, including~\cite{Rubin1989,Rubin1996,KK2021book}; Rubin uses the concept
of \emph{local 
density} is taken to mean that the closure of $\GG[U]\cdot x$ has nonempty interior for
every $x\in U$. Observe that in our definition of locally approximation,
a locally approximating group $\GG$
of homeomorphisms of $M$ is automatically locally moving. Moreover, if $M$ is closed
or if $\partial M\neq \varnothing$ and $\GG\leq\Homeo_c(M\setminus\partial M)$,
then $\GG$ is automatically locally dense as a subgroup of
$\Homeo(M\setminus\partial M)$.

For $g\in \Homeo(X)$, write $\supp^e g$ for the \emph{extended support} of $g$, which is to say the interior of the closure of the \emph{open support} of
$g$, the latter being
the complement of the fixed points of $g$. If $M$ is a connected manifold of positive dimension, then every regular open subset of $M$
arises as the extended support of a homeomorphism of $M$; one may even assume this homeomorphism to be isotopic to the identity;
see~\cite{dlNKK22}.

If $\GG$ is only assumed to be locally moving (or locally approximating)
then not every regular open set need arise as the extended support
of a homeomorphism; this fact is one of the main technical difficulties throughout
this paper. However, the collection of all extended
supports of elements of $\GG$ forms a dense subset \[\DD=\DD_{\GG}\subseteq\ro(M).\]
Here, density is interpreted in the Boolean algebra sense, so that $\DD$ is dense in $\ro(M)$ if for all $\varnothing\neq U\in\ro(M)$ there exists a
nonempty $V\in\DD$ such that $V\leq U$. In the sequel, we will generally suppress the subscript for $\DD$ when the group $\GG$ is clear from context.

In general, there is no reason to suppose that $\DD$ is itself a Boolean algebra. Indeed, in a general
locally moving group, if $g_1,g_2\in\GG$, there is no reason
for there to exist an element $g_3$ such that $\supp^e g_3$ is
the join or meet of $\supp^e g_1$ and $\supp^e g_2$, or for $\DD$ to be closed under Boolean complements.

To construct an explicit example of a locally approximating group $\GG$ wherein
$\DD=\DD_{\GG}$ is not a Boolean algebra, we consider the interval $I$ as the
two-point compactification of the real line $\R$. We set $\GG_0$ to be the commutator
subgroup of Thompson's group $F$, which coincides precisely with the dyadic piecewise
linear homeomorphisms of $\R$ with compact support. Note first that any subgroup of
$\Homeo(I)$ which contains $\GG_0$ is automatically locally approximating. Indeed,
since the interval $I$ has nonempty boundary, locally approximating groups are
either compactly supported or general type. In the compactly supported case,
there is nothing to argue since $F$ is already locally approximating and therefore
so is its subgroup of compactly supported elements. In the general case, it
suffices to show that for every nonempty open $W\subset I$, the group
$\GG_0[W]$ meets every every nonempty basic open set $U_{K,V}$ for
$K\subseteq W$ compact and $V\subseteq W$ open, in the compact-open topology on
$\Homeo_0(I)$. We treat the case $W=I$, with the general case being similar.

If $K$ does not accumulate on the boundary of $I$ and $V$ is arbitrary
then there is an element of $\GG_0$ contained in $U_{K,V}$ already. If $K$ does
accumulate on the boundary, the either the complement of $K$ in $I$ contains an
nonempty interval or is empty. In the latter case, there is again nothing to show.
In the former case, then any $V$ for which $U_{K,V}$ is nonempty must also contain
an endpoint of the interval, and hence contains a neighborhood of a boundary point
of $I$. In this case, it is easy to find an element of $\GG_0$ that sends $K$ into
$V$.

Next, let $f\in\Homeo_0(\R)$ be supported on $(0,\infty)$, viewed
as a homeomorphism of the interval. We may take $f$ to
be an element of $F$ if we like, though this is not strictly necessary; for
instance, we may take $f$ to be the identity from $(-\infty,0)$, then
have slope two on $(0,1)$, and then be translation by one on $(1,\infty)$.
Then, $\GG=\langle \GG_0,f\rangle$ is the kernel of the
map $F\longrightarrow \Z$ given
by evaluating the germ at the left endpoint of the interval.

Notice that the $f$ is not the identity near then right endpoint of the interval,
though every element of $\GG$ is the identity near the left endpoint of the interval.
It follows that $(\suppe f)^{\perp}\notin\DD$, so that $\DD$ is not closed under
Boolean complements.

Notwithstanding, if $\supp^e g_1$ and 
$\supp^e g_2$ are disjoint subsets of $M$ then their join and meet are indeed elements of $\DD$ (those being the empty set and $\supp^e g_1g_2$,
respectively).

Crucial for investigating locally moving (and in particular locally approximating) groups is Rubin's Expressibility Theorem, which we recall here.

\begin{thm}\label{thm:expressibility}
Let $\GG$ be a locally moving group of homeomorphisms of a Hausdorff topological space. For all $g\in\GG$, the rigid stabilizer $\GG[\supp^e g]$
is first order definable, uniformly in $X$ and $g$. Moreover, we have the following.
\begin{enumerate}
\item
The set $\DD=\DD_{\GG}=\{\supp^e g\mid g\in\GG\}\subseteq\ro(X)$ is dense.
\item
For all regular open sets $U$ and $V$, we have $U\subseteq V$ if and only if $\GG[U]\subseteq\GG[V]$.
\item
The Boolean operations of join, meet, and complement of regular open sets are first order expressible predicates on rigid stabilizers.
\item
For all $g\in\GG$ and $U\in\DD$, the map $U\mapsto g(U)$ is first order definable.
\end{enumerate}
\end{thm}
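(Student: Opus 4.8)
The plan is to extract one first-order formula — expressing that two elements have \emph{disjoint} extended supports — and to derive everything else from it by elementary bookkeeping with rigid stabilizers. Write $f\perp g$ for the assertion $\supp^e f\cap\supp^e g=\varnothing$. One direction is free: if $f\perp g$ then, since $X\setminus\supp^e g\subseteq\Fix g$, the set $\supp^e f$ is fixed pointwise by $g$, and as both $f$ and $g$ preserve $\supp^e f$ and its complement, a one-line computation gives $gfg^{-1}=f$; thus $f\perp g\Rightarrow[f,g]=1$. The converse is false (take $f=g$), so the defining formula for $\perp$ must say more. Following Rubin, $\perp$ is expressed by a commutation-type formula; the subtle direction — that overlapping extended supports \emph{violate} the formula — rests on the \textbf{commutation trick}: if $h\in\GG$ is nontrivial on a nonempty open $W$ and $W'\subseteq W$ is open with $h(W')\cap W'=\varnothing$ (available by Hausdorffness), then for any $1\neq f\in\GG[W']$ — which exists by local moving — the conjugate $hfh^{-1}$ is supported in $h(W')$, disjoint from $\supp f\subseteq W'$, so $hfh^{-1}\neq f$ and $[f,h]\neq1$. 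I would reproduce Rubin's verification of the $\perp$-formula at this point; this is the one genuinely hard step, and it is the content of Rubin's Expressibility Theorem proper (see~\cite{Rubin1989,Rubin1996,KK2021book,dlNKK22}).

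Granting a first-order $\perp$, put
\[
\supp^e h\subseteq\supp^e g\ :\Longleftrightarrow\ \forall k\,\big(k\perp g\to k\perp h\big).
\]
This relation holds precisely when $\supp^e h\subseteq\supp^e g$: the forward implication is clear, and if $\supp^e h\not\subseteq\supp^e g$ then, as $\supp^e g$ is regular open, $\supp^e h\setminus\overline{\supp^e g}$ is nonempty open, and local moving gives $1\neq k$ supported there with $k\perp g$ but $k\not\perp h$. Since $\supp h\subseteq\supp^e h=\inte\overline{\supp h}$, one checks $\GG[\supp^e g]=\{h:\supp^e h\subseteq\supp^e g\}$, so the displayed formula (with parameter $g$) defines the rigid stabilizer, uniformly in $X$ and $g$. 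Density of $\DD$ (item (1)) is then immediate without even invoking the formula: for $\varnothing\neq U\in\ro(X)$, local moving yields $1\neq h\in\GG[U]$, and $\varnothing\neq\supp^e h=\inte\overline{\supp h}\subseteq\inte\overline U=U$ by regularity of $U$, so $\supp^e h\in\DD$ and $\supp^e h\leq U$.

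For item (2): $U\subseteq V$ forces $\GG[U]\subseteq\GG[V]$ since an element that is the identity off $U$ is \emph{a fortiori} the identity off $V$; conversely, if $U\not\subseteq V$ with both regular open then $U\cap V^{\perp}\neq\varnothing$ (else $U\subseteq\overline V$ and so $U=\inte\overline U\subseteq\inte\overline V=V$), and any $1\neq h\in\GG[U\cap V^{\perp}]$ lies in $\GG[U]\setminus\GG[V]$. For item (3), under the order-embedding $U\mapsto\GG[U]$ of $\ro(X)$ into the subgroup lattice of $\GG$ afforded by (2), the Boolean operations become first-order: $\GG[U\cap V]=\GG[U]\cap\GG[V]$ trivially; $\GG[U^{\perp}]=C_{\GG}(\GG[U])$, where $\subseteq$ follows from commutation of disjointly supported elements and $\supseteq$ from the commutation trick applied to any $h$ that centralizes $\GG[U]$ but is nontrivial on $U$; and, since $U\oplus V=(U^{\perp}\cap V^{\perp})^{\perp}$, one gets $\GG[U\oplus V]=C_{\GG}\!\big(C_{\GG}(\GG[U])\cap C_{\GG}(\GG[V])\big)$. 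For item (4): $\GG[g(U)]=g\,\GG[U]\,g^{-1}$, and for $U=\supp^e f$ one has $g(U)=\supp^e(gfg^{-1})$, so the relation $V=g(U)$ on $\DD$, with $U=\supp^e f_1$ and $V=\supp^e f_2$, is defined by $\forall h\,\big(\supp^e h\subseteq\supp^e f_2\leftrightarrow\supp^e(g^{-1}hg)\subseteq\supp^e f_1\big)$ in the parameters $f_1,f_2,g$.

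In summary, every item reduces to elementary arguments using only local moving, Hausdorffness, and the regularity of extended supports, \emph{except} the existence and correctness of the first-order formula for $\perp$, which is the real obstacle and which I would settle by appealing to (or reproducing) Rubin's argument.
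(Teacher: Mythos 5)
The paper does not prove this theorem; it is recalled as Rubin's Expressibility Theorem with citations to \cite{Rubin1989,Rubin1996,KK2021book,dlNKK22}, so there is no in-paper argument to compare against. Your reconstruction of the routine reductions is sound, and you correctly isolate the one genuine input. Given a first-order formula for $f\perp g$ (disjointness of extended supports), the equivalence
$\supp^e h\subseteq\supp^e g \iff \forall k\,(k\perp g\to k\perp h)$
does define $\GG[\supp^e g]$: the forward direction is trivial, and for the converse your use of regularity to extract a nonempty open set $\supp^e h\setminus\overline{\supp^e g}$ and then local moving to produce the separating witness $k$ is exactly right, as is the observation that $\GG[\supp^e g]=\{h:\supp^e h\subseteq\supp^e g\}$ because $\supp h\subseteq\supp^e h$. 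Density (item (1)) follows as you say, since $\supp^e h\subseteq U$ whenever $\supp h\subseteq U$ and $U$ is regular. Item (2) is correct, using that $U\cap V^{\perp}=U\setminus\overline V$ is nonempty whenever $U\not\subseteq V$ for regular opens. Item (3) via $\GG[U\cap V]=\GG[U]\cap\GG[V]$, $\GG[U^{\perp}]=C_{\GG}(\GG[U])$ (proved by the commutation trick plus local moving), and de~Morgan for $\oplus$ is standard and correct. Item (4) via $g\,\GG[U]\,g^{-1}=\GG[g(U)]$ together with item (2) is also correct. The only gap is the one you explicitly flag: the existence and correctness of the first-order $\perp$-formula, which is the real content of Rubin's theorem. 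Deferring to Rubin there is exactly what the paper does, so your proposal is consistent with the paper's treatment; a self-contained proof would still need to fill in that step.
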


An important point in Rubin's Expressibility Theorem is that even though Boolean
operations are expressible, they may admit no witnesses in $\DD$.
For a locally moving group of homeomorphisms $\GG$ of $X$, we only
obtain a dense subset $\DD\subseteq\ro(X)$ of regular open sets arising
as regular open supports of elements of $\GG$, and so we only obtain approximations
to meets, joints, and complements as witnesses in $\DD$; this is the provenance
of the term \emph{locally approximating}.
So, for $U,V\in\DD$, we may use first order formulae to quantify over elements of $\DD$ which are:
\begin{enumerate}
\item
Subsets of $U\cap V$;
\item
Subsets of $U\oplus V$;
\item
Subsets of $U^{\perp}$.
\end{enumerate}

In particular, we can express using elements of $\DD$ whether or not $U\cap V$
and $U^{\perp}$ are nonempty.

\subsection{Expressing properties of regular open sets}

We will need to be able to express some topological properties of regular open sets, though our needs will not be as extensive as in~\cite{dlNKK22}.
As always, $M$ is a compact, connected topological manifold of positive dimension.

Let $\dim M=n$.
A \emph{collared ball} $B$ in $M$ is a homeomorphic image in $M$ of a closed Euclidean ball of radius $1$ centered at the origin in $\bR^n$
\[f\colon B(1)\longrightarrow M,\] such that $f$ extends to the ball $B(2)$ of radius two.
of radius $2$.
Let $U,V\in\ro(M)$. In this paper, we will abuse terminology slightly and say that $U$ is \emph{compactly contained} in $V$ if $U$ is contained in
a collared ball $B$ which is contained in the interior of $V$.

\begin{lem}\label{lem:compact}
Let $\GG\leq\Homeo(M)$ be locally approximating, and let $U,V\in\DD$. Then $U$ is compactly contained in $V$ if and only if there exists an element 
$\varnothing\neq W\subseteq V$ of $\DD$ such that for all nonempty elements $\hat W$ of $\DD$ contained in $W$,
there is a $g\in \GG[V]$ such that $g(U)\subseteq \hat W$.
\end{lem}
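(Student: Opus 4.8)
The plan is to prove both directions by exploiting the one characterization we have available: the action of $\GG$ on $\DD$ and Boolean relations among elements of $\DD$, which by Theorem~\ref{thm:expressibility} are all first order expressible. The condition ``$U$ is compactly contained in $V$'' means $\overline{U}$ sits inside a collared ball $B\subseteq\inte V$. First I would establish the forward direction. Suppose $U$ is compactly contained in $V$, witnessed by a collared ball $B$ with $\overline U\subseteq B\subseteq\inte V$. Pick any nonempty $W\in\DD$ with $W\subseteq V\setminus\overline B$ in the Boolean sense — such $W$ exists by density of $\DD$ in $\ro(M)$, since $V\setminus\overline B$ has nonempty interior (here I use that $\dim M\geq 1$ and $B$ has a collar, so $V$ properly contains $\overline B$; if $V$ happened to equal a neighborhood of $\overline B$ only, one still finds room in the collar region). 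Now let $\hat W\in\DD$ be any nonempty subset of $W$. Shrinking further inside $\hat W$ I may assume $\hat W$ is itself contained in a collared ball $B'$ with $\overline{B'}\subseteq\inte V$ and $B'\cap B=\varnothing$ (again by density, passing to a smaller element of $\DD$ — note the statement only requires producing \emph{some} $g$, and replacing $\hat W$ by a smaller set only makes the task harder, which is what we want). Then both $B$ and $B'$ are collared balls inside the connected manifold (or rather inside a chart after localizing) contained in $\inte V$; by a standard ``ball swallowing'' / isotopy-extension argument there is a homeomorphism $h$ of $M$, supported in a slightly larger collared ball inside $\inte V$, carrying $B$ into $B'$. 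Since $\GG$ is locally approximating, $\GG[V']$ is dense in $\Homeo(M)[V']$ for a suitable collared neighborhood $V'\subseteq\inte V$ of $B\cup B'$, so we can approximate $h$ by $g\in\GG[V']\leq\GG[V]$ well enough that $g(\overline U)\subseteq g(B)$ is still inside the open set $\hat W$'s interior — more carefully, we approximate $h$ so that $g(\overline U)\subseteq B'$ and then compose with a further small element of $\GG$ supported in $B'$ (available by local approximation) that pushes $B'$, hence $g(U)$, into the open set $\hat W$. This produces the required $g\in\GG[V]$ with $g(U)\subseteq\hat W$.

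For the converse, suppose $W\in\DD$ is as in the statement. I claim $\overline U\subseteq\inte V$ first: if some point $x\in\overline U$ lay in $\partial V$ or outside $V$, then since each $g\in\GG[V]$ fixes $M\setminus V$ pointwise, $g(U)$ accumulates at $x$ as well, so $g(U)$ cannot be contained in any $\hat W$ whose closure misses a neighborhood of $x$ — and by density of $\DD$ and Hausdorffness we can choose such a $\hat W\subseteq W$ (after possibly shrinking $W$, but the hypothesis quantifies over \emph{all} nonempty $\hat W\subseteq W$, so it already supplies one avoiding $x$). Hence $\overline U\subseteq\inte V$. Now I must upgrade this to: $\overline U$ lies in a single collared ball inside $\inte V$. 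Choose a nonempty $\hat W\in\DD$ with $\hat W\subseteq W$ contained in a collared ball $B_0$ with $\overline{B_0}\subseteq\inte V$ (density again). By hypothesis there is $g\in\GG[V]$ with $g(U)\subseteq\hat W\subseteq B_0$, so $g(\overline U)\subseteq\overline{B_0}$, which is itself contained in a collared ball $B_1\subseteq\inte V$ (take $B_0$ to have a collar). Then $\overline U=g^{-1}(g(\overline U))\subseteq g^{-1}(B_1)$, and $g^{-1}(B_1)$ is a collared ball (homeomorphic image of a collared ball) contained in $g^{-1}(\inte V)=\inte V$ since $g\in\GG[V]$ and $g$ is a homeomorphism. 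Thus $U$ is compactly contained in $V$, as desired.

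The main obstacle I anticipate is the forward direction, specifically the ``ball swallowing'' step: producing a homeomorphism of $M$ supported in $\inte V$ that carries the collared ball $B$ into the arbitrary small element $\hat W$, and then approximating it within $\GG$ while maintaining the inclusion $g(U)\subseteq\hat W$. One must be careful that $\hat W$, being merely an element of $\DD$, need not be a ball; the fix is to first pass to a smaller collared ball $B'\subseteq\hat W$ (using density of $\DD$ only to know $\hat W$ has nonempty interior), target that, and then do a final small push inside $\hat W$. The local approximation hypothesis must be invoked on open sets compactly contained in a single chart of $M$, so one should first replace $V$ by a collared ball $V'\subseteq\inte V$ containing $B\cup B'$ (legitimate since $\inte V$ is a manifold and $B\cup B'$ is compact and can be engulfed in a chart-sized collared ball once $\hat W$ is chosen close enough to $B$ — alternatively, chain finitely many such moves through a sequence of overlapping balls if $\hat W$ is far from $B$ inside $V$). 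Handling the case $\partial M\neq\varnothing$ requires remembering that collared balls by definition do not touch $\partial M$ and that $U,V\in\DD$ already avoid a neighborhood of $\partial M$ in the compactly supported case; in the general boundary case the local approximation hypothesis still applies to the relevant coordinate charts, so the argument goes through verbatim.
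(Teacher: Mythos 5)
Your overall strategy matches the paper's on both directions. For the converse you pull back a collared ball containing $\hat W$ via $g^{-1}$, which is exactly what the paper does (the preliminary step verifying $\overline U\subseteq\inte V$ is correct but unnecessary, since the pullback already gives a collared ball in $V$ containing $U$). For the forward direction you likewise produce a chart-supported homeomorphism carrying $B$ toward $\hat W$ and approximate it in $\GG$.

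There is a genuine gap in the forward direction, though. You write ``Pick any nonempty $W\in\DD$ with $W\subseteq V\setminus\overline B$,'' but this quantification is too loose: the witness $W$ must be chosen inside the \emph{same connected component} of $V$ as the collared ball $B$. Any $g\in\GG[V]$ fixes the frontier of $V$ pointwise and therefore permutes the components of $V$ trivially (each component, being open and not all of $M$, has nonempty frontier which $g$ fixes), so $g$ cannot carry $U\subseteq B$ into a subset of a different component. If $V$ is disconnected and your $W$ happens to land in another component, no $g$ exists and the argument collapses. Your parenthetical remark about ``finding room in the collar region'' shows you have the right picture in mind, but ``pick any'' does not enforce it. Relatedly, the paper pins down $W$ more tightly — it requires $W$ and $B$ to be compactly contained in a \emph{single chart} of $M$ — precisely so that there is a single collared Euclidean ball $Z$ containing both, a single homeomorphism of $\Homeo(M)[Z]$ carrying $B$ into $\hat W$, and a single application of the local approximation hypothesis; this eliminates the chaining of overlapping balls you invoke at the end. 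Since the lemma only asks for the \emph{existence} of a suitable $W$, you are free to make this simpler choice, and you should.
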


In particular, there is a first order predicate which expresses that a regular open set $U$ is compactly contained in a regular open set $V$.

\begin{proof}[Proof of Lemma~\ref{lem:compact}]
For the ``if" direction,
let $W\subset V$ be as in the hypothesis and let $\hat W\subseteq W$ be
contained in the interior of a collared ball $B$ contained in the interior of $W$.
Let $g(U)\subseteq \hat W$. Then $g^{-1}(B)\subseteq V$
is a collared ball in the interior of $V$ containing $U$.

Conversely, suppose that $U\subseteq B\subseteq V$ for a collared ball $B$, and let $W\subseteq V$ be a nonempty regular open set in the
same connected component $\hat V$ of $V$ as $B$ and such that $W$ and $B$
are both compactly contained in a single chart of $M$.

For all nonempty $\hat W\subseteq W$, there exists
a homeomorphism of $M$ which sends $B$ into $\hat W$, which we may assume is supported on the interior $Z$ a collared Euclidean ball contained in $\hat V$.
The set of elements of
$\Homeo(M)[Z]$ which send $B$ into $\hat W$ form an open set in the compact open topology.
Since $\GG$ is locally approximating, it follows that there exists an element
$g\in\GG[Z]$
such that $g(U)\subseteq g(B)\subseteq \hat W$.
\end{proof}

As we have already mentioned,
one of the primary difficulties we encounter is the fact that we have little \emph{a priori} control over the elements of $\DD$. Many first order
formulae involving regular open sets, even if they have many witnesses in $\ro(M)$, may fail to have any witnesses in $\DD$. One helpful
fact which follows from the fact that $\GG$ is locally approximating and which does not follow immediately from the density of $\DD$ in $\ro(M)$ is
the following:

\begin{lem}\label{lem:large-subset}
Let $U\in\ro(M)$ be compactly contained in a single chart of $M$
and let $V\subseteq U$ be compactly contained in $U$. Then there is an element $W\in\DD$ such that $W\subseteq U$ and $V$
is compactly contained in $W$.
\end{lem}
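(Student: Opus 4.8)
We want: given $U \in \ro(M)$ compactly contained in a single chart, and $V \subseteq U$ compactly contained in $U$, find $W \in \DD$ (i.e., $W = \supp^e g$ for some $g \in \GG$) with $V$ compactly contained in $W$ and $W \subseteq U$.

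The key issue: $\DD$ is only a dense subset of $\ro(M)$, not a Boolean subalgebra, so we can't just "take" a regular open set containing $V$ — we need it to actually arise as an extended support of a group element. But since $U$ is in a single chart, we have a lot of room, and locally approximating means $\GG[U']$ is dense in $\Homeo(M)[U']$ for appropriate $U'$.

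**Approach.**

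First, since $V$ is compactly contained in $U$, we can choose a collared ball (or finite union of collared balls covering $V$) $B$ with $V \subseteq B \subseteq U$ and $B$ still compactly contained in the chart. Actually the cleanest move: find an intermediate regular open set $V'$ with $V$ compactly contained in $V'$, and $\overline{V'} \subseteq U$, where $V'$ is a "nice" set like the interior of a finite union of collared balls. Now pick any homeomorphism $g_0 \in \Homeo(M)$ supported inside $U$ (indeed inside a collared ball in $U$) such that $\supp^e g_0 = V'$ — this exists by the fact recalled in the excerpt that every regular open subset of a manifold is the extended support of some homeomorphism isotopic to the identity, and we can arrange this homeomorphism to be supported in $U$.

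Second — and this is the main step — we approximate $g_0$ by an element $g \in \GG$ with $\supp^e g$ close enough to $V'$ that still $V$ is compactly contained in $\supp^e g$ and $\supp^e g \subseteq U$. Here "close" must be made precise: since $V$ is compactly contained in $V'$, fix a collared ball $B_V$ with $V \subseteq B_V \subseteq \overline{B_V} \subseteq V'$. We want $g \in \GG$ with (a) $g$ not the identity anywhere on $B_V$ in a strong sense — actually we need $B_V \subseteq \supp^e g$, and (b) $\supp^e g \subseteq U$. For (b), simply take the $\GG$-approximation inside $\GG[U']$ where $U'$ is a collared ball with $\overline{V'} \subseteq U' \subseteq \overline{U'} \subseteq U$; any such element is automatically supported in $U$, hence $\supp^e g \subseteq U'\subseteq U$. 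For (a): the set of $h \in \Homeo(M)[U']$ with $B_V \subseteq \supp^e h$ is open in the compact-open topology — or rather, we should argue that the set of homeomorphisms which move some prescribed finite collection of points of $B_V$ is open, and by picking a dense-enough finite net in $B_V$ together with a small enough displacement we force $B_V$ into the open support. Combined with density of $\GG[U']$, we get $g \in \GG[U']$ with $B_V \subseteq \suppo g \subseteq \supp^e g =: W$. Then $V \subseteq B_V$ and $B_V$ is a collared ball in the interior of $W$, so $V$ is compactly contained in $W$, and $W \subseteq U'\subseteq U$.

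**The main obstacle.**

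The delicate point is showing that the approximation can be taken to genuinely contain $B_V$ in its extended support, rather than merely having support $C^0$-close to $V'$: $C^0$-closeness of $g$ to $g_0$ does not a priori prevent $\supp^e g$ from having tiny holes inside $B_V$. The fix is to not approximate $g_0$ globally but to build $g$ as a product: take $g = g_1 g_2$ where $g_1 \in \GG[U']$ is a $\GG$-approximation forcing $\overline{B_V}$ into its open support (obtained from density applied to a fixed homeomorphism with extended support a collared ball slightly larger than $B_V$ but inside $V'$ — so $\suppo g_1 \supseteq \overline{B_V}$ is guaranteed by the open condition "moves every point of a fine net in $\overline{B_V}$ by at least $\epsilon$"), at the cost of $\supp^e g_1$ possibly being somewhat smaller than $V'$; this already suffices. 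Indeed we do not need $W$ to be large — we only need $W \subseteq U$ and $V$ compactly contained in $W$, so a single such $g_1$ with $\overline{B_V} \subseteq \suppo g_1$ and $\supp^e g_1 \subseteq U'$ does the job, and we may take $W = \supp^e g_1$. So really the whole proof reduces to: \emph{locally approximating implies that for any collared ball $\overline{B_V}$ compactly contained in a chart, there is $g \in \GG$ supported in a slightly larger collared ball inside $U$ with $\overline{B_V}$ contained in the open support of $g$} — and this follows by applying the density hypothesis to the open (compact-open) neighborhood of a fixed homeomorphism that equals a nontrivial translation-like map on $\overline{B_V}$ and is supported in that larger ball.
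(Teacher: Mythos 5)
Your final argument coincides with the paper's proof: both exploit the fact that for a collared ball $B_V$ with $V\subseteq B_V$ and $\overline{B_V}$ contained in a slightly larger collared ball $U'\subseteq U$, the set of homeomorphisms mapping $\overline{B_V}$ entirely into $U'\setminus\overline{B_V}$ is a basic compact-open set $\UU_{\overline{B_V},\,U'\setminus\overline{B_V}}$ that is nonempty (witnessed by a ``translation-like'' map, or in the paper's phrasing, a map sending $V$ into a disjoint collared ball inside $U$), so density of $\GG[U']$ in $\Homeo(M)[U']$ yields $g\in\GG[U']$ in this set; then $\overline{B_V}\subseteq\suppo g$, hence $V$ is compactly contained in $W=\supp^e g\subseteq U'\subseteq U$. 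Your initial plan of reproducing a prescribed extended support exactly, and the observation that it fails, is a reasonable detour that the paper skips.

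One imprecision worth fixing in your middle paragraph: the claim that the right open condition is ``moves some prescribed finite collection of points of $B_V$ by at least $\epsilon$'' is not sufficient, since a homeomorphism can displace finitely many points while fixing open sets in between them, so that condition alone does not force $B_V$ into the open support. What you actually need, and what your final paragraph implicitly invokes via the ``translation-like'' map, is the condition that the compact set $\overline{B_V}$ is carried wholly off itself; this is already of the form $\UU_{K,U}$ and thus a single basic open set in the compact-open topology, making the density hypothesis directly applicable.
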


Thus, one might imagine that if $U$ has a large connected subset then there is an element of $\DD$ which shares a large potion of that connected
subset.

\begin{proof}[Proof of Lemma~\ref{lem:large-subset}]
There is a homeomorphism $f$ supported on $U$ such that $f(V)$ is contained in collared ball that is disjoint from $V$. Since $\GG$ is locally approximating,
there is a $g\in\GG$ which is also supported on $U$ such that $g(V)$ is also contained in the same collared ball. It follows that $W=\supp^e(g)$
compactly contains $V$ and is contained in $U$.
\end{proof}

We will need to also discuss connectedness of regular open sets. The main difficulty is that there may not be any elements of $\DD$ that
are connected, and so directly referencing
connected sets presents a technical difficulty.

If $U,V\in\DD$ are nonempty, we can express the notion that $U$ is contained in a single connected component of $V$. Here, we will give details for when $V$ is
compactly contained in a single chart of $M$, which is sufficient for our purposes;
it is straightforward to generalize to $V\in\DD$ arbitrary.

If $V$ is compactly contained in a single
chart of $M$, can we say that for all
triples of disjoint, nonempty regular open sets $W_1,W_2,W_3\subseteq U$ with
$W_3$ compactly contained in \[U\cap (W_1\oplus W_2)^{\perp}\] and $W_2$ compactly contained in \[U\cap (W_1\oplus W_3)^{\perp},\]
there exists a $g\in\GG[V\cap W_3^{\perp}]$ such that $g(W_2)\cap W_1\neq\varnothing$, or a
$g\in\GG[V\cap W_2^{\perp}]$ such that $g(W_3)\cap W_1\neq\varnothing$. It is clear that if $U$ lies in a single connected component
(which coincides with a path component) of $V$ then such a $g$ must exist; the only care that must be taken is in dimension one where
$W_3$ may separate $W_1$ from $W_2$, and this is the reason for the symmetric conditions on $W_2$ and $W_3$.

Conversely, if $U$ is not contained in a single connected component of $V$ then choosing $W_2$ and $W_3$ to lie in one component of $V$
and $W_1$ to lie in a different one. Any suitable $g$ must fix the component containing $W_2$ and $W_3$ and cannot pull either of these sets
to meet $W_1$.

We can now talk about connected components of elements of $\DD$ in a roundabout manner: let $V\in\DD$, and let $\varnothing\neq
U_0\subseteq V$ be an element of $\DD$ contained in a single connected component of $V$. Then $U_0$ \emph{marks} a connected component $V_0$
of $V$ in the sense that $V_0$ is canonically identified with the set of $U\in\DD$ such that $U_0\oplus U$ are contained in a single connected
component of $V$. In fact:

\begin{prop}\label{prop:interp-conn}
    The group $\GG$ interprets a sort $\on{cc}$, whose members consist of connected
    components of elements of $\DD$.
\end{prop}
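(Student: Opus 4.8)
The plan is to construct the sort $\on{cc}$ as a quotient of a definable subset of $\DD\times\DD$. The preceding discussion shows that for $V\in\DD$ with $V$ compactly contained in a single chart, there is a first order predicate $\on{same-comp}_V(U_0, U)$ expressing that two nonempty elements $U_0, U\in\DD$ contained in $V$ lie in a single connected component of $V$ (namely, that $U_0\oplus U$ lies in one connected component of $V$, via the separation criterion with the triple $W_1,W_2,W_3$). First I would check that this gives, for fixed $V$, an equivalence relation on the set of nonempty $U_0\in\DD$ with $U_0\subseteq V$ and $U_0$ in a single component of $V$ — reflexivity and symmetry are immediate, and transitivity holds because if $U_0\oplus U_1$ and $U_1\oplus U_2$ each lie in a single component, then since $U_1$ is nonempty all three sets $U_0,U_1,U_2$ lie in the common component containing $U_1$, so $U_0\oplus U_2$ does too. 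Thus the connected components of $V$ are in canonical bijection with these equivalence classes; a component $V_0$ is ``marked'' by any such $U_0\subseteq V_0$.

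To remove the restriction that $V$ be compactly contained in a chart, I would use the density of $\DD$ and Lemma~\ref{lem:large-subset} together with the ability (noted after Theorem~\ref{thm:expressibility}) to quantify over elements of $\DD$ below a given join: a connected component of an arbitrary $V\in\DD$ is determined by its intersections with chart-sized pieces, and one glues these along overlaps. Concretely, the generalization of the single-component predicate to arbitrary $V$ is asserted in the text to be straightforward, so I would invoke that and define $\on{same-comp}_V$ in full generality. The sort $\on{cc}$ is then the set of pairs $(V, U_0)$ with $V\in\DD$, $U_0\in\DD$ nonempty, $U_0\subseteq V$, and $U_0$ contained in a single connected component of $V$, modulo the definable equivalence relation $(V,U_0)\sim (V', U_0')$ iff $V = V'$ (as regular open sets, expressible via Theorem~\ref{thm:expressibility}(2)) and $\on{same-comp}_V(U_0, U_0')$ holds. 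All the defining conditions and the equivalence relation are first order by Theorem~\ref{thm:expressibility} and the discussion preceding the proposition, so this is a genuine interpretation.

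Finally I would note whatever auxiliary structure on $\on{cc}$ is needed downstream — at minimum, the membership relation between an element of $\DD$ and a connected component (``$U$ is contained in the component marked by $(V,U_0)$''), which is $U\subseteq V$ together with $\on{same-comp}_V(U_0, U)$ when $U$ itself lies in a single component of $V$, and in general is handled componentwise — as well as the $\GG$-action on $\on{cc}$ induced from the definable action $U\mapsto g(U)$ of Theorem~\ref{thm:expressibility}(4). The main obstacle is purely bookkeeping around $\DD$: since $\DD$ need not be a Boolean algebra, one must be careful that the witnesses $W_1, W_2, W_3$ and the intermediate sets $U_0\oplus U$ used in $\on{same-comp}_V$ actually exist in $\DD$ in enough generality, and that the component marked by $U_0$ is independent of the choice of marker; both points are secured by local approximation (Lemma~\ref{lem:large-subset} and the openness arguments in the proof of Lemma~\ref{lem:compact}), but verifying the single-component predicate for arbitrary $V\in\DD$ — rather than only chart-sized $V$ — is the step requiring the most care.
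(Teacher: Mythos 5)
Your proposal takes essentially the same approach as the paper: fix (or carry along) $V\in\DD$, take the definable subset of elements of $\DD$ contained in a single connected component of $V$, and quotient by the definable ``same connected component'' equivalence relation, which is derived from the separation criterion discussed just before the proposition. The paper phrases the interpretation as a parametrized family (with $U$ as a parameter) rather than bundling $V$ into the tuple, and leaves the transitivity check and the generalization beyond chart-sized $V$ to the reader, but the underlying argument is identical.
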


In Proposition~\ref{prop:interp-conn}, we precisely mean that that given $U$, there
is a quotient of a definable subset of $\DD$ by a definable equivalence relation
(all with parameter $U$) which is canonically identified with the connected
components of $U$. Moreover, the formula defining the connected components
is uniform in $U$.

\begin{proof}[Proof of Proposition~\ref{prop:interp-conn}]
    Let $U\in\DD$ be nonempty, and consider the set $\VV_U$
    of all elements $V\in\DD$ such that $V$ is
    contained in a single connected component of $U$. By local approximation,
    every connected component of $U$ contains an element of $\VV_U$ as a subset.
    By the preceding discussion, this
    is a definable subset of $\DD$, with $U$ as a parameter. We then put an equivalence
    relation on $\VV_U$ by declaring two elements to be equivalent if they lie in the
    same connected component of $U$. This equivalence relation is also definable,
    and the equivalence classes correspond canonically to connected components of $U$.
\end{proof}

In light of Proposition~\ref{prop:interp-conn}, we may quite freely refer to
connected components of elements of $\DD$, to assert equality, inequality, or containment
between components of various elements of $\DD$,
and to assert that components have various definable
properties, even though they may not themselves elements of $\DD$. In the sequel, we will
be careful to avoid any unwarranted quantifications.

For technical reasons which will arise in later sections of this paper,
we will need to interpret certain
invariant unions of components of elements of $\DD$, which again may not themselves
be elements of $\DD$.

\begin{lemma}\label{lem:finite-invariant}
    Let $\varnothing \neq V\in\DD$, let $f\in\GG$, and let
    $\varnothing\neq\hat V$ be component of $V$. Suppose that there exists an $n\geq 1$ such that $f^n(\hat V)=\hat V$.
    Then the set \[V_{\infty}=\bigcup_{i=0}^{n-1}f^i(\hat V)\] is interpretable,
    with $\hat V$ as a parameter.
\end{lemma}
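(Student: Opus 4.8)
The plan is to interpret $V_{\infty}$ not as an element of $\ro(M)$ --- which it need not be, since the constituent pieces $f^{i}(\hat V)$ need not be disjoint nor regular open --- but as a definable finite subset of the interpreted sort $\on{cc}$ of Proposition~\ref{prop:interp-conn}, namely the $\langle f\rangle$--orbit of the component $\hat V$, together with the standing convention that this finite set of components codes the union of its members.

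First I would record that $\DD$ is $f$--invariant: if $U=\suppe g$ with $g\in\GG$, then $f(U)=\suppe(fgf^{-1})\in\DD$. By part~(4) of Rubin's Expressibility Theorem (Theorem~\ref{thm:expressibility}) the map $U\mapsto f(U)$ on $\DD$ is first order definable, uniformly in $f$. Next, since $f$ is a homeomorphism it preserves the relation ``lying in a single connected component'', so the component of $U\in\DD$ marked by $U_{0}$ is carried to the component of $f(U)$ marked by $f(U_{0})$; as $f(U),f(U_{0})\in\DD$, this shows $f$ acts definably on the sort $\on{cc}$. In particular $f^{i}(\hat V)$ is a well-defined member of $\on{cc}$ for every $i\in\Z$, and the hypothesis $f^{n}(\hat V)=\hat V$ forces the orbit $O=\langle f\rangle\cdot\hat V=\{f^{i}(\hat V):0\le i<n\}$ to be finite; note that $O$ does not depend on the choice of $n$.

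The crucial step is to see that $O$ is definable with only $\hat V$ and $f$ as parameters. For this I would invoke Theorem~\ref{thm:member} in the case $n=1$: membership in the cyclic subgroup $\langle f\rangle$ is given by the first order predicate $\on{member}_{1}(\cdot,f)$. Writing $h\cdot W$ for the (definable) action of $h\in\GG$ on $W\in\on{cc}$, we then have
\[ O=\bigl\{\,W\in\on{cc}\ :\ \exists h\ \bigl(\on{member}_{1}(h,f)\wedge h\cdot\hat V=W\bigr)\,\bigr\}, \]
which is the desired definition. (Alternatively one could run the recursion $W\mapsto f(W)$ through the parameter-free interpretation of first order arithmetic from Theorem~\ref{thm:arith}, but the membership predicate is more direct.) Finally I would declare $V_{\infty}$ to be interpreted by the pair consisting of the definable finite set $O\subseteq\on{cc}$ and the convention that it represents $\bigcup_{W\in O}W$, and then check that the statements about $V_{\infty}$ needed later are expressible without any illicit quantification, e.g.\ ``$U$ meets $V_{\infty}$'' by $\exists W\in O\,(U\cap W\ne\varnothing)$, ``$V_{\infty}\subseteq U$'' by $\forall W\in O\,(W\subseteq U)$, and $f$--invariance of $V_{\infty}$ directly from $f\cdot O=O$. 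Uniformity in $(\GG,M,f,\hat V)$ is clear since each ingredient is uniform.

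The main obstacle I anticipate is the step producing the orbit: one cannot naively write ``$\exists i\in\N\ W=f^{i}(\hat V)$'' because $\on{cc}$ is an interpreted sort and $i$ is not an object of the structure. The resolution is precisely to replace this by quantification over the cyclic \emph{subgroup} $\langle f\rangle$ of $\GG$ via the membership predicate of Theorem~\ref{thm:member}. A secondary, more conceptual point to be careful about is that $V_{\infty}$ is \emph{not} being interpreted as a regular open set: its pieces may overlap and need not be regular open, so throughout we must only use predicates about $V_{\infty}$ that factor through the finite set $O$ of components, in keeping with the caveat following Proposition~\ref{prop:interp-conn} about avoiding unwarranted quantifications.
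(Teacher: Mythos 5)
Your high-level framing --- that $V_\infty$ should be interpreted via components rather than as an element of $\ro(M)$, since it need not be regular open or an element of $\DD$ --- is on the right track and matches the spirit of the paper. The fatal problem is a circularity in your key definability step.

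You produce the orbit $O$ by quantifying over the cyclic subgroup $\langle f\rangle$ via $\on{member}_1(\cdot,f)$, with the fallback of ``running the recursion through the interpretation of arithmetic.'' But Lemma~\ref{lem:finite-invariant} lives in Section~\ref{sec:bool}, and is cited in Section~\ref{sec:arith} precisely to build the aperiodicity condition on quasi-invariant triples, which is an ingredient in the interpretation of arithmetic (Theorem~\ref{thm:arith}); that interpretation in turn underlies the membership predicate of Theorem~\ref{thm:member} in Section~\ref{sec:member}. So both $\on{member}_1$ and the arithmetic sort are downstream of the very lemma you are trying to prove, and neither may be invoked here. The difficulty you flag at the end --- ``one cannot write $\exists i\in\N,\ W=f^i(\hat V)$'' --- is genuine, but routing around it through $\langle f\rangle$ or $\N$ is exactly the move that is unavailable at this stage of the paper.

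The paper's proof avoids any appeal to arithmetic or to definable finitely generated subgroups. It picks $\hat g\in\GG$ with $\suppe\hat g\subseteq\hat V$, forms $g=\prod_{i=0}^{n-1}f^{-i}\hat g f^i$ (which is supported on $V_\infty$), and sets $X=\suppe g\in\DD$. One then writes down first order conditions on an element $Y\in\DD$ --- that $Y$ meets $\hat V$, and that the components of $V$ met by $Y$ and by $f(Y)$ match up in both directions --- with respect to which such an $X$ is minimal; the set of all such minimal $X$ interprets $V_\infty$. In other words, the finite orbit is not recovered by quantifying over natural numbers or over $\langle f\rangle$, but is witnessed by a single group element $g\in\GG$ whose extended support already sweeps out $V_\infty$, after which only Boolean and component-level predicates (available from Rubin's Expressibility Theorem and Proposition~\ref{prop:interp-conn}) are needed. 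If you want to salvage your approach, you would need to find a way to define the $\langle f\rangle$-orbit of $\hat V$ in $\on{cc}$ using only the tools of Section~\ref{sec:bool}; the paper's construction of $g$ is essentially the mechanism for doing exactly that.
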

\begin{proof}
    Because $V$ is regular, we have that $\hat V$ and arbitrary unions of its
    $f$--translates are all regular; cf.~\cite{dlNKK22}.

    Let $\hat g\in\GG$ satisfy $\suppe g\subseteq\hat V$ and let $n$ be minimal so that
    $f^n(\hat V)=\hat V$. Observe then that \[g=\prod_{i=0}^{n-1} f^{-i}\hat gf^i\] is
    supported on $V_{\infty}$ (though $g$ may not commute with $f$ since $f^n$ may not
    restrict to the identity on $\hat V$).
    
    Write $X=\suppe g$. Observe that $X$ has the property
    that $X\cap\hat V\neq\varnothing$. Moreover, for all $\varnothing\neq
    W\subseteq f(X)$ contained in a single
    component of $V$, there is a $\varnothing\neq W'\subseteq X$ such that $W'$ and $W$
    are contained in the same component of $V$, and conversely after switching the roles
    of $X$ and $f(X)$. Finally, suppose $Y\in\DD$ is arbitrary with the properties:
    \begin{enumerate}
        \item $Y\cap \hat V\neq\varnothing$.
        \item For all $\varnothing\neq
    W\subseteq f(Y)$ contained in a single
    component of $V$, there is a $\varnothing\neq W'\subseteq Y$ such that $W'$ and $W$
    are contained in the same component of $V$.
    \item For all $\varnothing\neq
    W\subseteq Y$ contained in a single
    component of $V$, there is a $\varnothing\neq W'\subseteq f(Y)$ such that $W'$ and $W$
    are contained in the same component of $V$.
    \end{enumerate}
    Under these conditions on $Y$,
    for all $\varnothing \neq W\subseteq X$ contained in a single component of $V$,
    there is a $\varnothing\neq W'\subseteq Y$ such that $W$ and $W'$ are contained in a
    single component of $V$. It follows that any such $X$ is ``minimal" among $Y$ with
    these properties.

    It is clear that if $X$ satisfies the foregoing conditions with respect to
    all such $Y$, then $X$ meets exactly the components of
    $V_{\infty}$. Note also that any subset of
    a component of $V_{\infty}$ is contained in such an $X$.
    Thus, the set $V_{\infty}$ can
    be interpreted as the set of all such $X$, which is a definable subset of $\DD$
    with $\hat V$ as a parameter.
\end{proof}

We will require one further technical tool about invariance:

\begin{lem}\label{lem:comp-inv}
    Let $W,W_0\in\DD$ with $W_0\subseteq W$, and let $f\in\GG$. It is expressible
    that the Boolean complement $W\cap W_0^{\perp}$ is $f$--invariant,
    i.e.~$f(W\cap W_0^{\perp})=W\cap W_0^{\perp}$.
\end{lem}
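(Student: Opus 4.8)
The plan is to reduce the $f$-invariance of the regular open set $A := W\cap W_0^{\perp}$ --- which, crucially, need not itself lie in $\DD$ --- to a pair of one-sided containments, each of which can be detected by quantifying only over elements of $\DD$ contained in $A$. First I would record two elementary reductions: (i) $A=W\cap W_0^{\perp}$ is a regular open set, being the meet of $W$ with the Boolean complement $W_0^{\perp}$; and (ii) since $f^{-1}\in\GG$, the set $A$ is $f$-invariant if and only if $f(A)\subseteq A$ and $f^{-1}(A)\subseteq A$, because the latter containment is equivalent to $A\subseteq f(A)$, and the two inclusions together force $f(A)=A$.

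The key step is a density observation: for any $h\in\GG$, one has $h(A)\subseteq A$ if and only if $h(V)\subseteq A$ for every $V\in\DD$ with $V\subseteq A$. The forward implication is immediate. For the converse, since $\DD$ is dense in $\ro(M)$ by Theorem~\ref{thm:expressibility}(1), the union $S=\bigcup\{V\in\DD : V\subseteq A\}$ is dense in $A$; hence $\overline{S}=\overline{A}$, and as $A$ is regular, $A=\inte\overline{S}$. Applying the homeomorphism $h$, which commutes with interior and closure, gives $h(A)=\inte\overline{h(S)}=\inte\overline{\bigcup_V h(V)}$, and since each $h(V)\subseteq A$ by hypothesis, this is contained in $\inte\overline{A}=A$.

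It then remains to package this into a first-order predicate in $f$, $W$, $W_0$. Applying the density observation with $h=f$ and with $h=f^{-1}$, the statement ``$A$ is $f$-invariant'' becomes: for every nonempty $V\in\DD$ with $V\subseteq A$, both $f(V)\subseteq A$ and $f^{-1}(V)\subseteq A$. Now ``$V\subseteq A$'' unpacks as ``$V\subseteq W$ and $V\cap W_0=\varnothing$'' (for $V$ open, $V\subseteq W_0^{\perp}$ is equivalent to $V\cap W_0=\varnothing$), and likewise ``$f^{\pm1}(V)\subseteq A$'' unpacks as ``$f^{\pm1}(V)\subseteq W$ and $f^{\pm1}(V)\cap W_0=\varnothing$''; by Theorem~\ref{thm:expressibility} the inclusion order among regular open sets, the emptiness of meets, and the maps $V\mapsto f^{\pm1}(V)$ on $\DD$ are all first-order expressible on rigid stabilizers, and $f^{\pm1}(V)=\supp^e(f^{\pm1}gf^{\mp1})\in\DD$ whenever $V=\supp^e(g)\in\DD$. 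So the displayed condition is a first-order formula with the desired meaning.

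I do not anticipate a genuine obstacle: the only delicate point is the recurring one throughout the paper, namely that $A$ may fail to be the extended support of any element of $\GG$, and this is precisely what the density observation circumvents. The remaining care is purely bookkeeping --- one must verify that every quantifier ranges over $\DD$ and that every Boolean relation invoked is among those certified expressible by Rubin's Expressibility Theorem.
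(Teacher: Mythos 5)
Your proof is correct and yields the same first-order formula that the paper uses: for all $V\in\DD$ with $V\subseteq W\cap W_0^{\perp}$, both $f(V)$ and $f^{-1}(V)$ lie in $W\cap W_0^{\perp}$. The difference is in the converse direction (the formula implies invariance). The paper argues pointwise: for $p\in W\cap W_0^{\perp}$, local approximation furnishes a $V\in\DD$ with $p\in V\subseteq W\cap W_0^{\perp}$, whence $f^{\pm1}(p)\in f^{\pm1}(V)\subseteq W\cap W_0^{\perp}$. You argue at the level of sets: writing $A=W\cap W_0^{\perp}$ and $S=\bigcup\{V\in\DD: V\subseteq A\}$, density of $\DD$ in $\ro(M)$ together with regularity of $A$ gives $A=\inte\overline{S}$, and applying the homeomorphism $f^{\pm1}$ and the hypothesis yields $f^{\pm1}(A)\subseteq\inte\overline{A}=A$. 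Your route is marginally more economical in hypotheses --- it uses only density of $\DD$ (available for any locally moving group) and regularity of $A$, whereas producing an element of $\DD$ through a \emph{prescribed} point, as the paper does, genuinely needs more than density (it uses local approximation). Both arguments are short, and both are correct; the packaging into a formula via Rubin's Expressibility Theorem is identical.
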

\begin{proof}
    We may simply write for all $V\subseteq W\cap W_0^{\perp}$, we have
    $f^{-1}(V)\subseteq W\cap W_0^{\perp}$ and $f^{-1}(V)\subseteq W\cap W_0^{\perp}$.
    If $W\cap W_0^{\perp}$ is $f$--invariant then clearly all subsets of
    $W\cap W_0^{\perp}$ have this property. Conversely, let $p\in W\cap W_0^{\perp}$
    be an arbitrary point. Then there is some neighborhood $U$ of $p$ contained in
    $W\cap W_0^{\perp}$, and by local approximation there exists
    an element $V\in\DD$ containing $p$ and contained in $U$. We have that the image
    of $V$ under both $f$ and $f^{-1}$ is contained in $W\cap W_0^{\perp}$. It follows
    that $p\in W\cap W_0^{\perp}$ if and only if $p\in f(W\cap W_0^{\perp})$.
\end{proof}

\subsection{Boundaries of manifolds}
In this section, we assume that $\partial M\neq\varnothing$ and that $\GG$ does
not consist of only compactly supported homeomorphisms of $M\setminus\partial M$.

The case of manifolds with boundary generally presents technical difficulties,
as it did in~\cite{dlNKK22}, for instance. In the case of locally approximating groups
of homeomorphisms, one of the complicating factors is that if $N$ is a component of
$\partial M$, then there may be no element of $\DD$ which is a tubular neighborhood
of $N$ in $M$.

We will not be able to give a uniform characterization of tubular neighborhoods of
boundaries of manifolds, and we will have to give a characterization that depends
on the dimension of $M$. We will need to use the fact that topological manifolds
do actually admit tubular neighborhoods though~\cite{brown-ann}.

We recall one standard fact about topological manifolds (cf.~\cite{dlNK23}:

\begin{lem}\label{lem:ball-cover}
    Let $M$ be a compact, connected manifold of dimension $n$.
    \begin{enumerate}
        \item If $M$ is closed then there is a $d(n)$ such that $M$ is covered by
        the interiors of $d(n)$ collared Euclidean balls.
        \item If $\partial M\neq\varnothing$ and
        $N$ is compact submanifold of $M\setminus \partial M$
        then $N$ is covered by the interiors of $d(n)$ collared Euclidean balls in $M$.
        \item If $N$ is a component of $\partial M$ then $N$ is covered by the
        interiors of $d(n-1)$ collared half-balls in $M$.
    \end{enumerate}
\end{lem}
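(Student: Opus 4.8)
The plan is to obtain all three statements from one input: the existence and crude combinatorial structure of a handle decomposition of a compact topological manifold (Kirby--Siebenmann in dimensions $\neq 4$, Moise--Bing in dimensions $\leq 3$, and the smooth theory in dimension $4$; compare the discussion in~\cite{dlNK23}). The only features I will use are that every compact $n$--manifold $P$, possibly with boundary, is built from finitely many handles $D^i\times D^{n-i}$; that each handle, after a small thickening, is the interior of a collared Euclidean ball; and that handles of the same index are pairwise disjoint. Granting this, for part~(1) I take $P=M$ and set $U_i$ to be the union of the thickened index--$i$ handles, $0\le i\le n$. Then $M=U_0\cup\cdots\cup U_n$, each $U_i$ is a disjoint union of interiors of collared balls, and the number of pieces, $n+1$, depends only on $n$. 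Since all of the later first--order arguments quantify over the covering pieces as elements of $\DD$ (a finite disjoint union of collared balls being a single regular open set), reading ``$d(n)$ collared Euclidean balls'' this way with $d(n)=n+1$ is what is needed; if genuinely connected balls are wanted with a bound depending only on $n$, this is classical in low dimensions (a sphere needs $2$; a closed surface needs $3$, using that the complement of a point deformation retracts onto a graph which is a union of two trees whose regular neighborhoods are disks; a closed $3$--manifold needs at most $4$ by G\'omez-Larra\~naga--Gonz\'alez-Acu\~na) and in general can be extracted from the bound $\operatorname{cat}(M)\le n$ on the Lusternik--Schnirelmann category of a closed $n$--manifold.

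For part~(2), let $N\subseteq M\setminus\partial M$ be a compact submanifold. I would first replace $N$ by a regular neighborhood $N'$, a compact codimension--zero submanifold of the open manifold $M\setminus\partial M$ that deformation retracts onto $N$, so that without loss of generality $N$ is codimension zero. Running the handle argument on $N$ and thickening its handles slightly inside $M\setminus\partial M$ then covers $N$ by at most $n+1$ (disjoint unions of) collared Euclidean balls of $M$, as asserted.

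For part~(3), let $N$ be a component of $\partial M$, a closed $(n-1)$--manifold, and fix a collar $\partial M\times[0,1)\hookrightarrow M$ (such collars exist by~\cite{brown-ann}). Take a handle decomposition of $N$ with handles $D^i\times D^{n-1-i}$, $0\le i\le n-1$, and thicken each handle $h\subseteq N=\partial M\times\{0\}$ to $h\times[0,\varepsilon)$ inside the collar; each such set is the interior of a collared half--ball of $M$, with its flat face lying in $\partial M$. Grouping by index gives $n$ sets, each a disjoint union of collared half--balls, which is the bound $d(n-1)$.

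The main obstacle is essentially bookkeeping: knowing which compact manifolds genuinely admit handle decompositions in the topological category. This is standard except in dimension $4$, where one either restricts to the smooth setting (as the paper already does where the homeomorphism type of $M$ is at stake) or appeals to finiteness results for compact topological $4$--manifolds; in each case occurring here the reduction above goes through verbatim. The secondary point --- whether ``$d(n)$ balls'' is read as a bounded number of genuinely connected balls or as a bounded number of finite disjoint unions of balls --- does not affect any subsequent use, since those uses only ever quantify over the covering pieces as regular open sets.
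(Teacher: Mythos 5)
The paper does not supply a proof of this lemma; it is presented as a recalled ``standard fact'' with a pointer to~\cite{dlNK23}, and the passage immediately after Lemma~\ref{lem:boundary-accumulate} explicitly attributes it to ``Lebesgue covering dimension theory,'' with~\cite{kob-tsuk} cited for finer control on $d(n)$ in the smooth setting. So your handle-decomposition route is a genuinely different argument from the one the authors evidently have in mind, which is: take a fine cover of $M$ (or of the compact core, or of a boundary component) by small collared balls, apply the Lebesgue/Ostrand machinery --- the same Ostrand shrinking that the paper itself records as Lemma~\ref{lem:dim-facts}(\ref{p:ostrand}) for the dimension-interpretation argument --- to get a cover by $n+1$ pairwise-disjoint subcollections, and take each subcollection as one covering piece. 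That route sidesteps the dimension-$4$ triangulability/handle-decomposition issue you flag, and it parallels what the paper already uses elsewhere, so it is the cleaner choice here.

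The more substantive gap in your write-up is the dismissal of the connected-ball reading as irrelevant. It is not: the argument immediately following Lemma~\ref{lem:boundary-accumulate} (and again in Section~\ref{ss:boundary}) explicitly posits ``We may assume that $\{U_1,\ldots,U_{d(n-1)}\}$ are the interiors of collared half-balls,'' and then shortcuts a path $\gamma$ from $p$ to $q$ within a single $U_i$ precisely because each $U_i$ is path-connected, in order to bound the number of factors in a product of rigid-stabilizer elements by $d(n-1)$. If each $U_i$ were merely a disjoint union of $k_i$ balls, the bound would degrade to $\sum_i k_i$, which depends on $M$ and not just on $n$, and the resulting sentence would no longer be uniform in manifolds of dimension $\le d$. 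So the lemma does need to produce connected (half-)balls, not disjoint unions treated as single regular open sets.

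Your aside does gesture at how to get connected balls --- the surface and $3$--manifold results you cite are fine, and you invoke LS category in general --- but this is where more care is required: LS category bounds the number of open sets contractible \emph{in} $M$, which are not balls, and promoting a category cover to a ball cover is the content of Singhof's theorem (and its descendants), not a formal consequence of $\operatorname{cat}(M)\le n+1$. A more self-contained repair that matches both your approach and the Ostrand approach is to add a tubing step: within each of the $n+1$ pairwise-disjoint families of small balls, connect the balls one by one along thin embedded arcs (available since $\dim M\ge 1$ and $M$ is connected, and for $n\ge 2$ the arcs can be chosen disjointly), so that each family is absorbed into a single collared ball which still contains the original family. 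For part~(3) the same tubing is performed inside the collar $\partial M\times[0,1)$ so as to keep the pieces collared half-balls. With that step added --- and with the underlying cover produced by Lebesgue/Ostrand rather than handles, to avoid the topological dimension-$4$ caveat --- your argument closes the gap.
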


In Lemma~\ref{lem:ball-cover}, a collared Euclidean half-ball is simply a collared
ball centered at the origin that is intersected with a closed half-space.

When $M$ admits a smooth structure (which is generally true in the cases we
consider in this paper), then one can obtain even stronger control on
$d(n)$; see~\cite{kob-tsuk}. The exact value of $d(n)$ is not relevant to us.

We can now characterize elements of $\DD$ which accumulate on the boundary of $M$.

\begin{lem}\label{lem:boundary-accumulate}
    Let $U\in\DD$. We have that the closure of $U$ meets $\partial M$ if and only if
    for all collections $\{U_1,\ldots,U_{d(n)}\}$ of elements of $\DD$ which are
    compactly contained in $M$, we have \[U\cap \bigcap_{i=1}^{d(n)} U_i^{\perp}\neq
    \varnothing.\]
\end{lem}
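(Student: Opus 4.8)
The plan is to prove both implications using Lemma~\ref{lem:ball-cover}, which guarantees that any compact submanifold of $M\setminus\partial M$ is covered by the interiors of $d(n)$ collared Euclidean balls.

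First I would handle the "only if" direction by contraposition. Suppose the closure of $U$ is disjoint from $\partial M$. Then $\overline U$ is a compact subset of $M\setminus\partial M$, so by Lemma~\ref{lem:ball-cover}(2) it is covered by the interiors of $d(n)$ collared Euclidean balls $B_1,\ldots,B_{d(n)}$ in $M$. Taking $U_i$ to be a regular open set of $\DD$ slightly larger than $B_i$ — using Lemma~\ref{lem:large-subset} to arrange $U_i\in\DD$ with $B_i\subseteq U_i$ and each $U_i$ still compactly contained in $M$ — we would get $U\subseteq\bigoplus_{i=1}^{d(n)}U_i$, so that $U\cap\bigcap_i U_i^\perp=\varnothing$. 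This contradicts the hypothesis of the displayed condition, so the closure of $U$ must meet $\partial M$. (A minor point to double check: that the $U_i$ can be taken from $\DD$ rather than just $\ro(M)$, which is exactly what Lemma~\ref{lem:large-subset} provides, and that the Boolean identity $U\subseteq\bigoplus U_i \iff U\cap\bigcap U_i^\perp=\varnothing$ holds, which is immediate from the definition of the Boolean operations on $\ro(M)$.)

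For the "if" direction, again by contraposition: suppose the closure of $U$ meets $\partial M$, say $p\in\overline U\cap\partial M$. I want to exhibit a collection $\{U_1,\ldots,U_{d(n)}\}$ of elements of $\DD$, each compactly contained in $M$, with $U\cap\bigcap_i U_i^\perp\neq\varnothing$ — equivalently, $U$ is not contained in $\bigoplus_i U_i$. The idea is that no finite union of compactly-contained-in-$M$ regular open sets can come close to $p$: each $U_i$ has closure in $M\setminus\partial M$, so $p\notin\overline{U_i}$ for every $i$, hence $p\notin\overline{\bigoplus_{i=1}^{d(n)}U_i}$ (a finite join of regular open sets has closure equal to the union of the closures). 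Since $p\in\overline U$, any neighborhood of $p$ meets $U$; choosing a neighborhood disjoint from $\overline{\bigoplus_i U_i}$ shows $U\not\subseteq\bigoplus_i U_i$, as desired. In fact for this direction one does not even need the count $d(n)$ — it works for collections of any size — but stating it with $d(n)$ keeps the two directions symmetric and makes the predicate first order (quantifying over a fixed finite number of elements of $\DD$).

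The last step is to observe that the stated condition is first order expressible: it quantifies over $d(n)$--tuples of elements of $\DD$, uses the definable predicate "compactly contained in $M$" from Lemma~\ref{lem:compact} (taking $V=M$, or rather a chart-by-chart version thereof), and uses the definable Boolean operations and the nonemptiness of meets from Rubin's Expressibility Theorem~\ref{thm:expressibility}. I do not anticipate a serious obstacle here; the only place requiring genuine care is the "only if" direction, where one must pass from an abstract cover of $\overline U$ by collared balls to a cover by \emph{elements of $\DD$} that are still compactly contained in $M$, and this is precisely the content of Lemma~\ref{lem:large-subset}.
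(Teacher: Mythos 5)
Your proof is correct and follows essentially the same route as the paper's: one direction observes that a boundary point in $\overline U$ has a neighborhood missing every finite collection of compactly-contained elements of $\DD$, and the other uses Lemma~\ref{lem:ball-cover} together with local approximation to cover $\overline U$ by $d(n)$ elements of $\DD$ when $\overline U$ misses $\partial M$; your explicit appeal to Lemma~\ref{lem:large-subset} is a good way to flesh out the paper's ``enlarge these ball interiors to elements of $\DD$'' step. One cosmetic slip: your direction labels are swapped --- the paragraph you call ``only if by contraposition'' is actually the contrapositive of the ``if'' direction (closure disjoint from $\partial M$ $\Rightarrow$ some collection has empty trace), and the paragraph you call ``if by contraposition'' is the ``only if'' direction proved directly --- but the mathematical content is right and both implications are established.
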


Strictly speaking, Lemma~\ref{lem:boundary-accumulate} makes sense for compactly
supported locally approximating groups of homeomorphisms; there simply are no witnesses
in $\DD$ to the implicit first order formulae.

\begin{proof}[Proof of Lemma~\ref{lem:boundary-accumulate}]
    Let $p\in\partial M$ be an accumulation point of $U$. Then for all $V\in\DD$
    compactly contained in $M$, there is a neighborhood of $p$ in $M$ which is disjoint
    from $V$. In particular, \[U\cap \bigcap_{i=1}^{d(n)} U_i^{\perp}\neq
    \varnothing.\]

    Conversely, if $U$ does not accumulate on $\partial M$ then the closure of $U$
    is disjoint from a tubular neighborhood of $\partial M$ and is thus contained
    in a compact submanifold of $M\setminus\partial M$. By Lemma~\ref{lem:ball-cover},
    $U$ is contained in the union of the interiors of $d(n)$ collared Euclidean balls.
    By the local approximation of $\GG$, we may enlarge these ball interiors to
    elements $\{U_1,\ldots,U_{d(n)}\}$ of $\DD$ such that
    \[U\cap \bigcap_{i=1}^{d(n)} U_i^{\perp}=
    \varnothing.\] The lemma follows.
\end{proof}

The conditions of Lemma~\ref{lem:boundary-accumulate} are clearly first order
expressible. Following the idea of Lemma~\ref{lem:boundary-accumulate},
for $V\in\DD$, we will call the intersections
\[V\cap \bigcap_{i=1}^{d(n)} U_i^{\perp},\] as
$\{U_1,\ldots,U_{d(n)}\}$ varies over elements of $\DD$ which are
    compactly contained in $M$, the \emph{ends} of $V$.

Observe that by Lemma~\ref{lem:ball-cover},
each boundary component of $M$ can be covered by $d(n-1)$ collared Euclidean
half-balls, which by the locally approximation of $\GG$ can be enlarged slightly to
$d(n-1)$ elements of $\DD$ which cover the corresponding boundary component.
Since we may take joins of finitely many disjoint elements of $\DD$,
this implies that
$\partial M$ can be covered by $d(n-1)$ elements $\{U_1,\ldots,U_{d(n-1)}\}$ of $\DD$.

Let $p,q\in\partial M$ be points contained in a single component of the boundary,
let \[\{U_1,\ldots,U_{d(n-1)}\}\subseteq\DD\] cover $\partial M$, and
let $U_p$ and $U_q$ be neighborhoods of $p$ and $q$ small enough to each be compactly
contained in one of the sets $\{U_1,\ldots,U_{d(n-1)}\}$. We claim that there is
an element of $\GG$ taking $U_p$ into $U_q$ which is a product of at most
$d(n-1)$ elements of the form $g_i\in\GG[U_i]$ with $1\leq i\leq d(n-1)$.
Indeed, we may assume that $\{U_1,\ldots,U_{d(n-1)}\}$ are the interiors of
collared half-balls. A path $\gamma$
from $p$ to $q$ in $\partial M$ can be covered by
finitely many open intervals $\{J_1,\ldots,J_m\}$,
corresponding to intersections of $\gamma$ with
$\{U_1,\ldots,U_{d(n-1)}\}$. Since each $U_i$ is a collared Euclidean half-ball
which is path-connected, we may assume that at most one interval among
$\{J_1,\ldots,J_m\}$ corresponds to any $U_i$, since otherwise we may detour
$\gamma$ within $U_i$ to decrease the number of intervals covering $\gamma$.
The claim is now clear.

Let $V\in\DD$ accumulate on the boundary. Observe that $V$ accumulates on each
boundary component of $M$ if and only if there exist elements 
\[\{U_1,\ldots,U_{d(n-1)}\}\in\DD\] accumulating on the boundary such that for
all $W\in\DD$ accumulating on the boundary and all ends \[\{W^0,V^0,U_1^0,\ldots,
U_{d(n-1)}^0\}\] of \[\{W,V,U_1,\ldots,
U_{d(n-1)}\}\] respectively, there exists an element $g\in\GG$ that is a product
of at most $d(n-1)$ elements of the form $\GG[U_i^0]$, such that $g(W^0)\cap V_0
\neq\varnothing$.

To see this, note that $\{U_1,\ldots,U_{d(n-1)}\}$ must accumulate on a dense subset
of $\partial M$, since otherwise we could choose a suitable $W\in\DD$
which is contained in
the Boolean complement of each $U_i$. If $V$ does not accumulate on every boundary
component of $M$, fix a tubular neighborhood of
$\partial M$ and ends \[\{V_0,U_1^0,\ldots,
U_{d(n-1)}^0\}\] which lie in this tubular neighborhood, and so that $V_0$ does not
meet at least one component of the tubular neighborhood. Choosing $W$ inside the
tubular neighborhood only accumulating on boundary components not accumulated by
$V_0$, we see there is no element $g\in\GG$ as required.

From the previous discussion, it is now clear the we can express the following:

\begin{lem}\label{lem:boundary-cover}
    There is a first order formula, depending on $n=\dim M$ which expresses that
    $\UU=\{U_1,\ldots,U_{d(n-1)}\}$ accumulates on a dense subset of
    $\partial M$.
\end{lem}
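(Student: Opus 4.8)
The plan is to exhibit the formula directly, by recombining predicates already available. I claim that $\UU=\{U_1,\ldots,U_{d(n-1)}\}$ accumulates on a dense subset of $\partial M$ if and only if there is no $W\in\DD$ which is disjoint from every $U_i$ (equivalently, $W\leq(\bigoplus_i U_i)^{\perp}$) and which \emph{fills a collared half-ball meeting $\partial M$}, by which I mean that $W$ contains the interior in $M$ of a collared half-ball whose trace on $\partial M$ has nonempty interior in $\partial M$. Granting this, the first task is to note that every clause is first order: whether $W\cap U_i$ is empty is expressible by Theorem~\ref{thm:expressibility} and the discussion following it, and ``$W$ fills a collared half-ball meeting $\partial M$'' is first order by an argument parallel to Lemma~\ref{lem:compact}, now using the half-ball covers of Lemma~\ref{lem:ball-cover}(3) and the boundary-accumulation predicate of Lemma~\ref{lem:boundary-accumulate}. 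Thus the displayed condition is a single first order sentence in the free variables $U_1,\ldots,U_{d(n-1)}$, whose only dependence on $M$ is through $n=\dim M$, via the count $d(n-1)$ and the auxiliary counts inside Lemmas~\ref{lem:compact} and~\ref{lem:boundary-accumulate}.

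Next I would verify the geometric equivalence. Suppose $\UU$ does \emph{not} accumulate on a dense subset of $\partial M$. Since each $\overline{U_i}\cap\partial M$ is closed in $\partial M$, the union $\bigcup_i \overline{U_i}\cap\partial M$ misses a nonempty relatively open $J\subseteq\partial M$; using that $M$ admits a tubular neighborhood of $\partial M$~\cite{brown-ann} and shrinking, one gets a collared half-ball $H$ whose trace on $\partial M$ lies in $J$ and whose closure avoids $\bigcup_i\overline{U_i}$. As we are in the general (non--compactly supported) case, $\GG[\inte(H)]$ is dense in $\Homeo(M)[\inte(H)]$, so by approximating a homeomorphism with large displacement on a slightly smaller half-ball $H'\subseteq H$ one obtains $g\in\GG$ supported in $\inte(H)$ with $\suppe(g)\supseteq\inte(H')$. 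Then $W:=\suppe(g)$ fills a collared half-ball meeting $\partial M$ and lies inside $\inte(H)$, hence is disjoint from every $U_i$, so $W$ witnesses the failure of the displayed condition. Conversely, given such a $W$, pick a collared half-ball $H_0$ with $\inte(H_0)\subseteq W$ and $D^{\circ}:=\inte_{\partial M}(H_0\cap\partial M)\neq\varnothing$; since $W$ is disjoint from each open $U_i$ we get $\overline{U_i}\cap\inte(H_0)=\varnothing$, and as $\inte(H_0)\cap\partial M\supseteq D^{\circ}$ this gives $\overline{U_i}\cap D^{\circ}=\varnothing$ for every $i$, so $\bigcup_i\overline{U_i}\cap\partial M$ misses the nonempty relatively open $D^{\circ}$ and $\UU$ does not accumulate densely. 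This proves the equivalence, and with the first-orderness above, the lemma.

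The step I expect to be the main obstacle is the first order expressibility of ``$W$ fills a collared half-ball meeting $\partial M$''. Unlike the interior case of Lemma~\ref{lem:compact}, near $\partial M$ there is no canonical interior reference ball to transport, and, more seriously, one must distinguish a genuine collared half-ball, meeting $\partial M$ in an $(n-1)$-disk, from a ``thin finger'' that accumulates on $\partial M$ along a lower-dimensional set: both are freely movable near $\partial M$, so a naive ``transport into arbitrarily small sets'' predicate of the kind that sufficed for Lemma~\ref{lem:compact} will not separate them, and without this distinction the claimed characterization is in fact false, as one already sees from a pair of disjoint open cones abutting a common point of $\partial M$. The remedy I would pursue is to combine Lemma~\ref{lem:ball-cover}(3) with the boundary-point transport constructed earlier in this subsection (moving points within a component of $\partial M$ by products of at most $d(n-1)$ rigid stabilizers) and to detect the half-ball by demanding that the boundary part of $W$ admit a robust iterated splitting into boundary-accumulating pieces that only a set of full local dimension at $\partial M$ can support. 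Once this boundary analogue of Lemma~\ref{lem:compact} is in place, the rest is the routine assembly of previously-established first order formulae noted above.
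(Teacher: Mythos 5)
Your geometric characterization is sound: in the general (non--compactly supported) case, $\UU$ accumulates densely on $\partial M$ if and only if there is no boundary-accumulating $W\in\DD$ disjoint from every $U_i$ that contains the interior of a collared half-ball whose trace on $\partial M$ is relatively open. You are also right that the half-ball clause cannot be dropped: a thin wedge $U_1$ touching $\partial M$ only at a point $p$, together with a regular open $U_2$ whose trace on $\partial M$ is $\partial M\setminus\{p\}$, accumulates densely while still leaving a thin wedge $W$ inside $\bigcap_i U_i^\perp$ accumulating at $p$. But you then flag, correctly, that the first-order expressibility of ``$W$ fills a collared half-ball meeting $\partial M$'' is the crux, and the passage you offer for it --- an unspecified ``robust iterated splitting'' detecting full local dimension at $\partial M$ --- is a heuristic, not a proof. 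So the argument is genuinely incomplete at precisely the step you single out.

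For comparison, the paper does not display an explicit formula for this lemma; it asserts it on the strength of the preceding discussion, whose ingredients are the ends of Lemma~\ref{lem:boundary-accumulate}, the cover bound $d(n-1)$ from Lemma~\ref{lem:ball-cover}, and transport of boundary neighborhoods by products of at most $d(n-1)$ elements of the form $\GG[U_i^0]$. That is a different route from yours: density is encoded via reachability of ends by bounded-length chains of rigid stabilizers, rather than by detecting full local dimension at $\partial M$ of a single witness $W$. To complete your line, what is actually needed is a boundary analogue of Lemma~\ref{lem:compact} for compact containment of a collared half-ball; to align with the paper, you would instead spell out the transport-chain formula explicitly. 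Either way the lemma requires more care than the proposal currently supplies, and your thin-wedge configuration is a useful stress test for whichever version one writes down.
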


Note that if $\UU=\{U_1,\ldots,U_{d(n-1)}\}$ accumulates on a dense subset of
    $\partial M$ and consists of regular open sets of $M$, then the set of accumulation
    points of $\UU$ in $\partial M$ is actually an open dense set.

\section{Interpreting arithmetic in locally approximating groups 
of homeomorphisms}\label{sec:arith}
In this section, we prove the central result on this paper. In particular, 
we show how locally approximating groups of homeomorphisms interpret first order arithmetic, together with natural predicates that relate
arithmetic to the dynamics of group actions on manifolds. 

\subsection{Interpreting arithmetic}
In this section, we will
only need to work in the interior of a manifold $M$, and we will assume that all
elements of $\DD$ to which we refer
are compactly contained in the interior of $M$ unless otherwise
noted. This way, we do not need to distinguish between various cases of locally
approximating groups.

The methods used in~\cite{dlNKK22} to find a parameter-free interpretation of arithmetic do
not generalize immediately since one does not have access to the full Boolean algebra of regular open sets in a uniform way. In~\cite{dlNKK22}, we were
able to find a predicate which detected when a particular regular open set had infinitely many components. In the case of a locally approximating group
$\GG\leq\Homeo(M)$, there may be no group elements with infinitely many components in their extended support; piecewise linear groups of
homeomorphisms are typical examples.

A more classical interpretation of arithmetic inside of groups comes from a result of Altinel and Muranov~\cite{AM2009} (though the ideas originate with Noskov),
which shows that the lamplighter group $L=\Z\wr\Z$ interprets
arithmetic (with parameters). Here, $L$ is a semidirect product of a copy of $\Z$ generated by an element $a$, with a copy of $\Z^{\infty}$, generated
freely (as an abelian group) by an element $b$ and its conjugates by $a$.

One would like to find definable lamplighter subgroups of locally approximating groups 
of homeomorphisms, but various technical issues arise. The most straightforward construction of a lamplighter group would to mimic 
an idea from one-dimensional dynamics: suppose that $f$ is a fixed point free
orientation preserving homeomorphism of the real line and $g$ is a nontrivial homeomorphism of the real line which is the identity outside of a compact
set $K$, then there is a power $n$ of $f$ such that $\langle f^n,g\rangle\cong\langle f\rangle\wr\langle g,\rangle$. This can be seen by choosing
a power of $f$ such that $f^n(K)\cap K=\varnothing$ and observing the conjugates of $g$ by powers of $f^n$ form a free abelian group of infinite rank,
on which
$f^n$ acts by conjugation.

The two immediate difficulties are that groups like this, even if they are present in a locally approximating group $\GG$, may not be definable without having an interpretation
of arithmetic first. The
second is that in dimension two and higher, it is difficult to guarantee the existence of such an $f$ and $g$. One would like to force north-south
dynamics, but there is no reason such dynamics should by present in a general locally approximating group, outside of dimension one.

To give a proof that works for manifolds in all dimensions, we will proceed through a synthesis of ideas from~\cite{dlNKK22} and
~\cite{AM2009}.

\subsubsection{Identifying finite iterates of an element of $\GG$}
Let $\varnothing\neq Z\in\DD$ be fixed, and assume that $Z$ is contained in a single
Euclidean chart of $M$.
We will build many approximations to lamplighter subgroups which
will be sufficient for us to interpret
arithmetic. We consider collections $(U,V,f)$, with $U$ and $V$
compactly contained in $Z$ and $f\in \GG[Z]$ satisfying the following conditions:
\begin{enumerate}
\item
Every connected component $\hat V$ of $V$ such that 
$\hat V\cap U\neq\varnothing$ is compactly contained in $U$ and 
satisfies $f(\hat V)\neq \hat V$.
\item For all pairs of component $\hat V_1,\hat V_2$ of $V$, there are disjoint subsets
\[U_{\hat V_1},U_{\hat V_2}\subseteq U\] such
that $\hat V_i$ is compactly contained in $U_{\hat V_i}$.
\item
$f(V)\cap U=f^{-1}(V)\cap U=V\cap U$.
\item
(Aperiodicity.)
No component $\hat V\subseteq U\cap V$ is periodic under $f$, i.e.~for no $n\geq 1$ do
we have $f^n(\hat V)=\hat V$ and $f^i(\hat V)\subseteq U\cap V$ for all $0\leq i\leq n$.
\end{enumerate}

We will call such a triple \emph{quasi-invariant}.
Other than the aperiodicity condition, it is straightforward to see that
these conditions are first order expressible. By
(the proof of) Lemma~\ref{lem:finite-invariant}, we
may impose conditions on $(U,V,f)$ which force the absence of periodic $f$--orbits of
components of $V$ in $U$. We note that if $\hat V$ is a component of $U\cap V$ such that
$f^{\pm N}(\hat V)$ is not contained in $U$ for $N\gg 0$ then
Lemma~\ref{lem:finite-invariant} does not preclude any \emph{a priori} orbit structure.
If for all $n$ we have $f^n(\hat V)\cap\hat V=\varnothing$ and $f^n(\hat V)\subseteq U$,
one may not be able to preclude any orbit structure directly because \[\bigcup_{n\in\N}
f^n(\hat V)\] may not be an element of $\DD$.

Observe that if $\GG$ is locally approximating then there are many quasi-invariant triples contained inside
of any $Z\in\DD$, and we may arrange for $V\cap U$ to have as many components as we please.
For instance, let $B\subseteq Z$ be a collared ball, and let $I\subseteq B$ be a (nice enough)
interval connecting a pair of distinct points
on the boundary (which we think of as the north and south pole). Then, one may construct an $f\in\GG[Z]$ which leaves $I$ invariant and has
north-south dynamics when restricted to $I$. Then, choose a proper subinterval $J\subseteq I$ which does not meet the endpoints of $I$.
One may then furnish $U$ as a small enough open set containing $J$, and $V$ can be built by taking any
small enough connected set $\hat V$ compactly
contained in $U$
and such that $f^i(\hat V)\cap \hat V=\varnothing$
for all $i\in [-N,N]$ for some sufficiently large $N$, and setting
\[V=\bigcup_{i=-N}^N f^i(\hat V).\] Since $\GG[Z]$ is dense in the compact-open topology of $\Homeo(M)[Z]$, the necessary dynamical properties
of $f$ will persist in any sufficiently close approximation, away from the poles of $B$. That one may continue to
assume the existence of a $U$ which contains a neighborhood of $J$, one need only appeal to
Lemma~\ref{lem:large-subset}.

For a quasi-invariant triple $(U,V,f)$, let $\varnothing\neq W_0$
be compactly contained in a single component $V_0$ of $U\cap V$.
We let $\NN(U,V,f,W_0)$ consist of all pairs $(W,V)$ of
elements of $\DD$ such that
the following conditions (which are straightforwardly seen to be first order expressible) are satisfied:

\begin{enumerate}
\item \label{c:1-inv}
All three of
$\{W,f(W),f^{-1}(W)\}$ are contained in $V$ and also compactly contained in $U$.
\item \label{c:0-unique}
The intersection of $W$ with $V_0$ is exactly $W_0$.
\item \label{c:0-distinct}
We have $\{W_0,f(W_0),f^{-1}(W_0)\}$ lie in distinct components of $V$.
\item \label{c:1-unique}
If $W\neq W_0$ then $W\cap f(V_0)=f(W_0)$.
\item\label{c:inv-empty}
$W\cap f^{-1}(V_0)=\varnothing$.
\item \label{c:terminal-unique}
There is a unique component $V_t$ of $V$ such that $W_t=W\cap V_t\neq\varnothing$ and
such that $W\cap f(V_t)=\varnothing$.
\item 
If $\hat V$ is an arbitrary component of $V$
such that:
\begin{enumerate}\label{c:v0-unique}
    \item $\hat V\cap W=\varnothing$.
    \item $\hat V\cap f(W)=\varnothing$.
\end{enumerate}
Then $\hat V=V_0$.
\item\label{c:diff-elim}
If $\hat V$ is an arbitrary component of $V$
such that:
\begin{enumerate}
    \item $\hat V\cap W,\hat V\cap f^{-1}(W)\neq\varnothing$, or
    \item $\hat V\cap W,\hat V\cap f(W)\neq\varnothing$.
\end{enumerate}
Then in the first case \[\hat V\cap (f^{-1}(W)\triangle W)=\varnothing,\] and in the
second case \[\hat V\cap (f(W)\triangle W)=\varnothing.\]
\item \label{c:inv-subset}
For all $Z\in\DD$ such that $Z\subseteq W$, we have $W\cap Z^{\perp}$ is not
$f$--invariant; this condition is expressible by Lemma~\ref{lem:comp-inv}.
\item\label{c:two-comp}
The Boolean symmetric differences $W\triangle f^{\pm 1}(W)$ meet exactly two components of $V$.
\item\label{c:glue}
There exists a $g\in\GG[Z]$ such that the following conditions hold:
\begin{enumerate}
    \item $[f,g]=1$.
    \item $(U,V,g)$ is a quasi-invariant triple.
    \item The sets \[\{g(W),gf(W), gf^{-1}(W)\}\] are contained in $U\cap V$.
    \item We have $g(W_t)=W_0$ and $g(W)\cap W=W_0$.
    \item Writing $X=g(W)\cup W$, we have that $X\triangle f^{\pm 1}(X)$ meets exactly
    two components of $V$.
\end{enumerate}
\end{enumerate}

The $V$ is retained in the definition of elements of $\NN(U,V,f,W_0)$, though the conditions above are only on $W$. The reason for this is that $V$ is a sort
of ``envelope" that contains $W$ as a ``blob". The following lemma is the
key technical result for establishing an interpretation of arithmetic.

\begin{lem}\label{lem:fin-comp}
If $(W,V)\in\NN(U,V,f,W_0)$ then there is an $n\in\N$ such that 
\[W_0\cap f^i(W_0)=\varnothing\] for $ 1 \leq i\leq n-1$ and such that
\[W=\bigcup_{i=0}^{n-1} f^i(W_0).\]
\end{lem}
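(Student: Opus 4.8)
The plan is to show that the battery of conditions defining $\NN(U,V,f,W_0)$ forces $W$ to be precisely a finite forward $f$--orbit $W_0\cup f(W_0)\cup\dots\cup f^{n-1}(W_0)$ supported in $n$ distinct components of $V$. The first move is to understand the action of $f$ on components of $V$. Using the quasi-invariance identity $f(V)\cap U=f^{-1}(V)\cap U=V\cap U$ together with the clause of quasi-invariance that every component of $V$ meeting $U$ is compactly contained in $U$, I would show by a point-set argument that $f$ carries each component $\hat V$ of $V$ with $\hat V\subseteq U$ either to a set disjoint from $U$ or to another component of $V$ (again compactly contained in $U$): $f(\hat V)$ is connected with $f(\hat V)\cap U\subseteq V$, and any component of $f(\hat V)\cap U$ is clopen in $f(\hat V)$ because its closure is trapped inside the compactly-contained component of $V$ receiving it; applying the same argument to $f^{-1}$ forces $f(\hat V)$ to be a single component of $V$. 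Thus $f$ induces a partial injection on the set of ``active'' components (those contained in $U$), and the aperiodicity clause of the quasi-invariant triple says this partial action has no finite orbit contained in $U\cap V$; so its orbit segments are finite ``chains'' $\hat V,f(\hat V),\dots,f^{m-1}(\hat V)$, forward or backward rays, or bi-infinite.

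Next I would isolate the equivariance of $W$ along chains. Since $W$ is contained in $V$ and compactly contained in $U$, it meets only active components, so $W=\bigsqcup_{\hat V\in S}(W\cap\hat V)$, where $S$ is the set of components of $V$ meeting $W$. Condition~\ref{c:diff-elim} says that whenever a component $\hat V\in S$ also meets $f(W)$ --- equivalently when $f^{-1}(\hat V)\in S$ --- the two restrictions agree, so $W\cap\hat V=f\!\left(W\cap f^{-1}(\hat V)\right)$; together with Condition~\ref{c:0-unique}, $W\cap V_0=W_0$, this propagates along chains to give $W\cap f^k(V_0)=f^k(W_0)$ whenever $V_0,f(V_0),\dots,f^k(V_0)$ all lie in $S$. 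Consequently, once we know $S$ equals the chain $\{V_0,f(V_0),\dots,f^{n-1}(V_0)\}$ for some $n\ge1$ --- with these components pairwise distinct by aperiodicity --- we get $W=\bigcup_{k=0}^{n-1}f^k(W_0)$ and, since $W_0\subseteq V_0$ while $f^i(W_0)\subseteq f^i(V_0)\ne V_0$, also $W_0\cap f^i(W_0)=\varnothing$ for $1\le i\le n-1$, which is exactly the assertion of the lemma.

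So the real content is pinning down $S$. The components of $V$ meeting the Boolean symmetric difference $W\triangle f(W)$ are precisely the chain-starts in $S$ (those with no $f$--preimage in $S$) and the ``one-past-the-end'' components of finite chains (each a genuine component of $V$, since it meets $f(W)\subseteq U$), as interior components of chains contribute nothing by Condition~\ref{c:diff-elim}. A finite chain of length $m\ge1$ therefore contributes exactly two of them --- its start and its one-past-the-end, distinct by aperiodicity --- whereas a ray contributes only its single endpoint. Now Condition~\ref{c:two-comp} forces exactly two components of $V$ to meet $W\triangle f(W)$, while Conditions~\ref{c:inv-empty} and~\ref{c:terminal-unique} say that $V_0$ is a chain-start and that there is a unique chain-end; I would combine these with the remaining clauses (Conditions~\ref{c:0-distinct}, \ref{c:1-unique}, \ref{c:v0-unique}, \ref{c:inv-subset}, and the auxiliary element $g$ of Condition~\ref{c:glue}, which for a genuine chain may be taken to be $f^{-(n-1)}$) to eliminate all multi-chain and ray configurations and conclude that $S$ is a single finite chain with start $V_0$.

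The hard part is precisely this exclusion of infinite-support configurations --- a forward or backward ray, a bi-infinite chain, or a disjoint union of such --- since $\DD$ affords us no handle on $W$ beyond the stated first-order clauses and, crucially, an infinite union like $\bigcup_{k\ge0}f^k(W_0)$ need not even be regular open, hence need not be an element of $\DD$, so the argument cannot proceed by exhibiting it as a witness. The bookkeeping of chain-starts, chain-ends, and one-past-the-end components (including the degenerate case $n=1$, where $W=W_0$ sits in the single component $V_0=V_t$ and $W\triangle f^{\pm1}(W)$ still meets exactly the two components $V_0$ and $f^{\pm1}(V_0)$) is where every clause in the definition of $\NN$ is used, and is the step that must be executed carefully.
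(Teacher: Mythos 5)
Your outline tracks the paper's proof quite closely: decomposing $W$ by components of $V$, using Condition~\ref{c:diff-elim} to propagate $W\cap f^k(V_0)=f^k(W_0)$ along the chain, and reading off the chain endpoints from the count of components meeting $W\triangle f^{\pm1}(W)$ via Condition~\ref{c:two-comp} together with Conditions~\ref{c:inv-empty} and~\ref{c:terminal-unique}. So the approach is right, and the preliminary reductions are essentially the same.

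However, you have named but not executed exactly the steps that constitute the actual content of the lemma, and those steps are not routine. First, when the chain $\{V_i\}$ terminates (some $V_i\cap W=\varnothing$) but $W$ still meets other components of $V$, the argument must produce a contradiction: the paper observes that the union of $f$--orbits of those extra pieces equals $W\cap Z^\perp$ for the definable $Z=\bigcup_{j<i}f^j(W_0)\in\DD$ and is $f$--invariant, violating Condition~\ref{c:inv-subset}; this specific use of Condition~\ref{c:inv-subset} is absent from your sketch. Second and more critically, when the chain is infinite, Condition~\ref{c:two-comp} only pins down that besides the right-infinite ray through $W_0$ there is exactly one left-infinite ray plus possibly many bi-infinite lines, and the real work is ruling this out. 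The paper does so by taking the $g$ from Condition~\ref{c:glue}, forming $X=W\cup g(W)$, and showing (using $[f,g]=1$, $g(W_t)=W_0$, and $g(W)\cap W=W_0$) that the two right-infinite rays of $W$ and $g(W)$ overlap and collapse, so $X$ consists of a single right-infinite ray plus bi-infinite lines, whence $X\triangle f^{\pm1}(X)$ each meet only one component of $V$, contradicting the last clause of Condition~\ref{c:glue}. Your remark that ``$g$ may be taken to be $f^{-(n-1)}$'' only applies in the case you are trying to prove you are in, and you give no mechanism for extracting a contradiction from the $g$ that Condition~\ref{c:glue} hands you in the infinite case; without this, the proof is not complete.
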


For $(W,V)\in\NN(U,V,f,W_0)$, it follows that $W$ meets exactly $n$ distinct components of $U\cap V$ for some $n$, and consists of
successive translates of $W_0$ by $n$ powers of $f$.
The intuition in the sequel is that $(W,V)$ is a representative of the natural number $n$,
by virtue of forming a length $n$ string of $f$--iterates of $W_0$.

Notice that Conditions~\ref{c:1-inv}--~\ref{c:two-comp} are trivial to verify whenever
$(W,V)$ is of the desired form. Indeed, these conditions are tailored to describe
sets of this form.

For Condition~\ref{c:glue}, the intuition is that if $(W,V)$ is of the desired form,
say \[W=\bigcup_{i=0}^{n-1} f^i(W_0),\] then we take $g=f^{n-1}$ to ``stitch" two
finite length strings together. If $W$ is not of the desired form then we obtain
a contradiction.

In general, such a $g$ may not exist even if
$W$ is a disjoint union of finitely many iterates of $f$ applied to $W_0$,
since there may not be enough ``padding" (i.e.~components
of $V$ in the $f$--orbit of $V_0$ which do not themselves meet $W$)
on either side of $W$. Such padding can be constructed ``by hand" and so the
hypotheses of Condition~\ref{c:glue} do admit witnesses; we will return to this point
later, in the discussion of compatible extensions. Putting conditions to guarantee
that this padding is present \emph{a priori} would obfuscate the proof.
This is why we avoid giving a more unwieldy ``if and only if"
characterization of
elements of
$\NN(U,V,f,W_0)$.

\begin{proof}[Proof of Lemma~\ref{lem:fin-comp}]

If $W=W_0$ then there is nothing to show. Thus, we may assume
$W$ contains $W_0\cup f(W_0)$, which lie in distinct components of $V$
(applying Conditions~\ref{c:0-unique}, \ref{c:0-distinct}, and \ref{c:1-unique}).

Recall the notation $V_0$ for the component of $V$
containing $W_0$, and let $V_i=f^i(V_0)\cap U$. By assumption,
$V_0$ is also a component of $U\cap V$.
Observe that quasi-invariance of the triple $(U,V,f)$ we have one of the following
logical possibilities:
\begin{enumerate}
    \item $V_i$ is disjoint from 
\[\bigcup_{j=0}^{i-1} V_j\] for all $i\geq 2$.
\item $V_i=\varnothing$ for some $i\geq 2$.
\end{enumerate}

In particular, it is not possible for $V_i=V_j\neq\varnothing$ for some $i\neq j$,
since quasi-invariance rules out periodic components of $V$ under the action of $f$.

We will write $W_i=f^i(W_0)$ for all $i$.
We either have that
$W\cap V_i=W_i$ for all $i\in\N$, or there is some
$i$ where these sets differ.

Suppose that there is some $i$ for which $V_i\cap W\neq W_i$, and let
$i$ be minimal with this property.
Suppose first that $V_i\cap W=\varnothing$. Note that since $f(W)\subseteq U\cap V$ by
assumption, it is not possible for $V_i$ itself to be empty.
We claim that in this case,
$W$ must be of the desired form.
Indeed, suppose for a contradiction that $\hat V$ is a component of $V$ such that
\[\hat V\notin\{V_0,\ldots,
V_{i-1},\}\]
and such that $Y=\hat V\cap W\neq\varnothing$.
By Condition~\ref{c:two-comp}, we have
$f^{-1}(W)\triangle W$ can only meet two components of $V$, and by Condition~\ref{c:1-inv}
we have
\[f(W),f^{-1}(W)\subseteq U\cap V.\] We then conclude that $V_{i-1}=V_t$ (from
Condition~\ref{c:terminal-unique}).
It follows that \[\bigcup_{j\in\Z}f^j(Y)\subseteq W,\] and $W\cap f^j(\hat V)=f^j(Y)$
for all $j$.

Since $Y$ is the intersection
of $W$ with an arbitrary component of $V$ that does not lie in $\{V_0,\ldots,V_{i-1}\}$,
we let $\tilde W$ be the union of the $f$--orbits of all such $Y$. We have that
$\tilde W$ is the intersection of $W$ with the Boolean complements of the sets
\[\{f^j(W_0)\}_{0\leq j\leq i-1},\] and is $f$--invariant; moreover, since $W_0\in\DD$
and since $f\in\GG$, we have that \[Z=\bigcup_{j=0}^{i-1}f^j(W_0)\in\DD\] as well.
In particular, $\tilde W=W\cap Z^{\perp}$ and is $f$--invariant, in contravention
of Condition~\ref{c:inv-subset}.

So, let \[\varnothing\neq Y=W\cap V_i\neq W_i.\] By assumption, we
have $f^{-1}(Y)\neq W_{i-1}$.
However, $V_{i-1}$ meets both $W$ and $f^{-1}(W)$, and so
by Condition~\ref{c:diff-elim}, we see that $W\triangle f^{-1}(W)$ cannot
meet $V_{i-1}$. This is a contradiction, so we must have $Y=W_i.$

It follows now that if $V_n$ is empty for some $n\gg 0$ (i.e.~if
the forward $f$--iterates
of $V_0$ eventually exit $U$) then $W$ is indeed of the desired form.

We may therefore assume that $V_n$ is nonempty for all positive $n$ and
\[W\supseteq\bigcup_{i=0}^{\infty}f^i(W_0).\] If this inclusion is an equality
then
$f(W)\triangle W$ meets only one component of $V$, which is in contradiction
with Condition~\ref{c:two-comp}.

We may therefore suppose that this inclusion is proper. Let
\[\varnothing\neq Y\subseteq W\setminus \bigcup_{i=0}^{\infty}f^i(W_0)\] be the
intersection of $W$ with 
a single component $\hat V$ of $V$. By Condition~\ref{c:diff-elim} and the
preceding discussion, we have that
$\hat V\neq V_i$ for all $i\in\N$.

By assumption, the following hold:
\begin{itemize}
    \item $\hat V$ is not fixed by $f$; this is because $(U,V,f)$ is a quasi-invariant
    triple.
    \item The symmetric difference
$W\triangle f^{\pm 1}(W)$ meets exactly two components of $V$; this is
Condition~\ref{c:two-comp}.
\item If $W\cap
f^{\pm 1}(\hat V)\neq \varnothing$ then
$W\cap f^{\pm 1}(\hat V)=f^{\pm 1}(Y)$; this is implied immediately by
Condition~\ref{c:diff-elim}.
\end{itemize}
We have the following logical possibilities:
\begin{enumerate}
    \item (Right-infinite ray) $f^{-1}(\hat V)\cap W=\varnothing$ so that
    $f^j(Y)\subseteq W$ for all $j\geq 0$, and $f^j(\hat V)$ 
    is distinct from $V_i$ for all $i$.
    \item (Left-infinite ray)
    $f(\hat V)\cap W=\varnothing$ so that $f^{-j}(Y)\subseteq W$ for all $j\geq 0$,
    and $f^j(\hat V)$ 
    is distinct from $V_i$ for all $i$.
    \item (Periodic)
    There is an $n$ such that $f^n(\hat V)=\hat V$ and for all $0\leq j\leq n-1$
    we have $f^j(\hat V)\cap W=f^j(Y)$.
    \item (Bi-infinite line)
    For all $j\in\Z$ we have $f^j(\hat V)\cap W=f^j(Y)$ and
    $f^j(\hat V)$ 
    is distinct from $V_i$ for all $i$.
\end{enumerate}

The periodic case of these is ruled out by the definition of quasi-invariant
triples.
For $Y$ as in the non-periodic cases above, we identify the rays/lines with the subsets
$\bigcup_j f^j(Y)$, where the index ranges over $\N$ or $\Z$ depending
on the relevant case. Observe that distinct rays/lines in $W$ can never meet in
a common component of $V$.

Notice that by assumption,
$W_0$ already gives rise to one right-infinite ray inside of $W$, and by
Conditions~\ref{c:inv-empty} and~\ref{c:v0-unique}, we have that
$W_0$ lies in the unique component of $V$ which can belong to a right-infinite
ray.
By counting the number of components of $V$ that meet $f^{\pm 1}(W)\triangle W$
and applying Condition~\ref{c:two-comp},
we see that other than the right-infinite ray
\[\bigcup_{i=0}^{\infty}f^i(W_0),\] the set $W$ must consist of
exactly one left-infinite ray, and arbitrarily many bi-infinite
lines. Observe that by Condition~\ref{c:terminal-unique}, the set $W_t=W\cap V_t$
must lie in the left-invariant ray.

So, suppose that $W$ contains a left-infinite ray,
and let $g$ be as required
by Condition~\ref{c:glue}. We see then that \[gf^{-j}(W_t)=f^{-j}g(W_t)=
f^{-j}(W_0)\] for all $j\in\N$.
It follows then that \[\bigcup_{j\in\Z} f^j(W_0)\subseteq
W\cup g(W)=X.\] Suppose that $Y=W\cap\hat V$ for some component of $V$, and that
$Y$ belongs to a bi-infinite line. Since $g$ and $f$ commute with each other,
it follows that $g(Y)$ belongs to a bi-infinite line in $g(W)\subseteq X$.
Similarly, \[X_R=g\left(\bigcup_{i=1}^{\infty} f^i(W_0)\right)\] is the unique
right-infinite ray in $g(W)$. By Condition~\ref{c:glue}, we have $g(W)\cap W=W_0$,
and so the only piece of $g(W)$ meeting $W$ is $g(W_t)$. In particular,
\[gf^i(W_0)\cap W=\varnothing\] for all $i\in\N$. It follows
$X_r\cap g^{-1}(X_R)=\varnothing$, and so \[g(W)\cap gf^{-1}(V_0)=\varnothing.\]
It follows that $X_R$ is in fact
the unique right-infinite
ray inside of $X$.
In particular, $X$ consists of a unique right-infinite ray
and bi-infinite lines. It follows that the two sets $X\triangle f^{\pm 1}(X)$
each meet a single component of $V$, namely $g(V_0)$ or
\[f^{-1}g(V_0)=gf^{-1}(V_0),\]
respectively.
This also in contravention of Condition~\ref{c:glue}.

We conclude that $W$ cannot have any infinite rays or bi-infinite lines, and so
$W$ is of the desired form.
\end{proof}

\subsubsection{Compatible extensions}
As we have said already, elements of \[\NN(U,V,f,W_0)\] are supposed to
encode natural numbers. For a given \[\NN(U,V,f,W_0)\]
and a given $n\in\N$, however,
there may be no pairs $(W,V)$ such that $W$ meets $V$ in exactly $n$ components.

To address this issue,
consider $\NN(U,V,f,W_0)$ and $\NN(U',V',f',W_0)$ for various choices of parameters but with $W_0$ fixed. We will say that $\NN(U',V',f',W_0)$ is a
\emph{compatible extension} of $\NN(U,V,f,W_0)$ if:
\begin{enumerate}
\item
$U\subseteq U'$.
\item
$V\subseteq V'$.
\item
$V'\cap U=V\cap U$.
\item
The map $f'$ agrees with $f$ on $U\cap V$, i.e.~for all $X\in\DD$ such that $X\subseteq U\cap V$, we have $f'(X)=f(X)$.
\end{enumerate}

It is immediate that if \[\NN(U',V',f',W_0)\] is a compatible 
extension of \[\NN(U,V,f,W_0)\] then for all \[(W,V)\in\NN(U,V,f,W_0),\] we have
\[(W,V')\in \NN(U',V',f',W_0).\]
Write $\hat\NN(U,V,f,W_0)$ for the collection of all $\NN(U',V',f',W_0)$, ranging over compatible extensions of $\NN(U,V,f,W_0)$. Clearly 
$\hat\NN(U,V,f,W_0)$ is definable.

\begin{lem}\label{lem:ext-exist}
There exists a tuple $(U,V,f,W_0)$ as above such that for all $n\in\bN$, there exists a $(W',V')\in\hat\NN(U,V,f,W_0)$ wherein $W'$ meets
exactly $n$ components of $V'$.
\end{lem}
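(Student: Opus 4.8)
The plan is to produce a single ``seed'' tuple $(U,V,f,W_0)$ whose triple $(U,V,f)$ is quasi-invariant but deliberately \emph{small}, and then, for each $n\in\bN$, to exhibit a compatible extension $(U_n,V_n,f_n,W_0)$ large enough to contain a witness $(W',V_n)\in\NN(U_n,V_n,f_n,W_0)$ with $W'$ meeting exactly $n$ components of $V_n$. The geometric model throughout is a north--south \emph{funnel}: an embedded arc $I$ inside a chart $Z\in\DD$, a small regular open ``slot'' $\hat V$ near an interior point $p\in I$, and a homeomorphism acting with north--south dynamics along $I$, so that its iterates $f^i(\hat V)$ form a chain of pairwise disjoint regular open sets marching toward the two poles. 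A finite subchain of slots plays the role of $V$, the slot $V_0=\hat V$ contains $W_0$, and $W'=\bigcup_{i=0}^{n-1}f_n^i(W_0)$ is the desired witness.

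First I would build the seed. Fix $Z\in\DD$ in a single chart, a collared ball $B\subseteq Z$, and an arc $I\subseteq B$ joining two boundary points of $B$. Using local approximation, choose $\hat g\in\GG[Z]$ whose extended support $\hat V:=\suppe\hat g$ is a small regular open set near an interior point $p$ of $I$. Approximating a genuine north--south homeomorphism of $B$ preserving $I$ by an element $f\in\GG[Z]$, one arranges that the iterates $f^i(\hat V)$ for $|i|\le N$ are pairwise disjoint and compactly contained in a fixed small neighborhood $U$ of a subarc $J\ni p$, chosen so that $\hat V\subseteq U$ but $f^i(\hat V)\cap U=\varnothing$ for $0<|i|\le N$; equivalently $U\cap V=\hat V$. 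Set
\[
V:=\suppe\!\Big(\prod_{i=-N}^{N}f^i\hat g f^{-i}\Big)=\bigcup_{i=-N}^{N}f^i(\hat V)\in\DD
\]
(a finite product in $\GG$), and pick $W_0\in\DD$ compactly contained in the component $V_0=\hat V$ of $U\cap V$, so that $f^{\pm1}(W_0)$ lie in the two adjacent slots. That $(U,V,f)$ is quasi-invariant is then routine: the aperiodicity clause and Lemma~\ref{lem:finite-invariant} are harmless because $U\cap V$ is a single, non-periodic component, and the remaining clauses follow from the disjointness of the slots and the persistence of the shift dynamics under small perturbation (as in the discussion around Lemma~\ref{lem:large-subset}).

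Next, for a fixed $n$, I would construct the compatible extension. The key point is that a compatible extension may replace $f$ by any $f_n\in\GG$ that agrees with $f$ on $U\cap V=\hat V$; since $\hat V$ is tiny, this is a weak constraint, and writing $f_n=f\cdot c$ with $c\in\GG[Z\setminus\overline{\hat V}]$ (dense in $\Homeo(M)[Z\setminus\overline{\hat V}]$ by local approximation, since the closure still lies in a chart) I can use $c$ to ``repair'' the dynamics of $f$ away from $\hat V$ so that $f_n$ has a genuine funnel of length $\gg n$: the slots $f_n^i(\hat V)$ for $|i|\le M$ (with $M\gg n$) are pairwise disjoint and compactly contained in an enlarged neighborhood $U_n\supseteq U$ of $\bigcup_{|i|\le M}f_n^i(J)$, with $f_n^i(\hat V)\cap U_n=\varnothing$ for $|i|>M$, and $f_n^i(\hat V)\cap U=\varnothing$ for $0<|i|\le N_n$. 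Put $V_n:=\bigcup_{i=-N_n}^{N_n}f_n^i(\hat V)\in\DD$ with $N_n\gg M$. Then $U\subseteq U_n$, $V\subseteq V_n$, $V_n\cap U=V\cap U=\hat V$ (the original $U$ being so small that no new slot intrudes), and $f_n=f$ on $U\cap V$; so $(U_n,V_n,f_n,W_0)$ is a compatible extension, and $(U_n,V_n,f_n)$ is again quasi-invariant. It then remains to check that $W':=\bigcup_{i=0}^{n-1}f_n^i(W_0)$ satisfies $(W',V_n)\in\NN(U_n,V_n,f_n,W_0)$. Conditions~\ref{c:1-inv}--\ref{c:two-comp} are tailored to sets of this form, since the $f_n$--orbit of $V_0$ is a chain of genuinely disjoint components on which $f_n$ acts as a shift, $W'$ is a block of $n$ consecutive translates of $W_0$, and the symmetric differences $W'\triangle f_n^{\pm1}(W')$ meet exactly the two ``end'' slots; the precise number of padding slots of $V_n$ on either side of the $W'$--block (and how far $U_n$ reaches past the ends) is exactly what must be arranged to make the counting conditions \ref{c:terminal-unique}, \ref{c:v0-unique}, and \ref{c:two-comp} hold simultaneously. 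For Condition~\ref{c:glue} one takes $g:=f_n^{\,n-1}$: it commutes with $f_n$; $(U_n,V_n,g)$ is quasi-invariant (no periodic slots); $g$ ``stitches'' the block, giving $g(W_t)=W_0$ and $g(W')\cap W'=W_0$; and the symmetric-difference count for $X=g(W')\cup W'$ comes out right precisely because $M\gg n$, so that $g(W')$ still sits inside $U_n\cap V_n$ with room on both sides. Since $W'$ meets exactly the $n$ slots $f_n^0(\hat V),\dots,f_n^{n-1}(\hat V)$ of $V_n$, this is the required witness, and $n$ was arbitrary.

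The main obstacle I expect is precisely the simultaneous bookkeeping for Condition~\ref{c:glue} and the counting conditions: these dictate the exact combinatorial shape of $(U_n,V_n)$ — how many padding slots to place on each side of the $W'$--block, and exactly how far $U_n$ and $V_n$ extend — and are the reason both for taking $M,N_n$ large relative to $n$ and for keeping $U\cap V$ a single tiny component in the seed (so that the freedom to change $f$ off $U\cap V$ is available). A secondary technical point is keeping $V$, $V_n$, $W_0$, $W'$, and the auxiliary sets genuinely in $\DD$ rather than merely in $\ro(M)$; this is handled by presenting them as extended supports of explicit finite products of elements of $\GG$ supported on individual slots, which exist by local approximation, with Lemma~\ref{lem:large-subset} invoked whenever a slightly enlarged element of $\DD$ is needed.
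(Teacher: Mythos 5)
Your plan---threading a chain of translates of a small slot $\hat V$ along an arc with north--south dynamics and then enlarging the funnel through compatible extensions---is essentially the approach the paper takes. The paper builds the extensions as a single nested sequence $(U_n,V_n,f_n)$ by induction, with $f_{n+1}=k_{n+1}f_n$ for a correction $k_{n+1}$ supported in a region disjoint from $U_n$, and reads off a witness $W=\bigcup_{i=0}^{\lfloor(n-1)/2\rfloor}f_n^i(W_0)$ at stage $n$; you instead go straight to an $n$-dependent extension, which is a cosmetic difference.

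There is, however, a genuine gap in how you justify the freedom to replace $f$ by $f_n$. You assert that since $f_n$ need only agree with $f$ on $U\cap V=\hat V$, taking $c\in\GG[Z\setminus\overline{\hat V}]$ gives ample freedom. This overlooks the \emph{second} clause of the compatible-extension definition, $V\subseteq V_n$. With $V=\bigcup_{|i|\le N}f^i(\hat V)$ and your $V_n=\bigcup_{|i|\le N_n}f_n^i(\hat V)$, the containment $V\subseteq V_n$ forces $f_n^i(\hat V)=f^i(\hat V)$ for $|i|\le N$, and tracing iterates shows this requires $f_n$ to agree with $f$ on $\bigcup_{-N\le j\le N-1}f^j(\hat V)$, that is, on essentially all of $V$. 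So $c$ must be supported away from $V$, not merely away from $\overline{\hat V}$, and the stated rationale for keeping $U\cap V$ a single tiny slot---``so that the freedom to change $f$ off $U\cap V$ is available''---invokes the wrong constraint: what may vary is $f$ off $V$. The paper's inductive scheme is designed to sidestep exactly this point, since each $k_j$ is supported off the previous $U_{j-1}$ and $V_m\subseteq V_{m+1}$ holds by construction. Your version is salvageable, because $V$ has compact closure in the chart so there is still room beyond it to repair the dynamics, but as written the compatible-extension check is incomplete and the ``tiny seed'' design does not confer the advantage you claim.
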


In particular, the Lemma makes precise the intuitively obvious fact that for all $n$
there
exist choices of parameters so that $(W,V)\in \NN(U,V,f,W_0)$ has the property
that $W$ meets $V$ in exactly $n$ components. One slightly irritating piece of
bookkeeping is guaranteeing the existence of the element $g$ required by
Condition~\ref{c:glue}.

\begin{proof}[Proof of Lemma~\ref{lem:ext-exist}]
Let $Z$ be given, and choose a subset of $Z$ homeomorphic to $\R^n$, which we will implicitly identify with $\R^n$.

Consider a coordinate axis in $\R^n$, which we will identify with $\R$, and put a small closed
balls $\{B_i\}_{i\in\Z}$ (of radius say $1/10$) at integer points.
Let $h_1$ be a homeomorphism supported in a neighborhood $X_1$ of $[-2.5,2.5]\subseteq \R$ that
contains \[\bigcup_{i=-2}^2 B_i\]
such that:
\begin{enumerate}
\item
$h_1(B_0)$ is contained in the interior of $B_1$ and $h_1(B_1)$ is contained in the interior of $B_2$.
\item
$h_1^{-1}(B_0)$ is contained in the interior of $B_{-1}$ and $h_1^{-1}(B_{-1})$ is contained in the interior of $B_{-2}$.
\end{enumerate}

Choose $U_1=U\in\DD$ to be a neighborhood of the interval $[-1.5,1.5]\subseteq\R$ which contains $B_{-1}\cup B_0\cup B_1$, and such that
$U$ is compactly contained in $X_1$. Let $f_1=f\in\GG$
be a close enough to $h_1$ so that the defining conditions on $h_1$ also apply to $f_1$. Choose a
$\varnothing\neq Y\in\DD$ contained in $B_0$ and a $W_0\in\DD$ compactly contained in a single component of $Y$.
Set \[V_1=V=\bigcup_{i=-2}^2f^{i}(Y).\] Observe that $(U,V,f)$
is a quasi-invariant triple.

Now, suppose that $(U_n,V_n,f_n,W_0)$ have been defined for some $n\geq 1$. Let $h_{n+1}$ be a homeomorphism supported in a neighborhood
of $X_{n+1}$ of \[[-n-2.5,-n-0.5]\cup [n+0.5,n+2.5]\] which is disjoint from $U_n$, and such that $h_{n+1}(B_{n+1})$ is contained in the interior of $B_{n+2}$ and
$h_{n+1}(B_{-n-2})$ is contained in the interior of $B_{-n-1}$. Let
$k_{n+1}\in\GG$ be supported in $X_{n+1}$ with the same defining properties.
We set:
\begin{itemize}
\item
$U_{n+1}\in\DD$ containing $U_n$ which is a neighborhood of $[-n-1.5,n+1.5]\subseteq\R$ and is compactly contained in \[\bigcup_{i=1}^{n+1} X_i.\]
\item
$f_{n+1}=k_{n+1}f_n$.
\item
$V_{n+1}=V_n\cup f_{n+1}(V_n\cap X_{n+1})$.
\end{itemize}
By construction, $(U_n,V_n,f_n)$ is a quasi-invariant triple for all $n$, 
and \[\NN(U_n,V_n,f_n,W_0)\] is a compatible extension of
\[\NN(U_m,V_m,f_m,W_0)\] whenever $n\geq m$. Moreover, writing 
\[W=\bigcup_{i=0}^{\lfloor (n-1)/2\rfloor }f^i_n(W_0)\quad \textrm{and}\quad g=f^{-\lfloor (n-1)/2\rfloor},\]
we have that
$(W,V_n)\in\NN(U_n,V_n,f_n,W_0)$ has the property that $W$ meets $V_n$ in exactly
$\lfloor (n-1)/2\rfloor+1$ components of $V_n$.
\end{proof}

\subsubsection{A parameter-free interpretation of arithmetic}
Henceforth, we shall never need the element $g$ from Condition~\ref{c:glue} again,
so we shall recycle its name.
Let $(W_1,V_1)\in \hat \NN$ and $(W_2,V_2)\in\hat \NN'$ for different choices 
of parameters (in particular for different choices $U_1,U_2,f_1,f_2$ of parameters).
We will declare $(W_1,V_1)$ (with $U_1$ as the relevant parameter) to be
equivalent to $(W_2,V_2)$ (with $U_2$ as the relevant parameter)
if there exist $g,h\in\GG$ satisfying the following conditions:

\begin{enumerate}
\item
$g(U_1)$ is compactly contained in the Boolean complement of $U_2$.
\item
If $g(\hat V_1)$ is a component of $g(V_1)$ meeting $g(W_1)$ then there is a 
component $\hat V_2$ of $V_2$ meeting $W_2$ such that $hg(\hat V_1)\subseteq
\hat V_2$.
\item
If $g(\hat V_1)$ and $g(\hat V_1')$ are two components of $g(V_1)$ meeting $g(W_1)$ then $hg(\hat V_1)$ and $hg(\hat V_1')$ lie in distinct components of $V_2$.
\item
If $\hat V_2$ is a component of $V_2$ meeting $W_2$ then there is a component
$g(\hat V_1)$ meeting $g(W_1)$ such that $hg(\hat V_1)\subseteq\hat V_2$.
\end{enumerate}

In particular, the parameters $U_1$ and $U_2$ simply ``contain" the representatives
$(W_1,V_1)$ and $(W_2,V_2)$ respectively, and we use the fact that the sets $U_1$ and
$U_2$ have compact closure in a coordinate chart to arrange the representatives
$(W_1,V_1)$ and $(W_2,V_2)$ to be disjoint from each other. The role of the
parameters $f_1$ and $f_2$ is just to guarantee that $W_1$ and $W_2$ meet finitely many
components of $V_1$ and $V_2$, respectively; they are otherwise irrelevant.

It is easy to check that we have in fact defined
an equivalence relation on a parameter-free definable set of pairs $(W,V)\in\DD^2$.
We remark that though \emph{a priori} we seem to be quantifying over components of $V_1$ and $V_2$, these conditions are actually first order
expressible. For instance, the second condition can be expressed as follows: for all $U_1$ contained in a single component of $V_1$ such that there
exists a subset $X_1\subseteq W_1$ with $U_1$ and $X_1$ contained in the same component of $V_1$, there is a $U_2\subseteq V_2$ and
$X_2\subseteq W_2$ contained in the same component of $V_2$ such that whenever $U_3$ is contained in the same component of $V_1$ as $U_1$
then $f(U_3)$ is contained in the same component as $U_2$.

It is clear that the equivalence class $[(W,V)]$ of a pair $(W,V)$ is completely determined by the number of components of $V$ that $W$ meets.
We therefore represent this equivalence class by $[n]$, where $n\in\N$ is the number of components of $V$ that $W$ meets.

\begin{cor}\label{cor:arith-gen}
The group $\GG$ admits a parameter-free interpretation of first order arithmetic.
\end{cor}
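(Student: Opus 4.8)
The plan is to assemble the interpretation of the natural numbers $\N$ out of the pieces already constructed, and then to show that addition, multiplication, and the order relation are each first order definable on the resulting quotient. We have already built, via Lemma~\ref{lem:fin-comp} and Lemma~\ref{lem:ext-exist}, a parameter-free definable set $\mathcal K$ of pairs $(W,V)\in\DD^2$ (those lying in some $\hat\NN(U,V,f,W_0)$), together with a parameter-free definable equivalence relation $\sim$ on $\mathcal K$ whose classes $[n]$ are in bijection with $\N$, the class $[n]$ being determined by the number $n$ of components of $V$ that $W$ meets. Lemma~\ref{lem:ext-exist} guarantees surjectivity onto $\N$ (every $n$ is realized), and the discussion following the definition of the equivalence relation guarantees that $\sim$ is genuinely an equivalence relation and that $[(W,V)]$ depends only on this component count. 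So the underlying set of the interpretation, and the constant $0$ (realized by $W=W_0$ meeting a single component) and $1$, are in hand.

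Next I would define the successor relation: $[(W,V)]+1=[(W',V')]$ should hold precisely when, after moving $(W,V)$ by a group element $g$ into the Boolean complement of the chart containing $(W',V')$ (exactly as in the definition of $\sim$), the set $W'$ meets one more component of $V'$ than $W$ meets of $V$. This is expressible by the same ``matching of components'' device used for $\sim$: one asks for a partial bijection between the components of $V$ met by $W$ and the components of $V'$ met by $W'$ that is onto the former and misses exactly one component of the latter, where the terminal component $V_t$ of $V'$ (Condition~\ref{c:terminal-unique}) is the one left out. Addition $[m]+[n]=[k]$ is then handled by the ``stitching'' idea behind Condition~\ref{c:glue}: given representatives for $[m]$ and $[n]$ inside disjoint charts, one asks for a quasi-invariant triple and an element $g$ commuting with the relevant $f$'s that concatenates a length-$m$ string of iterates with a length-$n$ string into a length-$(m+n)$ string, and declares $[k]$ to be the class of the result; the existence of suitable ``padding'' to run this, for arbitrary $m,n$, is exactly what the compatible-extension machinery of $\hat\NN$ provides, so the graph of $+$ has witnesses and is definable. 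Multiplication $[m]\cdot[n]=[k]$ can be obtained by the standard trick of iterating addition: one codes a finite sequence $0,[n],2[n],\dots,m[n]$ as a single pair $(W,V)$ in which $W$ meets $V$ in $1+n+2n+\cdots$ — more cleanly, one builds a triple in which the $f$-orbit of $W_0$ is partitioned into $m$ consecutive blocks of length $n$, recognizable by a commuting auxiliary element $g$ that cyclically advances by $n$ steps — and reads off the total length as $[k]$; again compatible extensions supply arbitrarily long such configurations. The order $[m]\leq[n]$ is definable either directly (there is an injective component-matching from the $W$-side into the $W'$-side) or, once $+$ is available, by $\exists [r]\,([m]+[r]=[n])$.

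Finally I would verify that the structure $(\mathcal K/{\sim},\,0,\,1,\,+,\,\cdot,\,\leq)$ so obtained is isomorphic to the standard model of first order arithmetic: injectivity and surjectivity of the map to $\N$ give the bijection, and the definitions of $+$, $\cdot$, $\leq$ were each arranged to compute the corresponding operation on component counts, so the arithmetic axioms hold by construction. The whole interpretation uses no parameters — the only ``parameters'' appearing ($Z$, $U$, $f$, $W_0$, and their extensions) are existentially quantified away in passing from $\NN(U,V,f,W_0)$ to the parameter-free family $\hat\NN$ and to the equivalence relation $\sim$ — and it is uniform in $\GG$ and $M$ since every ingredient (Rubin's Expressibility Theorem~\ref{thm:expressibility}, compact containment Lemma~\ref{lem:compact}, the connectedness interpretation Proposition~\ref{prop:interp-conn}, and Lemmas~\ref{lem:finite-invariant}, \ref{lem:comp-inv}, \ref{lem:large-subset}) was established uniformly. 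The main obstacle I anticipate is definability of multiplication: unlike addition, it cannot be read off from a single application of the ``stitching'' element $g$, so one must encode the full iteration within one pair $(W,V)$ and design first order conditions — in the spirit of, but more elaborate than, Condition~\ref{c:glue} — that force $W$ to decompose into exactly $m$ blocks each of length $n$, while ensuring (via suitable compatible extensions) that witnesses exist for every $m$ and $n$; threading these conditions so that they admit the intended witnesses and exclude spurious infinite-ray configurations, exactly the delicate point in the proof of Lemma~\ref{lem:fin-comp}, is where the real work lies.
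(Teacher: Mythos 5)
Your framework — the parameter-free definable set $\mathcal K\subseteq\DD^2$ of pairs $(W,V)$, the equivalence relation $\sim$ via component counts, the use of $\hat\NN$ and Lemma~\ref{lem:ext-exist} for surjectivity — matches the paper, and your definition of the order relation by injective component-matching is exactly what the paper does. But for $+$ and especially $\cdot$ you take a considerably harder route than the paper, and for multiplication you do not actually close the argument.

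For addition, the paper never returns to the $f$--orbit/``stitching'' machinery: once the classes $[n]$ are in hand, it simply places representatives of $[n]$ and $[m]$ in disjoint collared balls and declares $[n]+[m]=[\ell]$ when some $g\in\GG$ sends the components of $V_n\cup V_m$ meeting $W_n\cup W_m$ bijectively onto the components of $V_\ell$ meeting $W_\ell$. Disjoint unions of representatives are again realizable (by moving one into a chart disjoint from the other, as in your definition of $\sim$), so witnesses exist for all $n,m$. No commuting auxiliary element, no quasi-invariant triple, no concatenation of orbit strings.

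For multiplication you propose to encode ``$m$ blocks of length $n$'' inside a single pair $(W,V)$ via an auxiliary commuting element that advances by $n$ steps, and you flag, correctly, that making this first-order while excluding spurious infinite-ray configurations would essentially require redoing the delicate casework of Lemma~\ref{lem:fin-comp} in a more elaborate setting; you leave that work undone. This is a genuine gap as written. The paper sidesteps it entirely: take a representative $(W_n,V_n)$ of $[n]$, let $Y_1,\ldots,Y_n$ be the components of $V_n$ meeting $W_n$, choose a representative $(W_m^i,V_m^i)$ of $[m]$ inside each $Y_i$, and declare $[n]\cdot[m]=[\ell]$ when some $g\in\GG$ sends the components of $\bigcup_i V_m^i$ meeting $\bigcup_i W_m^i$ bijectively onto those of $V_\ell$ meeting $W_\ell$. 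This is a nesting, not an iteration: multiplication is reduced to counting components of a finite disjoint union, the same device already used for $\sim$, $\leq$, and $+$, so no new orbit analysis is required and the witnesses are furnished by the same local approximation arguments used to produce representatives in the first place. You should replace your ``blocks in a single $f$--orbit'' scheme with this nesting construction; as it stands, the multiplication step of your argument is not complete.
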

\begin{proof}
It suffices to interpret addition, multiplication, and order. We provide a sketch, whose missing details are easy to fill in.

To interpret $[n]\leq [m]$, choose representatives $(W_1,V_1)$ and $(W_2,V_2)$ of
$[n]$ and $[m]$, respectively. We have $[n]\leq [m]$ if and only if there is an element $f\in\GG$ sending components of $V_1$ meeting $W_1$
injectively into components of $V_2$ meeting $W_2$.

We have \[[n]+[m]=[\ell]\] if there are representatives \[\{(W_i,V_i)\}_{i\in \{n,m,\ell\}}\] which lie in disjoint collared balls and such that there is an
element $g\in\GG$ sending components of $V_n\cup V_m$ meeting $W_n\cup W_m$ bijectively into components of $V_{\ell}$ meeting $W_{\ell}$.

Finally, we need to interpret \[[n]\cdot [m]=[\ell].\] Let $(W_n,V_n)$ be a representative of $[n]$ and $(W_{\ell},V_{\ell})$ be a representative of $[\ell]$.
Let $\{Y_1,\ldots,Y_n\}$ denote the components of $V_n$ which meet $W_n$, and choose representatives \[\{(W_m^i,V_m^i)\}_{1\leq i\leq n}\]
contained in $Y_i$ for each $i$. We set $[n]\cdot [m]=[\ell]$ if there is a $g\in \GG$ sending components of \[\bigcup_{i=1}^n V_m^i\] that meet
\[\bigcup_{i=1}^n W_m^i\] bijectively into components of $V_{\ell}$ meeting $W_{\ell}$.
\end{proof}

\subsection{Relating arithmetic to the action structure}

We now record some definable predicates which relate arithmetic to the action of $\GG$ on $\DD$. If $U\in\DD$, one can make direct reference
$n$ distinct components of $U$ for any $n$. Indeed, a collection of $n$ components of
$U$ is simply defined by a representative $(W,V)$ of $[n]$ and the existence of a $g\in\GG$ sending components of $V$ meeting $W$ injectively
into components of $U$. Of course, the union of those components of $U$ may not itself be an element of $\DD$. We will say that $g$ \emph{marks}
$n$ distinct components of $U$. The collection $\Gamma$ of such elements $g$ (with the representative of $[n]$ being irrelevant) will be called
the set of \emph{markings} of $n$ components of $U$.

Most importantly, we will need to define the exponentiation function \[\exp:\GG\times \Z\times\DD\longrightarrow\DD\] which expresses that
$f^n(U)=V$. The authors and Kim needed to define such a function in~\cite{dlNKK22}, though there it was done after points were suitably interpreted.

To motivate the definition, let $p\in M$ be a point and let $n\in\N$ be a natural number. If we consider the partial orbit \[\{p,f(p),\ldots,f^{n}(p)\},\] then
either all $n+1$ points are distinct, or $p$ lies in a periodic orbit of $f$, and there exists a natural number $k<n$ such that $f^k(p)=p$. In this case,
$f^{n}(p)=f^{r}(p)$, where $r$ is the remainder obtained when dividing $n$ by $k$.

For simplicity, we will assume $n\in\N$, with the case of a general integer being a straightforward
generalization. Let $U,V\in\DD$ be fixed regular open sets and $f\in\GG$.

We define the predicate $\exp(f,n,U)=V$ as follows; neither $U$ nor $V$ have
to satisfy any hypotheses, other than being elements of $\DD$.
The quantification is for all nonempty $U_0\subseteq U$ compactly contained in $U$,
there exists a further nonempty $U_1\subseteq U_0$ compactly contained in $U_0$ satisfying some conditions that we specify shortly.
For simplicity and readability, we first give the conditions needed for a set
$U_1\in\DD$ contained in a small neighborhood of a point $p\in U_0$ that is not periodic of period at most $n$ to satisfy
$f^n(U_1)\subseteq V$. In particular,
the sets \[\{U_1,f(U_1),\ldots,f^n(U_1)\}\] are all disjoint. We consider sets all
$W\in\DD$ and markings $\Gamma$ of $n+1$ components of $W$ satisfying
the following conditions.
\begin{enumerate}
\item
$U_1\subseteq W$.
\item
For all components $\hat U$ of $U_1$,
there is a $g\in\Gamma$ such that $\hat U$ is marked by $g$ and such that:
\begin{enumerate}
\item
For all for all components $\hat W$ of $W$ marked by $g$ and distinct from the component containing $\hat U$, there is a component
$\hat W_s$ of $W$ marked by $g$ such that $f(\hat W_s)=\hat W$. Here, the subscript $s$ stands for source.
\item
There is a unique component $\hat W_t$ of $W$ marked by $g$ such that $f(\hat W_t)$ is not a component of $W$. Here, the subscript $t$ stands for
target.
\item
$\hat W_t\subseteq V$.
\end{enumerate}

\item
For all nonempty $V_0$ compactly contained in $V$, we require the existence of a nonempty $V_1\subseteq V_0$ satisfying
the same conditions as above, with the roles of $U$ and $V$ switched and $f$ replaced by $f^{-1}$.
\end{enumerate}

Some remarks are in order. First, we should comment on how one can require $f$ to take a particular component to another particular
component, e.g.~$f(\hat W_s)=\hat W$. For this, we simply say that every element of $\DD$ contained in $\hat W_s$ is sent into $\hat W$ by $f$,
and every element of $\DD$ contained in $\hat W$ is sent into $\hat W_s$ by $f^{-1}$. This implies $f(\hat W_s)=\hat W$, since we have $f$ takes
a dense open subset of $\hat W_s$ into $\hat W$ and $f^{-1}$ take a dense open subset of $\hat W$ into $\hat W_s$. Since both of these sets are
regular, we have $f(\hat W_s)=\hat W$.

The second remark is that if $U_0$ contains a point that is not periodic of period at most $n$, then we can always choose $U_1$ whose $n+1$
iterates under $f$ (starting at zero) are disjoint as above.
If $U_0$ consists of periodic points of period at most $n$
then for all $U_1\subseteq U_0$ we have that $U_1$ itself consists of points that are
periodic of period at most $n$,
and the conditions defining $\exp(f,n,U)=V$ need to be modified.

Note that for any period $m$, being a point that is not of period at most $m$ is an open
condition. Thus, if $U_0$ consists of periodic points of period at most $n$ then there
is some $m\leq n$ such that there exists a nonempty open subset of $U_0$
consisting of periodic points of period exactly $m$. To modify the conditions on
$\exp(f,n,U)=V$, we have the following conditions for $U_0$ having an open set of
periodic points of period exactly $m$:

There exists a $U_1\subseteq U_0$ and a set $W\in\DD$ such that:
\begin{enumerate}
    \item $W$ contains $U_1$ and is $f$--invariant.
    \item For all components $\hat U$ of $U_1$, there exists a marking $g$ of
    $m$ components of $W$ such that:
    \begin{enumerate}
    \item The marking $g$ marks no component of $U_1$ other than $\hat U$.
        \item The components of $W$ marked by $g$ are $f$--invariant.
        \item Every component of $W$ is
    marked by some such $g$.
    \item If $g_1$ and $g_2$ are such markings that both mark a component $\hat W$ of $W$
    then the components of $W$ marked by $g_1$ and $g_2$ coincide.
    \end{enumerate}
    \item If $n=qm+r$ with $r<m$ then for every marking $g$ as above, there is a
    marking $g'$ of $r+1$ components of $W$ such that:
    \begin{enumerate}
    \item Every component of $W$ marked by $g'$ is marked by $g$.
        \item The marking $g'$ marks the corresponding component $\hat U$.
        \item The component $\hat U$ of $U$ is a unique component marked by $g'$ which 
        is not in the image under $f$ of another component marked by $g'$.
        \item There is a unique component $\hat W$ marked by $g'$ such that $f(\hat W)$
        is not marked by $g'$.
        \item $\hat W\subseteq V$.
    \end{enumerate}
\end{enumerate}

The full expression $\exp(f,n,U)=V$ is the statement that for all $U_0\subseteq U$,
there exists a $U_1\subseteq U_0$ satisfying a disjunction of the conditions adapted
to no periodic points of period at most $n+1$, together with the conditions adapted
to the existence of periodic points of period $m$ for all $m\leq n$.

\begin{lem}\label{lem:exponentiation}
For all $f\in\GG$, all $n\in \N$, and all $U,V\in\DD$, we have $\exp(f,n,U)=V$ if and only if $f^n(U)=V$.
\end{lem}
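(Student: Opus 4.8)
The plan is to prove both directions by localizing to individual points and tracking orbits. First I would establish the \emph{forward} direction: suppose $f^n(U)=V$. Then for every $\varnothing\neq U_0$ compactly contained in $U$ I must produce a $U_1\subseteq U_0$ witnessing the appropriate disjunct. Shrinking $U_0$ if necessary, there are two cases. If $U_0$ contains a point $p$ that is not periodic of period $\leq n+1$, then by continuity a whole neighborhood of $p$ consists of such points, and by local approximation (Lemma~\ref{lem:large-subset}) I can choose $U_1\in\DD$ inside that neighborhood whose iterates $U_1,f(U_1),\dots,f^{n}(U_1)$ are pairwise disjoint; I then build the witness set $W$ as the union of these $n+1$ iterates (this is regular since $U_1$ is, cf.~the argument in Lemma~\ref{lem:finite-invariant}) and let $g\in\GG$ mark its $n+1$ components via Corollary~\ref{cor:arith-gen} and the discussion of markings. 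The source/target conditions are then immediate from $f^i(U_1)\mapsto f^{i+1}(U_1)$, and $\hat W_t=f^n(U_1)\subseteq f^n(U)=V$; the symmetric condition with $f^{-1}$ and the roles of $U,V$ reversed holds because $f^{-n}(V)=U$. If instead $U_0$ consists entirely of periodic points of period $\leq n$, then some nonempty open subset consists of points of exact period $m$ for a fixed $m\leq n$; I choose $U_1$ inside that subset, set $W=\bigcup_{i=0}^{m-1}f^i(U_1)$ which is now $f$--invariant with $m$ components, mark it with $g$, and mark the first $r+1$ of those components (where $n=qm+r$) with $g'$; since $f^n(p)=f^r(p)$ on periodic points of period $m$, the component $\hat W$ with $f(\hat W)$ unmarked is $f^r(U_1)\subseteq f^r(U)=f^n(U)=V$.

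For the \emph{converse}, I assume $\exp(f,n,U)=V$ and must deduce $f^n(U)=V$. Since both $f^n(U)$ and $V$ are regular open sets, it suffices to show they have the same interior-of-closure, equivalently that each meets every element of $\DD$ the other meets; because $\GG$ is locally approximating, it is enough to argue pointwise on a dense set. Take an arbitrary $p\in U$ that is not periodic of period $\leq n+1$ (these are dense in $U$ unless $U$ is entirely periodic of bounded period, a case I treat separately). Pick $U_0$ a small neighborhood of $p$ inside $U$; the hypothesis supplies $U_1\subseteq U_0$, a set $W$, and a marking $\Gamma$ of $n+1$ components satisfying the listed conditions. Condition (2) forces each component $\hat U$ of $U_1$ to sit at the ``bottom'' of a length-$(n+1)$ chain of components of $W$ linked successively by $f$: the absence of a source for $\hat U$'s component together with the uniqueness of the target $\hat W_t$ and the count of exactly $n+1$ components pins down the chain to be $\hat U, f(\hat U),\dots, f^n(\hat U)$, with $\hat W_t=f^n(\hat U)\subseteq V$. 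Letting $U_1$ shrink toward $p$ and using that the conditions hold for \emph{all} $U_0$, I conclude $f^n(p)\in\overline V$, and then that $f^n(U)\subseteq V$ as regular open sets; the symmetric clause (3), with $f^{-1}$ and $U,V$ swapped, gives $f^{-n}(V)\subseteq U$, i.e.~$V\subseteq f^n(U)$. For the periodic case, the modified conditions with $W$ being $f$--invariant with $m$ components and the secondary marking $g'$ of $r+1$ components similarly force $\hat W=f^r(\hat U)$ and hence $f^n(p)=f^r(p)\in\overline V$ where $r$ is the residue of $n$ mod $m$; since the conditions are required for \emph{every} $U_0\subseteq U$, every possible period $m\leq n$ is handled.

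The main obstacle, as flagged throughout the surrounding text, is the gap between quantifying over points and quantifying over elements of $\DD$: I cannot directly say ``$f^n(p)=q$'' and I cannot take the union of an orbit as an element of $\DD$. The care required is (i) to verify that the trick ``$f$ takes a dense subset of $\hat W_s$ into $\hat W$ and $f^{-1}$ takes a dense subset of $\hat W$ back, hence $f(\hat W_s)=\hat W$ by regularity'' really does let me recognize the orbit chain from the first-order data, and (ii) to check that the combinatorial bookkeeping in Condition (2)—unique target, every marked source having a marked preimage, exactly $n+1$ marked components—genuinely rules out the pathological configurations (extra disjoint chains, cycles, or bi-infinite lines inside $W$), in the same spirit as the case analysis in the proof of Lemma~\ref{lem:fin-comp}. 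I also need to confirm that the separation between the ``no short periodic orbit'' disjunct and the ``period exactly $m$'' disjuncts, taken over all $U_0\subseteq U$, exhausts every point of $U$; this follows because for each fixed $m$ the set of points of period $\leq m$ is closed, so if $p$ is periodic its exact period $m\leq n$ is eventually detected by small enough $U_0$. Once these points are nailed down, assembling them into the equality $f^n(U)=V$ of regular open sets is routine.
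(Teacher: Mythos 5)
Your proposal is correct and follows essentially the same route as the paper: constructing explicit witnesses (split by whether the local dynamics near the chosen neighborhood is periodic) for the forward direction, and for the converse extracting from the hypothesis a dense open subset $\tilde U\subseteq U$ with $f^n(\tilde U)\subseteq V$ and then invoking regularity of $U$ and $V$ to upgrade to $f^n(U)\subseteq V$, with the symmetric $f^{-1}$ clause giving the reverse inclusion. One small imprecision: the phrase ``letting $U_1$ shrink toward $p$'' is misleading since $U_1$ is existentially supplied and need not be close to $p$; the correct bookkeeping is that $U_1\subseteq U_0$ for $U_0$ ranging over a neighborhood basis of $p$ forces $p\in\overline{\tilde U}$, hence $\tilde U$ is dense, which is exactly how the paper phrases it.
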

\begin{proof}
If $f^n(U)=V$ then it is straightforward to check that $\exp(f,n,U)=V$. The conditions merely assert that for every regular open set $U_0$ compactly
contained in $U$, there is a further subset $U_1$ such that $f^n(U_1)\subseteq V$. We may suppose that $U_1$ is contained in a small neighborhood
of a point which is not at most $n$--periodic, and $W$ is simply the union of
$U_1$ and the first $n+1$ iterates of $U_1$ under $f$.
As indicated above, a modification is required to
address the possibility that $U_0$ consists of periodic points of period at most $n$
under the action of $f$, and the full expression of $\exp(f,n,U)=V$ captures open
sets consisting of periodic points.

Conversely, suppose that $\exp(f,n,U)=V$, and let $p\in U$ be an arbitrary point.
The definition of the predicate is constructed in such a way that for all neighborhoods $U_0$ containing $p$ which are compactly contained
in $U$, there is a further nonempty open set $U_1$ contained in $U_0$ such that $f^n(U_1)\subseteq V$.

Observe that for an arbitrary $p\in U$, we may take $U_0$ to range over a neighborhood basis of $p$.
So, taking the union of the sets $U_1$ as $U_0$ ranges over subsets compactly contained in
$U$, we obtain an open set $\tilde U\subseteq U$ which is dense in $U$, and such that
$f(\tilde U)\subseteq V$. Since $V$ is a regular open set and $f^{-1}(V)$ is
regular open, we have $f^{-1}(V)$ contains the smallest regular open set containing $\tilde U$, namely $U$. It follows that $f(U)\subseteq V$.

The definition of $\exp(f,n,U)$ similarly forces there to be a dense open subset $\tilde V\subseteq V$ such that $f^{-1}(\tilde V)\subseteq U$, and the
regularity of $U$ forces $f(U)\supseteq V$.
\end{proof}

\section{The membership predicate and its consequences}\label{sec:member}

We will now give a proof of Theorem~\ref{thm:member} and its consequences.

\subsection{The membership predicate}\label{sec:membership}

From the preceding discussion, we have uniform (across $\GG$ and $M$) first order access to the following:
\begin{enumerate}
\item
A dense subset $\mathcal D$ of the Boolean algebra of regular open sets on $M$,
together with first order expressions for the Boolean operations and the action
of $\GG$ on $\mathcal D$ as guaranteed
by Rubin's Expressibility Theorem. Moreover, we have access to approximate topological notions regarding regular open sets, as spelled out in
Section~\ref{sec:bool}.
\item
A parameter-free interpretation of arithmetic, together with predicates that allow us to mark finite collections of components of $U\in\DD$,
and an exponentiation function, as discussed in Section~\ref{sec:arith}.
\end{enumerate}

The idea behind building the membership predicate is as follows: let $(g_1,\ldots,g_n)$ be a fixed tuple of elements of $\GG$ and let $h\in \GG$
be arbitrary. Clearly if \[h\in\langle g_1,\ldots,g_n\rangle\] then there is a word $w\in F_n$ in the free group on $n$ generators (viewed as a tuple of generators
of $F_n$ and their inverses) such that $h=w(g_1,\ldots,g_n)$. Then, the element $h$ agrees with $w(g_1,\ldots,g_n)$ on every regular open set in
$\DD$. We do not even need $h$ to agree with
$w(g_1,\ldots,g_n)$ on all elements of $\DD$: it in fact
suffices to check that $h$ agrees with $w(g_1,\ldots,g_n)$ on a much
smaller collection of open sets. For $\mathcal F\subset M$
an arbitrary finite set of points
(where here one may even assume that $\mathcal F$ is contained in a fixed
dense open subset of $M\setminus\partial M$) and
$U_{\mathcal F}$ a union of disjoint balls each containing
one point of $\mathcal F$, it suffices to check that $h$ agrees with
$w(g_1,\ldots,g_n)$ on every element of $\DD$ contained in $U_{\mathcal F}$.

Conversely, if \[h\notin\langle g_1,\ldots,g_n\rangle\] then 
for all $w$ there is some such $U_{\mathcal F}$ and an element
$\varnothing\neq V\in \DD$ contained in
$U_{\mathcal F}$ where $h$ and $w(g_1,\ldots,g_n)$ disagree;
in fact we may assume that the images of $V$ under $h$ and 
$w(g_1,\ldots,g_n)$ are compactly contained in each others' Boolean complements.
The point is that the conditions described here are first 
order expressible. The argument is inspired by the interpretation of WSOL in~\cite{dlNK23},
though extra care must be taken; one of the principal complications
is that though we can now assert that for some $U\in\DD$ and $f\in\GG$
we have $f^n(U)$ is disjoint from $U$ for all $n\in\N$,
there may be no witnesses in $\DD$ at all for this assertion. The explicit interpretation of arithmetic in
Section~\ref{sec:arith} circumvents this problem for us.

\subsubsection{The closed and compactly supported cases}
Fix a compact, connected manifold $M$. We will suppose that either $M$ is closed
or that $\GG$ consists of compactly supported homeomorphisms of $M\setminus\partial M$.

 Let
$\underline g=(g_1,\ldots,g_n)$ be a fixed tuple of elements of $\GG$. For notational convenience,
we will assume that the tuple is closed under taking inverses and contains the identity. This will obviously not result in any loss of generality.

We first observe that there is a $d=d(\dim M)$ such that there exist 
regular open sets $\{U_1,\ldots,U_d\}\subseteq\DD$ such that 
\[U=\bigcup_{i=1}^d U_i=M\] when $M$ is closed, or such
that $U$ contains any given compact submanifold of $M\setminus\partial M$ when
$M$ is not closed. Moreover,
we may assume that each $U_i$ is compactly contained in $M$.
This
is an easy consequence of Lebesgue covering dimension theory; see
Lemma~\ref{lem:ball-cover}.

In the case of locally approximating group actions,
it is not immediately clear how to express
 that the sets $U_1,\ldots,U_d$ actually cover $M$ or some fixed compact
 submanifold,
since we do not have direct access to points of $M$. However, we can express that \[U=\bigcup_{i=1}^d U_i\] is a dense open subset of $M$ in the case where
$M$ is closed, or that $U$ is dense in a compact submanifold of
$M\setminus\partial M$ wherein the action of $\underline g$ is carried. Indeed,
in the case where $M$ is closed,
it suffices to require that all nonempty $V\in\DD$ has nonempty intersection with some $U_i$. In the case where $M$ is not closed,
we may assume that $\underline g$ is the identity outside of $U$.
Indeed, one may simply require that for all \[V\subseteq\bigcap_{i=1}^d U_i^{\perp},\]
we have $g_j(V)=V$ for $1\leq j\leq n$.

 Within first order
arithmetic we may define (with only $n$ as a parameter) the set $\PP_n$, consisting of finite sequences of natural numbers with values in $\{1,\ldots,n\}$,
and for $\pi\in\PP_n$ one can definably extract $\pi(i)$, the $i^{th}$ term of the sequence, and $\ell(\pi)$, the length of the sequence.

Let $\pi\in\PP_n$ be fixed of length $m$. Choose a representative $(W^m,V^m)$ of $[m]$, so that \[(W^m,V^m)\in\NN(U^m,V^m,f,W_0^m)\] for some suitable
choices of parameters.

Next, fix a nonempty $W_0\in\DD$ such that $W_0\subseteq W_0^m$. Observe that $f^j(W_0)\cap W_0=\varnothing$ for $j\leq m-1$. Write
$W_j=f^j(W_0)$ for all $j\leq m-1$. The $m$ iterates of $W_0$ under $f$ are our ``scratchpad".

Let $\epsilon\in\{s,t\}$, where here $s$ and $t$ stand for source and target respectively.
Fix \[g_{1,\epsilon},\ldots,g_{d,\epsilon}\in \GG\] with the property that $g_{i,\epsilon}(U_i)$ is compactly contained in $W_0$ for all $i$
and $\epsilon$, and such that $g_{i,\epsilon}(U_i)$ is compactly
contained in the Boolean complement of
\[\bigoplus_{(j,\eta)\neq (i,\epsilon)} g_{j,\eta}(U_j).\]
It is clear that all possible joins of sets of the form
$g_{i,\epsilon}(U_i)$ are actually elements of $\DD$.

The homeomorphisms \[g_{1,\epsilon},\ldots,g_{d,\epsilon}\in \GG\]
``initialize" the scratchpad
so that we can begin to analyze products of tuples of elements in $\GG$.

We will call a pair $(V^0_s,V^0_t)\in\DD$ of elements \emph{$0$--admissible}
if the following (obviously definable) criteria are satisfied.
\begin{enumerate}
\item
For $\epsilon\in\{s,t\}$, we have $V^0_{\epsilon}$ is compactly contained in 
\[\bigoplus_{i=1}^d g_{i,\epsilon}(U_i).\]
\item
For all $\epsilon$ and $i\neq j$, we have \[g_{i,\epsilon}^{-1}(g_{i,\epsilon}(U_i)\cap V^0_{\epsilon})\cap g_{j,\epsilon}^{-1}(g_{j,\epsilon}(U_j)\cap V^0_{\epsilon})=\varnothing.\]
This conditions guarantees that the sets in the next condition exist and are elements of $\DD$.
\item
Writing \[Y^0_{\epsilon}=\bigoplus_{i=1}^d g_{i,\epsilon}^{-1}(g_{i,\epsilon}(U_i)\cap V^0_{\epsilon}),\] we require that there exists an element
$g_Y\in\GG$ such that $g_Y(Y^0_s)=Y^0_t$. The set $Y^0_s$ is a \emph{test neighborhood}
for encoding the action of $g_Y$.
\end{enumerate}

Admissible pairs encode the action of a homeomorphism on some regular open set, within the scratchpad. For a fixed $g_0\in\GG$, we say that
$(V^0_s,V^0_t)$ is \emph{$(0,g_0)$--admissible} if we may take $g_Y=g_0$.

We will similarly define $i$--admissible pairs $(V_s^i,V_t^i)$ and
$(i,g_0)$--admissible pairs if $f^{-i}(V_s^i,V_t^i)$ is $0$--admissible
or $(0,g_0)$--admissible. For a fixed $V^i_{\epsilon}$, we write
$Y(V^i_{\epsilon})$ for the regular open subset of $M$ that
$V^i_{\epsilon}$ is encoding. If some $Y\in\DD$ arises as
$Y(V^i_{\epsilon})$ for some member of an $i$--admissible pair, we will write
$V^i_{\epsilon}(Y)$ for the corresponding member of the pair, i.e.~the result of transporting $Y$ to $W_0$ via the homeomorphisms $g_{j,\epsilon}$
for $1\leq j\leq d$, and then translating by the $i^{th}$ power of $f$.

Let $\underline h$ be an $m$--tuple of elements of $\GG$. Note that in general we cannot quantify over arbitrary finite tuples in $\GG$, and we avoid
ever doing so directly. A pair $(V_s,V_t)\in\DD^2$ is \emph{admissible} (resp. $\underline h$--admissible)
if it is contained in \[W=\bigoplus_{i=0}^{m-1} W_i\in\DD\] and if
\[(V^i_s,V^i_t)=(V_s\cap W_i,V_t\cap W_i)\] is $i$--admissible (resp. $(i,h_i)$--admissible) for all $i$, where here $h_i$ denotes the $i^{th}$ element
of $\underline h$. We will also call $m$ the \emph{length} $\ell(V_s,V_t)$ of the admissible pair.

An admissible pair $(V_s,V_t)$ is \emph{compatible} if for all $i$, we have 
\[V_s^{i+1}=V_s\cap W_{i+1}=V_s^{i+1}(Y(V_t^i)).\] The compatibility
requirement simply says $V_s^{i+1}$ encodes the image of $V_s^i$ under some homeomorphism.

Let $\underline g=(g_1,\ldots,g_n)$ be our fixed tuple of elements of $\GG$.
We say that an admissible pair $(V_s,V_t)$, which here was constructed to have length $m=\ell(\pi)$, is \emph{$(\underline g,\pi)$--adapted} if:
\begin{enumerate}
\item
The admissible pair $(V_s,V_t)$ is compatible.
\item
For all $0\leq i\leq m-1$, we have $(V^i_s,V^i_t)$ is $(i,g_{\pi(i+1)})$--admissible.
\end{enumerate}

Note that the tuple \[(g_{\pi(1)},\ldots,g_{\pi(m)})\] really is definable from $\pi$
and $\underline g$, and so we are not quantifying over arbitrary tuples of elements of
$\GG$.

 Before defining the membership predicate, we establish a more general fact, from
 which Theorem~\ref{thm:define-intro} follows immediately.

\begin{lemma}\label{lem:word-define}
    Let $(g_1,\ldots,g_n)\in\GG$ be a tuple and let $w$ be a word in an alphabet
    on $n$ letters. Then the map \[(w,g_1,\ldots,g_n)\mapsto w(g_1,\ldots,g_n)\] is
    definable.
\end{lemma}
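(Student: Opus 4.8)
The plan is to define $h=w(g_1,\dots,g_n)$ by asserting that $h$ correctly transports every member of a dense family of ``test'' regular open sets through the scratchpad computation encoded by $w$. After enlarging $\underline g$ to be closed under inverses and to contain the identity (as in Section~\ref{sec:membership}), identify $w$ with an element $\pi\in\PP_n$ via the parameter-free interpretation of arithmetic from Section~\ref{sec:arith}, so that $w(\underline g)=g_{\pi(\ell(\pi))}\cdots g_{\pi(1)}$ up to the fixed reading convention relating $\pi$ to a product of generators. Let $\mathcal E$ be the parameter-free definable family (definable by Lemmas~\ref{lem:compact} and~\ref{lem:boundary-accumulate}) of nonempty elements of $\DD$ that are compactly contained in a single chart of $M\setminus\partial M$; this family is dense in $\ro(M)$. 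I would then declare that $h=w(\underline g)$ if and only if for every $Y\in\mathcal E$ there exists a $(\underline g,\pi)$--adapted admissible pair $(V_s,V_t)$ of length $m=\ell(\pi)$ with $Y(V_s^0)=Y$ and $h(Y)=Y(V_t^{m-1})$. Each ingredient here is first order: the representative of $[m]$ and its companion $f$, the scratchpad $W=\bigoplus_{i=0}^{m-1}f^i(W_0)$, the extraction of $W_{m-1}=f^{m-1}(W_0)$ via Lemma~\ref{lem:exponentiation}, the recovery of the encoded set $Y(V_t^{m-1})$, and the action $h(Y)$ by Rubin's Expressibility Theorem; the auxiliary data (the cover $U_i$, the map $f$, the transport maps $g_{i,\epsilon}$, and $W_0$) is existentially quantified inside the notion of an adapted pair. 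Hence the displayed condition defines a relation between $\underline g$, $w$, and $h$.

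Correctness then splits into two halves. First, \emph{any} $(\underline g,\pi)$--adapted admissible pair with $Y(V_s^0)=Y$ is forced to compute $w(\underline g)(Y)$: unwinding the definitions, compatibility gives $Y(V_s^{i+1})=Y(V_t^i)$, while $(i,g_{\pi(i+1)})$--admissibility gives $Y(V_t^i)=g_{\pi(i+1)}(Y(V_s^i))$, so chaining from $i=0$ to $i=m-1$ yields $Y(V_t^{m-1})=g_{\pi(m)}\cdots g_{\pi(1)}(Y)=w(\underline g)(Y)$. Consequently, if $h$ satisfies the displayed condition then $h(Y)=w(\underline g)(Y)$ for all $Y\in\mathcal E$; and since a homeomorphism of $M$ is pinned down by its action on any Boolean-dense family of regular open sets (if $h$ and $h'$ differ at a point $p$, choose disjoint neighbourhoods of $h(p)$ and $h'(p)$, pull them back and intersect to an open neighbourhood of $p$, and select a nonempty member of $\mathcal E$ inside it, on which $h$ and $h'$ must then disagree as sets), it follows that $h=w(\underline g)$. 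This is the ``only if'' direction and the uniqueness of $h$.

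Second, I must exhibit witnesses showing that $h=w(\underline g)$ \emph{does} satisfy the displayed condition, i.e.\ that an adapted pair with $Y(V_s^0)=Y$ exists for each $Y\in\mathcal E$. For the auxiliary data: by Lemma~\ref{lem:ball-cover} and local approximation choose $U_1,\dots,U_d\in\DD$, each compactly contained in $M\setminus\partial M$, whose union is dense and — in the compactly supported case — contains a fixed compact submanifold carrying the supports of $\underline g$, arranged (using Lemma~\ref{lem:large-subset}) so that $Y$ is compactly contained in $U_1$ and disjoint from the remaining $U_i$; choose transport maps $g_{i,\epsilon}$, $\epsilon\in\{s,t\}$, carrying the $U_i$'s into a common $W_0\in\DD$ with pairwise disjoint images, again by local approximation; and take a representative of $[m]$ with its companion $f$, which exists for every $m$ by Lemma~\ref{lem:ext-exist} (through the compatible-extension construction), with the scratchpad pieces $W_i=f^i(W_0)$ pairwise disjoint by Lemma~\ref{lem:fin-comp}. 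Then build the pair level by level: let $V_s^i$ be the $f^i$--translate of a faithful transport into $W_0$ of $g_{\pi(i)}\cdots g_{\pi(1)}(Y)$ (with $Y$ itself when $i=0$) and $V_t^i$ the $f^i$--translate of a faithful transport of $g_{\pi(i+1)}\cdots g_{\pi(1)}(Y)$, where ``faithful transport'' means partitioning the set into pieces $Z_j\subseteq U_j$ subordinate to the cover, moving each $Z_j$ by $g_{j,\epsilon}$, and joining; set $V_\epsilon=\bigoplus_{i=0}^{m-1}V_\epsilon^i\in\DD$. One checks directly that $(V_s,V_t)$ is $(\underline g,\pi)$--adapted, that $Y(V_s^0)=Y$, and that $Y(V_t^{m-1})=w(\underline g)(Y)=h(Y)$.

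Since $\PP_n$ is parameter-free interpretable and the whole condition is first order, the map $(w,g_1,\dots,g_n)\mapsto w(g_1,\dots,g_n)$ is definable, which is Lemma~\ref{lem:word-define} (and hence Theorem~\ref{thm:define-intro}). The main obstacle is the third paragraph: verifying that the level-by-level construction satisfies \emph{every} clause in the long definition of a $(\underline g,\pi)$--adapted admissible pair, and that all intermediate encoded sets $g_{\pi(i)}\cdots g_{\pi(1)}(Y)$ genuinely lie in $\bigcup_i U_i$ so that they admit faithful transports — which forces one to handle the overlaps of the cover via subordinate partitions and to treat the closed and compactly supported cases separately, so that images neither escape the region covered by the $U_i$ nor accumulate on $\partial M$. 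By contrast, the correctness/uniqueness half is essentially a matter of unwinding the definitions together with the standard fact that a homeomorphism is determined by its action on a Boolean-dense family of regular open sets.
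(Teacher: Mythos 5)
Your proposal is correct but uses a genuinely different quantifier structure from the paper's, which shifts the constructive burden to a different direction of the argument.  The paper defines $h=w(\underline g)$ by a \emph{universal} condition: for \emph{all} $(\underline g,\pi)$--adapted admissible pairs $(V_s,V_t)$, one has $h(Y(V_s^0))=Y(V_t^{m-1})$.  Under that formulation, the easy direction is $h=w(\underline g)\Rightarrow$ (condition), obtained by unwinding compatibility; the hard direction is $h\neq w(\underline g)\Rightarrow$ (condition fails), which the paper proves by \emph{choosing a single small test neighborhood} $Y$ around a point where $h$ and $w(\underline g)$ disagree, small enough that every partial orbit $\hat h_j(Y)$ lands inside a single $U_{i(j)}$.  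This entirely avoids the need to encode a set that straddles several elements of the cover.

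You instead define $h=w(\underline g)$ by: for all $Y\in\mathcal E$ there \emph{exists} an adapted pair with $Y(V_s^0)=Y$ and $h(Y)=Y(V_t^{m-1})$.  Your easy direction (the ``only if'') is the unwinding together with the standard fact that a homeomorphism is determined by its action on a Boolean-dense family; your hard direction is the existence of witnesses for \emph{arbitrary} $Y\in\mathcal E$.  Because $Y$ is now universally quantified, you cannot shrink it, and hence you must encode possibly-large intermediate sets $\hat h_j(Y)$ that overlap several elements of the cover; you handle this by partitioning subordinate to the cover and transporting each piece separately.  This works — the decode map is a Boolean join, and the regularization of a partition of a dense open subset of a regular open set recovers that regular open set — but it requires care that each partition piece is compactly contained in its $U_i$ (achievable by shrinking a refinement) and that all the clauses of $(\underline g,\pi)$--adaptedness are verified for the partitioned encode, which you correctly flag as the crux.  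In short: both routes reach the same first-order predicate (the two definitions are equivalent, as each is equivalent to $h=w(\underline g)$), but the paper's ``$\forall$ adapted pairs'' form lets one localize the test neighborhood in the only place where construction is needed, whereas your ``$\forall Y\,\exists$ adapted pair encoding $Y$'' form pays for its cleaner converse with a messier construction.  Both are valid; the paper's is organizationally simpler.
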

\begin{proof}
    We encode $w$ definably as an element $\pi\in\PP_n$. We need only find a predicate
    that expresses that $w(g_1,\ldots,g_n)=h$; for this, we simply require
    that for all $(g_1,\ldots,g_n,\pi)$--adapted
admissible pairs $(V_s,V_t)$, we have $h(Y(V^0_s))=Y(V^{m-1}_t)$; that is, the
$(g_1,\ldots,g_n,\pi)$--adapted admissible pairs $(V_s,V_t)$ encode the action of $h$
on various test neighborhoods.

Suppose first that $h=w(g_1,\ldots,g_n)$. Then, 
\[h=\prod_{i=m}^1 g_{\pi(i)}=g_{\pi(m)}\cdots g_{\pi(1)}.\]
For $j\leq m$, write \[h_j=\prod_{i=j}^1 g_{\pi(i)},\] and let $(V_s,V_t)$ be a $(g_1,\ldots,g_n,\pi)$--adapted admissible
pair. By construction, for all $j$ we have that $Y(V_t^{j-1})=
h_j(Y(V_s^0))$. In particular, $Y(V_t^{m-1})=h(Y(V_s^0))$, as desired.

Conversely, suppose $h\neq w(g_1,\ldots,g_n)$.
Write \[\hat h=\prod_{i=m}^1 g_{\pi(i)},\]
and \[\hat h_j=\prod_{i=j}^1 g_{\pi(i)}\] for $j\leq m$. By convention, $\hat h_0$ is the identity.
Since the union \[U=\bigcup_{i=1}^d U_i\] is dense in $M$, we may assume that there is
a point $p\in U_1$ such that $\hat h$ and $h$ disagree on $p$, and
such that $h(p)\in U$ and $\hat h_j(p)\in U$ for all $j\leq m$.
Indeed, the set \[U\cap h(U)\cap \bigcap_{j=1}^m \hat h_j(U)\] is open and dense in $M$,
so that any two distinct homeomorphisms of $M$ will disagree on some point in this set.

Let the test neighborhood $Y\in\DD$ be an open neighborhood of $p$,
which is chosen to be small enough so that for all $j$ the neighborhood $Y_j=\hat h_j(Y)$ 
of $\hat h_j(p)$ is contained in some $U_i=U_{i(j)}$, and such that
\[\hat h(Y)\cap h(Y)=\varnothing.\] For $0\leq j\leq m-1$, set
\[V_s^j=V_s^j(\hat h_{j}(Y)),\quad V_t^j=V_t^j(\hat h_{j+1}(Y)),\]
so that \[V_s=\bigoplus_{j=0}^{m-1} V_s^j,\quad  V_t=\bigoplus_{j=0}^{m-1} V_t^j\] form an admissible pair. By its very construction,
the pair $(V_s,V_t)$ is a $(g_1,\ldots,g_n,\pi)$--adapted admissible pair. However,$Y(V_s^0)=Y$ and \[Y(V_t^{m-1})=\hat h(Y)\neq h(Y),\] as desired.
\end{proof}

We are now ready to define the predicate $\on{member}_n(\delta,g_1,\ldots,g_n)$.
We set \[\GG\models \on{member}_n(h,g_1,\ldots,g_n)\] if there exists a $\pi\in\PP_n$
of some length $m$ such that for all $(g_1,\ldots,g_n,\pi)$--adapted
admissible pairs $(V_s,V_t)$, we have $h(Y(V^0_s))=Y(V^{m-1}_t)$; in view of the proof
of
Lemma~\ref{lem:word-define}, we have the following:

\begin{cor}\label{lem:member}
For all $h,g_1,\ldots,g_n\in\GG$, we have $\on{member}_n(h,g_1,\ldots,g_n)$ if and only if $h\in\langle g_1,\ldots,g_n\rangle$.
\end{cor}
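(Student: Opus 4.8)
\textbf{Proof plan for Corollary~\ref{lem:member}.}

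The plan is to read off the statement directly from Lemma~\ref{lem:word-define}, since by construction $\on{member}_n$ is just an existential quantification of the word-evaluation predicate over all possible words. First I would recall that $h\in\langle g_1,\ldots,g_n\rangle$ if and only if there is a word $w$ in the free group on $n$ letters with $h=w(g_1,\ldots,g_n)$; since the tuple $\underline g$ is assumed closed under inverses, we may take $w$ to be a positive word, hence encoded by some $\pi\in\PP_n$ of length $m=\ell(\pi)$. Thus $h\in\langle g_1,\ldots,g_n\rangle$ iff there is $\pi\in\PP_n$ with $h=w_\pi(g_1,\ldots,g_n)$, where $w_\pi=g_{\pi(m)}\cdots g_{\pi(1)}$.

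Next I would invoke the two directions established inside the proof of Lemma~\ref{lem:word-define}. For the forward direction: if $\on{member}_n(h,\underline g)$ holds, there is a witnessing $\pi\in\PP_n$ of length $m$ such that every $(\underline g,\pi)$--adapted admissible pair $(V_s,V_t)$ satisfies $h(Y(V_s^0))=Y(V_t^{m-1})$. The contrapositive half of the proof of Lemma~\ref{lem:word-define} shows that if $h\neq w_\pi(g_1,\ldots,g_n)$ then one can build a $(\underline g,\pi)$--adapted admissible pair on which this fails (using density of $U=\bigcup U_i$ in $M$ to pick a point where $h$ and $w_\pi(\underline g)$ disagree together with all the relevant iterates landing in $U$, and choosing a small enough test neighborhood $Y$). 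Hence the witnessing $\pi$ must satisfy $h=w_\pi(g_1,\ldots,g_n)$, so $h\in\langle g_1,\ldots,g_n\rangle$. For the converse: if $h\in\langle g_1,\ldots,g_n\rangle$, choose $\pi$ with $h=w_\pi(g_1,\ldots,g_n)$; the forward half of the proof of Lemma~\ref{lem:word-define} shows that then every $(\underline g,\pi)$--adapted admissible pair $(V_s,V_t)$ has $Y(V_t^{m-1})=h(Y(V_s^0))$, exhibiting the required $\pi$, so $\on{member}_n(h,\underline g)$ holds.

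Finally I would note the uniformity claims of Theorem~\ref{thm:member} follow from the uniformity already built into the ingredients: the interpretation of arithmetic and the exponentiation predicate are uniform in $\GG$ and in manifolds of dimension at most $d$ (Corollary~\ref{cor:arith-gen}, Lemma~\ref{lem:exponentiation}), the constant $d=d(\dim M)$ giving the covering by $U_1,\ldots,U_d$ depends only on $\dim M$ (Lemma~\ref{lem:ball-cover}), and every clause in the definitions of $i$--admissible, compatible, $(\underline g,\pi)$--adapted, and hence $\on{member}_n$, was checked along the way to be first order expressible with only $n$ as a parameter; so $\on{member}_n$ is a single formula working uniformly across all locally approximating $\GG\leq\Homeo(M)$ with $\dim M\leq d$.

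\textbf{Main obstacle.} There is essentially no new obstacle at this stage: the corollary is a one-line consequence of Lemma~\ref{lem:word-define} plus the observation that membership in $\langle g_1,\ldots,g_n\rangle$ is exactly ``$\exists\,w$'' over word evaluation. All the genuine difficulty — controlling admissible and adapted pairs when $\DD$ may contain no witnesses to the relevant disjointness assertions, and pushing the whole construction through the parameter-free interpretation of arithmetic rather than WSOL — has already been absorbed into Lemmas~\ref{lem:fin-comp}, \ref{lem:ext-exist}, \ref{lem:exponentiation}, and \ref{lem:word-define}. The only point requiring a little care is the reduction to positive words via closure of $\underline g$ under inverses and adjoining the identity, which is harmless and was explicitly assumed.
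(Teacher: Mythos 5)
Your proof is correct and follows exactly the paper's intended route: the paper itself derives Corollary~\ref{lem:member} directly from the definition of $\on{member}_n$ as an existential quantification over $\pi\in\PP_n$ of the condition verified in Lemma~\ref{lem:word-define}, together with the standing assumption that $\underline g$ is closed under inverses and contains the identity so that positive words suffice. Nothing further is needed.
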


Corollary~\ref{lem:member} immediately implies Theorem~\ref{thm:member} in the case
where either $M$ is closed or $\GG$ consists of compactly supported homeomorphisms.
The dependence on the dimension is only in the index $d$
in the collection $\{U_1,\ldots,U_d\}$ used to cover a dense open subset of $M$.

\subsubsection{The general case}

The case where $\partial M\neq\varnothing$ and $\GG$ is a general locally
approximating group of homeomorphisms is essentially the same as the closed or compactly
supported cases, except with slightly more bookkeeping.
We retain the full setup of the previous section.

 Recall that there is a collection
 \[\UU^{\partial}=\{U^{\partial}_1,\ldots,U^{\partial}_{d'}\}\subseteq\DD,\]
 with $d'$ depending only on the dimension
of $M$, so that $\UU'$ accumulates on a dense subset of the boundary of $M$. Fix
such a collection $\UU'$ and $m\in\N$.

To build a scratchpad that records the action of homeomorphisms
on the boundary, we choose a $W^{\partial}\in\DD$ and a
$W_0^{\partial}\subseteq W$ with the following properties:
\begin{enumerate}
    \item For $1\leq i\leq d'$, there exist fixed ends $U^{\partial,0}_i$ and elements
    $g^{\partial}_{i,\epsilon}$ for $\epsilon\in\{s,t\}$ such that $g^{\partial}_{i,\epsilon}(U^{\partial,0}_i)\subseteq W_0^{\partial}$
    for all $i$, such
    that these sets are compactly contained in each others' Boolean complements
    for differing values of $\epsilon$ and $i$, and so
    that the sets
    $g^{\partial}_{i,\epsilon}(U_i^{\partial,0})\subseteq W_0^{\partial}$ all lie
    in a single connected component $\hat W_0^{\partial}$ of $W_0^{\partial}$.
    \item We require the collection \[\{U_1,\ldots,U_d,U_1^{\partial,0},\ldots,
    U_{d'}^{\partial,0}\}\] to cover a dense subset of $M$.
    \item There exists an $f\in\GG[W^{\partial}]$ so that 
    $f^j(W_0^{\partial})\cap W_0^{\partial}=\varnothing$ for all $0\leq j\leq m-1$.
    Write $\hat W_j^{\partial}=f^j(\hat W_j^{\partial})$.
\end{enumerate}

Admissible pairs are defined analogously to the closed or compactly supported case.
The definitions are identical except that $V^j_{\epsilon}$ is now allowed to lie
in $\hat W_j^{\partial}$. The definition of the membership predicate is also
identical with the modified definition of an admissible pair, as is the proof
that it has the desired property. In encoding the action of homeomorphisms
on neighborhoods of points (i.e.~test neighborhoods),
one may restrict to points contained in the interior of
$M\setminus\partial M$ and hence all test neighborhoods can be assumed to be 
compactly contained in $M$.
We omit further details, which are a
straightforward reprise
of the compactly supported or closed case.



\subsection{Finiteness properties of elementarily equivalent groups}
We are now ready to establish Theorem~\ref{thm:inf-gen}. This will be a formal consequence of the fact that there if a definable membership
predicate; in particular, if $\GG\leq\Homeo(M)$ contains that identity component of the full group of homeomorphisms of $M$, then it is
an immediate consequence of~\cite{dlNK23} that any group that is elementarily equivalent to $\GG$ is not finitely generated.

There are two key observations. The first is that
the membership predicate still defines a subgroup in any group
that is elementarily equivalent to $\GG$, though perhaps
not the ``intended" one. The second is that if $\GG$ is not finitely generated then for any choice of parameters $g_1,\ldots,g_n\in\GG$ (with $n$
arbitrary), there is an element
$h\in\GG$ such that \[\GG\models \neg\on{member}_n(h,g_1,\ldots,g_n).\]

The first observation is made precise by the following lemma.
\begin{lem}\label{lem:larger-subset}
Let $\GG\leq\Homeo(M)$ be locally approximating and let $G\equiv \GG$. Suppose that 
\[h,g_1,\ldots,g_n\in G,\] and that
\[h\in\langle g_1,\ldots, g_n\rangle.\] Then we have
\[\on{member}_n(h,g_1,\ldots,g_n).\] In particular,
for fixed $g_1,\ldots,g_n\in G$, we have \[\langle g_1,\ldots,g_n\rangle\subseteq\{h\mid \on{member}_n(h,g_1,\ldots,g_n)\}.\] Moreover, the
set \[\{h\in G\mid \on{member}_n(h,g_1,\ldots,g_n)\}\] forms a subgroup of $G$.

\end{lem}
\begin{proof}
Let $w=w(x_1,\ldots,x_n)\in F_n$ be an element in the free group on $\{x_1,\ldots,x_n\}$, and let
\[a,b_1,\ldots,b_n\in \GG\] be arbitrary. In $\GG$, we have that if \[a=w(b_1,\ldots,b_n)\longrightarrow \on{member}_n(a,b_1,\ldots,b_n).\]
In particular, the sentence \[\phi_w:=(\forall\delta\forall_{i=1}^n\gamma_i)[\delta=w(\gamma_1,\ldots,\gamma_n)
\longrightarrow \on{member}_n(\delta,\gamma_1,\ldots,\gamma_n)]\] lies in the theory of $\GG$ and thus holds in any $G$ which is elementarily
equivalent to $\GG$. Thus, the first conclusion of the lemma is immediate.

To prove that $\{h\mid \on{member}_n(h,g_1,\ldots,g_n)\}$ is actually a group, note that
\begin{align*}
(\forall\delta_1,\delta_2\forall_{i=1}^n\gamma_i)[(\on{member}_n(\delta_1,\gamma_1,\ldots,\gamma_n)\wedge\\
\on{member}_n(\delta_2,\gamma_1,\ldots,\gamma_n))\longrightarrow \on{member}_n(\delta_1^{-1}\delta_2,\gamma_1,\ldots,\gamma_n)]
\end{align*}
is an element of the theory of $\GG$ as well, whence the conclusion follows.
\end{proof}

We are now able to give a proof of Theorem~\ref{thm:inf-gen}:

\begin{proof}[Proof of Theorem~\ref{thm:inf-gen}]
Let $n\in\N$ be arbitrary.
As we have observed previously,  if $\GG$ is not finitely generated then for any choice of parameters $g_1,\ldots,g_n\in\GG$, there is an element
$h\in\GG$ such that \[\GG\models \neg\on{member}_n(h,g_1,\ldots,g_n).\] In particular,  we have
\[G\models(\forall\gamma_1,\ldots,\gamma_n)(\exists \delta) [\neg\on{member}_n(\delta,\gamma_1,\ldots,\gamma_n)]:=\psi(\delta,\gamma_1,\ldots
\gamma_n).\] Since
by Lemma~\ref{lem:larger-subset} the predicate $\on{member}_n$ defines a subgroup of $G$ which is at least as large as the subgroup
generated by the parameters $\gamma_1,\ldots,\gamma_n$, it follows that any witness $\delta$ of $\psi(\delta,\gamma_1,\ldots
\gamma_n)$ for any purported choice of generators of $G$ will be a witness to the failure of $G$ to be generated by those elements.
\end{proof}

If $\GG$ does happen to be finitely generated and $G\equiv \GG$ then there will always exist a finite tuple $(g_1,\ldots,g_n)$ of elements of $G$ such that
\[G\models(\forall\delta)[\on{member}_n(\delta,g_1,\ldots,g_n)],\] though this need not mean that $G$ is finitely generated. One can imagine that $G$
may generated by the tuple in the ``nonstandard sense", i.e.~that one may need products of
elements of the generators which have nonstandard natural numbers
as their lengths.

\subsection{Primality for certain finitely generated locally approximating groups}\label{sec:qfa}
In this section, we prove that under certain general hypotheses, finitely generated locally approximating groups of homeomorphisms of manifolds are prime
models of their theories. In particular, 
we recover some results of Lasserre~\cite{lasserre}. Proofs of primality
of groups often go through bi-interpretability with arithmetic~\cite{khelif}. Here, we avoid that path for a more group theoretic argument.

Recall that if $\MM_0$ is a model of a theory $T$, we say that $\MM_0$ is \emph{prime} if whenever $\MM$ is another model of $T$ then
$\MM_0$ admits an elementary embedding into $\MM$. That is, there is a map $\psi\colon\MM_0\longrightarrow\MM$ such that
for all formulae $\phi(\underline x)$ with variables $\underline x$, and for all tuples of parameters $\underline a$ in $\MM_0$, we have
\[\MM_0\models \phi(\underline a)\iff \MM\models\phi(\psi(\underline a)).\] The Tarski--Vaught test~\cite{marker} shows that one need only consider
existential formulae. Precisely, let $\underline b$ be a tuple of parameters in $\MM_0$ and let $\phi(\underline x,\underline y)$ be a formula.
Then $\psi$ is an elementary embedding provided that we have
\[\MM\models(\exists\underline x)[\phi(\underline x,\psi(\underline b))]\] if and only if
there is a tuple $\underline a$ in $\MM_0$ such that \[\MM\models \phi(\psi(\underline a),\psi(\underline b)).\]

Let $\GG\leq\Homeo(M)$ be a locally approximating group of homeomorphisms, and let
$(g_1,\ldots,g_n)$ be a tuple of elements of $\GG$. By Lemma~\ref{lem:word-define},
there is a predicate $P(\alpha,\gamma_1,\ldots,\gamma_n,\delta)$
which for a word $w$ in an alphabet
on $n$ letters satisfies $P(w,g_1,\ldots,g_n,h)$ if and only if $w(g_1,\ldots,g_n)=h$.
Here, as before, we are encoding $w$ as a natural number.

Similarly, we can define $R(\alpha,\gamma_1,\ldots,\gamma_n)$ by $R(w,g_1,\ldots,g_n)$
if and only if \[w(g_1,\ldots,g_n)=1.\] Writing $\underline g=(g_1,\ldots,g_n)$,
we obtain $R_{\underline g}=\{w\mid R(w,\underline g)\}\subseteq\N$. The set
$R_{\underline g}$ is clearly definable with parameters by a formula
$\rho(\alpha,\underline g)$. The fact that $\rho$ has parameters is a somewhat annoying
feature.

We will say that $\GG$ is \emph{interpretably presented} if there exists
a parameter--free formula $\theta(\alpha)$ and generators $\underline g$ for
$\GG$ such that \[\GG\models R(w,\underline g)\leftrightarrow \theta(w).\]
In other words, the trivial words in $\underline g$ can be defined by
a parameter--free formula, as opposed to $\rho(\alpha,\underline g)$.
The
first order theory of $\GG$ is generally more expressive than first order arithmetic,
and so $\theta(\alpha)$ need not be a purely first arithmetic formula. Indeed,
$\theta$ may be allowed to be expressible
using a sufficiently large fragment of full 
second order arithmetic, whichever fragment might be definable by the
group theory of $\GG$.
We will say that
$\theta(\alpha)$ \emph{defines a presentation} of $\GG$ with respect to $\underline g$. 

Let us make some aside remarks about interpretably presented groups.
Any group
that is interpretable in arithmetic is interpretably presented. We spell
out some details here, since they do not seem to
 be always well-documented in literature,
especially with regards to the primality of groups which are bi-interpretable
with arithmetic
(cf.~\cite{khelif}). Let $G$ be an arbitrary finitely generated group which admits
a parameter--free interpretation of arithmetic, and which is itself interpretable
in arithmetic (so that there is a parameter--free arithmetic formula encoding
the multiplicative structure of $G$). In $\N$, we interpret a copy of $G$, and
then interpret a further copy of arithmetic inside of $G$. The map sending
$n\in\N$ to the $n^{th}$ power of the successor of zero in the internal copy of
arithmetic is a definable map which is a bijection from $\N$ to the standard part
of the copy of arithmetic inside of $G$.

Conversely, suppose $G$ interprets a copy of the natural numbers,
whereby it interprets an internal copy of itself. Suppose furthermore that
the evaluation map for the free group on tuples of elements in $G$ is definable
(such as it is for finitely generated locally approximating groups; 
cf.~\ref{lem:word-define}). Choosing a finite generating tuple $\underline g$, we can find a
definable (with $\underline g$ as a parameter) isomorphism between the external
copy of $G$ and the interpreted internal copy. This is done by sending
the tuple $\underline g$ to a suitable tuple in the internal copy of $G$; one
can express in arithmetic that this tuple generates the internal copy and
one can define an arithmetic formula which decides if a particular word in
generators represents the identity. In particular, we have:
\begin{prop}\label{prop:bi-interpretable}
    Let $\GG$ be a finitely generated, locally approximating group that is
    parameter--free interpretable in arithemetic. Then $\GG$ is bi-interpretable
    with arithmetic (with parameters).
\end{prop}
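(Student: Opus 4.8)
The plan is to check directly that the two interpretations already established combine into a bi-interpretation once one verifies that both round-trip composites are definable isomorphisms (parameters allowed). On one side, $\GG$ interprets the standard model of first order arithmetic $\N$ without parameters by Corollary~\ref{cor:arith-gen}; as is clear from the construction in Section~\ref{sec:arith}, the interpreted structure really is (a copy of) the standard $\N$ when the scheme is applied to an honest locally approximating group. On the other side, the hypothesis furnishes a parameter-free interpretation $\Psi$ of $\GG$ in $\N$, i.e.\ with $\Psi(\N)\cong\GG$. Composing these, inside $\GG$ we obtain a parameter-free interpreted copy $\N^{\GG}$ of $\N$ and then $\GG^{\N^{\GG}}:=\Psi(\N^{\GG})$, an interpreted copy of $\GG$; since $\N^{\GG}\cong\N$ and $\Psi$ preserves isomorphism type, $\GG^{\N^{\GG}}\cong\GG$. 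Dually, inside $\N$ we obtain $\GG^{\N}:=\Psi(\N)\cong\GG$ and then $\N^{\GG^{\N}}$, the result of applying the interpretation of Corollary~\ref{cor:arith-gen} to $\GG^{\N}$, which is again $\cong\N$. It then remains to produce a definable isomorphism $\GG\to\GG^{\N^{\GG}}$ and a definable isomorphism $\N\to\N^{\GG^{\N}}$.

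For the group side, I would fix a finite generating tuple $\underline g=(g_1,\ldots,g_n)$ of $\GG$ (available by hypothesis) and take it as the only parameter. Working in $\GG$, the evaluation map $(w,\gamma_1,\ldots,\gamma_n)\mapsto w(\gamma_1,\ldots,\gamma_n)$ is definable by Lemma~\ref{lem:word-define}, and the analogous evaluation map for the interpreted group $\GG^{\N^{\GG}}$ is definable as well, since $\GG^{\N^{\GG}}$ and its multiplication are arithmetically defined inside $\N^{\GG}$. Pushing $\underline g$ through some abstract isomorphism $\GG\cong\GG^{\N^{\GG}}$ shows that there exists a tuple $\underline a=(a_1,\ldots,a_n)$ in $\GG^{\N^{\GG}}$ that (i) generates $\GG^{\N^{\GG}}$ and (ii) has exactly the same relator words as $\underline g$, meaning that for every word $w$ coded in $\N^{\GG}$ one has $w(\underline a)=1$ in $\GG^{\N^{\GG}}$ if and only if $w(\underline g)=1$ in $\GG$. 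Both conditions are first order in $\GG$ with parameter $\underline g$ — condition (ii) couples the Lemma~\ref{lem:word-define} predicate for $\underline g$ with the arithmetical evaluation predicate for $\underline a$ — so one may pick, say, the least such $\underline a$ in the ambient coding of tuples of $\GG^{\N^{\GG}}$, which renders $\underline a$ definable from $\underline g$. I would then set $\iota(h)=w(\underline a)$ whenever $h=w(\underline g)$: condition (ii) makes $\iota$ well defined and injective, (i) makes it surjective, it is plainly a homomorphism, and it is definable with parameter $\underline g$ because ``$\iota(h)=y$'' unwinds to ``there is a word $w$ coded in $\N^{\GG}$ with $h=w(\underline g)$ and $y=w(\underline a)$''. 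Hence the $\GG$-to-$\GG$ composite is definably isomorphic to the identity.

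For the arithmetic side, no parameter is needed. The structure $\N^{\GG^{\N}}$ is definable in $\N$ and is isomorphic to $\N$ (it arises by interpreting arithmetic in $\GG^{\N}\cong\GG$ via Corollary~\ref{cor:arith-gen}), so the map $j\colon\N\to\N^{\GG^{\N}}$ sending $m$ to the $m$-th iterate of the interpreted successor applied to the interpreted zero is definable in $\N$ by primitive recursion; any isomorphism $\N\to\N^{\GG^{\N}}$ must preserve zero and successor and therefore coincide with $j$, so $j$ is itself an isomorphism. This disposes of the $\N$-to-$\N$ composite, and the proposition follows, with the single parameter $\underline g$ used only on the group side. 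The step I expect to be the real obstacle — and the one where finite generation together with Lemma~\ref{lem:word-define} is indispensable — is the construction of the definable tuple $\underline a$ and the verification that the induced map $\iota$ is simultaneously a bijection, a homomorphism, and definable using only $\underline g$; the rest is either quoted from earlier in the paper or is routine bookkeeping with interpretations.
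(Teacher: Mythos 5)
Your proposal is correct and follows essentially the same route as the paper, which does not give a formal proof environment for Proposition~\ref{prop:bi-interpretable} but instead sketches exactly this argument in the two paragraphs preceding it: interpret $\N$ in $\GG$ by Corollary~\ref{cor:arith-gen}, interpret $\GG$ in $\N$ by hypothesis, handle the $\N\to\GG\to\N$ composite via $m\mapsto S^m(0)$ in the internal copy, and handle the $\GG\to\N\to\GG$ composite by sending a fixed generating tuple $\underline g$ to a tuple in the internal copy with the same relator set, using Lemma~\ref{lem:word-define} to make this definable with parameter $\underline g$. You spell out a few points the paper leaves implicit — in particular the device of choosing the \emph{least} suitable internal tuple $\underline a$ in the arithmetic coding so that $\underline a$ is itself definable from $\underline g$, and the verification that the resulting map $\iota$ is a well-defined isomorphism — but these are elaborations of the paper's sketch rather than a different argument.
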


We will argue below that any $\GG$ satisfying the 
hypotheses of Proposition~\ref{prop:bi-interpretable} is a prime model of its
theory, in Theorem~\ref{thm:prime}. Any finitely generated locally approximating
group $\GG$ satisfying the hypotheses of Proposition~\ref{prop:bi-interpretable} is
both prime and quasi-finitely axiomatizable, by Khelif's result~\cite{khelif}.

Returning to the case of locally approximating groups,
observe that an arbitrary $n$--tuple $\underline g$ generates $\GG$ if and only if
\[\GG\models(\forall\delta)(\exists \alpha)[P(\alpha,\underline g,\delta)].\]
Moreover, $\GG$ is interpretably presented with a presentation defined by
$\theta(\alpha)$, with respect to $\underline g$ and if
$\underline g'$ is another generating
$n$--tuple of elements of $\GG$, we have
that $\underline g$ and $\underline g'$ are in the same $\GG$--automorphism orbit
if and only if 
\[\GG\models(\forall\alpha)[(R(\alpha,\underline g)\leftrightarrow\theta(\alpha))\leftrightarrow(R(\alpha,\underline g')\leftrightarrow\theta(\alpha))].\]


Now, suppose that $\GG$ is finitely generated and interpretably presented, and
let $G$ be an arbitrary group such that $G\models\on{Th}(\GG)$. We have that
$G\equiv\GG$ since $\on{Th}(\GG)$ is complete. Let $\GG$ be generated by
$\underline g=(g_1,\ldots,g_n)$, with $\theta(\alpha)$
a parameter-free formula
defining a presentation of $\GG$ with respect to $\underline g$.
We may then choose a tuple
$\underline h=(h_1,\ldots,h_n)$ of elements
of $G$
such that
\[G\models(\forall\delta)(\exists \alpha)[P(\alpha,\underline h,\delta)]
\wedge (\forall\alpha)[R(\alpha,\underline h)\leftrightarrow
\theta(\alpha)].\]
Here, the variable $\alpha$ ranges over a suitably parameter--free definable
sort in $G$, which may not be the standard model of arithmetic.

Consider the function $\psi$ defined by $\psi\colon g_i\mapsto h_i$.

\begin{lemma}\label{lem:elem-embed}
    The map $\psi$ extends to an injective homomorphism
    \[\psi\colon\GG\longrightarrow G.\]
\end{lemma}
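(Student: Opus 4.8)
The plan is to reduce the lemma to the single assertion that for every \emph{standard} word $w$ in the free group on $n$ letters, $w(\underline g)=1$ in $\GG$ if and only if $w(\underline h)=1$ in $G$. Granting this, $\psi$ is well-defined: if $w_1(\underline g)=w_2(\underline g)$ then $(w_2^{-1}w_1)(\underline g)=1$ in $\GG$, so $(w_2^{-1}w_1)(\underline h)=1$ in $G$, whence $w_1(\underline h)=w_2(\underline h)$; it is visibly a homomorphism, it sends $g_i$ to $h_i$ (take $w$ to be the $i$-th generator), its image is $\langle h_1,\dots,h_n\rangle$, and it is injective since $\psi(w(\underline g))=w(\underline h)=1$ forces $w(\underline g)=1$ by the reverse implication. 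So everything comes down to the displayed biconditional, which I would establish by a chain of equivalences that routes through $\theta$.

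The preliminary point is that, for a \emph{fixed} standard word $w$, the numeral $\underline w\in\N$ encoding $w$ is singled out inside the interpreted arithmetic sort by a parameter-free formula $\nu_w(\alpha)$ (obtained from the definable zero by a standard number of applications of the definable successor). Consequently, for any model $N\equiv\GG$ and any $n$-tuple $\underline a$ from $N$, the assertions ``$R(\underline w,\underline a)$'' and ``$\theta(\underline w)$'' unwind into genuine parameter-free formulae $\rho_w(\underline a)=\exists\alpha(\nu_w(\alpha)\wedge R(\alpha,\underline a))$ and $\sigma_w=\exists\alpha(\nu_w(\alpha)\wedge\theta(\alpha))$ in the language of groups. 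Moreover, by Lemma~\ref{lem:word-define} (correctness of the word-evaluation predicate on the standard word $w$) the sentence $\forall\underline\gamma\,[\rho_w(\underline\gamma)\leftrightarrow t_w(\underline\gamma)=1]$ — where $t_w$ is the group term attached to $w$ — lies in $\on{Th}(\GG)$, and since $\GG\equiv G$ it holds in $G$ as well; thus $R$ ``computes correctly'' on $\underline w$ in both $\GG$ and $G$.

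Now run the chain. In $\GG$: $w(\underline g)=1 \iff \GG\models\rho_w(\underline g) \iff \GG\models\sigma_w$, the first equivalence being correctness of $R$ on $\underline w$ and the second the hypothesis that $\theta$ defines a presentation of $\GG$ with respect to $\underline g$, i.e.\ $\GG\models\forall\alpha[R(\alpha,\underline g)\leftrightarrow\theta(\alpha)]$. Since $\sigma_w$ is parameter-free and $\GG\equiv G$, we get $\GG\models\sigma_w \iff G\models\sigma_w$. Finally, in $G$ the defining condition on $\underline h$, namely $G\models\forall\alpha[R(\alpha,\underline h)\leftrightarrow\theta(\alpha)]$, gives $G\models\sigma_w \iff G\models\rho_w(\underline h)$, and correctness of $R$ on $\underline w$ in $G$ gives $G\models\rho_w(\underline h)\iff w(\underline h)=1$ in $G$. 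Concatenating yields the desired biconditional for every standard $w$, completing the proof.

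The step I expect to be the main (and essentially only) subtlety is exactly the bookkeeping in the second paragraph: the interpreted arithmetic sort of $G$ need not be the standard model $\N$, so one cannot simply transfer the quantified equivalence $\forall\alpha[R(\alpha,\underline g)\leftrightarrow\theta(\alpha)]$ wholesale. One must, for each fixed standard word $w$, restrict to the definable numeral $\underline w$ and observe that each of the three relevant statements — $t_w(\underline g)=1$, $R(\underline w,\underline g)$, $\theta(\underline w)$ — is captured by a single parameter-free sentence, so that ordinary elementary equivalence applies. Once this is arranged the argument is purely formal and uses the locally approximating hypothesis only via Lemma~\ref{lem:word-define}.
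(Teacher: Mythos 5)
Your proposal is correct, but it takes a somewhat more circuitous route than the paper. The paper packages the entire transfer into a single parameter-free sentence
\[\phi_w \;:=\; (\forall\underline\gamma)\Bigl[\bigl[(\forall\delta)(\exists\alpha)[P(\alpha,\underline\gamma,\delta)]\wedge(\forall\alpha)[R(\alpha,\underline\gamma)\leftrightarrow\theta(\alpha)]\bigr]\longrightarrow w(\underline\gamma)=1\Bigr],\]
whose conclusion $w(\underline\gamma)=1$ is a \emph{group-theoretic} equation rather than an arithmetic statement. Since $\phi_w\in\on{Th}(\GG)$ and the antecedent holds of $\underline h$ in $G$ by choice, one deduces $w(\underline h)=1$ immediately and entirely sidesteps the arithmetic sort of $G$. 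By contrast, you factor the argument through the arithmetic sort explicitly: you isolate the standard numeral $\underline w$ by the parameter-free formula $\nu_w$, form the sentence $\sigma_w=\exists\alpha(\nu_w(\alpha)\wedge\theta(\alpha))$, and additionally transfer the correctness of $R$ on $\underline w$ as the separate sentence $\forall\underline\gamma[\rho_w(\underline\gamma)\leftrightarrow t_w(\underline\gamma)=1]$. This chain is valid — all the sentences you transfer are indeed parameter-free, $\nu_w$ does pick out the standard numeral uniquely in every model of arithmetic, and the correctness lemma does hold in both $\GG$ and $G$ — but it re-derives in $G$ a fact (correctness of $R$ on standard numerals) that the paper's formulation renders unnecessary, because the paper never needs to evaluate $R$ in $G$ on the right-hand side of the implication. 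Your version is more modular and makes explicit exactly how the nonstandardness of $G$'s arithmetic sort is quarantined; the paper's version is terser because the payoff of the sentence is already a pure group equation.
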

\begin{proof}
    We need to show that for every word $w$ in an alphabet on $n$ letters, we have
    $w(g_1,\ldots,g_n)=1$ in $\GG$ if and only if $w(h_1,\ldots,h_n)=1$ in $G$.
    In $\GG$, the variable $\alpha$ ranges over natural numbers, as our interpretation
    of arithmetic interprets the standard model $\N$ of the natural numbers. In $G$, 
    the same interpretation may give rise to a nonstandard model $\NN$ of arithmetic.
    Any such model $\NN$ has a standard copy of $\N$ elementarily embedded in it.

    Let $w$ be a word such that $w(\underline g)=1$ in $\GG$.
    We have that
    \[\GG\models(\forall\underline\gamma)\left([(\forall\delta)(\exists\alpha)[P(\alpha,\underline\gamma,\delta)]\wedge (\forall\alpha)[R(\alpha,\underline\gamma)\leftrightarrow\theta(\alpha)]]
    \longrightarrow w(\underline\gamma)=1\right). \]
    
    Since this sentence has no parameters, we have that $G$ models it as well, with
    the variable $\alpha$ ranging over the appropriate (parameter-free definable) sort.
    It follows that $w(\underline h)=1$. It is then immediate
    that the map $\psi$ extends to a homomorphism from $\GG$ to $G$. Repeating
    the same argument with a word $w$ such that $w(\underline g)\neq 1$ in $\GG$,
    we see that $\psi$ is injective, as desired.
\end{proof}

\begin{thm}\label{thm:prime}
    Let $\GG\leq\Homeo(M)$ be a finitely generated interpretably
    presented locally approximating group
    of homeomorphisms of $M$. Then $\GG$ is
    a prime model of its theory.
\end{thm}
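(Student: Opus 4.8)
The plan is to verify the two standard criteria for a countable structure to be a prime model of its theory: that it is countable, and that every tuple realizes an isolated type. Countability of $\GG$ is immediate since $\GG$ is finitely generated. For the isolation of types, I would use the interpretably presented hypothesis together with Lemma~\ref{lem:elem-embed} to pin down each tuple up to automorphism by a single formula.

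First I would fix the finite generating tuple $\underline g = (g_1,\ldots,g_n)$ and the parameter-free formula $\theta(\alpha)$ defining a presentation of $\GG$ with respect to $\underline g$. Consider the parameter-free formula
\[\chi(\underline\gamma) := (\forall\delta)(\exists\alpha)[P(\alpha,\underline\gamma,\delta)] \wedge (\forall\alpha)[R(\alpha,\underline\gamma)\leftrightarrow\theta(\alpha)],\]
which asserts that $\underline\gamma$ generates the group and satisfies exactly the relations prescribed by $\theta$. The key claim is that $\chi$ isolates the type of $\underline g$ in $\GG$: in any $G \equiv \GG$, any tuple $\underline h$ satisfying $\chi$ gives, via $g_i \mapsto h_i$, an injective homomorphism $\psi\colon\GG\to G$ by Lemma~\ref{lem:elem-embed}, and moreover $\psi$ is surjective because $\chi$ forces $\underline h$ to generate $G$ (using the first conjunct, noting that the evaluation map $P$ is definable). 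Hence $\psi$ is an isomorphism, so $\underline h$ and $\underline g$ realize the same type; this shows $\chi$ is a complete formula, i.e. it isolates $\tp(\underline g)$. Since isolated types are preserved under definable functions, and since the evaluation map of Lemma~\ref{lem:word-define} expresses every element of $\GG$ as a definable function of $\underline g$, the type of \emph{any} tuple in $\GG$ is isolated (it is implied by $\chi(\underline g)$ together with the formulae recording which words in $\underline g$ equal which coordinates of the tuple). By the Ryll-Nardzewski/omitting-types characterization, a countable model all of whose tuples have isolated type is prime.

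Alternatively, and perhaps more cleanly, I would argue directly via the Tarski--Vaught test as set up in the discussion preceding the theorem: given any $G \models \on{Th}(\GG)$, pick $\underline h$ in $G$ witnessing $\chi$ (possible since $\GG \models (\exists\underline\gamma)\chi(\underline\gamma)$ and this sentence is in $\on{Th}(\GG)$), let $\psi\colon\GG\to G$ be the resulting embedding from Lemma~\ref{lem:elem-embed}, and check it is elementary. For this I would verify the Tarski--Vaught condition: if $G \models (\exists \underline x)\phi(\underline x, \psi(\underline b))$ for a tuple $\underline b$ from $\GG$, then since $\underline b = v(\underline g)$ for definable word-tuples $v$ and since $\psi$ commutes with the definable evaluation map, one can pull the witness back; the point is that the image $\psi(\GG)$ is a definable (over $\psi(\underline g)$) subgroup of $G$ that, because of the presentation formula $\theta$, is forced to be all of $G$ — so any existential witness in $G$ already lies in $\psi(\GG)$, and I descend it to $\GG$.

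The main obstacle I expect is bookkeeping around the sort over which $\alpha$ ranges in $G$: the interpreted model of arithmetic in $G$ may be nonstandard, so one must be careful that $\theta(\alpha)$, which may use a fragment of second-order arithmetic definable from the group structure, still correctly pins down the relations of $G$ with respect to $\underline h$ when $\alpha$ runs over the nonstandard sort. Lemma~\ref{lem:elem-embed} already handles the injectivity direction of this, so the remaining work is to confirm surjectivity of $\psi$ — that $\underline h$ generates all of $G$, not merely a subgroup — which follows from the first conjunct of $\chi$ via the definability of $P$, interpreted in $G$ over the appropriate sort. Once that is in hand the rest is a routine application of the Tarski--Vaught test.
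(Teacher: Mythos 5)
Both of your approaches hinge on the claim that a tuple $\underline h$ in $G\equiv\GG$ satisfying $\chi(\underline h)$ must generate $G$, so that $\psi(\GG)=G$; this is false, and it is precisely the failure mode the paper flags right after Theorem~\ref{thm:inf-gen}. In $G$ the first conjunct $(\forall\delta)(\exists\alpha)[P(\alpha,\underline h,\delta)]$ is evaluated with $\alpha$ ranging over the \emph{interpreted} sort of arithmetic in $G$, which can be nonstandard; a nonstandard $\alpha$ does not code a finite word in $\underline h$, so it gives no membership of $\delta$ in the externally generated subgroup $\langle h_1,\ldots,h_n\rangle=\psi(\GG)$. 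Indeed, if $\psi$ were always surjective then every model of $\on{Th}(\GG)$ would be isomorphic to the countable group $\GG$, contradicting upward L\"owenheim--Skolem. So neither the ``$\psi$ is an isomorphism, hence $\chi$ isolates $\tp(\underline g)$'' step nor the Tarski--Vaught step as you phrase it (``any existential witness already lies in $\psi(\GG)$'') goes through.

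What the paper actually does is carefully avoid surjectivity. Given $G\models(\exists\underline x)\phi(\underline x,\psi(\underline b))$ with $\underline b=\underline u(\underline g)$, one forms the parameter-free sentence $(\exists\underline\gamma,\underline x)[\chi(\underline\gamma)\wedge\phi(\underline x,\underline u(\underline\gamma))]$ and transfers it to $\GG$; \emph{inside} $\GG$ the arithmetic sort is standard and any $\chi$-witness really does generate $\GG$ in the finite sense, so the existential witness can be replaced by a tuple of words $\underline v(\underline\gamma)$, and (using that all $\chi$-witnesses in $\GG$ lie in a single automorphism orbit) the existential on $\underline\gamma$ becomes a universal; transferring the resulting parameter-free sentence back to $G$ produces a witness in $\psi(\GG)$. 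If you prefer your first outline (atomic model), the fix is the same ingredient applied internally: use the automorphism-orbit fact in $\GG$ --- all tuples satisfying $\chi$ in $\GG$ are automorphic --- to conclude directly that $\chi$ isolates $\tp^\GG(\underline g)$, rather than routing through a putative isomorphism with an arbitrary model $G$.
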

\begin{proof}
    Let $G$ be a model of the theory of $\GG$, and let $\psi\colon \GG\longrightarrow G$
    be an injective homomorphism as furnished by Lemma~\ref{lem:elem-embed}. We
    claim that $\psi$ is actually an elementary embedding, which will suffice to
    prove the result.

    Let $\phi(\underline x,\underline y)$ be a formula in the language of groups, and let $\underline b$ be a tuple of parameters in $\GG$. Suppose
that $G\models (\exists\underline x)[\phi(\underline x,\psi(\underline b))]$.
By the Tarski--Vaught Test, we need only show that we may find a witness for $\underline x$ in $\psi(\GG)$.

Since $\underline b$ consists of elements of $\GG$ which are words in the generators $\underline g$, we may write $\underline b=
\underline u(\underline g)$, where here $\underline u$ denotes a tuple (of the same length as $\underline b$) of words in the free group on $n$ letters.
Writing $\underline\gamma$ for the tuple $(\gamma_1,\gamma_2)$, we have
\[G\models (\exists\underline\gamma,\underline x)\left((\forall\delta)(\exists\alpha)[P(\alpha,\underline\gamma,\delta)]\wedge (\forall\alpha)[R(\alpha,\underline\gamma)\leftrightarrow\theta(\alpha)]\right)\wedge\phi(\underline x,\underline u(\underline\gamma))].\] Since this sentence has no parameters,
it must also be true in $\GG$.

We have already observed that all tuples $\underline g$ in $\GG$ witnessing
\[\chi(\underline\gamma)=(\forall\delta)(\exists\alpha)[P(\alpha,\underline\gamma,\delta)]\wedge (\forall\alpha)[R(\alpha,\underline\gamma)\leftrightarrow\theta(\alpha)]\] are in
the same automorphism orbit, and so we may replace the existential quantification
on $\underline\gamma$ by a universal quantification. In particular,
\[\GG\models(\forall\underline\gamma)[\chi(\gamma)
\longrightarrow (\exists \underline x)[\phi(\underline x,\underline u(\underline\gamma))]].\] Since any witness $\underline g$ for $\chi(\underline\gamma)$
generates all of $\GG$, we may replace a witness for $\underline x$ by
words in $\underline g$. Thus, there is a tuple of words $\underline v$ such that
\[\GG\models(\forall\underline\gamma)[\chi(\gamma)
\longrightarrow [\phi(\underline v(\underline\gamma),\underline u(\underline\gamma))]].\]
This sentence now holds in $G$ and furnishes a witness for
$(\exists\underline x)[\phi(\underline x,\psi(\underline b))]$ in $\psi(\GG)$, as
required.
\end{proof}

\subsection{QFA}
Recall that a finitely generated group $G$ is \emph{quasi-finitely axiomatizable} or \emph{QFA} if any finitely generated group $H$ such that
$G\equiv H$ is isomorphic to $G$. Lasserre proved that Thompson's groups $F$ and $T$ are QFA because they are bi-interpretable with arithmetic. It is unclear to us
how to prove QFA in general for finitely generated locally approximating groups,
but we can establish the following weaker result:

\begin{prop}\label{prop:retract}
    Let $\GG$ be a finitely generated locally approximating group of homeomorphisms
    of a manifold $M$, and let $H$ be a finitely presented group such that
    $\GG\equiv G$. Then $H$ retracts to $\GG$.
\end{prop}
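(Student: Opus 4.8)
The plan is to mimic the first half of the proof of Theorem~\ref{thm:prime}, but in the reverse direction: instead of embedding $\GG$ into an arbitrary model $G$, we want to produce a homomorphism $H\to\GG$ that is a left inverse to an embedding $\GG\hookrightarrow H$. First I would fix a finite generating tuple $\underline g=(g_1,\ldots,g_n)$ for $\GG$ and observe that, since $H$ is finitely presented, say $H=\langle y_1,\ldots,y_k\mid r_1,\ldots,r_\ell\rangle$, the statement ``$H$ is generated by a $k$-tuple satisfying these finitely many relations'' is a \emph{single first order sentence} $\sigma$ true in $H$, hence true in $\GG$ (as $\GG\equiv H$). This produces a tuple $\underline h=(h_1,\ldots,h_k)$ of elements of $\GG$ such that the assignment $y_i\mapsto h_i$ extends to a surjective homomorphism $q\colon H\twoheadrightarrow\GG$ — surjectivity because the sentence $\sigma$ must also assert that the $k$-tuple generates (using the evaluation predicate $P$ of Lemma~\ref{lem:word-define} together with the membership predicate of Theorem~\ref{thm:member}), and we can demand that $\underline g\subseteq\langle\underline h\rangle$. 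Conversely, by the same trick applied in the other direction — the sentence asserting ``there is an $n$-tuple $\underline\gamma$ generating the whole group whose finite-relation behaviour matches that of $\underline g$'' holds in $\GG$ and hence in $H$ — we get a tuple $\underline g'$ in $H$ and a homomorphism $\iota\colon\GG\to H$ with $\iota(g_i)=g_i'$.

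The key step is to arrange that $q\circ\iota=\mathrm{id}_{\GG}$. For this I would use the membership/word-evaluation machinery: the hypothesis ``$\GG\equiv H$'' lets us transfer, for each fixed word $w$ in $n$ letters, the first order sentence expressing ``if $\underline\gamma$ generates the group and has the prescribed relation behaviour, then $w(\underline\gamma)$ behaves the same way as $w(\underline g)$ does'' — but here one must be careful, since $\GG$ need not be finitely presented, so there is no single sentence encoding ``$w(\underline g)=1$'' uniformly. Instead I would build $\iota$ and $q$ simultaneously inside a \emph{single} first order sentence: asserting the existence of an $n$-tuple $\underline g'$ in $H$ and a $k$-tuple $\underline h$ in $\GG$ together with the compatibility conditions $q(\underline g')=\underline g$ at the level of the finite presentation of $H$, and then checking on the finitely many relators $r_j$ of $H$ that everything is consistent. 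Because $H$ is finitely presented, whether a homomorphism $H\to\GG$ exists extending a given tuple is a first order condition (one only needs the relators to die), and whether a given tuple in $H$ generates and the composite is the identity on $\underline g$ is likewise first order. Transferring the corresponding existential sentence from $H$ (where $\iota$ visibly exists as a set-theoretic map once we pick $\underline g'$) or from $\GG$ gives the retraction.

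The main obstacle — and the reason this is stated as a retraction rather than an isomorphism — is precisely that $\GG$ may fail to be finitely presented, so one cannot run the symmetric argument of Lemma~\ref{lem:elem-embed} to conclude $\iota$ and $q$ are mutually inverse; a word trivial in $\GG$ need not be a consequence of any finite set of relations, and there is no first order sentence certifying its triviality in an arbitrary elementarily equivalent group. What \emph{does} go through is one direction: since $H$ is finitely presented, the existence of the homomorphism $q\colon H\to\GG$ with $q(y_i)=h_i$ only requires killing the finitely many relators $r_j$, which is expressible; and the existence of $\iota\colon\GG\to H$ with $q\circ\iota=\mathrm{id}$ can then be extracted from the same finitary data. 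So the endgame is: set $\underline h\in\GG^k$ witnessing the finite presentation of $H$ inside $\GG$, let $q$ be the induced surjection, let $\underline g'=\iota(\underline g)\in H^n$ be chosen so that $q(g_i')=g_i$ (possible since $q$ is onto), and check $\iota$ is a well-defined homomorphism using that the relations among the $g_i$ needed are exactly those forced by $H$'s finite presentation, which $q$ respects. This exhibits $\GG$ as a retract of $H$, completing the proof.
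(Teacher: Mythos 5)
The construction of the surjection $q\colon H\twoheadrightarrow\GG$ in your proposal is fine and matches the paper: since $H$ is finitely presented, the existence of a tuple killing the relators is one first-order sentence, and transferring it to $\GG$ (together with the generation condition via the membership machinery) yields a surjection. The gap is in the construction of the splitting $\iota\colon\GG\to H$. You write that the ``relations among the $g_i$ needed are exactly those forced by $H$'s finite presentation, which $q$ respects,'' and therefore $\iota(g_i)=g_i'$ is a well-defined homomorphism once you pick preimages $g_i'$ of $g_i$ under $q$. This is backwards. For $\iota$ to be a homomorphism you need: whenever $w(\underline g)=1$ in $\GG$, also $w(\underline g')=1$ in $H$. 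Knowing $q(g_i')=g_i$ only gives you the converse implication (if $w(\underline g')=1$ then applying $q$ gives $w(\underline g)=1$), since $q$ is not injective. A word $w$ may be trivial on $\underline g$ in $\GG$ yet nontrivial on the chosen preimage $\underline g'$ in $H$, and there are infinitely many such $w$ to worry about precisely because $\GG$ need not be finitely presented --- which is the very obstacle you correctly flagged earlier but then set aside.

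The missing ingredient is the elementary embedding $\psi\colon\GG\hookrightarrow H$ furnished by Theorem~\ref{thm:prime}, which is where the hypothesis that $\GG$ is \emph{interpretably presented} is used (this hypothesis is implicit in Proposition~\ref{prop:retract}, cf.\ the corresponding statement in Corollary~\ref{cor:qfa}). The paper goes in the opposite order from your proposal: it constructs the section $\psi$ \emph{first}, using $\chi$ and the interpretable presentation to guarantee that $g_i\mapsto\psi(g_i)$ preserves all relations of $\GG$, and then builds the retraction $q$ to be \emph{compatible} with $\psi$. Concretely, one extends $\psi(\underline g)$ to a finite generating tuple of $H$ with a finite relation set $\underline r$, notes that $H$ satisfies $(\exists\underline\gamma_1,\underline\gamma_2)[\chi(\underline\gamma_1)\wedge \underline r(\underline\gamma_1,\underline\gamma_2)=1]$, transfers this to $\GG$, and (using that all tuples satisfying $\chi$ lie in a single $\Aut(\GG)$-orbit) normalizes the witness so that $\underline\gamma_1=\underline g$. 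The induced map $q$ then satisfies $q\circ\psi=\mathrm{id}$ on generators, hence on all of $\GG$. Your attempt to obtain $\iota$ by transferring a sentence about ``an $n$-tuple whose finite-relation behaviour matches that of $\underline g$'' cannot be made precise without the formula $\theta$ defining the presentation, i.e.\ without invoking the interpretable presentation, which is exactly what Lemma~\ref{lem:elem-embed} and Theorem~\ref{thm:prime} supply.
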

\begin{proof}
    Let $H$ be a finitely presented group such that $\GG\equiv H$. Write 
    \[H=\langle h_1,\ldots,h_n,h_{n+1},\ldots,h_m\rangle,\] where
    $\{h_1,\ldots,h_n\}$ generate an elementary copy of $\GG$, as furnished by
    Theorem~\ref{thm:prime}. We write $\underline h_1=(h_1,\ldots,h_n)$ and
    $\underline h_2=(h_{n+1},\ldots,h_m)$. We may assume $H\models\chi(\underline h_1)$,
    where $\chi$ is as in the proof of Theorem~\ref{thm:prime}.
    Since $H$ is finitely presented, there
    is a finite tuple of words $\underline r$ such that
    $\underline r(\underline h_1,\underline h_2)$ is a complete set of relations for
    $H$.

    In $\GG$, we may find two tuples $\underline g_1$ and $\underline g_2$ such that
    $\chi(\underline g_1)$ and such that
    $\underline r(\underline g_1,\underline g_2)$. Sending $h_i\mapsto g_i$ for $1\leq i\leq m$
    furnishes a surjective homomorphism from $H$ to $\GG$, which is split by the
    elementary embedding $\psi$ from Theorem~\ref{thm:prime}.
\end{proof}

There are some further hypotheses on $\GG$ one can place to guarantee that the
retraction in Proposition~\ref{prop:retract} is actually an isomorphism. For instance,
if $\GG$ is uniformly simple (i.e.~if $g_1,g_2\in\GG$ are nontrivial then $g_2$ is a
uniformly bounded length product of conjugates of $g_1$) then any group elementarily
equivalent to $\GG$ is again simple, and so the retraction furnished in
Proposition~\ref{prop:retract} is an isomorphism.

Thompson's group $T$ is uniformly simple by~\cite{GalGis2017,GuelLiousse23},
as is the commutator
subgroup $[F,F]$ of Thompson's group $F$; see~\cite{GalGis2017}. The group $F$ is nonabelian with trivial center; see~\cite{CFP1996}. This latter fact implies that
any nonabelian quotient of $F$ is automatically isomorphic to $F$.

The group $[F,F]$ has bounded commutator width, i.e.~every element of $[F,F]$ can be written as a product of a bounded number of commutators; this
is also proved in~\cite{GalGis2017}.

If $\GG$ has trivial center then any elementarily
equivalent $H$ will also have trivial center. If $[\GG,\GG]$ has bounded commutator width (of say $k$) then $[\GG,\GG]$ is definable in $\GG$.
 Moreover, if
$H\equiv \GG$ then products of at most $k$ commutators in $H$ form a definable, normal subgroup whose induced quotient group is abelian and so
is equal to the commutator subgroup $[H,H]$. So, if $H\equiv\GG$ and is finitely
presented, where $\GG$ has trivial center and $[\GG,\GG]$ has bounded commutator
width, then the retraction $H\longrightarrow\GG$ furnished in
Proposition~\ref{prop:retract} is an isomorphism.

\subsection{Turing reducibility}
It is a consequence of the results in this paper that many groups of homeomorphisms and diffeomorphisms of manifolds are not finitely generated.
These groups arise not just as abstract model-theoretic constructions, but
oftentimes can be realized as actual groups of homeomorphisms (namely
as elementary subgroups of the ambient group of homeomorphisms).

It is natural to consider the complexity of the failure of finite generation. For instance, an infinitely generated group $G$ may still be
recursively presented, in the sense that there is Turing machine which enumerates relations defining $G$. Since there are only countably many
Turing machines, ``most" groups are not recursively presented. In the context of homeomorphism or diffeomorphism groups of manifolds,
this phenomenon is already present in dimension one; see~\cite{KK2020crit},
cf.~\cite{KKL2019ASENS,KL2017}.

Locally approximating groups of homeomorphisms display a wide variety of behaviors with regard to generation and presentability. At one extreme, Thompson's
groups $F$ and $T$ are finitely presented with efficiently solvable word problems.

As an example of intermediate complexity, consider the group $\PL_{\Q}(I)$, the group of piecewise linear homeomorphisms of the interval
with rational breakpoints. This group is not finitely generated, as can be seen from considering the germs of the action of $\PL_{\Q}(I)$ at the
endpoints of the interval. However, it is not difficult to see that elements of $\PL_{\Q}(I)$ can be encoded as natural numbers, and the
group operation can be encoded as a definable arithmetic function. Thus, the word problem in $\PL_{\Q}(I)$ is Turing reducible to first order
arithmetic; that is, a Turing machine with oracle access to first order arithmetic can solve the word in $\PL_{\Q}(I)$.

An example of high complexity is an elementary subgroup $\GG$ of $\Homeo(M)$ for a compact manifold $M$ of positive dimension. From
Theorem~\ref{thm:inf-gen}, we have that $\GG$ is not finitely generated. From~\cite{dlNKK22}, we have that $\Homeo(M)$ admits a parameter-free
interpretation of second order arithmetic. Now, were $\GG$ recursively presentable then the entire theory of $\GG$ would be Turing reducible to
first order arithmetic. Indeed, the word problem would be Turing reducible to first order arithmetic, and so
conjunctions and disjunctions of equations and inequations in $\GG$ 
(i.e.~quantifier-free formulae) could be uniformly coded in first order arithmetic, and then the reducibility of the full theory would follow from an induction
on the quantifier complexity of formulae. However, second order arithmetic does not admit a Turing reduction to first order arithmetic, and so
$\GG$ cannot be recursively presentable.

\section{Action rigidity of locally approximating groups}\label{sec:rigid}

We would like to establish some fact about action rigidity of locally approximating groups.
Ideally, the model theory of a locally approximating group $\GG\leq\Homeo(M)$
would recover the homeomorphism type
of $M$, so that if $G\equiv\GG$ can act in a locally approximating
way on another compact, connected
manifold $N$ if and only if $M$ and $N$ are homeomorphic to each other; we call
this phenomenon \emph{action rigidity}. Proving action rigidity for locally approximating
groups appears
a bit out of reach at the time of this writing; in~\cite{dlNKK22}, we proved that
the first order theory of the homeomorphism group of $M$ determines $M$ up to
homeomorphism. Action rigidity in the sense we are considering here is in the
flavor of Chen--Mann~\cite{ChenMann}, with the ideas finding there roots at
least as far back as Whittaker~\cite{Whittaker}. It seems unlikely that the
methods of~\cite{dlNKK22} will generalize fully to locally approximating groups, since in that
paper we relied heavily on an interpretation of full second order arithmetic inside of
$\Homeo(M)$. However, we obtain partial results in all dimensions, with full
action rigidity in dimensions one and two.

Unless otherwise noted, all implicit formulae in this
section are uniform in $\GG$ and in $M$
unless otherwise noted.

\subsection{Dimension rigidity}
For locally approximating groups of homeomorphisms, we have the following first
step towards
full action
rigidity:
\begin{thm}\label{thm:dim-rigid}
Let $\GG\leq\Homeo(M)$ be a locally approximating group of homeomorphisms,
let $G\equiv\GG$, and suppose that
$G$ acts faithfully and in a
locally approximating way on another compact connected manifold $N$. Then $\dim M=\dim N$.
\end{thm}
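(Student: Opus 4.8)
The plan is to exhibit, for each $n\in\N$, a single sentence $\sigma_n$ in the language of groups --- uniform across locally approximating groups --- such that for every compact connected manifold $P$ and every locally approximating $\HH\le\Homeo(P)$ one has $\HH\models\sigma_n$ if and only if $\dim P=n$. Granting such sentences, Theorem~\ref{thm:dim-rigid} is immediate: $\GG\models\sigma_{\dim M}$, hence $G\models\sigma_{\dim M}$ by elementary equivalence, and viewing $G$ as a subgroup of $\Homeo(N)$ via the given faithful, locally approximating action, the ``only if'' direction forces $\dim N=\dim M$.

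The topological input I would use is that the dimension of a manifold equals its Lebesgue covering dimension, together with a classical characterization of the latter (the Eilenberg--Otto theorem on \emph{essential families}): $\dim P\ge k$ if and only if $P$ admits a family of $k$ pairs of disjoint closed sets that is essential, meaning every choice of partitions between the pairs has nonempty common intersection; and $\dim P\le k$ if and only if no family of $k+1$ pairs is essential. The decisive feature is that only a bounded (in $k$) number of sets is involved, so there is a chance of rendering the condition first order. For a manifold the lower bound is witnessed concretely by the $n$ pairs of opposite faces of a small cube inside one chart; for the upper bound one exploits that, when $\dim P\le n$, the manifold $P$ is covered by $d(n)$ collared balls by Lemma~\ref{lem:ball-cover}, and inside each such ball any family of $n+1$ pairs is rendered inessential by a partition into sub-balls, so the whole construction can be carried out with boundedly many sets. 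One then sets $\sigma_n=\sigma^{\ge n}\wedge\neg\,\sigma^{\ge n+1}$, where $\sigma^{\ge k}$ is the group-language transcription of ``there is an essential family of $k$ pairs''.

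To write $\sigma^{\ge k}$ I would use the ingredients assembled in Sections~\ref{sec:bool} and~\ref{sec:arith}: Rubin's Expressibility Theorem~\ref{thm:expressibility} to speak of finite joins, meets and complements of elements of $\DD$ (and of containments among the possibly non-$\DD$ regular open sets they produce) through rigid stabilizers; Lemma~\ref{lem:compact} for ``compactly contained''; Proposition~\ref{prop:interp-conn} for connected components of elements of $\DD$; Lemma~\ref{lem:large-subset} to enlarge topological configurations to $\DD$-witnessed ones; and, when $\partial P\neq\varnothing$, Lemma~\ref{lem:boundary-cover} together with the ``ends'' formalism, so that the configurations see a collar of each boundary component. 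The three cases --- $P$ closed, $\HH$ compactly supported, and the general boundary case --- would be split exactly as in Section~\ref{sec:member}, the compactly supported case being run inside the open manifold $P\setminus\partial P$, which is exhausted by compact submanifolds each coverable by elements of $\DD$. If at some point one must quantify over a finite-but-unbounded family of elements of $\DD$, the parameter-free interpretation of arithmetic from Corollary~\ref{cor:arith-gen}, together with the marking predicates, is available to encode it; I expect, however, that the ball-by-ball formulation keeps the quantifier block of fixed, $n$-dependent length.

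The substance of the proof is the equivalence $\HH\models\sigma_n\iff\dim P=n$, and this is where local approximation carries the weight. One direction is soft: if $\dim P\ge n$, a genuine essential family may be fattened, using normality of $P$ and density of $\DD$, to a family of closures of elements of $\DD$ that is still essential (enlarging the closed sets only enlarges the set of admissible partitions), and every $\DD$-witnessed partition is in particular a genuine one, so the common-intersection condition survives verbatim. The hard direction is the faithful transfer of inessentiality: given a $\DD$-family that is genuinely inessential, one must produce partitions \emph{witnessed by boundedly many elements of $\DD$} whose complementary open sides still cover $P$. Here one shrinks the genuine open cover to a closed cover, covers each closed piece by finitely many collared balls lying inside the appropriate side, thickens these to elements of $\DD$ by Lemma~\ref{lem:large-subset}, and must then verify that the enlarged $\DD$-sets still cover $P$ while remaining disjoint in the way a partition requires. \emph{This covering-stability step is the main obstacle}: covering is not preserved under passing to a dense subalgebra, so one has no recourse but to use compactness of $P$ (or of the compact submanifold in play) to carry out the thickening with uniform control --- no ``gap'' must be opened between the finitely many pieces --- and to keep the bookkeeping bounded in $\dim P$, which is exactly why the ball-by-ball formulation and the covering numbers $d(n)$ of Lemma~\ref{lem:ball-cover} enter. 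The boundary case adds only the further bookkeeping of forcing the configurations to detect the collar of each boundary component, which is precisely what the ``ends'' machinery of Section~\ref{sec:bool} was built to allow.
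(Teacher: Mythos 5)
Your plan is a genuine alternative to the paper's: you propose to characterize $\dim P$ via the Eilenberg--Otto criterion on \emph{essential families} of pairs of disjoint closed sets, whereas the paper's Proposition~\ref{prop:dimension} works directly with the Lebesgue covering dimension (orders of finite open covers, Lebesgue's Covering Theorem, Ostrand's Theorem) and a bounded-size cover of each chart. Both are classical equivalent formulations of covering dimension, and your overall architecture---sentences $\sigma_n$ uniform across locally approximating groups, a ball-by-ball formulation, $d(n)$-boundedness from Lemma~\ref{lem:ball-cover}, and the case-split on the boundary type---matches the paper's in spirit. However, there is a concrete gap that sits precisely in what you label the ``soft'' direction, not the ``hard'' one.

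Your soft direction asserts that if $\dim P\ge n$, then a genuine essential family can be fattened to a $\DD$-witnessed one and ``the common-intersection condition survives verbatim.'' The problem is what ``survives verbatim'' could possibly mean in the group language. The essentiality of a family is the assertion that $\bigcap_i L_i\neq\varnothing$ for every choice of partitions $L_i$; but this intersection is a closed set which, in the sharpest instances of the Lebesgue cube theorem, is a \emph{single point}. The only tool available to certify nonemptiness via $\DD$ is the existence of some nonempty $W\in\DD$ inside $\bigcap_i L_i$, and this fails whenever the intersection has empty interior---which is the generic situation for a tight essential family in a chart. In other words, the union of the open sides $\bigcup_i(U_i\cup V_i)$ can be dense in the ball without covering it, so the dense subalgebra $\DD$ cannot see the leftover point. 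You have located exactly this phenomenon (``covering is not preserved under passing to a dense subalgebra'') but ascribed it to the transfer of \emph{inessentiality}; in fact it already obstructs the transcription of \emph{essentiality}, and compactness of $P$ alone does not fix it, since the obstruction is the possible vanishing of the interior of a nonempty compact set. (As a small side remark, you also state that enlarging the pairs $(A_i,B_i)$ enlarges the set of admissible partitions---it actually shrinks it, though fortunately this works in the direction you need.)

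The paper's own formulation is engineered specifically to sidestep this: it never tries to assert that a closed intersection is nonempty, but rather works with \emph{very dense covers} and \emph{good extensions} (Lemma~\ref{lem:good-extension}), exploiting that closure commutes with finite unions, so that a finite $\DD$-witnessed ``very dense cover'' of $W$ is actually a genuine cover of $\cl W$. Your Eilenberg--Otto route would need an analogous device, e.g.\ a reformulation of essentiality using thick (open) partitions together with a compact-containment/good-extension trick that converts $\DD$-level density into true covering. As written, the proposal does not supply such a device, so the equivalence $\HH\models\sigma_n\iff\dim P=n$ is not established.
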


Theorem~\ref{thm:dim-rigid} implies, for example, that no group that is elementarily
equivalent to Thompson's group $T$ can act in a locally approximating fashion on
the two-sphere $S^2$, for instance. The fact that $T$ itself cannot act even locally
densely on $S^2$ is a consequence of Rubin's Theorem (cf.~\ref{prop:rubin}), though
groups that are elementarily equivalent to $T$ can be substantially larger than
$T$ itself, and it is not clear \emph{a priori} that no action on $S^2$ can be locally
approximating.

There are many locally approximating groups of homeomorphisms of manifolds which admit faithful
actions on other manifolds; for instance, if $\GG$ is a locally approximating group of
homeomorphisms of the interval then $\GG^2$ is a locally approximating group of compactly supported
homeomorphisms of the open disk. Moreover, $\GG^2$ acts faithfully on $\R$
itself, and if $\GG$ is assumed to be countable then we can even assume that it acts
without a global fixed point. Since left orderability is a first order property of
groups~\cite{GLL18}, we even have that any countable group $G$ that is elementarily 
equivalent to $\GG^2$ acts faithfully on $\R$.
However, Theorem~\ref{thm:dim-rigid} implies that
no such group $G$ can
act in a locally approximating way on $I$. In order to force a
locally approximating group $\GG\leq\Homeo(M)$ to act in a locally approximating way on another manifold $N$,
one would like to be able to take the free product of two locally approximating groups and
get another group which can act in a locally approximating way; generally, this is not possible,
since there are too many elements with disjoint supports and so usual ping-pong
arguments fail. 
See~\cite{KK2018JT,KKR2020,KKR2204,koberda-pingpong}.

Let $\GG\leq\Homeo(M)$ be locally approximating. To prove Theorem~\ref{thm:dim-rigid},
we will interpret the topological dimension of $M$ within the first order theory
of $\GG$.
For a topological space $X$,
the \emph{order} of a finite open cover $\UU$  is defined as the number
\[
\sup_{x\in X}\; \abs*{\{U\in\mathcal{U}\mid x\in U\}}.\]
It will be sufficient for us to only to consider finite
covers; cf.~\cite{Coornaert2005,Edgar2008}.

The \emph{topological dimension} of $X$ is at most $n$
if every finite open cover of $X$ is refined by an open cover with order at most
$n+1$. The topological dimension $\dim X$ is defined to be $n$ if it has dimension
at most $n$ but not at most $n-1$. It is well-known that 
topological $n$--manifolds have the topological dimension $n$. 

A collection of open sets $\VV=\{V_i\}_{i\in I}$ is said to \emph{shrink} 
to another collection $\mathcal W=\{W_i\}_{i\in I}$ if $W_i\sse V_i$ 
holds for each $i$ in the index set $I$. We recall the following
classical facts about topological dimension:

\begin{lem}\label{lem:dim-facts}
\begin{enumerate}
    \item\label{p:lebesgue} (Lebesgue's Covering Theorem~\cite{HW1941}).
    If $\UU$ is a finite open cover of $I^n$ such that no element of $\UU$
    intersects an opposite pair of codimension one faces,
    then $\UU$ cannot be refined by an open cover of order at most $n$. 
    \item\label{p:monotone} (\v{C}ech~\cite{Cech1933}). If $X$ is a metrizable space and if $Y\sse X$, then $\dim Y\le \dim X$.
    \item\label{p:ostrand} (Ostrand's Theorem~\cite{Ostrand1971}).
    If $\UU=\{U_i\}_{i\in I}$ is a locally finite open cover of a normal space $X$ satisfying $\dim X\le n$, then for each $j\in\{0,\ldots,n\}$,
    the cover $\UU$ shrinks to a pairwise disjoint collection
    $\VV^j=\{V_i^j\}_{i\in I}$ of open sets such that
    the collection \[\bigcup_j \VV^j\] is a cover.
\end{enumerate}
\end{lem}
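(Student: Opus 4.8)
The plan is to treat all three items as the classical results they are: the proof will consist of recalling the standard arguments and pointing to the cited sources, since none of the three requires a new idea. I would organize it as three short paragraphs, one per item, followed by a remark on the one point of bookkeeping.

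For Part~\ref{p:lebesgue} I would give the usual combinatorial proof of the Lebesgue covering theorem, following~\cite{HW1941}. Assume $\UU$ is a finite open cover of $I^n$ no member of which meets an opposite pair of codimension-one faces, and suppose for contradiction that $\UU$ is refined by an open cover $\VV$ of order at most $n$. Take a partition of unity subordinate to $\VV$; the associated map $I^n\to\Delta$ into the standard simplex on the index set of $\VV$ has image in the $(n-1)$--skeleton of $\Delta$, precisely because the order of $\VV$ is at most $n$. Combining this with the boundary data produces a retraction $I^n\to\partial I^n$, which is impossible. (Alternatively one invokes Sperner's lemma on a sufficiently fine triangulation; I would use whichever is shorter to state.)

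For Part~\ref{p:monotone} I would reduce subspace monotonicity of the covering dimension to monotonicity of the large inductive dimension. The key point --- and the reason the hypothesis that $X$ is metrizable appears --- is that for metrizable spaces the covering dimension coincides with $\on{Ind}$, and $\on{Ind}$ is visibly monotone with respect to arbitrary subspaces straight from its inductive definition, so $\dim Y=\on{Ind}Y\le\on{Ind}X=\dim X$. I would cite~\cite{Cech1933} together with a standard reference such as Engelking's dimension theory text, and explicitly flag that covering dimension is \emph{not} subspace-monotone for general normal spaces, so metrizability (or at least perfect normality) genuinely cannot be dropped. For Part~\ref{p:ostrand} I would cite Ostrand's theorem~\cite{Ostrand1971} and recall its content: starting from a locally finite open cover $\UU=\{U_i\}_{i\in I}$ of the normal space $X$ with $\dim X\le n$, one first passes to a closed shrinking and then, using $\dim X\le n$ repeatedly, produces for each $j\in\{0,\ldots,n\}$ a pairwise disjoint open family $\VV^j=\{V_i^j\}_{i\in I}$ with $V_i^j\subseteq U_i$ whose union over $j$ still covers $X$; this ``$(n+1)$--colored refinement'' is one of the standard equivalent characterizations of $\dim X\le n$.

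There is no genuine obstacle here, since everything is classical; the only thing to get right is the category bookkeeping in Part~\ref{p:monotone}, namely that the statement fails without a hypothesis like metrizability or perfect normality. I would therefore be careful to state it with the metrizability assumption in force and note that this is automatic in all of our applications, where the ambient spaces are (second countable) manifolds, hence metrizable.
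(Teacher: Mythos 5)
The paper does not actually supply a proof of this lemma: it is introduced with the words ``we recall the following classical facts about topological dimension,'' and the citations to~\cite{HW1941}, \cite{Cech1933}, and~\cite{Ostrand1971} are the entire justification given. Your sketches are therefore more detailed than the paper's treatment, but they reconstruct standard arguments accurately and reach the same statements, so there is no conflict. The one place worth sharpening is Part~\ref{p:monotone}: it is the \emph{small} inductive dimension $\mathrm{ind}$, not the large inductive dimension $\mathrm{Ind}$, whose subspace monotonicity is literally visible from the inductive definition (a closed subset of a subspace $Y$ need not be closed in the ambient $X$, so $\mathrm{Ind}$-monotonicity for arbitrary subspaces requires hereditary normality hypotheses rather than following at a glance). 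Since for metrizable spaces the coincidence theorem gives $\mathrm{ind}=\mathrm{Ind}=\dim$, your argument goes through verbatim once routed through $\mathrm{ind}$; the conclusion and the role of the metrizability hypothesis are exactly as you describe.
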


Lebesgue's Covering Theorem above should not be confused with the Lebesgue Covering
Lemma, which says that if $X$ is a metric space and $K\subseteq X$ is compact, then
for all cover $\UU$ of $K$ there is a $\delta>0$ such that a ball of radius $\delta$
about any point in $K$ is entirely contained in an element of $\UU$; we use
both, implicitly.

In~\cite{dlNKK22}, we gave a proof of a characterization of dimension of $M$ that was expressible in $\Homeo(M)$, which implicitly used the
fact that it was possible to express that the closure of a regular open set is covered by some collection of regular open sets.
A substantial difference between the setup of locally approximating groups and general
homeomorphism groups is that we cannot directly interpret points on $M$ (in particular
since $\GG$ may be countable and $M$ is uncountable). Thus, we need to make some
suitable modifications to express that the closure of the regular open set is
covered by some collection of regular open sets. We will say an element $W\in\DD$ is \emph{densely covered} by a collection $\UU\subseteq\DD$
if for all elements $V\in\DD$ such that $\varnothing\neq V\subseteq W$, there is a $\varnothing \neq V_0\in\DD$ that is compactly contained in
$V$ and such that $V_0$ is compactly contained in $U$ for some $U\in\UU$. If $W$ is densely covered by $\UU$ then \[\bigcup_{U\in\UU} U\] contains
a dense subset of $W$. A collection $\UU'$ is a \emph{good extension} of $\UU$ if every element of $\UU$ is compactly contained in some
element of $\UU'$. Observe that if $\UU$ is a finite collection of sets
that densely covers $W$ and $\UU'$ is a (not necessarily finite) good extension of
$\UU$ then $\UU'$ covers the closure of $W$, since closure commutes with finite unions.

We say that a finite collection $\UU$ \emph{very densely covers} $W$
if $\UU$ admits a finite refinement $\VV$ which also densely covers
$W$, and such that $\VV$
admits a good extension which is also a refinement of $\UU$. Note that if $\UU$ very densely covers $W$ then $\UU$ covers the closure of
$W$.

It is a standard construction from dimension theory that the unit cube $\R^n$ can be covered by $n+1$ regular open sets $\UU=\{U_1,\ldots,U_{n+1}\}$
in a way that is periodic under the standard $\Z^n$--action on $\R^n$ by translations, so that this cover has order at most $n+1$. See section 50
of~\cite{Munkres-book}. This cover can be scaled so that for any $\delta>0$, we may assume that for all $i$, all components of $U_i$ are collared
Euclidean balls of
diameter at most $\delta$. It is straightforward to arrange for $\UU$ to admit a good extension $\UU'$, in the sense that $\UU'$ consists of
$n+1$ regular open sets, and every component of $\UU$ is compactly contained in some component of $\UU'$.

Suppose $W\subseteq\R^n$ be a regular open set with compact closure, and let $\UU$ be
a finite collection of regular open sets that very densely covers $W$.
Then, $\cl W$ is covered by $\UU$,
and by the Lebesgue Covering Lemma and the preceding paragraph,
we may find a refinement $\VV$ by $n+1$ regular open sets $\{V_1,\ldots,V_{n+1}\}$ such that each component of each $V_i$ is compactly contained
in a component of $\UU$, such that $\VV$ densely covers $W$, and such that $\VV$ admits a good extension $\VV'$ by $n+1$ regular open sets. This last statement is
expressible: indeed to express that $\VV$
admits a good extension, we can first express that for all regular open
$V_0$ contained in a single component of
$V_i\in\VV$, there exists a regular open $\hat V_i$ such that whenever
$V_0'$ and $V_0$ and $V_0$ lie in the same component of $V_i$ then $V_0'\subseteq
\hat V_i$. Such a $\hat V_i$ necessarily contains the component of $V_i$ containing $V_0$. We can further require $\hat V_i$ to be compactly contained
in a larger $\hat V'_i$ which is in turn compactly contained in some component of $\UU$, and that for distinct components $V_i$, the corresponding
sets $\hat V_i'$ are each compactly contained in each others Boolean complements. We thus conclude:

\begin{lem}\label{lem:good-extension}
Let $W\subseteq\R^n$ have compact closure and let $\UU$ be a finite very dense cover of $W$. Then there exists a refinement $\VV$ of $\UU$ by
$n+1$ regular open sets which densely covers $W$ such that $\VV$ admits a good extension $\VV'$ by $n+1$ open sets which is also a refinement of $\UU$.

Moreover, if $W\in\DD$ and $\UU\subseteq\DD$ are given then the existence of $\VV$ and $\VV'$ are
expressible.
\end{lem}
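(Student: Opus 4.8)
The plan is to deduce the statement from the standard periodic cover of $\R^n$ furnished by dimension theory, using the compact‑containment data that is bundled into the hypothesis that $\UU$ \emph{very densely} covers $W$, and then to observe that every clause involved is first order by the machinery of Section~\ref{sec:bool}.

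First I would recall (cf.\ section~50 of~\cite{Munkres-book}) the standard cover $\UU_0=\{U_1^0,\ldots,U_{n+1}^0\}$ of $\R^n$ by $n+1$ regular open sets: it is invariant under the translation action of $\Z^n$, has order at most $n+1$, and every component of each $U_i^0$ is a collared Euclidean ball. Composing with a homothety, for any prescribed $\eta>0$ I may assume every such component has diameter less than $\eta$. Dilating each component slightly about its centre, by a fixed factor slightly larger than $1$ so that $\Z^n$‑periodicity (and hence pairwise disjointness of the components lying in a single $U_i^0$) is preserved, produces a second cover $\UU_0'=\{U_1',\ldots,U_{n+1}'\}$, again by $n+1$ regular open sets, which is a good extension of $\UU_0$: each component of $U_i^0$ is compactly contained in the component of $U_i'$ obtained by dilating it.

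Next I would feed in the hypothesis. Since $\UU$ very densely covers $W$ there is a finite refinement $\VV^\sharp$ of $\UU$ that densely covers $W$, together with a good extension $\VV^\flat$ of $\VV^\sharp$ that is itself a refinement of $\UU$; in particular $\cl W$ is compact and $\VV^\flat$ (hence $\UU$) covers it. I would choose a regular open set $W_1$ with $\cl W\subseteq W_1$, with $\cl{W_1}$ compact, and with $\VV^\flat$ still covering $\cl{W_1}$ (possible since $\VV^\flat$ is an open cover of the compact set $\cl W$), and let $\lambda>0$ be a Lebesgue number for $\VV^\flat$ as a cover of $\cl{W_1}$. Taking $\eta$ in the previous paragraph smaller than both $\lambda$ and the distance from $\cl W$ to the complement of $W_1$, and then discarding from $\UU_0$ and $\UU_0'$ every component that misses $W$, I obtain $\VV=\{V_1,\ldots,V_{n+1}\}$ and $\VV'=\{V_1',\ldots,V_{n+1}'\}$, each a collection of $n+1$ regular open sets. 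Then $\VV$ covers $W$ (nothing meeting $W$ was discarded), $\VV'$ is a good extension of $\VV$ by construction of the dilation, and every component of $\VV$ or of $\VV'$ is a set of diameter less than $\lambda$ lying inside $\cl{W_1}$, hence lies in a single element of $\VV^\flat$ and therefore in a single element of $\UU$; so both $\VV$ and $\VV'$ refine $\UU$. This is the part with genuine bookkeeping: it really uses the compact containments packaged into ``very densely'' rather than only ``densely'', since otherwise the dilated cover $\VV'$ need not stay inside $\UU$. Carrying out the same estimates inside $\DD$ — where, by local approximation (Lemma~\ref{lem:large-subset}), each set above can only be replaced by a $\DD$‑element agreeing with it closely enough that ``densely covers $W$'' and ``good extension that refines $\UU$'' survive — is the step I expect to be the main obstacle; this is exactly the approximation argument already rehearsed in the paragraph preceding the statement of the lemma, but reconciling the diameter bounds, the compact containments, and membership in $\DD$ simultaneously is the fiddly point.

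Finally, for the ``moreover'' clause, with $W\in\DD$ and $\UU\subseteq\DD$ fixed, I would note that each ingredient of the conclusion — ``$\VV$ refines $\UU$'', ``$\VV$ densely covers $W$'', ``$\VV'$ is a good extension of $\VV$'', together with the implicit quantifications over connected components (Proposition~\ref{prop:interp-conn}) and the compact‑containment predicate (Lemma~\ref{lem:compact}) — is first order in $\GG$ with $W$ and the members of $\UU$ as parameters, by Rubin's Expressibility Theorem and the approximate topological notions of Section~\ref{sec:bool}; the explicit formula asserting that a tuple ``admits a good extension'' was already written out just before the statement. Hence ``there exist finite $\VV,\VV'\subseteq\DD$ with the stated properties'' is a single first order formula with parameters $W$ and $\UU$, which is what is required, and the first part of the lemma guarantees it has a witness whenever $\UU$ very densely covers $W$.
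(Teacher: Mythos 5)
Your proof follows essentially the same route as the paper's: the $\Z^n$-periodic order-$(n+1)$ cover from Munkres, scaled to have small component diameter, a slight dilation to manufacture the good extension, and then the Lebesgue number estimate against the cover $\VV^\flat$ that the ``very densely'' hypothesis supplies, followed by discarding components missing $W$. The expressibility clause you handle just as the paper does, by invoking the compact-containment predicate, the interpreted sort of connected components, and the explicit formula for ``admits a good extension'' spelled out in the text immediately preceding the lemma.

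One small correction to the point you flag as the ``main obstacle'': the lemma as stated does not require $\VV$ or $\VV'$ to consist of elements of $\DD$ --- it only asserts the existence of regular open (resp.\ open) witnesses, plus first-order expressibility of that existence, and a first-order formula expressing this will quantify over tuples in $\DD$. That the formula actually has witnesses in $\DD$ is a separate matter, handled later (in the proof of Proposition~\ref{prop:dimension}) by the density of $\DD$ in $\ro(M)$; so the simultaneous reconciliation of diameter bounds, compact containments, and $\DD$-membership you were bracing for is not part of the burden of this lemma.
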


Suppose $M$ has dimension $n$, let $\varnothing\neq U\in\DD$, and let $I^n\subset U$
be an embedded $n$--cube such that, in some Euclidean chart meeting $U$, we have
$I^n$ is identified with the unit cube in $\R^n$.

\begin{lem}\label{lem:cofinal}
Let $W\in\ro(M)$ denote the interior of $I^n\subseteq U\subseteq M$.
Then there exist sequences $\{U_i\}_{i\in\N}$ and $\{V_i\}_{i\in\N}$ of elements of $\DD$
which are externally and internally
cofinal for $W$ in the following sense:
\begin{enumerate}
\item
For all $i\in\N$, we have $U_i\subseteq W$ is compactly contained and for all $W'$ compactly contained in $W$, we have $W'\subseteq U_i$ for
all sufficiently large $i$. In particular, \[\bigcup_i U_i=W.\]
\item
For all $i\in\N$, we have $V_i\supseteq W$ with $W$ compactly contained, $W'$ compactly contained in $W^{\perp}$, we have
$W'\subseteq V_i^{\perp}$ for
all sufficiently large $i$. In particular, \[\bigcap_i V_i=W.\]
\end{enumerate}
\end{lem}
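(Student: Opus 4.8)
The plan is to build both sequences from a single fixed exhaustion of $I^n$ by concentric closed subcubes (for $\{U_i\}$) and a fixed shrinking sequence of concentric closed overcubes (for $\{V_i\}$), thickening each stage to an element of $\DD$ by Lemma~\ref{lem:large-subset}. Fix the Euclidean chart $\phi$ in which $I^n$ corresponds to $[-1,1]^n$, so $W=\phi^{-1}((-1,1)^n)$ and $\overline W=I^n$, and recall that for any open set $W$ one has $W^\perp=M\setminus\overline W$.

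\emph{Internal sequence.} Choose concentric closed subcubes $Q_1\subseteq\inte Q_2\subseteq Q_2\subseteq\inte Q_3\subseteq\cdots$ with $\bigcup_j Q_j=W$; each $\inte Q_j$ is regular open and compactly contained in $W$, and each $Q_{j+1}$ is a collared ball inside the chart. For each $j$, Lemma~\ref{lem:large-subset} applied with ambient set $\inte Q_{j+1}$ and inner set $\inte Q_j$ yields $U_j\in\DD$ with $\inte Q_j$ compactly contained in $U_j\subseteq\inte Q_{j+1}$; hence $Q_j\subseteq U_j\subseteq Q_{j+1}$, so each $U_j$ is compactly contained in $W$ (as $Q_{j+1}$ is a collared ball inside $W$), and $\bigcup_j U_j=W$. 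If $W'$ is compactly contained in $W$, then $\overline{W'}$ is a compact subset of $W=\bigcup_j\inte Q_j$, so $\overline{W'}\subseteq\inte Q_m$ for some $m$, and then $W'\subseteq Q_i\subseteq U_i$ for all $i\ge m$. This is property (1).

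\emph{External sequence.} Put $N_i=\phi^{-1}\big((-1-\tfrac{1}{i},1+\tfrac{1}{i})^n\big)$ for $i$ large enough that the corresponding closed box lies in the chart (and $N_i$ constant for smaller $i$), so that the sets $\overline{N_i}=\phi^{-1}([-1-\tfrac{1}{i},1+\tfrac{1}{i}]^n)$ form a decreasing sequence of compact collared balls with $\overline{N_{i+1}}\subseteq N_i$ and $\bigcap_i\overline{N_i}=I^n=\overline W$. For each $i$, Lemma~\ref{lem:large-subset} applied with ambient set $N_i$ and inner set $N_{i+1}$ yields $V_i\in\DD$ with $N_{i+1}$ compactly contained in $V_i\subseteq N_i$; since $\overline{N_{i+1}}\subseteq V_i$ is a collared ball containing $\overline W$, the set $W$ is compactly contained in $V_i$, and $\overline{V_i}\subseteq\overline{N_i}$. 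Thus $W\subseteq\bigcap_i V_i\subseteq\bigcap_i\overline{N_i}=\overline W$. Finally, if $W'$ is compactly contained in $W^\perp=M\setminus\overline W$, then $\overline{W'}$ is compact and disjoint from $\overline W$, so the nested compact sets $\overline{W'}\cap\overline{N_i}$ have empty total intersection; by the finite intersection property $\overline{W'}\cap\overline{N_{i_0}}=\varnothing$ for some $i_0$, hence $\overline{V_i}\cap W'=\varnothing$, i.e.\ $W'\subseteq M\setminus\overline{V_i}=V_i^\perp$, for all $i\ge i_0$. This is property (2).

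\emph{The one point requiring care} is that Lemma~\ref{lem:large-subset} produces an element of $\DD$ squeezed between two prescribed regular open sets but gives no control over how the outputs for different stages nest; consequently the sequences $\{U_i\}$ and $\{V_i\}$ themselves need not be monotone, and the ``for all sufficiently large $i$'' clauses must instead be harvested from the monotonicity of the auxiliary cubes $Q_j$ and $N_i$. A minor secondary point: since every $V_i$ must contain $\overline W$, the identity $\bigcap_i V_i=W$ in the statement is to be read in $\ro(M)$, equivalently as $\bigcap_i\overline{V_i}=\overline W$, which is exactly what the argument establishes, whereas $\bigcup_i U_i=W$ holds literally.
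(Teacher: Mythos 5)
Your proof is correct and follows essentially the same route as the paper's: the paper builds $U_i$ directly by applying local approximation to move a compact cube $K_i$ into a small disjoint set and taking the extended support, whereas you invoke Lemma~\ref{lem:large-subset} (which is proved by precisely that argument) as a black box, which is cleaner and avoids repetition. Your remark that $\bigcap_i V_i=W$ must be read as a meet in $\ro(M)$ is a correct observation that the paper glosses over, and you supply the details of the external sequence that the paper omits with ``the second conclusion is obtained similarly.''
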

\begin{proof}
Identify $W$ with $(0,1)^n$.
Let $U_i^0\subseteq W$ denote $(1/(2i),1-1/(2i))^n$, and let $K_i\subseteq W$ denote $[1/i,1-1/i]^n$. Choose a small open set $Z_i$ contained in
$U_i^0\setminus K_i$. By the locally approximation of $\GG$, there exists a
$g_i\in\GG[U_i^0]$ such that
$g_i(K_i)\subseteq Z_i$. We have $K_i\subseteq\supp^e g_i$. Setting $U_i=\supp^e g_i$, we obtain the first conclusion. The second conclusion
is obtained similarly, and we omit the details.
\end{proof}

Arguing as in Lemma~\ref{lem:cofinal} and using the Lebesgue Covering Lemma, we obtain:

\begin{lem}\label{lem:refine}
Let $i$ be fixed, and let $\UU=\{U_1,\ldots,U_m\}$ be a finite cover of
$K_i=[1/i,1-1/i]^n$ by open subsets of $W$.
Then there exists a refinement $\VV=\{V_1,\ldots,V_m\}$ of $\UU$ that covers $K_i$,
such that $V_i$ is compactly contained in $U_i$ and such that $V_i\in\DD$ for all $i$.
\end{lem}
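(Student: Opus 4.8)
The plan is to combine the classical shrinking lemma with the local approximation of $\GG$, exactly in the spirit of the proof of Lemma~\ref{lem:cofinal}, using the Lebesgue Covering Lemma to make the shrunk pieces small enough that their closures sit inside collared balls.

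First I would extract a Lebesgue number: since $K_i$ is compact and $\UU=\{U_1,\dots,U_m\}$ is a finite open cover of it by subsets of $W\cong(0,1)^n$, there is $\delta>0$ so that every subset of $K_i$ of diameter at most $\delta$ lies in some $U_j$. I then cover $K_i$ by finitely many open Euclidean balls $B_1,\dots,B_p$, each centered at a point of $K_i$ and small enough that the concentric ball $2B_k$ of twice the radius has diameter at most $\delta$ (so $2B_k\subseteq U_{\sigma(k)}$ for some index $\sigma(k)$) and is a collared ball inside $U_{\sigma(k)}$. Next, inside each $2B_k$ I run the Lemma~\ref{lem:cofinal} maneuver: the compact set $\overline{B_k}$ can be carried into $2B_k\setminus\overline{B_k}$ by a homeomorphism of $M$ supported in $2B_k$, and since this is an open condition in the compact-open topology, local approximation of $\GG$ provides $g_k\in\GG[2B_k]$ with $g_k(\overline{B_k})\cap\overline{B_k}=\varnothing$; then $A_k:=\supp^e g_k\in\DD$ satisfies $B_k\subseteq A_k$ and $\overline{A_k}\subseteq 2B_k\subseteq U_{\sigma(k)}$, so $A_k$ is compactly contained in $U_{\sigma(k)}$. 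To land in a family indexed by $1,\dots,m$, for each $j$ I take $V_j:=\supp^e g_j$ where $g_j\in\GG[\bigcup_{\sigma(k)=j}2B_k]$ is obtained the same way so as to displace $\bigcup_{\sigma(k)=j}\overline{B_k}$ off itself; then $\bigcup_{\sigma(k)=j}B_k\subseteq V_j$ and $\overline{V_j}\subseteq U_j$, and since $\bigcup_jV_j\supseteq\bigcup_kB_k\supseteq K_i$ the collection $\VV=\{V_1,\dots,V_m\}$ refines $\UU$, covers $K_i$, and lies in $\DD$. When $\UU$ is a cover by disjoint unions of collared balls, as in the dimension argument, one even obtains that each component of $V_j$ is compactly contained in a component of $U_j$, by carrying out the displacement ball by ball.

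The main obstacle, and the only place where the hypotheses on $\GG$ are used, is forcing the shrunk sets to belong to $\DD$ rather than merely to $\ro(M)$: one cannot shrink inside $\DD$ directly, so each shrunk piece must be realized as the extended support of a group element, which is exactly what local approximation buys — with the Lebesgue-number smallness ensuring the resulting extended supports are contained in collared balls. Everything else is the classical shrinking and covering machinery together with routine bookkeeping of indices.
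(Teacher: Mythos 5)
Your proof is correct and matches the approach the paper actually intends: the paper states the lemma with only the one-line indication ``Arguing as in Lemma~\ref{lem:cofinal} and using the Lebesgue Covering Lemma, we obtain,'' and your argument fleshes out exactly that — cover $K_i$ by small concentric ball pairs $B_k\subseteq 2B_k$ obtained from compactness/Lebesgue number, use local approximation to displace the relevant compact sets off themselves by elements of $\GG$ supported in the $2B_k$'s, and take extended supports to land in $\DD$ while inheriting compact containment in the $U_j$'s. Two tiny points you could tighten if you wrote this out formally: the Lebesgue number is a priori only for subsets of $K_i$, so it is cleaner to choose, for each $x\in K_i$, a radius $r_x$ with $\overline{B(x,4r_x)}$ inside some $U_j$ and extract a finite subcover of the $B(x,r_x)$'s rather than invoke the Lebesgue number verbatim; and the claim that $\bigcup_{\sigma(k)=j}\overline{B_k}$ can be displaced off itself inside $Z_j=\bigcup_{\sigma(k)=j}2B_k$ deserves the observation that each component of $Z_j$ strictly contains its intersection with the compact union (an open subset of $\R^n$ cannot equal a nonempty compact set), so there is always room to push.
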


We can now prove the following result, which clearly implies Theorem~\ref{thm:dim-rigid}:

\begin{prop}\label{prop:dimension}
Let $\GG$ be a locally approximating group of homeomorphisms of a compact, connected manifold.
Then there exists a unique $n\in\N$ and a sentence $\dim_n$ such that $\GG\models\dim_n$ and such that if $\GG$ acts in a locally approximating way on
a compact, connected manifold $M$ then $\dim M=n$.
\end{prop}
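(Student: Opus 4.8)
The plan is to express the topological dimension of $M$ by instead expressing the topological dimension of the interior $W$ of a regular open set, and to exploit the elementary fact that every nonempty open subset of a connected topological $n$--manifold has topological dimension exactly $n$ (the inequality $\le n$ is \v{C}ech monotonicity from Lemma~\ref{lem:dim-facts}, and $\ge n$ holds because such a set contains an embedded open $n$--cube). Concretely, for each $k\in\bN$ I would construct a parameter--free first order predicate $\varphi_{\le k}(W)$, defined for $W\in\DD$, with the property that $\GG\models\varphi_{\le k}(W)$ if and only if the topological dimension of $W$ is at most $k$; then I would set
\[
\dim_n \;:=\; \exists\, W\in\DD\ \bigl[\ \varphi_{\le n}(W)\ \wedge\ \neg\,\varphi_{\le n-1}(W)\ \bigr].
\]
Granting that $\varphi_{\le k}$ behaves as claimed, the proposition follows quickly. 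If $\GG$ acts in a locally approximating way on a compact connected manifold $M$ with $\dim M=n$, then taking $W$ to be the interior of an embedded standard $n$--cube inside a chart of $M$ gives $\dim_{\mathrm{top}}W=n$, so $\varphi_{\le n}(W)$ holds and $\varphi_{\le n-1}(W)$ fails, whence $\GG\models\dim_n$. If instead $\dim M=m\neq n$, then either $m<n$, in which case every nonempty open $W$ satisfies $\varphi_{\le n-1}(W)$ (as $\dim_{\mathrm{top}}W=m\le n-1$) and $\dim_n$ fails, or $m>n$, in which case every nonempty open $W$ has $\dim_{\mathrm{top}}W=m>n$, so $\varphi_{\le n}(W)$ fails and again $\dim_n$ fails. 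In particular $\dim_n$ is true in $\GG$ for exactly one value of $n$, namely $n=\dim M$, which gives both the existence and the uniqueness of $n$.

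It remains to describe $\varphi_{\le k}$. The idea is the classical characterization of covering dimension for compact metrizable spaces: $\dim_{\mathrm{top}}W\le k$ if and only if $W$ admits open covers of order at most $k+1$ of arbitrarily small mesh. I would encode ``order at most $k+1$'' by a bounded--arity datum: by the Ostrand shrinking theorem (Lemma~\ref{lem:dim-facts}) one may take such covers to be unions of $k+1$ pairwise disjoint subfamilies, and a finite pairwise disjoint subfamily of $\DD$ has a Boolean join in $\DD$, so the cover is recorded by just $k+1$ elements $Z_1,\dots,Z_{k+1}\in\DD$ together with the requirement that $\{Z_1,\dots,Z_{k+1}\}$ very densely covers $W$ in the sense of Section~\ref{sec:bool} (so that, passing to good extensions, one genuinely covers $\cl W$). ``Arbitrarily small mesh'' is then expressed as a $\forall i\,\exists(Z_1,\dots,Z_{k+1})$ statement: for every $i\in\bN$ (quantified in the interpreted copy of arithmetic) there exist such $Z_1,\dots,Z_{k+1}$ which are \emph{$i$--fine}, meaning that no connected component of any $Z_j$ (these are available via the sort $\on{cc}$ of Proposition~\ref{prop:interp-conn}) meets two elements of $\DD$ that are ``$i$--separated'' relative to the externally cofinal sequence for $W$ produced by Lemma~\ref{lem:cofinal}. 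This way $\varphi_{\le k}(W)$ never quantifies over infinite families, only over a natural number together with a fixed number of elements of $\DD$.

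The verification that $\varphi_{\le k}(W)\iff\dim_{\mathrm{top}}W\le k$ splits into the two expected directions. If $\dim_{\mathrm{top}}W\le k$, one produces the required $i$--fine covers of order $k+1$ from the standard periodic covers of a Euclidean chart, using Lemma~\ref{lem:refine} to push the refinements into $\DD$ and Lemma~\ref{lem:good-extension} to secure the good extensions. Conversely, if $\dim_{\mathrm{top}}W>k$, then by Ostrand's theorem there is a finite open cover $\mathcal O$ of $W$ admitting no refinement of order $\le k+1$; since $\cl W$ is compact metric, $\mathcal O$ has a Lebesgue number, so any sufficiently $i$--fine cover of $W$ refines $\mathcal O$, and hence for $i$ large no $i$--fine cover of order $\le k+1$ can exist, i.e.\ $\varphi_{\le k}(W)$ fails. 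The lower bound half, $\neg\varphi_{\le n-1}(W)$ for $W$ a cube interior, is where Lebesgue's Covering Theorem in Lemma~\ref{lem:dim-facts} is ultimately used: it is precisely what guarantees that the standard fine covers of the $n$--cube have no refinement of order $\le n$.

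The main obstacle throughout, exactly as elsewhere in this paper, is that $\DD$ is only a dense subset of $\ro(M)$ and that we have no first order access to points of $M$: a priori there need be no element of $\DD$ realizing a prescribed cover, refinement, or shrinking. This is handled by the dense cover / very dense cover / good extension formalism of Section~\ref{sec:bool}, together with the internally and externally cofinal sequences of Lemma~\ref{lem:cofinal} and the refinement Lemma~\ref{lem:refine}, which together guarantee that, for the embedded cube $W$, all the covers and refinements referenced by $\varphi_{\le k}$ genuinely admit witnesses in $\DD$; the one genuinely new point of care is to phrase ``arbitrarily fine'' as a first order schema over a single arithmetic variable rather than as a quantification over an infinite sequence of regular open sets, which the $i$--fineness predicate accomplishes. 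One could alternatively mimic the bounded--arity covers of~\cite{dlNKK22} more closely, quantifying over $\langle f_1,\dots,f_n\rangle$--translates of a fixed piece with the help of the membership predicate of Theorem~\ref{thm:member}; this is conceptually the same device.
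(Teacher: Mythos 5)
Your approach shares the paper's dimension-theoretic toolkit (\v{C}ech monotonicity, Ostrand, Lebesgue's Covering Theorem, the dense/very dense cover and good-extension formalism, Lemmas~\ref{lem:cofinal} and~\ref{lem:refine}) and the idea of encoding covering dimension with bounded-arity covers, but the formalization of the universal half of the definition is genuinely different and contains a gap.

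The difficulty is in the ``$i$--fine'' predicate. You propose to capture ``arbitrarily small mesh'' first-order by quantifying an arithmetic variable $i$ and asking that no component of any $Z_j$ meet two elements of $\DD$ that are ``$i$--separated relative to the externally cofinal sequence for $W$ produced by Lemma~\ref{lem:cofinal}.'' But that lemma is purely an existence statement: the cofinal sequence is not canonical and certainly not interpretable as a definable object, so ``$i$--separated relative to it'' is not a predicate you have access to. More fundamentally, there is no metric in sight, so ``mesh'' has no intrinsic first-order meaning, and trying to manufacture a surrogate scale from a single $U_i\in\DD$ does not obviously distinguish fine covers from coarse ones (any two disjoint sets can be separated by some $U_i$ in some cofinal exhaustion). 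As written, this is the one step that does not survive scrutiny.

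The paper avoids fineness altogether: $\dim_{\le n}$ is a $\forall W\,\forall\UU\,\exists\VV,\VV'$ statement ranging over \emph{all} covers $\UU$ by exactly $2^{n+1}$ elements of $\DD$ that very densely cover $W$, asking for a refinement of order $\le n+1$ with a good extension. The converse direction is discharged by one application of Lebesgue's Covering Theorem, which produces a single $2^{n+1}$-element cover of the cube that admits \emph{no} refinement of order $\le n+1$, together with Lemmas~\ref{lem:cofinal} and~\ref{lem:refine} to bring it into $\DD$. No scale, no mesh, no arithmetic quantifier is needed; the quantification over arbitrary covers does all the work. If you replace your $\forall i\,\exists(Z_1,\ldots,Z_{k+1})[\text{$i$--fine}\wedge\ldots]$ inner clause with the paper's $\forall\,\UU$ over $2^{k+1}$-element very dense covers and drop the fineness requirement, the rest of your argument (Ostrand to bound the arity of the refinement, Lemma~\ref{lem:good-extension} for good extensions, and the case split on $\dim M$ versus $n$) goes through essentially unchanged, and your $\exists W$ outer form is an acceptable variant of the paper's $\forall W$ form since every nonempty open subset of a connected $n$--manifold has topological dimension exactly $n$.
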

\begin{proof}
Suppose $W$ is compactly contained in $M$. Without loss of generality, we may assume that $W$ is contained in a single chart of $M$, which
we identify with $\R^n$.

We define the sentence $\dim_{\leq n}$ to express that for all nonempty $W\in\DD$ compactly contained in $M$ and for all
\[
    \UU=\{U_i\co i=1,2,\ldots,2^{n+1}\}\subseteq\DD\] that very densely covers $W$ we have:
\begin{enumerate}
\item
There exists a collection $\VV=\{V_1,\ldots,V_{n+1}\}$ of elements of $\DD$ such that for all $i$, each component of $V_i$ is contained in a single
component of $\UU$.
\item
The collection $\VV$ densely covers $W$.
\item
There exists a good extension $\VV'=\{V_1',\ldots,V_{n+1}'\}\subseteq\DD$ of $\VV$ such that for all $i$, each component of $V_i'$ is contained in
a single component of $\UU$.
     \end{enumerate}
     
     The conditions defining $\dim_{\leq n}$ are first order expressible by the discussion in this section.
    We claim that $\GG\models\dim_{\leq n}$ if and only if whenever $\GG$ acts in a
    locally approximating way on a compact, connected manifold $M$
    then $\dim M\leq n$; this clearly suffices to prove the proposition.
    
    Suppose first that $M$ has dimension at most $n$. Then by \v{C}ech's Theorem in Lemma~\ref{lem:dim-facts}, we have that the dimension of
    the closure of any open subset of $M$ (and in particular of any element of $\DD$) is at most $n$.
    We may assume that the closure of $W$ is contained in the unit cube $I^n\subseteq \R^n$, where $\R^n$ is identified with a chart of $M$.
    The existence of $\VV$ and $\VV'$ populated by regular open sets follows from Lemma~\ref{lem:good-extension}. By the density of $\DD$ in $\ro(M)$, it is
    straightforward to see that there are witnesses for $\VV$ and $\VV'$ in $\DD$.
    
    Conversely, a relatively straightforward application of Lebesgue's Covering Theorem
    shows that for $m>n$, the unit cube $I^m$ admits a covering $\UU$
    by $2^{n+1}$ regular open sets that admits no refinement (by arbitrary open sets, not necessarily regular open sets) of order at most $n+1$.
    Moreover, $\UU$ can be taken to consist of open cubes, and so $\UU$ very densely covers the interior of $I^m$;
    cf.~\cite{dlNKK22}. By Lemma~\ref{lem:cofinal} and Lemma~\ref{lem:refine},
    if $\dim M>n$ then there exists an element $W\in\DD$ that is compactly contained in $M$
    with a covering by $2^{n+1}$ elements of $\DD$ that admits no refinement of order at most $n+1$, a contradiction.
\end{proof}

We can now prove that if $\GG\leq\Homeo(M)$ is a locally approximating group of homeomorphisms
then the finitely generated locally approximating subgroups of $\GG$ are definable. The
formulae defining them depend on the dimension of $M$, and it is easy to see that one
can find one such formula that works for all manifolds $M$
of dimension bounded by any fixed
$n$ and all locally approximating $\GG\leq\Homeo(M)$.
We recall Corollary~\ref{cor:loc-approx-subgroup}.

\begin{cor}
Let $\GG\leq\Homeo(M)$, let $n\in\N$, and let $\underline g$ be an $n$--tuple of
elements of $\GG$. Then there is a first order predicate (depending only on $n$ and
the dimension of $M$) $\on{loc-approx}_n(\underline\gamma)$ such that
$\GG\models\on{loc-approx}_n(\underline g)$ if and only if $\langle \underline g\rangle$
is a locally approximating group of homeomorphisms of $M$. The formula is uniform in $\GG$
and depends only on the dimension of $M$.
\end{cor}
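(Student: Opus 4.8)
The plan is to write $\on{loc-approx}_n(\underline\gamma)$ as a Boolean combination of assertions about the definable subgroup $H:=\langle\underline\gamma\rangle$, which is definable by Theorem~\ref{thm:member} via $\on{member}_n$. Recall from the definition that, as a subgroup of $\Homeo(M)$, the group $H$ is locally approximating exactly when: (i) for every $U\in\DD$ with compact closure in a single Euclidean chart and not accumulating on $\partial M$, the rigid stabilizer $H[U]=H\cap\GG[U]$ is dense in $\Homeo(M)[U]$ in the compact--open topology; and (ii) if $\partial M\neq\varnothing$, then either every element of $H$ is compactly supported in $M\setminus\partial M$, or the density in (i) holds more generally for every $U\in\DD$ with compact closure in a (possibly boundary) chart of $M$. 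Note $\GG[U]$ is first order definable with $U$ as a parameter by Rubin's Expressibility Theorem (Theorem~\ref{thm:expressibility}), hence so is $H[U]$ using also $\underline\gamma$ and $\on{member}_n$; membership of $U$ in the various classes of open sets above (``compactly contained in $M$'', ``compact closure in a chart meeting $\partial M$'') is first order by Lemma~\ref{lem:compact} together with Lemma~\ref{lem:boundary-accumulate} and the boundary apparatus of Section~\ref{sec:bool}, and ``every element of $H$ is compactly supported'' is first order by applying Lemma~\ref{lem:boundary-accumulate} to $\suppe\delta$ under the hypothesis $\on{member}_n(\delta,\underline\gamma)$. Since $\dim M$ and the length $n$ of the tuple are fixed, the formula is allowed to depend on both.

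The heart of the matter is to express, first order and purely in terms of the action on $\DD$, that $H[U]$ is dense in $\Homeo(M)[U]$ for a fixed $U\in\DD$ with compact closure in a chart. I plan to prove that this is equivalent to an \emph{approximate realization} property in the spirit of Lemma~\ref{lem:compact}: for every $g\in\GG[U]$ and every finite ``test configuration'' of source/target pairs $(W_1,V_1),\dots,(W_k,V_k)$ of elements of $\DD$ compatible with $g$ (each $W_i$ compactly contained in $U$, and $g(W_i)$ compactly contained in $V_i$), there is $h\in H[U]$ with $h(W_i)\subseteq V_i$ for all $i$. The ``only if'' direction is immediate, since such a compatible configuration with room to spare defines an open neighbourhood of $g$ in $\Homeo(M)[U]$. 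For ``if'', given $f\in\Homeo(M)[U]$, a compact $K$ and $\varepsilon>0$, I would first replace $f$ by a close $g\in\GG[U]$ using that $\GG$ itself satisfies (i), then cover the part of $K$ lying well inside $U$ by finitely many small $W_i\in\DD$ with $g(W_i)$ of small diameter (Lemma~\ref{lem:large-subset}), choose targets $V_i\in\DD$ with $g(W_i)$ compactly contained in $V_i$ and $\operatorname{diam}V_i<\varepsilon$ (again Lemma~\ref{lem:large-subset}), and apply the hypothesis; combined with the frontier control discussed below and the fact that $f$ and every element of $H[U]$ are the identity on $M\setminus U$, this yields an $\varepsilon$-approximation of $f$ on $K$. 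Every ingredient here is first order: membership in $\GG[U]$, the action on $\DD$, and ``$g(W_i)$ compactly contained in $V_i$'' by Theorem~\ref{thm:expressibility} and Lemma~\ref{lem:compact}, and ``$h\in H$'' by Theorem~\ref{thm:member}. The one step that is not merely first order logic is that the test configurations have unbounded length; this I would handle exactly as in Section~\ref{sec:member}: using the parameter-free interpretation of arithmetic (Corollary~\ref{cor:arith-gen}) and the marking/scratchpad machinery, a configuration of length $k$ is encoded as a single element of $\DD$ with $2k$ marked slots living in an auxiliary chart disjoint from $U$, so that ``for all finite configurations'' becomes a genuine first order quantifier.

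With the resulting formula $\mathrm{Dens}(U)$ in hand, I would set $\on{loc-approx}_n(\underline\gamma)$ to be the conjunction of ``$\forall U\in\DD$ compactly contained in $M$, $\mathrm{Dens}(U)$'' with the disjunction of ``$\forall\delta\,[\on{member}_n(\delta,\underline\gamma)\to \suppe\delta$ does not accumulate on $\partial M]$'' and ``$\forall U\in\DD$ with compact closure in a chart of $M$ (possibly meeting $\partial M$), $\mathrm{Dens}(U)$''. For sets meeting $\partial M$ one reproves the equivalence ``$\mathrm{Dens}(U)\iff H[U]$ dense in $\Homeo(M)[U]$'' using collared half-ball covers and the boundary tools of Section~\ref{sec:bool} (Lemmas~\ref{lem:ball-cover}--\ref{lem:boundary-cover}), exactly as in the ``general case'' of the membership-predicate construction. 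When $M$ is closed the compactly-supported disjunct is vacuously true, so the formula correctly collapses to its first conjunct. One then checks directly that $\GG\models\on{loc-approx}_n(\underline g)$ if and only if $\langle\underline g\rangle$ is locally approximating on $M$, and that everything used is uniform in $\GG$ and depends only on $n$ and $\dim M$.

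The main obstacle I anticipate is the ``if'' direction of the approximate-realization characterization, specifically the behaviour near the frontier of $U$: an element $h\in H[U]$ realizing a finite test configuration is a priori uncontrolled near $\operatorname{Fr}(U)$, whereas compact--open density of $H[U]$ in $\Homeo(M)[U]$ demands good behaviour there (points of $K$ close to $\operatorname{Fr}(U)$ are moved little by $f$ and must be moved little by the approximant). The resolution I expect is to enlarge the class of admissible test configurations to also include ``shells'', namely elements of $\DD$ accumulating on $\operatorname{Fr}(U)$ together with thin targets, so that realizing all configurations forces the needed frontier control; this is a relativization to $H$ of precisely the kind of frontier/ends bookkeeping already carried out for $\GG$ and $\partial M$ in Section~\ref{sec:bool}, and with care it should go through and remain first order. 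The encoding of unbounded configurations is a secondary, routine point, being a direct reprise of Section~\ref{sec:member}.
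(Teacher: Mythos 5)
Your overall strategy is the same as the paper's: reduce to first order the statement that $H[U]$ is dense in $\Homeo(M)[U]$ by quantifying over finite ``test configurations'' of sources and targets, using $\on{member}_n$ (Theorem~\ref{thm:member}) to keep the witnesses inside $H=\langle\underline g\rangle$ and Rubin's Expressibility Theorem for the remaining ingredients. The substantive difference is how the unboundedness of the configurations is handled. You propose to encode configurations of arbitrary finite length via the arithmetic/scratchpad machinery of Section~\ref{sec:member}, importing a fair amount of technology. The paper instead invokes the Lebesgue covering dimension of $M$: since $\dim M = n$ is fixed in advance, any compact $K\subset W=h^{-1}(U)$ admits, by Ostrand's theorem (Lemma~\ref{lem:dim-facts}), a covering refinement by exactly $n+1$ regular open sets $V_1,\dots,V_{n+1}$ (each chosen so that its intersection with each component of $W$ is compactly contained there). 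The realization condition then quantifies over a tuple of \emph{bounded} length $n+1$, so the resulting formula is a plain first-order sentence with no auxiliary encoding needed, and its dependence on $\dim M$ appears precisely through this $n+1$. Your route would presumably work, but the dimension bound is the key observation that makes the corollary essentially trivial once the membership predicate and the expressibility of ``compactly contained'' are in place.

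One further comment: the frontier-control issue you flag at the end is a non-problem. Both the approximand $h\in\Homeo(M)[U]$ and any candidate $f\in H[U]$ are the identity on $M\setminus U$, hence on $\operatorname{Fr}(U)$. A standard compactness argument then shows that, uniformly, points sufficiently close to $\operatorname{Fr}(U)$ are moved only slightly by \emph{any} element of $\Homeo(M)[U]$, so closeness of $f$ to $h$ near the frontier is automatic and requires no ``shell'' test configurations. Your extra care with the boundary cases of $M$ (compactly supported versus general type, coverings by half-balls) is reasonable and is indeed implicit in the paper's setup, though the paper's proof as written elides it.
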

\begin{proof}
    Let $U,V\in\DD$ and $h\in\GG[V]$. Consider the basic open sets $\UU_{K,U}
    \subseteq\Homeo(M)$ to
    which $h$ belongs, where $\UU_{K,U}$ consists of all elements of $\Homeo(M)$
    sending a compact set $K$ into an open set $U$.
    Clearly, these sets are precisely the sets $\UU_{K,U}$ for
    which $K\subseteq h^{-1}(U)=W$.
    
    If $K\subseteq W$ is a compact subset then $\dim K\leq \dim M$. In particular,
    any cover of $K$ can be refined to a cover consisting of $n+1$ regular open
    sets $\{V_1,\ldots,V_{n+1}\}$.
    Moreover, for each component $\hat W$ of $W$, we may assume each
    $V_i\cap\hat W$ to
    be compactly contained in $\hat W$.

    For each $i$, we have $h(V_i)\subseteq U$. Thus, we have $\langle \underline g
    \rangle$ is locally approximating
    if and only if for all $h\in\GG$, for all $U\in\DD$, and for all
    \[\{V_1,\ldots,V_{n+1}\}\subseteq\DD\] such that for all components $\hat W$ of
    $W=h^{-1}(U)$ the set $V_i\cap\hat W$ is compactly contained in $\hat W$,
    we have
    the following: whenever
    $h(V_i)\subseteq U$ for all $i$, there exists a $f\in\GG$ such that
    $\on{member}_n(f,\underline g)$ and such that $f(V_i)\subseteq U$ for all $i$.
    In particular, there exists an element $f\in\langle \underline g
    \rangle$ such that $f(K)\subseteq U$, and so $f\in\UU_{K,U}$.
\end{proof}

\subsection{Action rigidity in dimension one}

Suppose that $\GG$ is a locally approximating group of homeomorphisms of a compact, connected
one-manifold. By Theorem~\ref{thm:dim-rigid}, $\GG$ cannot act in a locally approximating way on
any manifold of dimension two or higher.

\begin{thm}\label{thm:rigid-one}
Let $\GG$ be a locally approximating group of homeomorphisms of a compact, connected
one-manifold $M\in \{I,S^1\}$. If $G\equiv\GG$ can be realized
as a locally approximating group of homeomorphisms of a compact one-manifold $N$, then
$N\cong M$.
\end{thm}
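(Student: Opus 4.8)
The plan is to isolate a single first-order sentence $\sigma$ in the language of groups so that, for every locally approximating group $\GG$ of homeomorphisms of a compact connected one-manifold $M$, one has $\GG\models\sigma$ if and only if $M\cong S^1$. Granting such a $\sigma$, the theorem is immediate: $M,N\in\{I,S^1\}$ (for $N$ because it is a compact connected one-manifold; one may also invoke Theorem~\ref{thm:dim-rigid} to see $N$ is forced to be one-dimensional in the first place), and $\GG\equiv G$ forces $\GG\models\sigma\iff G\models\sigma$, so $M\cong S^1$ precisely when $N\cong S^1$, whence $M\cong N$ in all cases.

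The sentence I would take is $\sigma:=(\exists g)\,\mathrm{fpf}(g)$, where $\mathrm{fpf}(g)$ is a formula expressing that $g$ is a fixed-point-free homeomorphism of the underlying manifold. The topological input is the elementary observation that every homeomorphism of $I$ has a fixed point (an orientation-preserving homeomorphism of $[0,1]$ fixes both endpoints; an orientation-reversing one reverses them and hence has an interior fixed point by the intermediate value theorem), while $\Homeo(S^1)$ has fixed-point-free elements and, crucially, so does every locally approximating $\GG\le\Homeo(S^1)$. For the latter: choose two antipodal points $p_1,p_2\in S^1$, delete a tiny arc around each to obtain open arcs $A_1,A_2$, each with compact closure in a single chart and with $A_1\cup A_2=S^1$, and build $h_i\in\Homeo(S^1)[A_i]$ which pushes every point of $A_i$ counterclockwise by a definite amount on the ``middle'' of $A_i$, arranged so that the identity region of $h_i$ is a tiny neighbourhood of $p_i$ disjoint from that of $h_{3-i}$. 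A short case check (a point lies near $p_1$, near $p_2$, or in the common middle) shows $h:=h_2h_1$ displaces every point of $S^1$ counterclockwise by at least some $\eta>0$; in particular $h$ is fixed-point-free, and robustly so, since any homeomorphism within $C^0$-distance $\eta/2$ of $h$ is again fixed-point-free. As $\GG[A_i]$ is dense in $\Homeo(S^1)[A_i]$, pick $\tilde h_i\in\GG[A_i]$ close enough to $h_i$ that $\tilde h_2\tilde h_1$ lies within $\eta/2$ of $h$; then $\tilde h\in\GG$ is fixed-point-free, so $\GG\models\sigma$.

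It remains to make $\mathrm{fpf}(g)$ first order, which I would do with the covering machinery of Section~\ref{sec:bool}. On a compact manifold a fixed-point-free $g$ displaces every point by a uniformly positive amount, so by the Lebesgue covering lemma and Lemma~\ref{lem:ball-cover} (in dimension one $d(1)$ is a fixed small integer) there are finitely many regular open sets $U_1',\dots,U_k'$, each a good extension of a slightly smaller $U_i\in\DD$, which cover $M$ and satisfy $g(U_i')\cap U_i'=\varnothing$ for all $i$; conversely, such a family precludes a fixed point, as a fixed point $p$ would lie in some $U_i'$ and then $p=g(p)\in g(U_i')\cap U_i'=\varnothing$. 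All ingredients here — membership in $\DD$, the Boolean operations, ``compactly contained'' (Lemma~\ref{lem:compact}), ``very densely covers'' and good extensions (the discussion preceding Lemma~\ref{lem:good-extension}), and the action $U\mapsto g(U)$ (Rubin's Expressibility Theorem) — are first order expressible and uniform given $\dim M$, so $\mathrm{fpf}$ is a genuine formula evaluating to ``$g$ is a fixed-point-free homeomorphism of $M$''. Note that on $I$ it comes out false for the right reason: even a homeomorphism with no interior fixed point (a ``$\sqrt{x}$-type'' map) displaces points near $\partial I$ by arbitrarily little, so no covering family with disjoint $g$-images can reach the ends of $(0,1)$.

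The step I expect to be the real obstacle is the expressibility of $\mathrm{fpf}$ — specifically, pinning down a formulation of ``$\{U_1',\dots,U_k'\}$ covers $M$'' robust enough to certify the absence of fixed points (including at $\partial M$ in the interval case) while quantifying only over elements of $\DD$; this is the recurring difficulty that $\DD$ need not be a Boolean algebra and that $M$ itself (or $\inte M$) may not lie in $\DD$. Once that bookkeeping is settled, the construction of the fixed-point-free element and its persistence under local approximation are routine one-dimensional dynamics, and the model-theoretic transfer from $\GG$ to $G$ and thence to $N$ is a one-line application of elementary equivalence.
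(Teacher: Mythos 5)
Your proof is correct, and it takes a genuinely different route from the paper's. The paper distinguishes $S^1$ from $I$ through a \emph{separation} property: after fixing $U\in\DD$, a compactly contained subinterval $\tilde V\subseteq U$, and an $f\in\GG[U]$ pushing a small $V\subseteq\tilde V$ to the other side of $\tilde V$, the paper asks whether one can move $f(V)$ back past $\tilde V$ using some $g$ supported on $W\subseteq\tilde V^{\perp}$. On $S^1$ the complement of a short arc is a single arc, so one can ``go around'' through the region $X$ disjoint from $U$; on $I$ the complement of $\tilde V$ is disconnected and no such $W$ serves. You instead distinguish the two manifolds \emph{dynamically}, via the existence of a fixed-point-free element, encoding ``fixed-point-free'' by the existence of a finite family $U_1',\ldots,U_k'\in\DD$ that very densely covers $M$ (via the dense-cover-plus-good-extension machinery) with $g(U_i')\cap U_i'=\varnothing$. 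Both routes lean equally on the paper's cover-expressibility technology, and both are correct; yours arguably makes the role of $\partial M$ cleaner, since the unsatisfiability of the cover condition on $I$ is automatic --- each member of a good extension's inner refinement is compactly contained in $\inte M$, so a finite such family can never densely cover $M$ when $\partial M\neq\varnothing$, and $\sigma$ is false on $I$ without any analysis of $g$ itself. The point you flag as the ``real obstacle'' is indeed the crux, and the resolution is exactly this rigidity of compact containment at the boundary. One small remark: in the construction of the fixed-point-free $h=h_2h_1$, one should be explicit that there exist compacta $K_i\subseteq A_i$ with $K_1\cup K_2=S^1$ and $h_i$ displacing all of $K_i$ forward by a definite amount, so that the composed displacement is uniformly bounded below; this is a routine one-dimensional fact but worth stating, as $C^0$-perturbation of the factors then preserves fixed-point-freeness of the product, which is what allows the approximation by elements of $\GG[A_i]$.
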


\begin{proof}
Let $\GG\Homeo(M)$ be locally approximating and $G\leq \Homeo(N)$ a realization of $G$
as a locally approximating group of homeomorphisms.

Let $\DD=\DD_{\GG}$ or $\DD_G$.
 Let $\varnothing\neq U\in\DD$ be fixed, and let $\varnothing\neq X\in\DD$
be such that both $U$ and $X$ are contained in the Boolean complement of the
other. Choose $V\subseteq U$ compactly contained in a single connected component of $U$,
 and let $f\in \GG[U]$ (or $G[U]$) be such that:
\begin{enumerate}
\item
$f(V)$ is contained in the same connected component as $V$.
\item
There is a regular open set $\tilde V\in\DD$ compactly contained in $U$ such that $V$ is contained in a single component of $\tilde V$,
and such that $f(V)$ and $\tilde V$ are disjoint.
\end{enumerate}

Observe that any $f\in \GG[U]$ (or $G[U]$) must be an orientation
preserving homeomorphism of $M$ (or $N$, respectively).
Moreover, if either $M\cong S^1$ of $N\cong S^1$
then there is an element $W\in\DD$ contained in
$\tilde V^{\perp}$ and a $g\in \GG[W]$ (or $G[W]$) such that $g(f(V))\subseteq U$.
If either $M\cong I$ or $N\cong I$
then no such $W$ exists, and the result follows.
\end{proof}

\subsection{Action rigidity in dimension two}
Now, suppose that $\GG$ is a locally approximating group of homeomorphisms of a compact,
connected two-manifold $M$. Recall that $M$ is classified up to homeomorphism
by the following data:
\begin{enumerate}
    \item Orientability.
    \item Genus.
    \item Number of boundary components.
\end{enumerate}

\subsubsection{Counting boundary components}\label{ss:boundary}
Counting boundary components of a compact manifold equipped with a locally
dense action of a group $\GG$ is now possible; none of the discussion in this
subsection is special to the two-dimensional case.

We can count boundary components for an arbitrary
compact manifold $M$; the characterization of the number of boundary components
depends on the dimension of $M$, which we know is completely determined by the
elementary equivalence class of $\GG$.

We can characterize the absence of boundary in $M$ easily. Note that if $\partial M
=\varnothing$ then there are $d=d(\dim M)$ compactly contained elements
$\{U_1,\ldots,U_d\}$ such that for all $\varnothing \neq W\subseteq M$, there exists
a nonempty $W_0\subseteq W$ such that $W_0\subseteq U_i$ for some $i$; that is,
\[U=\bigcup_{i=1}^d U_i\] is dense in $M$. If
$\partial M\neq\varnothing$ then $\GG$ is either compactly supported or of general type.
In the former case, given any such collection $\{U_1,\ldots,U_d\}$, there is clearly
a nonempty $W\subseteq M$ which is disjoint from $U_i$ for all $i$. If $\GG$ is of
general type then since each $U_i$ is compactly contained in $M$, no $U_i$ can
accumulate on $\partial M$. Therefore, there exists a tubular neighborhood of
$\partial M$ which is disjoint from $\{U_1,\ldots,U_d\}$, whence there is a nonempty
element of $\DD$ disjoint from each $U_i$.

If $M$ has boundary and $\GG$ consists of compactly supported homeomorphisms of $M$,
let $\UU$ be an arbitrary collection of $d=d(\dim M)$ compactly contained elements of
$\DD$ which densely cover a compact submanifold $N$ of $M\setminus\partial M$.
We have that $\partial M$ has at least $n$ components if for all collections $\UU$
of $d$ compactly contained elements,
there exists another such collection of $d$ compactly contained (in $M$) elements $\UU'$ 
such that for all $U\in\UU$ there exists a $U'\in\UU'$ such that $U$ is compactly contained
in $U'$
and such that there exist at least
$n$ distinct components of
\[\bigcap_{U'\in\UU'} U'^{\perp}\] which are not densely covered
by $d$ elements of $\DD$.
First, note that this condition is expressible. Indeed, to identify the $n$ components,
we simply choose elements $\{V_1,\ldots,V_n\}$ compactly contained in
$\bigcap_{U'\in\UU'} U'^{\perp}$ with each $V_i$ in a distinct component,
such that whenever \[\{W_1,\ldots,W_d\}\subseteq\DD\] is a collection of $d$ compactly
contained (in $M$) elements, then
for all $1\leq i\leq n$ there exists a $V_i'$ compactly contained in
$\bigcap_{j=1}^d W_j^{\perp}$ such that $V_i$ and $V_i'$ lie in the same component
of $\bigcap_{U'\in\UU'} U'^{\perp}$. Strictly speaking, the set
$\bigcap_{U'\in\UU'} U'^{\perp}$ may not be an element of $\DD$, but we need only
say that there exists an element of $\DD$ contained in $\bigcap_{U'\in\UU'} U'^{\perp}$
containing both $V_i$ and $V_i'$ such that both $V_i$ and $V_i'$ are contained in
the same component.

To see that the condition holds if and only if 
$M$ has at least $n$ boundary components, suppose first that $M$ has at least
$n$ boundary components. Let $N\subseteq M$ be the complement of a tubular
neighborhood of $\partial M$, so that $N\cong M$. For $\UU$ arbitrary,
choose an enlargement $\UU'$ of $\UU$ covering $N$.
Choose $\{V_1,\ldots,V_n\}$ so that
each of these sets lies in a distinct component of $M\setminus N$. Now,
any collection $\{W_1,\ldots,W_d\}$ will have compact closure in $M\setminus
\partial M$,
so we choose $V_i'$ in the same end of $M\setminus N$ as $V_i$, but outside of
$\bigcup_{i=1}^d W_i$.

Conversely, suppose that $M$ has fewer than $n$ boundary components, and let $\UU$
cover the complement $N$ of a tubular neighborhood of $\partial M$. The same will
be true of any enlargement $\UU'$ of $\UU$. Let $\{V_1,\ldots,V_n\}$ be contained
in distinct components of $\bigcap_{U'\in\UU'} U'^{\perp}$.
We may assume that $V_1$ and $V_2$ lie in the
same end of $M\setminus N$; thus, we may assume that the component of
$\bigcap_{U'\in\UU'} U'^{\perp}$ containing $V_2$ has compact closure in
$M\setminus\partial M$. Thus, the component of $\bigcap_{U'\in\UU'} U'^{\perp}$
containing
$V_2$ is contained in a compact submanifold $N'\supseteq N$, and
$N'$ can be covered by a collection $\{W_1,\ldots,W_d\}\subseteq\DD$. Any
$V_2'$ compactly contained in $\bigcap_{j=1}^d W_j^{\perp}$ will not lie in
the same component of $\bigcap_{U'\in\UU'} U'^{\perp}$ as $V_2$.

Finally, if $\partial M\neq\varnothing$ and $\GG$ is a general case locally approximating
group of homeomorphisms, then we may simply proceed as in the compactly supported
case, requiring that all sets under consideration do not accumulate on the boundary
of $M$.

\subsubsection{Annuli and M\"obius bands}\label{ss:annuli}
Next, we wish to characterize the number of disjoint simple closed curves that can
be simultaneously realized on $M$. Note first that if
$A\subseteq M$ is an embedded annulus or
M\"obius band then there is an element $U\in\DD$ which contains a simple (i.e.~smoothly
embedded)
closed curve that is isotopic to the core curve of
$A$. This is again a straightforward consequence of locally approximation.

A \emph{simple closed curve}
$\gamma\subseteq M$ is a smooth embedding of $S^1$ into $M$.
It makes sense to talk about a smooth embedding since $M$ admits a unique smooth
structure compatible with its manifold structure. We say that $\gamma$ is
\emph{homotopically essential} if $\gamma$ does not bound a disk in $M$.
We will often conflate $\gamma$ with its isotopy class when no confusion arises.

Since $\gamma$ is smooth embedding,
it is a smooth submanifold of $M$ and thus admits a tubular
neighborhood. Such a tubular neighborhood is an interval bundle over the circle and
is thus homeomorphic to either an annulus or a M\"obius band. In the first case,
we say that $\gamma$ is \emph{locally two-sided} and in the second that it is
\emph{locally one-sided}.

Let $U,V\in\DD$ be compactly contained in $M$. We say that the pair $(U,V)$ is
\emph{annular} if there is a $W\subseteq U\oplus V$ satisfying:
\begin{enumerate}
\item
$W\subseteq W_U\oplus W_V$ for compactly contained subsets $W_U\subseteq U$ and
$W_V\subseteq V$ respectively.
\item $W$ has a component $\hat W$ which is not compactly contained in $M$.
\end{enumerate}

The set $\hat W$ will be called a \emph{witness} that $(U,V)$ is an annular pair.
Even though $\hat W$ may not itself be an element of $\DD$,
the condition defining annular pairs is
easily seen to be expressible. Indeed, one can express the existence of two
elements $X,Y\in\DD$ which are each separately
compactly contained in a single component
of $W$, such that for all $Z\in\DD$
compactly contained in $M$, there is no $f\in\GG$ taking
$X$ and $Y$ simultaneously into $Z$.

It is straightforward to see that $\gamma\subseteq M$ is a homotopically nontrivial
simple closed curve then there is a neighborhood of $\gamma$ which can be written
as $U\cup V$ for some annular pair $(U,V)$; in this case we say $\gamma$ is
\emph{carried} by the annular pair. Conversely,
we claim that if $(U,V)$ is annular, then the
component $\hat W$ carries a homotopically nontrivial curve.

To see this, suppose that
$(U,V)$ is an annular pair and $\hat W$ carries no homotopically nontrivial
simple closed curves.
If $\gamma$ is a simple closed loop in $\hat W$,
then $\gamma$ bounds an embedded disk $D_{\gamma}$
in $M$. We may take the union of $\hat W$ and $D_{\gamma}$ to get a larger open subset
$\hat W_{\gamma}$ of $M$ containing $\hat W$ in which $\gamma$ bounds a disk. Observe
that every simple loop in $\hat W_{\gamma}$ can be pushed into $\hat W$, so we may
repeat this process for every simple closed loop in $\hat W$ in order to obtain
an open subset $\tilde W$ of $M$ containing $\hat W$ in which every simple loop in
$\tilde W$ bounds a disk in $\tilde W$.
By
uniformization of Riemann surfaces, any nonempty simply connected open subset
of a compact two--manifold is automatically
homeomorphic to an open disk. In particular, the compactly contained subsets $X$ and $Y$
of $\hat W$ witnessing that $\hat W$ is not compactly contained in $M$ will be contained
in a collared ball inside of $M$, a contradiction.

Next, we can identify when an annular pair $(U,V)$ carries a homotopically nontrivial
curve that is isotopic to a boundary component. Let
\[\UU=\{U_1,\ldots,U_d\}\subseteq
\DD\] be an arbitrary
collection of $d$ compactly contained sets. We say that $(U,V)$ is \emph{peripheral}
if for any such $\UU$, there exists an annular pair $(U',V')$ with $U'\subseteq U$
and $V'\subseteq V$ (i.e.~an~\emph{annular subpair})
and elements $f,g\in\GG$ such that $fg$ take $U'$ and $V'$
simultaneously into \[\bigcap_{i=1}^d U_i^{\perp}.\] We have that $(U,V)$ is
peripheral if and only if $U\cup V$ carries a homotopically nontrivial
curve that is isotopic to a boundary component of $M$ (i.e.~a peripheral
curve).

Indeed, suppose $\gamma$ is a
peripheral curve in $U\cup V$ and let $Z\subset U\cup V$ be a tubular neighborhood
of $\gamma$. Then there is an annular subpair $(U',V')$ contained in $(U,V)$ such
that \[\gamma\subseteq U'\cup V'\subseteq Z.\]
If $B$ is the corresponding boundary component of $M$, then
for any given neighborhood $W$ of $B$ containing $U'\cup V'$,
we may write $W$ as a union of two open
subsets $W_1$ and $W_2$ which are contained in collared Euclidean half-balls in $M$,
so that $U'\subseteq W_1$ and $V'\subseteq W_2$. 
By locally approximation, there is an element $g\in\GG[W_1]$ such that
\[g(U')\subseteq \bigcap_{i=1}^d U_i^{\perp}\] and so that \[g(V')\cap U_i\subseteq W_2\]
for $1\leq i\leq d$.
Then, choose $f$ so that \[f(g(V'))\subseteq \bigcap_{i=1}^d U_i^{\perp}.\] Note
that here we are not assuming that $\GG$ is a general case locally approximating subgroup,
since we may assume $f$ and $g$ to be compactly supported inside of $W_1$ and $W_2$.

Conversely, if $U\cup V$ contains no nonperipheral curves then there is a fixed
compact subset $K\subseteq M\setminus\partial M$ so that any homotopically nontrivial
curve in $U\cup V$ must meet $K$. If $\UU$ consists of compactly contained elemets
and covers $K$, then no annular subpair
$(U',V')$ can be moved outside of $\UU$.

Similarly, we can express when two annular pairs $(U_1,V_1)$ and $(U_2,V_2)$ share
a common isotopy class of
homotopically essential simple closed curves. For that, we may simply say
that there exists an annular subpair $(U',V')$ of $(U_1,V_2)$ and elements
$f,g\in\GG$ each supported on compactly contained subsets of $M$ such that
$fg$ applied to the pair $(U',V')$ results in an annular subpair of $(U_2,V_2)$.
We call such a pair of annular pairs \emph{isotopic}; a pair of annular pairs is
isotopic if and only if they carry a common isotopy class of
homotopically essential simple closed curves.
The proof is similar to the characterization of peripheral annular pairs,
and we leave the details to the reader.

We will say an annular pair is \emph{minimal} if any two disjoint annular subpairs
are isotopic. It is clear that a minimal annular pair carries a unique
isotopy class of homotopically essential simple closed curves.

Finally, if $(U,V)$ is a minimal
annular pair and $\gamma$ is an essential simple closed
curve carried by $(U,V)$, we need to decide if $\gamma$ is locally one-sided or
two-sided. Let $\hat W$ be a witness for an annular pair $(U,V)$ and let
$(U',V')$ be an annular subpair of $\hat W$. Let
$W_1,W_2$ be nonempty and compactly contained in $\hat W$ such that
\[W_1,W_2\subseteq (U'\oplus V')^{\perp}.\] We will say that $(U,V)$ is
\emph{one-sided} if for all such $W_1$ and $W_2$, there exists an annular subpair
$(U'',V'')$ of $(U',V')$ and an element
\[g\in \GG[\hat W\cap (U''\oplus V'')^{\perp}]\] such that $g(W_1)\subseteq W_2$.
We say that a minimal annular pair $(U,V)$
is \emph{M\"obius} if $(U,V)$ is one-sided for all choices of
$\hat W$ witnessing that $(U,V)$ is an annular pair. Again, even though
$\hat W$ need not be an element of $\DD$, we can still express the existence of
such a $g$.

We claim that $(U,V)$ is M\"obius if and only if every homotopically nontrivial
simple closed curve carried by $(U,V)$ is locally one-sided, and so has a tubular
neighborhood homeomorphic to a M\"obius band.

Suppose first that $(U,V)$ is M\"obius, and let $\gamma$ be a homotopically
essential closed curve carried by $(U,V)$.
Choose a tubular neighborhood $Y$ of $\gamma$, which we may
assume contains $\hat W$, and let $X$ be a tubular neighborhood
of $\gamma$ with compact
closure in $\hat W$. Let $W_1,W_2\subseteq\hat W$
but disjoint from the closure of $X$ be arbitrary.
If $\gamma$ is
locally two-sided then clearly $W_1$ and $W_2$ may have been chosen to
lie in distinct components of $Y\setminus X$. In particular, there is no
homeomorphism supported on $Y\setminus X$ taking $W_1$ to $W_2$. Choosing\[\{U',V',U'',V''\}\] appropriately, we arrive at a contradiction which shows that
$\gamma$ must indeed be locally one-sided.

Conversely, suppose that the unique isotopy class of
homotopically essential simple closed curve
carried by $(U,V)$ is locally one-sided. Choose $\hat W$ witnessing that $(U,V)$
is an annular pair and $W_1,W_2\in\DD$ compactly contained in the Boolean
complement of
an annular subpair $(U',V')$ contained in $\hat W$. We have that $(U',V')$ carries
a representative $\gamma$
of the isotopy class carried by $(U,V)$. Choose a small tubular
neighborhood $X$ of $\gamma$ contained in $(U',V')$, and
$(U'',V'')$ an annular subpair inside that neighborhood. We have that 
the complement of $X$ does not separate $\hat W$ and so there is still a
$g\in\GG$ supported on the complement of $X$ in $\hat W$ taking $W_1$ into $W_2$,
as required.

\subsubsection{The classification of compact surfaces}

We are now ready to prove the two-dimensional part of Theorem~\ref{thm:action-rigid}.
Let:
\begin{itemize}
    \item $\GG\leq\Homeo(M)$ be a locally approximating group of homeomorphisms, where
$M$ is a compact, connected
$2$--manifold.
\item $G\equiv \GG$.
\item $N$ an arbitrary compact, connected manifold.
\item $G\leq\Homeo(N)$ a realization of $G$ as a locally approximating group of homeomorphisms.
\end{itemize}

We have already proved in Theorem~\ref{thm:dim-rigid} that $\dim N=2$.
Moreover, if $M$ has $b\geq 0$ boundary components then so does $N$, as follows
from Section~\ref{ss:boundary}. Consider the maximal number of disjoint, minimal,
pairwise non-isotopic, non-peripheral
annular pairs that can be realized in $M$. This number
coincides with the maximal number $c(M)$ of pairwise non-isotopic, non-peripheral,
homotopically essential
simple closed curves in $M$. By Section~\ref{ss:annuli}, we have $c(M)=c(N)$.
Moreover, the maximal number of pairwise non-isotopic M\"obius annular pairs in
$M$ and $N$ coincide. Together with the number of boundary components, these
data determine $N$ up to homeomorphism.

\subsection{Action rigidity in higher dimensions}

In this section, we investigate action rigidity for manifolds of dimension three
and higher.
We will only consider closed, smooth $n$--manifolds.
For general topological manifolds, one encounters various hard problems in geometric
topology, related to the existence of manifolds that are not triangulable for instance.

\subsubsection{Triangulations of $n$--manifolds and homeomorphisms}\label{ss:triangle}
The references for this section are~\cite{Cairns,Whitehead,AFW2015,Kuperberg-alg}.
If $M$ is a topological $3$--manifold then $M$ admits a unique smooth structure
and piecewise-linear that is
compatible with its manifold structure, by a classical result of Moise.
Thus, when discussing the topology of $M$,
it makes sense to talk about smoothness of maps, embeddings, and triangulations.
In particular, every topological $3$--manifold admits a triangulation.
If $\dim M\geq 4$ then $M$ may not admit any smooth structures or triangulations,
which is the reason for which we will have to assume that manifolds under
consideration admit triangulations; this is all we need to assume about
the underlying manifolds, though we assume smoothness for ease of stating the
main results of the paper.

Note that a finite
simplicial complex (such as arising from a triangulation of a compact smooth
manifold) is a piece of combinatorial data which can be encoded as
a finite string of natural numbers (equivalently, a single natural number).


\subsubsection{Good coverings and nerves}

Recall that if $X$ is a topological space and $\UU$ is a locally finite open cover
of $X$, then the \emph{nerve} $\NN(\UU)$ of the cover $\UU$ is a simplicial complex
built by letting $k$--simplices correspond to $(k+1)$--fold nonempty intersections.

To avoid foundational pathologies in the definitions,
we always assume $X=M$ is at least a topological manifold.
A locally finite cover $\UU$ of $M$ is \emph{good} if all nonempty intersections
of elements of $\UU$ are contractible. Leray's
Nerve Theorem~\cite{Borsuk,Leray,McCord,Weil}
shows
that $\NN(\UU)$ is homotopy equivalent to \[\bigcup_{U\in\UU} U.\]

Suppose that $M$ admits a triangulation $\TT$
and a good cover
with one collared open ball $U_v$ for each vertex of $\TT$, and such that
nonempty intersections correspond to simplices inside of $\TT$. We call such
good covers \emph{subordinate} to $\TT$. In general, a collection $\UU$ of open
sets whose elements are in fixed bijection with the zero skeleton $\TT^{0}$ of $\TT$
will be called \emph{subordinate} to $\TT$ if nonzero intersections of elements of
$\UU$ corresponding precisely to simplices of $\TT$, without necessarily assuming
that $\UU$ covers $M$ nor that any of the elements of $\UU$ nor nonempty intersections
be contractible.

Note that
in manifold topology, good covers generally mean that all intersections are themselves
homeomorphic to (or diffeomorphic to) collared open balls themselves, though
in the present context we content
ourselves with contractibility.

\subsubsection{Homotopy equivalence and homeomorphism}\label{ss:homotopy-equiv}

Let $M$ be a fixed closed manifold (of arbitrary dimension) and
let $\TT$ be a finite triangulation of a space $N$.
We wish to express, to some degree,
when $M$ admits a triangulation of the same combinatorial
type as $\TT$; for then, we can consider
good open covers of $M$ that is subordinate to $\TT$. One complication is
that in the context of locally
approximating groups, we cannot directly access balls in $M$, though arithmetic
comes to our aid.

For all vertices $v\in\TT^{0}$ in the $0$--skeleton of $\TT$
and all $i\in\N$, choose elements
$\{U_v^i\}_{i\in\N}\subseteq\DD$ satisfying the following \emph{subordination 
conditions}:
\begin{enumerate}
\item For fixed $i$, and $\{v_1,\ldots,v_k\}\subseteq \TT^{0}$,
we have \[\bigcap_{j=1}^n U_{v_j}^i\neq\varnothing\] if and only if 
$\{v_1,\ldots,v_k\}$ spans a simplex in $\TT$.
\item For fixed $v$ and all $i$, we have
$U_v^i$ is compactly contained in $U_v^{i+1}$.
\item For fixed
$\{v_1,\ldots,v_k\}$ and all $i$, if \[\bigcap_{j=1}^k U_{v_j}^i\neq\varnothing\]
then there is some $i'>i$ such that this intersection is compactly
contained in \[\bigcap_{j=1}^k U_{v_j}^{i'}\neq\varnothing.\]
\item For all $\varnothing \neq W\subseteq M$, there exists a nonempty $W_0$ compactly
contained in $W$ such that $W_0\subseteq U_v^1$ for some $v$.
\end{enumerate}

We will address the expressibility of the subordination conditions below.
Observe first that \[U^2=\bigcup_v U_v^2\] covers $M$ since \[U^1=\bigcup_v U_v^1\] is
already dense in $M$. Indeed, $U^1$ meets every nonempty open subset of $M$, and so
the closure of $U^1$ is $M$. Since the closure of each $U_v^1$ lies in $U_v^2$ and
$\TT$ has finitely many vertices, we see that $M\subseteq U^2$.

Next, write \[U_v=\bigcup_i U_v^i.\] Of course, $U_v$ need not be an element of $\DD$,
though it is contractible since it is an ascending union of open sets and the image
of the inclusion of $U^i_v$ into $U^{i+1}_v$ lies in a collared open ball.
A similar observation applies to all nonempty intersections of sets of the form
$U_v$. Let $\UU$ be the collection of the sets $U_v$ where $v$ ranges over the vertices
of $\TT$. Then Leray's Nerve Theorem implies that \[\bigcup_{U_v\in\UU} U_v=M\] is
homotopy equivalent to $\NN(\UU)=N$.

Suppose that $M$ admits a triangulation $\TT$. Then $M$ admits a good
cover $\UU_{\TT}$ by open collared balls that is subordinate to $\TT$,
where all nonempty intersections are also collared open balls; one merely takes the open
balls to be the open stars of each vertex in the triangulation.

Given such a good cover
$\UU=\UU_{\TT}$, it is not immediate that a small shrinking of the elements of
$\UU$ still results in a good cover
subordinate to $\TT$. However, we can write each $U_v\in\UU$ as an ascending union
of subsets \[U_v=\bigcup_{i=1}^{\infty} W_v^i,\] where each $W_{v_i}$ is a collared open
ball, so that $W_v^i$ is compactly contained in $W_v^{i+1}$ for all $i$, and so that
for fixed $i$ and $\{v_1,\ldots,v_k\}$ we have $\bigcap_{j=1}^k U_{v_j}\neq\varnothing$
if and only if $\bigcap_{j=1}^k W^i_{v_j}\neq\varnothing$. We will simply not claim
that the latter intersection is contractible. It follows that each $U_v$ admits an
ascending exhaustion by elements of $\DD$, say \[U_v=\bigcup_{i\in\N} U_v^i,\] with
$U_v^i$ compactly contained in $U_v^{i+1}$ for all $v$ and $i$, and with
the same conditions on intersections as the $\{W_v^i\}_{v\in\TT^{0}}$
of for a fixed $i$; cf.~Lemma~\ref{lem:cofinal}.
For instance, identifying $U_v$ with the unit ball in $\R^n$ centered at the origin,
we may assume that $U_i$ compactly contains the ball of radius $1-1/(2i)$ and that it
is compactly contained in the ball of radius $1-1/(2i+1)$.

The requirement that for all nonempty $W\subseteq M$ there is a nonempty $W_0$ compactly
contained in $W$ such that $W_0\subseteq U^1_v$ for some $v$ is plainly expressible.

Now, we have that if $\{v_1,\ldots,v_k\}\subseteq\TT^{0}$ and
$\bigcap_{j=1}^k U_{v_j}\neq\varnothing$ then this intersection is a collared open
ball. By assumption, for each $i$, we have
$\bigcap_{j=1}^k U^i_{v_j}\neq\varnothing$, and furthermore this
set has compact closure in $\bigcap_{j=1}^k U_{v_j}\neq\varnothing$. In fact, there is
a closed collared ball $B$ such that \[\bigcap_{j=1}^k U^i_{v_j}\subseteq B\subseteq
\bigcap_{j=1}^k U_{v_j}.\]
Since
each $U_v$ is the ascending union of $U_v^i$, we have that
\[B\subseteq \bigcap_{j=1}^k U_{v_j}\] is contained in
$\bigcap_{j=1}^k U^{i'}_{v_j}$ for some $i'>i$. Note that it follows that the inclusion
\[\bigcap_{j=1}^k U^i_{v_j}\longrightarrow \bigcap_{j=1}^k U^{i'}_{v_j}\] induces
the trivial map on all homotopy groups.

Thus, elements of $\DD$ satisfying the subordination conditions exist; it remains
to show that we can express their existence.

To define such a collection of sets $\{U^i_v\}_{i\in\N,v\in\TT^{0}}$,
we need to use first order
arithmetic. Because first order arithmetic is uniformly interpreted across all
locally approximating groups, we may define subsets of
$\N\times\DD^k$ for any fixed $k\in\N$ with predicates of the appropriate sorts.
Define \[\Omega_{\TT}\subseteq\N\times
\DD^{|\TT^{0}|}\] as follows. We write \[\Omega_{\TT}^i=\Omega_{\TT}\cap(\{i\}\times
\DD^{|\TT^{0}|}).\] If $\UU=\{U_v\}_{v\in\TT^{0}}$ and $\UU'=\{U'_v\}_{v\in\TT^{0}}$
are two collections of elements of $\DD$, we say $\UU$ \emph{refines} $\UU'$ if
$U_v$ is compactly contained in $U'_v$ for all $v\in\TT^{0}$ and if
all nonempty intersections of elements of $\UU$ are compactly contained in the
        corresponding intersection of elements of $\UU'$.
\begin{enumerate}
    \item The set $\Omega^1_{\TT}$ consists of collections
    $\UU^1=\{U^1_v\}_{v\in\TT^{0}}$ which satisfy:
    \begin{enumerate}
        \item\label{omega1} For all nonempty $W\subseteq M$, there exists a nonempty, compactly contained
        $W_0\subseteq W$ such that $W_0\subseteq U^1_v$ for some $v$.
        \item $\UU^1$ is subordinate to $\TT$.
        \item There exists a collection $\UU'=\{U'_v\}_{v\in\TT^{0}}$
        subordinate to $\TT$ such that $\UU$ refines $\UU'$.
    \end{enumerate}
    \item\label{omega2} For $i>1$, the set $\Omega^i_{\TT}$ consists of collections
    $\UU^i=\{U^i_v\}_{v\in\TT^{0}}$ that are subordinate to $\TT$ and such that:
    \begin{enumerate}
        \item (Upward compatibility) For all $\UU^{i-1}\in\Omega^{i-1}_{\TT}$, we have $\UU^{i-1}$ refines
        some $\UU^i\in\Omega^i_{\TT}$.
        \item (Downward compatibility) For all $\UU^i\in\Omega^i_{\TT}$, we have $\UU^i$ is refined by some
        $\UU^{i-1}\in\Omega^{i-1}_{\TT}$.
        \item (Extensibility) For all $\UU^i\in\Omega^i_{\TT}$,
        there exists a collection $\UU'=\{U'_v\}_{v\in\TT^{0}}$
        subordinate to $\TT$ such that $\UU^i$ refines $\UU'$.
    \end{enumerate}
\end{enumerate}

The conditions defining $\Omega^i_{\TT}$ are clearly expressible. To spell this
out, consider the maps \[\pi_{\N}\colon\N\times\DD^{|\TT^0|}\longrightarrow \N,\quad
\pi_{\DD}\colon\N\times\DD^{|\TT^0|}\longrightarrow \DD^{|\TT^0|},\] given by
projection onto the two coordinates. We write $s(\UU)$ as shorthand for
expressing that
$\UU=\{U_v\}_{v\in\TT^0}$ is subordinate to $\TT$, and $r(\UU',\UU)$ for
expressing that $\UU'$ refines $\UU$. To express Condition~\ref{omega2}
for instance, we
may take the conjunction of the following:
\begin{enumerate}
    \item Extensibility: \[(\forall i)(\forall \UU\in\pi_{\DD}\circ \pi^{-1}_{\N}(i))[s(\UU)\wedge(\exists \VV)[s(\VV)\wedge r(\UU,\VV)]].\]
    \item Downward compatibility:
    \[(\forall i>1)(\forall\UU\in\pi_{\DD}\circ \pi^{-1}_{\N}(i))[(\exists\VV\in
    \pi_{\DD}\circ \pi^{-1}_{\N}(i-1))[r(\VV,\UU)]].\]
    \item Upward compatibility:
    \[(\forall i>1)(\forall\UU\in\pi_{\DD}\circ \pi^{-1}_{\N}(i-1))[(\exists\VV\in
    \pi_{\DD}\circ \pi^{-1}_{\N}(i))[r(\UU,\VV)]].\]
\end{enumerate}

Here, we are abusing notation for readability; in particular, we are conflating
variables with objects they refer too, and we quantify over cover $\UU$, which
are actually fixed length finite tuples of elements of $\DD$. Condition~\ref{omega1}
is similarly straightforward to express precisely.

Moreover, we have
already argued that if $\TT$ is a triangulation of $M$ then there exists an
ascending chain
\[\{\UU^i\}_{i\in\N}=\{U^i_v\}_{v\in\TT^{0},i\in\N}\] such that $\UU^i\in\Omega^i_{\TT}$
for all $i$ and such that the subordination conditions are satisfied.

Here,
we remark that ascending chains in $\Omega_{\TT}$ give rise to submanifolds
equipped with good coverings whose nerves are homotopy equivalent to the manifold
$M$. The set $\Omega_{\TT}$ does not single out a good covering. Moreover,
we need not ever quantify over sets of the form $\Omega_{\TT}$; it suffices
that the levels $\Omega_{\TT}^i$ be nonempty for all $i$, which is indeed
an invariant of the elementary equivalence class of the underlying group $\GG$.

We can now prove the higher dimensional part of Theorem~\ref{thm:action-rigid}:

\begin{prop}\label{prop:higher-action-rigid}
    Let $M$ be a closed, connected, smooth $n$--manifold,
    let \[\GG\leq\Homeo(M)\] be a locally
    approximating group, let $G\equiv\GG$, and let $G\leq\Homeo(N)$ be a realization
    of $G$ as a locally approximating group of homeomorphisms of a compact
    connected manifold $N$. Then $N$ is also
    a closed $n$--manifold that is homotopy equivalent to $M$.
\end{prop}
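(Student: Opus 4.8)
The plan is to combine dimension rigidity (already proved in Theorem~\ref{thm:dim-rigid}) with the observation that the nonemptiness of the levels $\Omega_{\TT}^i$ associated to a finite simplicial complex $\TT$ is a first order invariant of the elementary equivalence class of the acting group. First I would note that $\dim N = \dim M = n$ and that $N$ is closed: the boundary-counting technique of Section~\ref{ss:boundary} shows $\partial N = \varnothing$ since $\partial M = \varnothing$ (the relevant sentence asserting that finitely many compactly contained elements of $\DD$ densely cover the whole manifold holds in $\GG$ and hence in $G$). Since $N$ is a closed topological $n$-manifold and, by the hypotheses of the paper, we are working in the setting where $N$ admits a triangulation (for $n \le 3$ this is automatic by Moise; in higher dimensions we restrict attention to triangulable $N$ as discussed in Section~\ref{ss:triangle}), fix a finite triangulation $\TT_N$ of $N$.

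Next I would use the construction of $\Omega_{\TT}$. Because $M$ admits a triangulation, there is some finite simplicial complex $\TT$ — namely any triangulation of $M$ — for which the sentence $\sigma_{\TT}$ asserting ``for every $i \in \N$, the level $\Omega_{\TT}^i$ is nonempty'' holds in $\GG$. This sentence is first order (the conditions defining $\Omega_{\TT}^i$ were shown to be expressible in the preceding subsection, and the quantifier ``for all $i$'' ranges over the parameter-free interpreted copy of arithmetic from Corollary~\ref{cor:arith-gen}), so $\sigma_{\TT}$ holds in $G$ as well. Running the argument of Section~\ref{ss:homotopy-equiv} in $G \le \Homeo(N)$: the nonemptiness of all levels $\Omega_{\TT}^i$, together with the upward/downward compatibility and extensibility clauses, yields an ascending chain $\{\UU^i\}_{i \in \N}$ of collections of elements of $\DD_G$ realizing the subordination conditions. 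Forming $U_v = \bigcup_i U_v^i$ for each vertex $v$ of $\TT$, each such set and each nonempty intersection is contractible (it is an ascending union in which each inclusion $U_v^i \hookrightarrow U_v^{i+1}$ factors through a collared ball, so the intersection inclusions are null-homotopic on all homotopy groups), the sets cover $N$ (density of $\bigcup_v U_v^1$ forces $\bigcup_v U_v^2 = N$ and a fortiori $\bigcup_v U_v = N$), and the nerve of this good cover is $\TT$ itself. By Leray's Nerve Theorem, $N$ is homotopy equivalent to $|\TT|$, which is homeomorphic to $M$. Hence $N \simeq M$.

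The main obstacle — and the step that requires the most care — is verifying that the ascending chain $\{\UU^i\}$ extracted inside $G$ genuinely produces a \emph{good} cover of $N$, i.e.\ that the homotopy-triviality of the intersection maps survives in the abstract elementarily equivalent model. The subtlety is that $G$ need not be an elementary substructure of $\GG$, so I cannot transport ``$U_v^i$ is compactly contained in a collared ball inside $U_v^{i+1}$'' as a literal geometric fact; rather, the compactly-contained relation is the first order one defined via Lemma~\ref{lem:compact}, and I would need to check that in \emph{any} locally approximating action on a manifold $N$, a compactly contained element of $\DD$ inside another is literally contained in a collared ball in the larger set (this is essentially the content of Lemma~\ref{lem:compact} applied in $N$). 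Once that is in hand, the chain $U_v^i$ of genuinely collared-ball-nested open sets forces $\bigcup_i U_v^i$ to be contractible and likewise for intersections, so the Nerve Theorem applies. A secondary point to handle carefully is that $\Omega_{\TT}$ is parametrized by the \emph{arithmetic} interpreted in $G$, which may be nonstandard; but since the statement $\sigma_{\TT}$ quantifies over all $i$ in that interpreted sort and we only use the standard-indexed initial segment $\{\UU^i : i \in \N\}$ (which exists precisely because the standard naturals embed in any model of the interpreted theory), the construction goes through without ever needing to quantify over the full set $\Omega_{\TT}$.
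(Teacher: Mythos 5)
Your proposal is correct and follows essentially the same route as the paper: show $\dim N = n$ and $\partial N = \varnothing$, observe that the sentence asserting nonemptiness of every level $\Omega_{\TT}^i$ (for $\TT$ a triangulation of $M$) is first order and hence transfers to $G$, extract an ascending chain in $\DD_G$ satisfying the subordination conditions, and apply Leray's Nerve Theorem to conclude $N \simeq |\TT| \cong M$. You also correctly flag the two points the paper glosses over — that Lemma~\ref{lem:compact}, being an ``if and only if'' valid for any locally approximating action, converts the transferred first-order compactly-contained relation back into genuine collared-ball containment in $N$, and that only the standard-indexed initial segment of the chain is required, so nonstandard arithmetic in $G$ is harmless. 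One small redundancy: fixing a triangulation $\TT_N$ of $N$ and worrying about triangulability of $N$ is unnecessary; the argument only needs $M$ to carry a triangulation, since the conclusion about $N$ is purely homotopy-theoretic via the nerve of the good cover built from $\TT$.
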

\begin{proof}
    We have already proved that $\dim N=n$. By the discussion in 
    Section~\ref{ss:boundary}, we have that $\partial N=\varnothing$. Let $\TT$
    be a triangulation of $M$, so that there exists a nonempty
    definable set
 $\Omega_{\TT}$ as above with collections of elements
 $\{\UU^i\}_{i\in\N}\subseteq\Omega_{\TT}$ satisfying the subordination conditions.
 It follows that such a collection $\{\UU^i\}_{i\in\N}$ also exists in $N$,
 and so $N$ also admits a good cover that is
 subordinate to $\TT$. It follows that $N$ is homotopy equivalent to the nerve
 complex for a good cover of $M$ subordinate to $\TT$, i.e.~to $M$ itself.
\end{proof}

Note that Proposition~\ref{prop:higher-action-rigid} gives another proof of
two-dimensional action rigidity for closed manifolds. Indeed, we have that
two closed $2$--manifolds are homotopy equivalent if and only if they are homeomorphic
to each other. Proposition~\ref{prop:higher-action-rigid} applies when $n=2$, showing
that the manifolds $M$ and $N$ are homotopy equivalent and hence homeomorphic.

\subsection{Action rigidity for closed $3$--manifolds}
In this section we prove action rigidity for general closed orientable $3$--manifolds,
which is reliant on the ideas of the general dimensional case.
From the previous section, we have already shown that the elementary equivalence class
of a locally approximating group of homeomorphisms of a closed $3$--manifold already
determines that manifold up to homotopy equivalence. We wish
not to exclude pairs of closed $3$--manifolds which are homotopy equivalent but not
homeomorphic.
Throughout this section, $M$ will always denote a closed, orientable
$3$--manifold. The reader is directed to~\cite{Hempel2004} for general background.

\subsubsection{Heegaard splittings}
Recall that $M$ admits a \emph{Heegaard decomposition}, so that $M$ is the union
of two handlebodies of some genus $g$ over a closed, embedded surface of genus $g$.
The genus $g$ Heegaard splitting \[M=H_1\cup_{S_g}H_2\] is determined by two
nonseparating $g$--tuples
of pairwise non-isotopic homotopically essential
simple closed curves \[\Gamma^1=\{\gamma_1^1,\ldots,\gamma_g^1\}\quad \textrm{and}
\quad
\Gamma^2=\{\gamma_1^2,\ldots,\gamma_g^2\}\] of curves which bound disks in $H_1$ and
$H_2$ respectively. The homeomorphism type of $M$ depends only on the
mapping class group orbit of the union $\Gamma^1\cup\Gamma^2$; that is, if
$\psi$ is a representative of an isotopy class of homeomorphisms of $S_g$ and
elements of $\Gamma^1$ and $\Gamma^2$ are identified with representatives in their
classes, then the collections $\psi(\Gamma^i)$ determine handlebodies for
$i\in\{1,2\}$, and the resulting Heegaard splitting will be of a manifold that is
homeomorphic to $M$. Heegaard splittings admit a natural \emph{stabilization} procedure,
so that a given $M$ admits a Heegaard splitting of genus $g$ for all sufficiently large
$g$. In particular, we may assume henceforth that $g\geq 2$ for all splittings under
consideration.

\subsubsection{Curve graphs}
Isotopy classes of essential simple closed curves on $S_g$ are organized into the curve
graph $\CC_g=\CC(S_g)$, where the isotopy classes form the vertices and the adjacency
relation is given by disjoint realization within the isotopy classes; that is, classes
$\gamma_1$ and $\gamma_2$ are adjacent if they admit representatives which are disjoint
and non-isotopic. The curve graph of $S_g$ admits an action of the mapping class group
of $S_g$ by graph automorphisms. It turns out that in general, the curve graph
is a prime model of its theory (in the language of graph theory),
and in fact an arbitrary finite tuple of curves
is determined up to an automorphism by an existential formula; see~\cite{DKdlN22}.
In particular, for $\Gamma^1$ and $\Gamma^2$ as above, there is a quantifier-free formula
$\phi(\underline x,\underline y,\underline z)$, where $\underline x$ and $\underline y$
are $g$--tuples of variables and where $\underline z$ is a tuple of variables,
such that \[\CC(S_g)\models(\exists\underline z)[\phi(\gamma_1^1,\ldots,\gamma_g^1,
\gamma_1^2,\ldots,\gamma_g^2,\underline z)\] and such that for any other witness
$\{\delta_1^1,\ldots,\delta_g^1,
\delta_1^2,\ldots,\delta_g^2\}$, the tuples \[\{\gamma_1^1,\ldots,\gamma_g^1,
\gamma_1^2,\ldots,\gamma_g^2\}\quad\textrm{and}\quad \{\delta_1^1,\ldots,\delta_g^1,
\delta_1^2,\ldots,\delta_g^2\}\] lie in the same mapping class group orbit.

The fundamental fact which guarantees this homogeneity of $\CC(S_g)$ is the exhaustion
of $\CC(S_g)$ by \emph{finite rigid tuples}, as proved by
Aramayona--Leininger~\cite{aramayona-leininger}. They prove that every finite tuple
$\underline\gamma$
of vertices of $\CC(S_g)$ embeds in a large finite tuple $X$, whose abstract isomorphism
type as a graph determines $X$ up to the action of the mapping class group. That is,
if \[X\cong Y\longrightarrow \CC(S_g)\] is any simplicial embedding then there is a
unique mapping
class of $S_g$ taking $X$ to $Y$. The type of any such $X$ is isolated by a
quantifier--free formula, which simply records adjacency and nonadjacency in $X$.
For an arbitrary finite tuple $\gamma$, one embeds $\gamma$ in a finite rigid tuple,
and an existential formula isolating the type of $\gamma$ simply asserts the existence
of the additional vertices of $X$, together with the appropriate adjacency relations.

\subsubsection{$3$--manifold topology}
We will require some facts from $3$--manifold topology, which can be
found in~\cite{Hempel2004,GHM1972},
for instance. A $3$--manifold is \emph{prime} if it cannot
be decomposed as a nontrivial connect sum.
\begin{thm}\label{thm:3mfld}
The following hold.
\begin{enumerate}
    \item Let $M$ be a prime and compact with $\pi_2(M)=0$ and $\pi_1(M)$ a
    nontrivial free
    group. Then $M$ is homeomorphic to a (possibly non-orientable) handlebody.
    \item Let $M$ be compact with $\pi_1(M)\cong \pi_1(S_g)$, and suppose that 
    $\pi_2(M)=0$. Then $M$ is homeomorphic to a trivial $I$--bundle over $S_g$.
    \item Let $M$ be an open manifold with finitely presented fundamental group and 
    trivial $\pi_2$. Then there is a compact submanifold $K$ of $M$
    whose inclusion into $M$ is a homotopy equivalence. Moreover,
    given a compact
    incompressible submanifold $K'$ of $M$, we can assume $K'\subseteq K$.
\end{enumerate}
\end{thm}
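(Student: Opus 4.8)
The statement to be proved is Theorem~\ref{thm:3mfld}, a collection of three classical facts from $3$--manifold topology. Since these are well-known results, the "proof" here should really be a compilation of citations together with brief indications of how each follows from standard theory; the authors have signalled this by writing "which can be found in~\cite{Hempel2004,GHM1972}". So the plan is not to reprove these theorems from scratch but to assemble the right references and sketch the logical chain.

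\textbf{Part (1).} The plan is to invoke the Sphere Theorem: since $\pi_2(M)=0$ and $M$ is prime, $M$ is aspherical or is $S^2\times S^1$ (the latter excluded because its fundamental group is $\Z$, which is free, so one must be slightly careful --- but $S^2\times S^1$ is not a handlebody, yet it \emph{is} homeomorphic to a trivial $I$--bundle issue; actually the cleanest route is: a prime $3$--manifold with $\pi_2=0$ is either aspherical or $S^2\times S^1$ or $S^2\tilde\times S^1$). Assuming $\pi_1(M)$ is a nontrivial free group, asphericity of $M$ would force $\pi_1(M)$ to have cohomological dimension equal to $\dim M$ relative to the boundary structure; a closed aspherical $3$--manifold has Poincar\'e duality group $\pi_1$, which is never free, so $M$ must have nonempty boundary. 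Then one applies the characterization of compact $3$--manifolds with free fundamental group and incompressible boundary --- by a theorem going back to work compiled in Hempel, a compact irreducible $3$--manifold with free fundamental group is a handlebody (this uses the loop theorem/Dehn's lemma to compress the boundary and an induction on genus). I would cite Hempel, Chapter on handlebodies, and note the orientability caveat is handled by allowing non-orientable handlebodies.

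\textbf{Part (2).} The plan is to cite the theorem (again in Hempel, building on Stallings and Waldhausen) that a compact irreducible $3$--manifold with $\pi_1\cong\pi_1(\Sigma)$ for $\Sigma$ a closed surface of negative Euler characteristic is an $I$--bundle over $\Sigma$; the hypothesis $\pi_2(M)=0$ gives irreducibility via the Sphere Theorem (plus primeness, which one extracts since $\pi_1$ of a surface group is freely indecomposable so $M$ cannot be a nontrivial connect sum with anything having nontrivial $\pi_1$, and an $S^2\times S^1$ summand is killed by $\pi_2=0$). Since $M$ is closed and $\pi_1$ is a surface group, the $I$--bundle must be the trivial (orientable) one $\Sigma\times I$ --- the twisted $I$--bundle has a single boundary component and $\pi_1$ a surface group of a double cover, and in the closed case one rules it out by comparing boundary behaviour, so $M$ is forced to be the trivial bundle. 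I would reference Hempel Theorem 10.2 / 10.5 and~\cite{GHM1972}.

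\textbf{Part (3).} This is Scott's Compact Core Theorem~\cite{scott-core} (also in Hempel): a $3$--manifold with finitely generated fundamental group contains a compact submanifold $K$ whose inclusion is $\pi_1$--surjective, and when $\pi_2(M)=0$ one can arrange $K$ to be a deformation retract up to homotopy, i.e.~the inclusion is a homotopy equivalence (using that both $M$ and $K$ are aspherical with the same $\pi_1$, by Whitehead's theorem). The refinement that a given compact incompressible submanifold $K'$ may be absorbed into $K$ follows by taking the compact core of a neighbourhood containing $K'$ and an isotopy argument; this is standard and I would cite Scott and McCullough. \emph{The main obstacle}, such as it is, is not mathematical depth but bookkeeping: making sure the statements are cited in exactly the form used later (in particular the "incompressible submanifold absorbed into the core" clause, which is the version needed in the subsequent Heegaard-splitting argument), and handling the orientability and $S^2$--summand edge cases cleanly so that "prime" plus "$\pi_2=0$" genuinely delivers "irreducible". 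I would therefore organize the proof as three short paragraphs, each reducing to a named classical theorem with the Sphere Theorem and Whitehead's theorem invoked as the connective tissue.
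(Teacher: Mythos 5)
Your proposal matches the paper's treatment: the paper gives no proof of this theorem, stating it as a compilation of classical $3$--manifold facts with references to Hempel and GHM, and your plan is likewise a set of citations (Sphere Theorem, Stallings/Hempel on free $\pi_1$ forcing a handlebody, Stallings/Waldhausen on surface groups forcing $I$--bundles, Scott's compact core theorem) with the correct connective tissue.

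Two small slips in your sketch of part (2) are worth fixing. You write ``Since $M$ is closed and $\pi_1$ is a surface group, the $I$--bundle must be trivial'' --- but under the hypotheses $M$ cannot be closed: by the Poincar\'e duality argument you yourself give in part (1), a closed irreducible $3$--manifold with infinite $\pi_1$ has $\pi_1$ a $PD(3)$ group, not a $PD(2)$ group, so the surface-group hypothesis forces $\partial M\neq\varnothing$. The trivial-vs-twisted distinction is then resolved differently: if the total space is a twisted $I$--bundle, its own $\pi_1$ is still $\pi_1$ of the \emph{base} (it is the \emph{boundary}, not the total space, that has $\pi_1$ isomorphic to the index-two orientation-double-cover subgroup). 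So a twisted $I$--bundle over an orientable $S_g$ exists only when the total space is non-orientable; when $M$ is orientable (the context in which the paper invokes this statement) the bundle must be trivial, and when $M$ is allowed to be non-orientable the statement should be read as allowing the base to be determined up to homeomorphism by the isomorphism type of $\pi_1$ (the abelianization distinguishes orientable from non-orientable surface groups, so the base is $S_g$). Neither slip affects the substance of the cited theorems, but the phrasing should be corrected before relying on it.
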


\subsubsection{Curves on triangulated surfaces}
Let $S_g$ be a closed surface of genus $g$. If $\TT$ is a finite triangulation of 
a space, then it is decidable whether $\TT$ is a triangulation of $S_g$. Thus,
after
defining the set of all finite triangulations of spaces in arithmetic, the set of
triangulations of $S_g$ is a definable subset. Moreover, given two triangulations
of $S_g$, it is decidable if one is a refinement of the other.

If $\gamma$ is an isotopy class of homotopically
essential simple closed curves on $S_g$ then there is a triangulation $\TT$ of $S_g$ such
that $\gamma$ is isotopic to a simple loop in the $1$--skeleton of $\TT$. In general,
for any collection of disjoint, pairwise non-isotopic, homotopically essential simple
closed curves on $S_g$, there is a triangulation $\TT$ of $S_g$ wherein there exist
representatives of these isotopy classes that are simple closed loops in the
$1$--skeleton of $\TT$. In~\cite{erickson-nayyeri}, it was even proved that from a
given triangulation of $S_g$ and given curves that are \emph{normal} (i.e.~with controlled
intersection patterns with the triangulation), one can efficiently
compute a new triangulation
wherein the curves become simple loops in the one-skeleton. If $\gamma$ is
a simple closed
curve on $S_g$ which are represented by a simple loop in the one-skeleton of
$\TT$, then we call $\gamma$ a \emph{$\TT$--subordinate} loop. By convention, a single
vertex in $\TT^{0}$ is a $\TT$--subordinate loop. Observe that for any fixed
triangulation $\TT$, there is a finite number of $\TT$--subordinate loops, and the
number of them is bounded by a function which is uniformly computable from the
data of the one-skeleton of $\TT$.

For a given triangulation $\TT$ of $S_g$, the following are
easily seen to be computable from $\TT$:
\begin{enumerate}
    \item A list of $\TT$--subordinate simple loops.
    \item A list of pairs of $\TT$--subordinate simple loops that are isotopic to each
    other.
    \item A complete list of representatives of homotopically essential $\TT$--subordinate
    loops, with no pair of representatives isotopic to each other.
    \item A list of nonseparating $g$--tuples of pairwise non-isotopic $\TT$--subordinate
    loops.
\end{enumerate}

Finally, there exists a sequence of triangulations $\{\TT_n\}_{n\in\N}$ which
are \emph{cofinal}, in the sense that
for a fixed generating set for $\pi_1(S_g)$, every loop in $S_g$ which can be represented
by a based loop of length at most $n$ in the generating set has a $\TT_m$--subordinate
representative for $m\gg 0$. Similar arguments have been applied in~\cite{KoMa2015} for instance.
Moreover, we may assume
that $\TT_{n+1}$ is a refinement of $\TT_n$ for all $n$, and $T_n$ is computable from
$T_1$ for all $n$ (say by taking stellar subdivisions).

\subsection{Subsurfaces of $M$}
We now apply the preceding discussion to analyze embedded surfaces in $M$.

\subsubsection{Finding surfaces in $M$}
Finding open submanifolds of $M$ which are homotopy
equivalent to $S_g$ is straightforward, following the ideas in
Section~\ref{ss:homotopy-equiv} above. For any triangulation $\TT$ of $S_g$ and
$0$--skeleton $\TT^{0}$, one can build collections \[\{U_v^i\}_{i\in\N,v\in\TT^{0}}\]
of elements of $\DD$ which are subordinate to $\TT$, so that in particular
each set \[U_v=\bigcup_i U_v^i\] is contractible, all nonempty intersections among the
$\{U_v\}_{v\in\TT^{0}}$ are contractible, and all nonempty intersections correspond to
simplices in $\TT$.

More precisely, we construct the set $\Omega_{\TT}$ as in
Section~\ref{ss:homotopy-equiv}, without the requirement that the collections
$\UU$ cover or densely cover $M$, and where $\TT$ is now a triangulation of
$S_g$. Ascending chains $\{\UU^i\}_{i\in\N}$ with each $\UU^i\in\Omega_{\TT}^i$
now give rise to open submanifolds
\[U_{\TT}=\bigcup_{v\in\TT^{0}} U_v\subseteq M\] that are homotopy equivalent
to $S_g$, by Leray's Nerve Theorem. By Theorem~\ref{thm:3mfld}, we have that there
exists a compact submanifold $K_{\TT}$
of $U_{\TT}$ which is homeomorphic to $S_g\times I$ and
for which the inclusion of $K_{\TT}$ into $U_{\TT}$ is a homotopy equivalence.
The set $\Omega_{\TT}$ will interpret all possible such open submanifolds, and 
will not single out any particular one.

\subsubsection{Interpreting the curve graph of embedded surfaces}

For the interpreted open submanifold $U_{\TT}$ as above, we wish to interpret the curve
graph of $U_{\TT}$, which will consist of free homotopy classes of curves in $U_{\TT}$
which admit simple representatives in $S_g$.

First, let $\TT_1$ and $\TT_2$ be triangulations of $S_g$ such that $\TT_2$ is a refinement
of $\TT_1$. We may interpret collections of
manifolds $U_{\TT_1}$ and $U_{\TT_2}$ as above via ascending chains in the
sets $\Omega_{\TT_1}$ and
$\Omega_{\TT_2}$. Moreover, since $\TT_2$ is a refinement of $\TT_1$,
we may assume that every ascending chain in $\Omega_{\TT_1}$ coincides with
an ascending chain in $\Omega_{\TT_2}$ and vice versa, so that in particular
$U_{\TT_1}=U_{\TT_2}$ as subsets of $M$. Indeed, let $v\in\TT_1^{0}$ and $v'\in\TT_2^{0}$.
We have that $v'$ is either a vertex of $\TT_1$, or lies in an edge or face of $\TT_1$.
We can first require that for all $i\in\N$, the set $U_{v'}^i$ is contained in the
intersection of the sets of the form $U_v^i$, where $v$ ranges over the vertices 
of $\TT_1$ that are incident to the simplex of $\TT_1$ containing $v'$ in its interior.
This condition will force $U_{\TT_2}\subseteq U_{\TT_1}$. Conversely, we can
require $U_v^i$ to be densely covered by sets of the form $U_{v'}^i$, where $v'$ lies in
the closed star of $v$. This latter condition will force
$U_{\TT_1}\subseteq U_{\TT_2}$. We call $U_{\TT_2}$ a \emph{refinement} of $U_{\TT_1}$.

Fix a triangulation $\TT$ of $S_g$, a simple loop $\gamma$ in the one-skeleton of $\TT$,
and an open submanifold $U_{\TT}\subseteq M$ as above.
We may interpret a submanifold $U_{\gamma}$ of each $U_{\TT}$ whose free
unoriented homotopy class in
$\pi_1(U_{\TT})$ coincides with that of $\gamma$ in $S_g$. This is done by simply
considering sets of the form $U_v^i\in\DD$ with $v$ a vertex of $\TT$ visited by
$\gamma$ and $i\in\N$, so that the union $U_v=\bigcup_i U_v^i$ consists of contractible
open subsets of $M$ such that $U_{v_1}\cap U_{v_2}$ is contractible and is
nonempty if and only if
$v_1$ and $v_2$ span an edge in the itinerary that $\gamma$ follows. From Leray's Nerve
Theorem, we have that $U_{\gamma}=\bigcup_{v\in\gamma} U_v$ is homotopy equivalent to
the circle, and any core circle of $U_{\gamma}$ represents the unoriented free homotopy
class of $\gamma$. If $\gamma_1$ and $\gamma_2$ are disjoint simple closed curves in
the one-skeleton of $\TT$, we may arrange for $U_{\gamma_1}$ and $U_{\gamma_2}$ to be
disjoint submanifolds of $U_{\TT}$.

Now, if $U_{\TT_2}$ refines $U_{\TT_1}$ then the curve $\gamma$ in the one-skeleton of
$\TT_1$ naturally lives in the one-skeleton of $\TT_2$, and one can arrange for the
submanifold $U_{\gamma}^2\subseteq U_{\TT_2}$ to be a proper subset of
$U_{\gamma}^1\subseteq U_{\TT_1}$. Thus, $U_{\gamma}^2$ can be called a \emph{refinement}
of $U_{\gamma}^1$.

Consider now the (computable) sequence $\{\TT_n\}_{n\in\N}$
of triangulations of $S_g$ which are cofinal, and such that $\TT_{n+1}$ is a refinement
of $\TT_n$ for all $n$.
We interpret the sequence $\{U_{\TT_n}\}_{n\in\N}$ of open submanifolds of $M$, such that:
\begin{enumerate}
    \item $U_{\TT_i}=U_{\TT_j}$ for all $i,j\in\N$ as subsets of $M$.
    \item If $\gamma$ is an essential loop that is $\TT_i$--subordinate then we interpret
    $U_{\gamma}^i\subseteq U_{\TT_i}$.
    \item $U_{\gamma}^{i+1}$ is a refinement of $U_{\gamma}^i$ for all $i$.
    \item If $\gamma_1$ and $\gamma_2$ are disjoint loops that are $\TT_i$ subordinate
    then $U_{\gamma_1}^i$ is disjoint from $U_{\gamma_2}^i$.
\end{enumerate}

To be more explicit, the manifolds $U_{\TT_i}$ and $U_{\gamma}$ for $\gamma$ subordinate
to $\TT_i$ are interpreted as certain definable subsets
of $\N\times\DD^{\TT_i^{0}}$; of course, the number of factors of $\DD$ in the Cartesian
product depends on $i$. This presents a further difficulty because we cannot quantify
over these Cartesian powers, though we are able to interpret
arbitrarily large finite pieces of the
curve graph $\CC(S_g)$. We obtain a finite piece $\CC(S_g)^i$ of the curve graph
which can be described as follows:
\begin{enumerate}
    \item The vertices of $\CC(S_g)^i$ consists of the manifolds $U_{\gamma}$, where
    $\gamma$ ranges over $\TT_i$--subordinate curves, with only one representative from
    each isotopy class chosen.
    \item The edges of $\CC(S_g)^i$ are between $U_{\gamma_1}$ and $U_{\gamma_2}$, where
    $\gamma_1$ and $\gamma_2$ are $\TT_i$--subordinate curves which are disjoint in
    $\TT_i$.
\end{enumerate}

\subsubsection{Finding handlebodies}

Let $S_g$ be a closed surface of genus $g$ with a triangulation $\TT$ and let 
$U_{\TT},U_{\gamma}\subseteq M$ as above, where $\gamma$ is a $\TT$--subordinate
essential simple closed curve on $S_g$.
We wish to express that $\gamma$ bounds a disk on in $M$, that $S_g$ is itself
two--sided, and that $S_g$ bounds a handlebody in $M$.

First, construct a submanifold $U_{\TT}^0\subseteq U_{\TT}$ which is also homotopy
equivalent to $S_g$, such that the inclusion of $U_{\TT}^0$ into $U_{\TT}$ is a
homotopy equivalence, and so that $U_{\TT}^0$ has compact closure inside of $U_{\TT}$.
This can be arranged by requiring each of the sets used to build $U_{\TT}^0$ be contained
in some $U_v^2$, where $v$ ranges over $\TT^{0}$. From Theorem~\ref{thm:3mfld},
we may assume that $U_{\TT}^0$ lies in a compact submanifold $K$ of $U_{\TT}$ that is
homeomorphic to $S_g\times I$, and so that the inclusion of $K$ is a homotopy equivalence
(i.e.~$K$ is a \emph{core}).

We can now detect whether or not the core copy of $S_g$ separates $M$. Precisely,
the core copy of $S_g$ separates $M$ if there exist elements $V_1,V_2\in\DD$ which
are in different connected components of $M\setminus U_{\TT}$, which remain in
different components of $M\setminus U_{\TT}^0$. This condition is expressible since
for $j\in\{1,2\}$,
we can require an element $\tilde V_j$ of $\DD$ to lie in the Boolean complement
$U_v^i$ for all $i\in\N$ and all $v\in\TT^{0}$, and then choose $V_j\subseteq \tilde V_j$
that is compactly contained. Then, it is easy to express (for each $i$)
that there is no element
$g\in\GG$ which restricts to the identity on each $U^i_v$, which fixes the component of
$\bigcap_v (U_v^i)^{\perp}$ containing $V_1$, and so that $g(V_1)\cap V_2\neq\varnothing$.
Precisely, this discussion identifies a subset of $\Omega_{\TT}$ whose ascending
chains give rise to copies of $S_g\times I$ inside of $M$ which separate $M$;
moreover, every separating genus $g$ surface in $M$ will occur as the
core surface of some such $U_{\TT}$.

Next, let $\HH$ be a triangulation of a rose with $g$ petals. Proceeding as in the
construction of $U_{\TT}$, we build a set $\Omega_{\HH}$ whose
ascending chains interpret a collections of
open submanifolds
$U_{\HH}\subseteq M$ of $M$
satisfying the following conditions:
\begin{enumerate}
    \item $U_{\HH}$ is homotopy equivalent to a rose on $g$ petals.
    In particular, $\pi_1(U_{\HH})$ is free of rank $g$ and has trivial $\pi_2$.
    \item There is core copy $K$ of $S_g$ inside of $U_{\TT}$ so that $U_{\HH}$ lies on
    one side of $K_0$ and so that $U_{\HH}$ is homotopy equivalent to a compact submanifold
    of $M$ with boundary $K$.
    \item The interpreted manifolds $U_{\gamma}$, where $\gamma$ is a $\TT$--subordinate
    simple closed loop, may be chosen to lie in $U_{\HH}$.
\end{enumerate}

Any such $U_{\HH}$ gives rise to a handlebody inside of $M$, bounded by a core copy
of $S_g$ in $U_{\TT}$. If there exist such manifolds $U_{\HH}$ on both sides of $K_0$
then $K_0$ is a Heegaard surface for a Heegaard splitting of $M$.

To precisely identify the Heegaard splitting, we need only find a collection of
$g$ pairwise non-isotopic $\TT$--subordinate homotopically essential simple loops
\[\{\gamma_1,\ldots,\gamma_g\}\] whose union does not separate $S_g$, and which
are homotopically trivial in $U_{\HH}$. Indeed, each such loop has a representative
lying on the boundary of a genus $g$ handlebody. If such a loop is homotopically trivial
in the handlebody then Dehn's Lemma implies that it bounds a disk in the handlebody.

For a $\TT$--subordinate homotopically essential closed curve $\gamma$, we may assume
that the manifold $U_{\gamma}$ lies inside of $U_{\HH}$. Both $U_{\gamma}$ and $U_{\HH}$
are interpreted as ascending unions of submanifolds, each with compact closure in the
next. Thus if $\gamma$ is homotopically trivial in $U_{\HH}$, then
we may assume that some compact submanifold of $U_{\gamma}$ containing
the core curve of $U_{\gamma}$ which is compactly contained in $U_{\HH}$. Even though
$U_{\HH}$ is not itself an element of $\DD$, it is straightforward to express this
compact containment, and we omit the details.

\subsubsection{Action rigidity in dimension $3$}

Finally, we obtain action rigidity for closed, orientable manifolds in dimension $3$.
Let $\GG\leq\Homeo(M)$ be locally approximating, where $M$ is
connected, closed, and orientable
of dimension $3$, and let $N$ be another connected
compact manifold on which some $G\equiv\GG$
acts in a locally approximating manner. We have already established that
$\partial N=\varnothing$ and $\dim N=3$. From the first order theory of $\GG$, we can
express the existence of a genus $g$ Heegaard surface in the underlying manifold,
together with a collection of $g$ pairwise nonisotopic, essential simple closed curves
whose union is nonseparating. By interpreting a sufficiently large finite portion of
the curve graph of the Heegaard surface, we can express the existence of a tuple
\[\{\gamma_1,\ldots,\gamma_g,\delta_1,\ldots,\delta_g\},\] where the $\gamma$ curves
bound disks on one side of the Heegaard surface, the $\delta$ curves bound on the other
side, and where the type of these $2g$ curves is completely determined by a finite
further collection of vertices in the curve graph. The type of this $2g$--tuple determines
the tuple up to the action of the mapping class group, and so the first order theory
of $\GG$ determines the homeomorphism type of $M$. In particular, the first order
theory of $G$ forces $N$ to admit a Heegaard splitting of the same genus
with a $g$--tuples of bounding curves on each side of the Heegaard surface which
have the same type as for the Heegaard splitting of $M$. In particular, $M\cong N$.

\section*{Acknowledgements}

The second author is supported by the Samsung Science and Technology Foundation under Project Number SSTF-BA1301-51 and
 by KIAS Individual Grant MG084001 at Korea Institute for Advanced Study. The second 
 author is partially supported by NSF Grants DMS-2002596 and DMS-2349814, and by Simons Foundation Fellowship
 SFI-MPS-SFM-00005890. The authors thank S.-h.~Kim, A.~Nies, and
 C.~Rosendal for helpful discussions.

  \bibliographystyle{amsplain}
  \bibliography{ref}

\def\cprime{$'$} \def\soft#1{\leavevmode\setbox0=\hbox{h}\dimen7=\ht0\advance
  \dimen7 by-1ex\relax\if t#1\relax\rlap{\raise.6\dimen7
  \hbox{\kern.3ex\char'47}}#1\relax\else\if T#1\relax
  \rlap{\raise.5\dimen7\hbox{\kern1.3ex\char'47}}#1\relax \else\if
  d#1\relax\rlap{\raise.5\dimen7\hbox{\kern.9ex \char'47}}#1\relax\else\if
  D#1\relax\rlap{\raise.5\dimen7 \hbox{\kern1.4ex\char'47}}#1\relax\else\if
  l#1\relax \rlap{\raise.5\dimen7\hbox{\kern.4ex\char'47}}#1\relax \else\if
  L#1\relax\rlap{\raise.5\dimen7\hbox{\kern.7ex
  \char'47}}#1\relax\else\message{accent \string\soft \space #1 not
  defined!}#1\relax\fi\fi\fi\fi\fi\fi}
\providecommand{\bysame}{\leavevmode\hbox to3em{\hrulefill}\thinspace}
\providecommand{\MR}{\relax\ifhmode\unskip\space\fi MR }
\providecommand{\MRhref}[2]{%
  \href{http://www.ams.org/mathscinet-getitem?mr=#1}{#2}
}
\providecommand{\href}[2]{#2}
\begin{thebibliography}{10}

\bibitem{AM2009}
Tuna Altinel and Alexey Muranov, \emph{Interpr\'etation de l'arithm\'etique
  dans certains groupes de permutations affines par morceaux d'un intervalle},
  J. Inst. Math. Jussieu \textbf{8} (2009), no.~4, 623--652. \MR{2540875}

\bibitem{aramayona-leininger}
Javier Aramayona and Christopher~J. Leininger, \emph{Exhausting curve complexes
  by finite rigid sets}, Pacific J. Math. \textbf{282} (2016), no.~2, 257--283.
  \MR{3478935}

\bibitem{AFW2015}
Matthias Aschenbrenner, Stefan Friedl, and Henry Wilton, \emph{Decision
  problems for 3-manifolds and their fundamental groups}, Interactions between
  low-dimensional topology and mapping class groups, Geom. Topol. Monogr.,
  vol.~19, Geom. Topol. Publ., Coventry, 2015, pp.~201--236. \MR{3609909}

\bibitem{Borsuk}
Karol Borsuk, \emph{On the imbedding of systems of compacta in simplicial
  complexes}, Fund. Math. \textbf{35} (1948), 217--234. \MR{28019}

\bibitem{brown-ann}
Morton Brown, \emph{Locally flat imbeddings of topological manifolds}, Ann. of
  Math. (2) \textbf{75} (1962), 331--341. \MR{133812}

\bibitem{Cairns}
S.~S. Cairns, \emph{Triangulation of the manifold of class one}, Bull. Amer.
  Math. Soc. \textbf{41} (1935), no.~8, 549--552. \MR{1563139}

\bibitem{CFP1996}
J.~W. Cannon, W.~J. Floyd, and W.~R. Parry, \emph{Introductory notes on
  {R}ichard {T}hompson's groups}, Enseign. Math. (2) \textbf{42} (1996),
  no.~3-4, 215--256. \MR{1426438}

\bibitem{Cech1933}
Eduard {\v C}ech, \emph{P{\v r}{\'\i}sp{\v e}vek k theorii dimense
  [contribution to the theory of dimension]}, {\v C}asopis P{\v e}st. Mat.
  \textbf{62} (1933), 277--291.

\bibitem{ChenMann}
Lei Chen and Kathryn Mann, \emph{Structure theorems for actions of
  homeomorphism groups}, Duke Math. J. \textbf{172} (2023), no.~5, 915--962.
  \MR{4568050}

\bibitem{Coornaert2005}
Michel Coornaert, \emph{Topological dimension and dynamical systems},
  Universitext, Springer, Cham, 2015, Translated and revised from the 2005
  French original. \MR{3242807}

\bibitem{DKdlN22}
Valentina Disarlo, Thomas Koberda, and J.~de~la Nuez~Gonz\'alez, \emph{The
  model theory of the curve graph}, 2022, arXiv:2008.10490v3.

\bibitem{Edgar2008}
Gerald Edgar, \emph{Measure, topology, and fractal geometry}, second ed.,
  Undergraduate Texts in Mathematics, Springer, New York, 2008. \MR{2356043}

\bibitem{erickson-nayyeri}
Jeff Erickson and Amir Nayyeri, \emph{Tracing compressed curves in triangulated
  surfaces}, Discrete Comput. Geom. \textbf{49} (2013), no.~4, 823--863.
  \MR{3085131}

\bibitem{Farrell-Borel}
F.~T. Farrell, \emph{The {B}orel conjecture}, Topology of high-dimensional
  manifolds, {N}o. 1, 2 ({T}rieste, 2001), ICTP Lect. Notes, vol.~9, Abdus
  Salam Int. Cent. Theoret. Phys., Trieste, 2002, pp.~225--298. \MR{1937017}

\bibitem{GalGis2017}
\'Swiatos\l aw~R. Gal and Jakub Gismatullin, \emph{Uniform simplicity of groups
  with proximal action}, Trans. Amer. Math. Soc. Ser. B \textbf{4} (2017),
  110--130, With an appendix by Nir Lazarovich. \MR{3693109}

\bibitem{GHM1972}
D.~E. Galewski, J.~G. Hollingsworth, and D.~R. McMillan, Jr., \emph{On the
  fundamental group and homotopy type of open {$3$}-manifolds}, General
  Topology and Appl. \textbf{2} (1972), 299--313. \MR{317333}

\bibitem{GLL18}
M.~Giraudet, G.~Leloup, and F.~Lucas, \emph{First order theory of cyclically
  ordered groups}, Ann. Pure Appl. Logic \textbf{169} (2018), no.~9, 896--927.
  \MR{3808400}

\bibitem{GuelLiousse23}
Nancy Guelman and Isabelle Liousse, \emph{Uniform simplicity for subgroups of
  piecewise continuous bijections of the unit interval}, Bull. Lond. Math. Soc.
  \textbf{55} (2023), no.~5, 2341--2362, With an appendix by Pierre Arnoux.
  \MR{4672899}

\bibitem{Hempel2004}
John Hempel, \emph{3-manifolds}, AMS Chelsea Publishing, Providence, RI, 2004,
  Reprint of the 1976 original. \MR{2098385 (2005e:57053)}

\bibitem{HW1941}
Witold Hurewicz and Henry Wallman, \emph{Dimension {T}heory}, Princeton
  Mathematical Series, vol. vol. 4, Princeton University Press, Princeton, NJ,
  1941. \MR{6493}

\bibitem{dlNKK22}
Sang hyun Kim, Thomas Koberda, and J.~de~la Nuez~Gonz\'alez, \emph{First order
  rigidity of homeomorphism groups of manifolds}, 2023, arXiv:2302.01481.

\bibitem{MR2293770}
Olga Kharlampovich and Alexei Myasnikov, \emph{Elementary theory of free
  non-abelian groups}, J. Algebra \textbf{302} (2006), no.~2, 451--552.
  \MR{2293770}

\bibitem{KMS-2021}
Olga Kharlampovich, Alexei Myasnikov, and Mahmood Sohrabi, \emph{Rich groups,
  weak second order logic, and applications},  (2021).

\bibitem{khelif}
Anatole Khelif, \emph{Bi-interpr\'etabilit\'e{} et structures {QFA}: \'etude de
  groupes r\'esolubles et des anneaux commutatifs}, C. R. Math. Acad. Sci.
  Paris \textbf{345} (2007), no.~2, 59--61. \MR{2343552}

\bibitem{KK2018JT}
{S.-h.} Kim and T.~Koberda, \emph{Free products and the algebraic structure of
  diffeomorphism groups}, J. Topol. \textbf{11} (2018), no.~4, 1054--1076.
  \MR{3989437}

\bibitem{KK2020crit}
{S.-h.} {Kim} and T.~Koberda, \emph{Diffeomorphism groups of critical
  regularity}, Invent. Math. \textbf{221} (2020), no.~2, 421--501. \MR{4121156}

\bibitem{KK2021book}
Sang-hyun Kim and Thomas Koberda, \emph{Structure and regularity of group
  actions on one-manifolds}, Springer Monographs in Mathematics, Springer,
  Cham, [2021] \copyright 2021. \MR{4381312}

\bibitem{KKL2019ASENS}
Sang-hyun Kim, Thomas Koberda, and Yash Lodha, \emph{Chain groups of
  homeomorphisms of the interval}, Ann. Sci. \'{E}c. Norm. Sup\'{e}r. (4)
  \textbf{52} (2019), no.~4, 797--820. \MR{4038452}

\bibitem{KKR2020}
Sang-hyun Kim, Thomas Koberda, and Crist\'{o}bal Rivas, \emph{Direct products,
  overlapping actions, and critical regularity}, J. Mod. Dyn. \textbf{17}
  (2021), 285--304. \MR{4288175}

\bibitem{KKR2204}
Sang-hyun Kim, Thomas Koberda, and Crist\'obal Rivas, \emph{Virtual {C}ritical
  {R}egularity of {M}apping {C}lass {G}roup {A}ctions on the {C}ircle},
  Transform. Groups \textbf{29} (2024), no.~3, 1105--1114. \MR{4788024}

\bibitem{kob-tsuk}
Kazuaki Kobayashi and Yasuyuki Tsukui, \emph{The ball coverings of manifolds},
  J. Math. Soc. Japan \textbf{28} (1976), no.~1, 133--143. \MR{391111}

\bibitem{koberda-pingpong}
Thomas Koberda, \emph{Ping-pong lemmas with applications to geometry and
  topology}, Geometry, topology and dynamics of character varieties, Lect.
  Notes Ser. Inst. Math. Sci. Natl. Univ. Singap., vol.~23, World Sci. Publ.,
  Hackensack, NJ, 2012, pp.~139--158. \MR{2987617}

\bibitem{dlNK23}
Thomas Koberda and J.~de~la Nuez~Gonz\'alez, \emph{Uniform first order
  interpretation of the second order theory of countable groups of
  homeomorphisms}, 2023, arXiv:2312.16334.

\bibitem{KL2017}
Thomas Koberda and Yash Lodha, \emph{2-chains and square roots of {T}hompson's
  group {$F$}}, Ergodic Theory Dynam. Systems \textbf{40} (2020), no.~9,
  2515--2532. \MR{4130814}

\bibitem{KoMa2015}
Thomas Koberda and Johanna Mangahas, \emph{An effective algebraic detection of
  the {N}ielsen-{T}hurston classification of mapping classes}, J. Topol. Anal.
  \textbf{7} (2015), no.~1, 1--21. \MR{3284387}

\bibitem{Kuperberg-alg}
Greg Kuperberg, \emph{Algorithmic homeomorphism of 3-manifolds as a corollary
  of geometrization}, Pacific J. Math. \textbf{301} (2019), no.~1, 189--241.
  \MR{4007377}

\bibitem{lasserre}
Cl\'ement Lasserre, \emph{R. {J}. {T}hompson's groups {$F$} and {$T$} are
  bi-interpretable with the ring of the integers}, J. Symb. Log. \textbf{79}
  (2014), no.~3, 693--711. \MR{3248780}

\bibitem{leroux-mann}
Fr\'ed\'eric Le~Roux and Kathryn Mann, \emph{Strong distortion in
  transformation groups}, Bull. Lond. Math. Soc. \textbf{50} (2018), no.~1,
  46--62. \MR{3778543}

\bibitem{Leray}
Jean Leray, \emph{Sur la forme des espaces topologiques et sur les points fixes
  des repr\'esentations}, J. Math. Pures Appl. (9) \textbf{24} (1945), 95--167.
  \MR{15786}

\bibitem{marker}
David Marker, \emph{Model theory}, Graduate Texts in Mathematics, vol. 217,
  Springer-Verlag, New York, 2002, An introduction. \MR{1924282}

\bibitem{McCord}
Michael~C. McCord, \emph{Homotopy type comparison of a space with complexes
  associated with its open covers}, Proc. Amer. Math. Soc. \textbf{18} (1967),
  705--708. \MR{216499}

\bibitem{Munkres-book}
James~R. Munkres, \emph{Topology: a first course}, Prentice-Hall, Inc.,
  Englewood Cliffs, N.J., 1975. \MR{0464128 (57 \#4063)}

\bibitem{Nies-describing}
Andr\'e Nies, \emph{Describing groups}, Bull. Symbolic Logic \textbf{13}
  (2007), no.~3, 305--339. \MR{2359909}

\bibitem{Ostrand1971}
Phillip~A. Ostrand, \emph{Covering dimension in general spaces}, General
  Topology and Appl. \textbf{1} (1971), no.~3, 209--221. \MR{288741}

\bibitem{Rubin1989}
Matatyahu Rubin, \emph{On the reconstruction of topological spaces from their
  groups of homeomorphisms}, Trans. Amer. Math. Soc. \textbf{312} (1989),
  no.~2, 487--538. \MR{988881}

\bibitem{Rubin1996}
\bysame, \emph{Locally moving groups and reconstruction problems}, Ordered
  groups and infinite permutation groups, Math. Appl., vol. 354, Kluwer Acad.
  Publ., Dordrecht, 1996, pp.~121--157. \MR{1486199}

\bibitem{MR2238945}
Z.~Sela, \emph{Diophantine geometry over groups. {VI}. {T}he elementary theory
  of a free group}, Geom. Funct. Anal. \textbf{16} (2006), no.~3, 707--730.
  \MR{2238945 (2007j:20063)}

\bibitem{Sela10}
Zlil Sela, \emph{Diophantine geometry over groups x: The elementary theory of
  free products of groups},  (2010).

\bibitem{Weil}
Andr\'e Weil, \emph{Sur les th\'eor\`emes de de {R}ham}, Comment. Math. Helv.
  \textbf{26} (1952), 119--145. \MR{50280}

\bibitem{Whitehead}
J.~H.~C. Whitehead, \emph{On {$C^1$}-complexes}, Ann. of Math. (2) \textbf{41}
  (1940), 809--824. \MR{2545}

\bibitem{Whittaker}
James~V. Whittaker, \emph{On isomorphic groups and homeomorphic spaces}, Ann.
  of Math. (2) \textbf{78} (1963), 74--91. \MR{150750}

\end{thebibliography}
  
\end{document}